\documentclass[11pt, reqno]{amsart}
\usepackage{latexsym}
\usepackage{color,dsfont}
\usepackage[dvipsnames]{xcolor}
\usepackage{amssymb,amsfonts,amsmath,mathrsfs}
\usepackage{cases}

\numberwithin{equation}{section}

\DeclareMathOperator{\sgn}{sgn}

\renewcommand{\P}{\mathbb{P}}

\DeclareMathOperator{\E}{\mathbb{E}}

\newtheorem{lem}{Lemma}[section]
\newtheorem{remark}[lem]{Remark}
\newtheorem{prop}[lem]{Proposition}
\newtheorem{thm}[lem]{Theorem}

\newtheorem{cor}[lem]{Corollary}
\newtheorem{conj}[lem]{Conjecture}

\usepackage{etoolbox}

\makeatletter
\patchcmd{\@starttoc}
  {\linespacing\@plus\linespacing}
  {6pt}
  {}{}
\makeatother

\begin{document}

\title{Small values of the Riemann zeta-function near the critical line}
\author{Hung M. Bui, Alexandra Florea, Andr\'e Heycock}
\address{Department of Mathematics, University of Manchester, Manchester M13 9PL, UK}
\email{hung.bui@manchester.ac.uk}
\address{UC Irvine, Mathematics Department, Rowland Hall, Irvine 92697, USA}
\email{floreaa@uci.edu}
\address{Department of Mathematics, University of Manchester, Manchester M13 9PL, UK}
\email{andreheycock@gmail.com}
\date{}

\allowdisplaybreaks

\begin{abstract}
Assuming the Riemann Hypothesis, we study negative moments and
small values of the Riemann zeta-function to the right of the
critical line. For every fixed $0<k<1/2$, we establish the
conjecturally sharp bound
$$ \frac1T\int_T^{2T}
 \Big|\zeta(\tfrac12+\alpha+it)\Big|^{-2k}\,dt
 \ll \alpha^{-k^2},
$$
for $\alpha=o(1)$ and $\alpha\gg 1/\log T$. We also obtain almost sharp upper bounds slightly to the left of $1/\log T$. This improves the upper bounds of the first two authors 
for shifts as small as $1/\log T$,
confirming a conjecture of Gonek in this range.
A separate bound for the moment at $k=1/2$ yields
an improved bound for cancellation in the M\"obius function. Namely, we show that
$$
\sum_{n\leq x}\mu(n)
\ll
\sqrt{x}\exp \bigg(C\sqrt{\frac{\log x}{\log\log x}}\bigg)
$$
for some $C>0$, improving on previous results of Soundararajan, Balazard and de Roton, and of the first two authors. For $t$ chosen uniformly from $[T,2T]$, we also prove that
\[
 \min_{|h|\leq1}
 \Big|\zeta\Big(\tfrac12+\alpha+i(t+h)\Big)\Big|
 =\alpha^{1+o(1)}
\]
in probability, provided that $\alpha \ll (\log T)^{-c}$ for some fixed $c >0$,
 and
$\alpha\log T/\log\log T\to\infty$. This agrees with the
leading order predicted by a minimum analogue of the
Fyodorov--Hiary--Keating Conjecture. Further results include
bounds for minima on intervals of logarithmically growing
length, for negative moments in almost all short intervals, and
an extension of the range of sharp negative-moment bounds
for $k>1/2$.
A main ingredient in the proof of some of these results is a Gaussian large-deviation upper bound
for $\log|\zeta(1/2+\alpha+it)|^{-1}$,  obtained by building on work of Arguin, Bailey,
and Roberts. 

\end{abstract}
\maketitle 

\setcounter{tocdepth}{1}
\tableofcontents

\section{Introduction}

In this paper, we study small values of the Riemann zeta--function and its negative moments. The questions we are interested in are how frequently
small values occur, and how small they can be on a typical
short interval, as the distance to the critical line tends to zero.

Throughout the paper, $t$ is chosen uniformly from $[T,2T]$, and $\P$
denotes the corresponding probability.

\subsection{Negative moments near the critical line}

We first state our sharp bounds for negative moments.  For $k>0$ and $\alpha>0$, let
\[
 I_{-k}(\alpha,T)
 :=\frac1T\int_T^{2T}
 \Big|\zeta\Big(\tfrac12+\alpha+it\Big)\Big|^{-2k}\,dt.
\]
Gonek~\cite{Gonek} conjectured that for every fixed $k>0$ and $1/\log T \leq \alpha \leq 1$,
\[
 I_{-k}(\alpha,T)\asymp_k\alpha^{-k^2}.
\]
For smaller shifts, his conjecture predicts that for $0 < \alpha \leq 1/\log T$,
\[
 I_{-k}(\alpha,T)\asymp_k
 \begin{cases}
  (\log T)^{k^2}&0<k<1/2,\\[2pt]
  (\log T)^{1/4}\log\dfrac{e}{\alpha\log T}&k=1/2.
 \end{cases}
\]
Assuming the Riemann Hypothesis (RH), Gonek proved lower bounds of the conjectured
order for all $k>0$ when $1/\log T\leq\alpha\leq1$, and
for $0<k<1/2$ when $0<\alpha\leq1/\log T$.
Focusing on upper bounds for the negative moments, Bui and Florea~\cite{BF} proved non-trivial upper
bounds for certain ranges of shifts, and asymptotic formulas
when the shift is sufficiently large. More precisely, they obtained asymptotic formulas under RH for every fixed
$k,\varepsilon,C>0$ when
\[
 \alpha\geq\max\Big\{
 \frac{C(\log\log T)^{4/k+\varepsilon}}
        {(\log T)^{1/(2k)}},
 \frac{(1+\varepsilon)\log\log T}{2\log T}
 \Big\}.
\]
Their results also give
upper bounds for smaller shifts, but these do not have the
conjectured order throughout that range.

In this paper, we improve the upper bounds for small shifts.
In particular, for $0<k<1/2$, we establish Gonek's conjectured
upper bound down to shifts of order $1/\log T$. More specifically, we prove the following. 

\begin{thm}\label{negativemom}
Assume RH. For every $0<k<1/2$ and $0<c<1$, we have the following bounds:
\begin{numcases}{I_{-k}(\alpha,T)\ll}
\alpha^{-k^2}
& $\text{if } 1\ll\alpha\log T\ll\log\log T$,
\label{firstrange}\\
(\log T)^{O(1)}
& $\text{if } (\log\log\log T)^{-c}\ll\alpha\log T\ll1$.
\label{secondrange}
\end{numcases}

\end{thm}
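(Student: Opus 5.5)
We follow the Soundararajan--Harper approach to moments, adapted to negative powers as in Bui--Florea, and we feed in a Gaussian large-deviation upper bound for $\log|\zeta(\tfrac12+\alpha+it)|^{-1}$. The starting point is an RH upper bound for $-\log|\zeta(\sigma+it)|$ in terms of a Dirichlet polynomial over primes; this is the analogue for $\log(1/|\zeta|)$ of Soundararajan's bound for $\log|\zeta|$. Off the line (at $\sigma=\tfrac12+\alpha$), such a bound holds for $t$ outside a set where $t$ lies unusually close to zeros. Concretely, for $x=T^{\theta}$ we write
\[
-\log\big|\zeta(\tfrac12+\alpha+it)\big| \le -\Re\sum_{p\le x}\frac{1}{p^{1/2+\alpha+it}}\cdot w_x(p)+\frac{C}{\theta}\,\frac{1+\log^+(1/(\alpha\log T))}{1}+(\text{prime squares})+E(t).
\]
Here $E(t)$ is controlled by the distance from $t$ to the nearest ordinate (through $\log|\zeta|$ on the line and the Hadamard product). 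For $\alpha\log T\gg1$ the zero term is harmless: the shift of size $\alpha\ge c/\log T$ damps contributions of zeros at scale $1/\log T$. This is why the sharp range \eqref{firstrange} begins at $\alpha\asymp1/\log T$.

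Next we split the primes into dyadic-in-$\log\log$ blocks $(T^{\beta_{j-1}},T^{\beta_j}]$ with $\beta_j$ growing rapidly, exactly as in Harper's method. For each $t$ we define $J(t)$ as the largest $j$ such that the partial sums $P_i(t)=\Re\sum_{p\in I_i}p^{-1/2-\alpha-it}$ over all blocks $i\le j$ are not too large, namely $|P_i|\le \beta_i^{-3/4}$. On the set where $J(t)=j$ we apply the pointwise bound with $x=T^{\beta_j}$. We then bound $\exp(2k\sum_{i\le j}P_i)$ by a truncated Taylor polynomial. On the complementary event where $P_{j+1}$ is large, we multiply by $(\beta_{j+1}^{3/4}P_{j+1})^{2m}$. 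Mean values of these Dirichlet polynomials are computed by the standard orthogonality estimate over $[T,2T]$. This gives
\[
\frac1T\int_{J(t)=j}|\zeta|^{-2k}\ll e^{-c/\beta_{j+1}}\,\exp\Big(k^2\sum_{p\le T^{\beta_j}}p^{-1-2\alpha}\Big)\,e^{O(k/\beta_j)} .
\]
Since $\sum_{p\le y}p^{-1-2\alpha}=\log\min(\log y,1/\alpha)+O(1)$, the factor is $\ll\alpha^{-k^2}$ uniformly in $j$ as soon as $T^{\beta_j}\ge e^{1/\alpha}$. The sum over $j$ converges thanks to the decay $e^{-c/\beta_{j+1}}$ against the $e^{O(k/\beta_j)}$ loss, provided $\beta_{j+1}/\beta_j$ is a large constant.

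The set where no block is well behaved, namely $j=0$ with $\beta_0\approx(\log\log T)^{-2}$, needs a separate argument. There we have only the crude bound $|\zeta(\tfrac12+\alpha+it)|^{-1}\le\exp(C\log T/\log\log T)$, or better a Hölder argument. This step is where the hypotheses $k<1/2$ and $\alpha\log T\ll\log\log T$ enter. We use the Gaussian large deviation bound $\P(\log|\zeta|^{-1}>V)\ll\exp(-V^2/(\log(1/\alpha))(1+o(1)))$, building on Arguin--Bailey--Roberts, together with the pointwise upper bound above. These show that the exceptional set has measure small enough to beat the possible size of $|\zeta|^{-2k}$. The main obstacle is this exceptional set near zeros. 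Off the critical line, $|\zeta|^{-1}$ can be as large as roughly $(\alpha\cdot\text{dist to zero})^{-1}$. Integrating over $t$ near a zero gives $\int (\alpha^2+u^2)^{-k}\,du$, which is finite uniformly only when $2k<1$, and this is why we need $k<1/2$. We would handle this by a zero-density or pair-counting bound for ordinates in short windows under RH, combined with the convergent local integral.

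For the second range \eqref{secondrange}, with $\alpha$ slightly below $1/\log T$, the same scheme applies. However, the zero contribution $E(t)$ now costs a factor $(1/(\alpha\log T))^{O(k)}$ per nearby zero, and the number of zeros within distance $1/\log T$ must be controlled. With $\alpha\log T\gg(\log\log\log T)^{-c}$ and $c<1$, these losses accumulate to at most $(\log T)^{O(1)}$. The Gaussian main term is only $(\log T)^{k^2}$ here, so the overall bound becomes $(\log T)^{O(1)}$.
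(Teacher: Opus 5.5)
Your central premise, that for $\alpha\log T\gg1$ the zero contribution is harmless, is where the argument breaks.

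In a Harper-type scheme you apply the explicit inequality with cutoff $x=T^{\beta_j}$. The damping is then governed by $\alpha\log x=\alpha\beta_j\log T$, not by $\alpha\log T$. Lemma~\ref{bf_lemma} gives the error term $\beta_j^{-1}\log\frac{1}{1-T^{-\alpha\beta_j}}$, which for $\alpha\beta_j\log T\le1$ equals $\beta_j^{-1}\big(\log\frac{1}{\alpha\beta_j\log T}+O(1)\big)$. Likewise the weights are $b_\alpha(p;x)=-1+O\big(\frac{\log p}{\log x}(1+\log\frac{1}{\alpha\log x})\big)$, not $-1$.

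In the range \eqref{firstrange}, every block with $\beta_j<1/(\alpha\log T)$ (all blocks when $\alpha\log T\asymp1$) therefore costs $\exp\big((2k+o(1))\beta_j^{-1}\log(1/\beta_j)\big)$ on $\{J(t)=j\}$, not $e^{O(k/\beta_j)}$. The saving from $|P_{j+1}|>\beta_{j+1}^{-3/4}$ comes from a moment of order at most $1/\beta_{j+1}$ (the length constraint). It is at best $\exp\big(-(\tfrac12+o(1))\beta_{j+1}^{-1}\log(1/\beta_{j+1})\big)$.

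In Harper's setting the loss is only $e^{2k/\beta_j}$, so the logarithm in the saving wins for any block ratio. Here both sides are of size $\beta^{-1}\log(1/\beta)$ and only the constants compete. With a large ratio $r=\beta_{j+1}/\beta_j$, the terms with small $\beta_j$ carry a factor $\exp\big((2k-\frac1{2r}+o(1))\beta_j^{-1}\log\frac1{\beta_j}\big)$. For $\beta_j\approx(\log\log T)^{-2}$ this is super-polylogarithmic unless $k<1/(4r)$. Any such scheme needs at least $r<1/(2k)$ and thresholds close to $\beta^{-1}$.

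This balance, not local integrability near zeros, is where $k<1/2$ enters. In the paper it appears in Lemma~\ref{lower}:
\begin{itemize}
\item a deficit $V-u$ forces a cutoff $\beta_m\approx D\log(V-u)/(V-u)$, so that the error is at most $(V-u)/D$;
\item a moment of order $a/\beta_m$ then gives decay $e^{-\rho(V-u)}$ only with $\rho<a/(rD)<1$;
\item the Gaussian bound of Theorem~\ref{mainthm} needs $\rho>2\kappa$.
\end{itemize}
Similarly, above $V=\frac12\log\frac1\alpha$ the available tails (Theorem~\ref{bigk} and Lemma~\ref{lem:negative-moment-high-tail}) decay like $e^{-V}$ or $e^{-\rho V}$ with $\rho<1$. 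These are integrable against $e^{2kV}$ only when $2k<1$. Your local integral $\int_{|u|\le1/\log T}(\alpha^2+u^2)^{-k}\,du$, by contrast, is $\ll(\log T)^{2k-1}$ for every $k$ once $\alpha\gg1/\log T$.

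The exceptional-set step also rests on inputs that are not available.
\begin{itemize}
\item Near $\alpha\asymp1/\log T$ the RH pointwise bound is only $|\zeta(\tfrac12+\alpha+it)|^{-1}\le T^{1/2+o(1)}$ (see \eqref{eq:above-half-pointwise-cap}), not $\exp(C\log T/\log\log T)$.
\item A Gaussian tail for $W_\alpha$ at these shifts cannot be imported from Arguin--Bailey--Roberts. Proving it for $\kappa<1/2$ is Theorem~\ref{mainthm}, the main work of the paper. In this range of shifts no Gaussian bound is available for $V\ge\frac12\log\frac1\alpha$.
\item Handling $t$ near zeros by pair counting would need lower bounds for $|\zeta'(\rho)|$, which are unknown.
\end{itemize}

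The second range is not a zero-counting matter either. The paper bounds the part $V\le B\log\log T$ trivially by $(\log T)^{2kB}$ and uses Lemma~\ref{lem:negative-moment-high-tail} above it. There the hypothesis $\alpha\log T\gg(\log\log\log T)^{-c}$ is exactly what keeps $\frac VJ\log\frac{1}{1-T^{-\alpha J/V}}$ below $V$ by a margin, with cutoff $x=T^{J/V}$ and $J\asymp\log V$.

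The workable route is the paper's: write $I_{-k}(\alpha,T)=2k\int e^{2kV}\,\P(W_\alpha>V)\,dV$. Then use Theorem~\ref{mainthm} around the saddle point $V=k\log\frac1\alpha$, and Theorem~\ref{bigk} and Lemma~\ref{lem:negative-moment-high-tail} for larger $V$.
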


We note that the improvement includes the range
$1 \ll \alpha\log T=o(\log\log T)$, which is not covered
by the asymptotic formula in~\cite{BF}. Combining our result
with the larger-shift results in \cite{BF} gives the
conjectured upper bound for all $\alpha=o(1)$ satisfying
$\alpha\gg 1/\log T$, for every fixed $0<k<1/2$. Moreover, combining the upper bound in this region with the corresponding lower bound obtained by Gonek \cite{Gonek} shows that when $1/\log T \ll \alpha =o(1)$ and $0<k<1/2$, we have
$$ I_{-k}(\alpha, T) \asymp \alpha^{-k^2}. $$

To the left of $1/\log T$, the upper bound in \eqref{secondrange} is almost sharp, up to some $\log T$ factors. This improves on the bound $\exp (O( \log \log T (\log \log \log T)^2 ))$ in this regime obtained in \cite{BF} (see \cite[(1.8)]{BF}).

The next theorem bounds the moment $I_{-1/2}(\alpha,T)$, again improving on \cite{BF}.
\begin{thm}\label{negative1/2}
Assume RH. For every $0 < \varepsilon<1$ and $0<c<1$, there exist 
$C=C(c)>0$ and $C_1=C_1(\varepsilon)>0$ such that the following bound holds:
\begin{numcases}{I_{-1/2}(\alpha,T)\ll}
\exp\bigg(C\log\log T
 \Big(\frac{\log\log\log T}
          {\log\log\log\log T}\Big)^2\bigg)\nonumber\\
\qquad\qquad\qquad\qquad \text{if } (\log\log\log T)^{-c}\ll\alpha\log T\ll1,
\label{firstrange'}\\
\exp\big((\log T)^{\varepsilon}\big)\nonumber\\
\qquad\qquad\qquad\qquad  \text{if } C_1 (\log\log T)^{-1}\leq \alpha\log T
   \ll(\log\log\log T)^{-c}.
\label{secondrange'}
\end{numcases}

\end{thm}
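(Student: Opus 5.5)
We bound $I_{-1/2}(\alpha,T)$ by the layer-cake formula, splitting into ranges of the size of $\log|\zeta(\tfrac12+\alpha+it)|^{-1}$. Write $L=\log\log T$ and $V=\log|\zeta(\tfrac12+\alpha+it)|^{-1}$. Then
\[
I_{-1/2}(\alpha,T)=\int_{-\infty}^{\infty} e^{V}\,\P\big(\log|\zeta(\tfrac12+\alpha+it)|^{-1}>V\big)\,dV .
\]
We need upper bounds for the tail probability in three regimes. For $V\le L^{1+o(1)}$, i.e.\ in the Gaussian regime, we use a large-deviation bound of the shape $\P(\cdot>V)\ll \exp(-V^2/(\log(1/\alpha)+O(1)))$ times a controllable error. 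Here $\log(1/\alpha)\approx L$ when $\alpha\log T\asymp 1$, and the variance is $\tfrac12\log\min(1/\alpha,\log T)$. For moderately large $V$ we use a weaker but still useful bound. For very large $V$ we use an RH-based pointwise lower bound for $|\zeta|$ near the line.

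\textbf{The Gaussian range.} For $V$ up to about $L\log_3 T$ I would use the Gaussian upper bound. It comes from approximating $\log|\zeta|^{-1}$ by a Dirichlet polynomial over primes, following the Arguin--Bailey--Roberts iterative scheme. The variables are increments over prime ranges $T^{1/\beta_j}$, and the moments of the Dirichlet polynomial are controlled by the inequality from Soundararajan/Harper that bounds $\log|\zeta(\sigma+it)|^{-1}$ from above using zeros. Under RH this is the statement $-\log|\zeta(\tfrac12+\alpha+it)| \le -\mathrm{Re}\sum_{p\le x} p^{-1/2-\alpha-it}\cdots +$ a zero sum. The zero sum has the wrong sign for lower bounds and must be handled by averaging; this is why the shift $\alpha\log T\gg(\log_3T)^{-c}$ enters. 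Integrating $e^{V}e^{-V^2/L}$ gives $e^{L/4}=(\log T)^{1/4}$. The loss arises only because the Gaussian bound holds with variance inflated by a factor $1+O(\log_4 T/\log_3T)$, or up to a threshold. At $k=1/2$ the integrand $e^{V-V^2/L}$ is exactly balanced at $V\approx L/2$, so there is no slack. The error in the exponent, of size $V^2\cdot(\log_3T/(L\log_4T))^{2}\cdot(\ldots)$ from truncation at the optimal iteration depth, yields the factor $\exp(CL(\log_3T/\log_4T)^2)$ in \eqref{firstrange'}.

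\textbf{The tail ranges.} For $V$ beyond the Gaussian range I would use the crude bound $\P(V'>V)\ll\exp(-cV\log V)$, obtained from high moments of short Dirichlet polynomials, together with the pointwise bound. For $\alpha\log T\gg(\log_3 T)^{-c}$, the pointwise bound under RH is $\log|\zeta(\tfrac12+\alpha+it)|^{-1}\ll \log(1/(\alpha\log T))\cdot\log T/\log\log T$ plus a zero-proximity term. The latter is controlled because zeros lie on the line at distance $\ge\alpha$. These tails contribute $O(1)$ in range \eqref{firstrange'}. In range \eqref{secondrange'} the tail control degrades: the zero-sum term contributes like $\sum_\gamma \log(1+1/(\alpha^2\ldots))$. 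Allowing $V$ up to $(\log T)^{\varepsilon}$ before the exponential-type tail takes over, with the constant $C_1(\varepsilon)$ making $\alpha\log T\ge C_1/L$ just enough, gives the bound $\exp((\log T)^\varepsilon)$.

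\textbf{Main obstacle.} The hardest step is the Gaussian large-deviation upper bound at the critical exponent $k=1/2$, with uniformity in $\alpha$ down to $(\log_3T)^{-c}/\log T$. In particular, the iteration over prime scales must absorb the zero contribution near height $t$ without losing more than $(\log_3T/\log_4T)^2$ in the variance. I would first try choosing $\beta_j$ growing like iterated exponentials, stopping at depth $\asymp\log_3T/\log_4T$ so that the discarded primes contribute variance $O(1)$. I would handle the bad set where the partial sums are large by Markov with moments of order $V^2/L$.
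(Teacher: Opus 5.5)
You have the right framework (the layer-cake formula, and for \eqref{secondrange'} the split at $(\log T)^{\varepsilon}$), but the ingredient that proves the theorem is missing, and what you put in its place does not work.

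The weight at $k=1/2$ is $e^{V}$, and $W_\alpha$ is only capped at $V_*\asymp\log T$ (see \eqref{eq:above-half-pointwise-cap}; for $\alpha\log T\ll1$ this cap is of order $\log T$, not $\log T/\log\log T$). So above some threshold you need a tail bound that \emph{beats} $e^{-V}$, say $\P(W_\alpha>V)\ll e^{-V}g(V)$ with $\int_{V_0}^{V_*}g(V)\,dV=o(1)$.

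Your claimed $\exp(-cV\log V)$ from high moments of a short prime polynomial is not obtainable here. In Lemma~\ref{bf_lemma} with $x=T^{\beta}$, the zero term $\frac1\beta\log\frac1{1-T^{-\alpha\beta}}$ must stay below $V$. For $\alpha\log T\ll1$ this forces $\beta\gg\log V/V$, which caps the moment order at $q\lesssim V/\log V$. Moreover, by Lemma~\ref{asympta}, $b_\alpha(p;x)$ carries an extra term of size up to about $\log\frac1{\alpha\log x}\approx\log V$. With a single polynomial, Markov's inequality then falls short even of $e^{-V}$, because the variance $\gg\log\log T$ of the small primes is only raised to the power $q$.

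The Gaussian input you rely on is both unavailable and unnecessary. In \eqref{firstrange'} one has $\alpha\log T\ll1$, whereas Theorem~\ref{mainthm} needs $\alpha\log T\gg1$ and $\kappa<1/2$. Your variance-inflation account also cannot produce the stated factor: inflating the variance by $1+\delta$ only costs $e^{\delta L/4}$.

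The missing idea is the block argument of Lemma~\ref{lem:negative-moment-high-tail}.
\begin{itemize}
\item Take $J=\lfloor\log V\rfloor$ and $x=T^{J/V}$. The hypothesis $\alpha\log T\gg(\log\log\log T)^{-c}$ with $c<1$ is what keeps the zero term $\frac VJ\log\frac1{1-T^{-\alpha J/V}}$ below $V$ by a margin $w\asymp V\log J/J$. The zeros are handled pointwise through this term, not by averaging.
\item Split $Q_x$ into blocks $T^{j/V}<p\le T^{(j+1)/V}$, on which $|b_\alpha(p;x)|\ll J-j$. Apply Markov with the block-dependent order $q_j=\lfloor V/(j+1)\rfloor$, which keeps $Q_j^{q_j}$ of length at most $T$.
\item The point is that the large variance sits in the short block $p\le T^{1/V}$, which can be raised to the power $V$ rather than $V/\log V$.
\item The blocks $j\ge1$ give $\ll e^{-V-3V/J}$. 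The block $j=0$ gives $\bigl(CJ^2\log\log T/(V(\log J)^2)\bigr)^{\lfloor V\rfloor}$, which is $\ll e^{-2V}$ only once $V\ge V_0=C_0\log\log T\,(\log\log\log T/\log\log\log\log T)^2$.
\end{itemize}
Hence $\P(W_\alpha>V)\ll e^{-V-aV/\log V}$ for $V\ge V_0$. Using the trivial bound $\P\le1$ below $V_0$ gives $I_{-1/2}\le e^{V_0}+O\bigl(\log T\,e^{-aV_0/\log V_0}\bigr)$. So the factor in \eqref{firstrange'} is simply $e^{V_0}$, the threshold at which the small-prime block stops obstructing.

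In \eqref{secondrange'} the same scheme (Lemma~\ref{newlem}) gives $e^{-V-2V/\log V}$ for $V\ge(\log T)^{\varepsilon}$, with $\P\le1$ below that. There $C_1(\varepsilon)$ is what lets $\log J\ge\log(\varepsilon\log\log T)$ dominate $\log\frac1{\alpha\log T}$, so the zero term stays below $V$.
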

Note that by comparison, Theorem $1.2$ in \cite{BF} gives in the range \eqref{firstrange'}
$$I_{-1/2}(\alpha, T) \ll \exp \Big( \frac{(4+\varepsilon) \log T \log \log \log T}{\log \log T}\Big),$$ and in the range \eqref{secondrange'} it gives $\exp( \varepsilon \log T)$.

 Finally, when $k>1/2$, we have the following theorem. 

 \begin{thm}\label{k>1/2}
Assume RH. For every $k>1/2$, there exists $C=C(k)>0$ such that
uniformly for 
\begin{equation*}
\alpha=o(1), \qquad \alpha\geq\frac{C}{(\log T)^{1/(2k)}\log\log T},
\end{equation*}
we have
\begin{equation*}\label{eq:above-half-sharp-moment}
 I_{-k}(\alpha,T)\ll \alpha^{-k^2}.
\end{equation*}
\end{thm}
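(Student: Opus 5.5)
We follow the upper-bound method for negative moments of Soundararajan and Harper, in the shifted form used by Bui and Florea. Under RH we start from the standard upper bound for $-\log|\zeta|$ to the right of the critical line. For $x\le T^{O(1)}$ and $\sigma=\tfrac12+\alpha$,
\[
\log\Big|\zeta\big(\tfrac12+\alpha+it\big)\Big|^{-1}\le -\Re\sum_{p\le x}\frac{1}{p^{1/2+\alpha+it}}\,\frac{\log(x/p)}{\log x}+\frac{O(1)}{\alpha\log x}\cdot\frac{\log T}{\log x}+O(1),
\]
where the prime-square contribution is $\ll \log(1/\alpha)$ and is absorbed below. This is the negative-moment analogue of Soundararajan's inequality. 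The sign is harmless because the prime sum is symmetric in distribution. The term $\frac{\log T}{\alpha(\log x)^2}$ is the essential feature: we want it to be $O(1)$, which forces $\log x\gg\sqrt{\log T/\alpha}$. It also explains why $\alpha$ must not be too small relative to $k$.

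Next we split the primes into Harper-type ranges $T^{\beta_{j-1}}<p\le T^{\beta_j}$, $j=1,\dots,J$. The $\beta_j$ grow geometrically, and $\beta_J$ is chosen so that $\alpha\beta_J\log T$ is large enough that primes beyond $\exp(1/\alpha)$ contribute $O(1)$ in mean. We then partition $[T,2T]$ according to the first $j$ at which a partial sum $|P_{j}(t)|$ exceeds a threshold $\beta_j^{-3/4}$. On the good set, where all partial sums are small, we replace $\exp(2k\,\Re P_j)$ by truncated Taylor polynomials of length about $\beta_j^{-3/4}$. The mean value of the product of these polynomials is then computed as a Dirichlet polynomial of length $<T$. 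This gives $\prod_p\exp\big(k^2p^{-1-2\alpha}\big)\asymp \alpha^{-k^2}$, since $\sum_p p^{-1-2\alpha}=\log(1/\alpha)+O(1)$. On each bad set $\mathcal T_j$, we bound the measure by a high moment, $(\beta_{j}^{3/4}|P_{j+1}|)^{2\lceil 1/(10\beta_{j+1})\rceil}$, multiplied by the truncated exponential of the earlier pieces. Here we use the inequality with $x=T^{\beta_j}$. Because $x$ is now smaller, the error term $\frac{\log T}{\alpha\beta_j^2(\log T)^2}$ grows. It stays bounded by $k$-dependent constants as long as $\beta_j\gg(\alpha\log T)^{-1/2}$. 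The decay $e^{-c\beta_{j+1}^{-1}\log(1/\beta_{j+1})}$ from the high moment must beat the factor $\exp\big(2k\,C/(\alpha\beta_j^2\log T)\big)$.

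This balancing is where the constraint on $\alpha$ appears, and we expect it to be the main obstacle. For $k>1/2$, the loss in the first bad range is roughly $\exp\big(2k/(\alpha\beta_1^2\log T)\big)$. The gain is $\exp(-c\,\beta_1^{-1}\log\log T)$ if we take $\beta_1\approx (\log\log T)^{-2}$-ish. Optimising with the extra $\log\log T$ saving, obtained from choosing $\beta_1$ as small as allowed by the length restriction $T^{\beta_1\cdot \beta_1^{-1}}$, gives the condition $\alpha\ge C(\log T)^{-1/(2k)}/\log\log T$. Concretely, we would take $\beta_0$ with $T^{\beta_0}=$ the smallest admissible $x$, namely $\log x=\frac{1}{2k}\log\log T+\dots$ in the exponent scale. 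We would then check that the Gaussian moment bound for the short initial sum $\sum_{p\le T^{\beta_0}}$ still controls the contribution of $t$ outside all good sets. There the trivial bound on $|\zeta|^{-2k}$ is unavailable, so we instead use the inequality with the smallest $x$, together with a Chebyshev bound of exponent $\asymp 1/\beta_0$.

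Finally, we sum over $j$. The good-set contribution is $\ll\alpha^{-k^2}$. Each bad set contributes $\alpha^{-k^2}e^{-c/\beta_{j+1}}$ times bounded factors, and the total over $j$ is geometric, giving $I_{-k}(\alpha,T)\ll\alpha^{-k^2}$. The condition $\alpha=o(1)$ ensures that the main Euler product is genuinely $\alpha^{-k^2}$ rather than $O(1)$, though the argument works uniformly there as well.
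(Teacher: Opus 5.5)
There is a genuine gap. Your scheme never controls the extreme lower tail of $|\zeta(\tfrac12+\alpha+it)|$, and that tail is the only place where the hypothesis $\alpha\ge C(\log T)^{-1/(2k)}/\log\log T$ enters.

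In a Harper-type decomposition with thresholds $\beta_i^{-3/4}$ and $\beta_1\approx(\log\log T)^{-2}$, every $t$ in the good set, or in a bad set $\mathcal T_j$ with $j\ge1$, has $W_\alpha(t)\ll(\log\log T)^{3/2}$. Every $t$ with larger $W_\alpha(t)$ therefore lies in $\mathcal T_0$, the set where the first (shortest) block already exceeds its threshold. That set has measure about $Te^{-c(\log\log T)^2}$. On it, however, $W_\alpha$ can be as large as the pointwise cap $V_*\asymp\log T$ of \eqref{eq:above-half-pointwise-cap}. So for $k>1/2$ you need $\P(W_\alpha>V)$ to decay strictly faster than $e^{-2kV}$, uniformly for $V$ up to $V_*$.

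Harper disposes of this set by Cauchy--Schwarz and Soundararajan's a priori bound. No a priori bound on $I_{-2k}(\alpha,T)$ of the needed strength is available for such small $\alpha$; proving one would require the same tail analysis with $2k$ in place of $k$.

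Your substitute, the inequality with the smallest $x$ plus one Chebyshev bound of order $1/\beta_0$, cannot work with a single cutoff. A $2q$-th moment with $q\le1/\beta_0$ gives decay about $\exp\bigl(-\beta_0^{-1}\log(\beta_0V^2/\log\frac1\alpha)\bigr)$. This beats $e^{2kV}$ only for $V\lesssim\beta_0^{-1}\log V$. Shrinking $\beta_0$ far enough to reach $V_*$ makes the explicit-formula error at least comparable to $V_*$ itself.

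The cutoff must therefore depend on $V$, and this is what the paper does in Lemma \ref{lem:above-half-tail}. For $V>B\alpha\log T$:
\begin{itemize}
\item It takes $x=T^{J/V}$ with $J\approx\log\frac{V}{\alpha\log T}-\log\log\frac{V}{\alpha\log T}$, so that the error $\frac VJ\log\frac1{1-T^{-\alpha J/V}}$ leaves a gap of $\frac{3V}{4J}$.
\item It splits the prime sum into blocks over $(T^{j/V},T^{(j+1)/V}]$, with moments $q_j=\lfloor V/(j+1)\rfloor$.
\item It uses the refined coefficient bound $|b_\alpha(p;x)|\ll1+J\frac{\log p}{\log x}(1-\frac{\log p}{\log x})$ from Lemma \ref{asympta}. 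This is needed because $\alpha\log x$ is small here, so the coefficients are not close to $-1$.
\end{itemize}
The result is $\P(W_\alpha>V)\ll\exp(-\frac VJ(\log V-C))$. Requiring $\log V-2kJ\ge C+1$ for all $V\le V_*$ is exactly what forces $\alpha\log T\,\log\log T\gg(\log T)^{1-1/(2k)}$.

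The remaining ranges are easier. For $b\log\frac1\alpha\le V\le B\alpha\log T$ a single polynomial with $x=T^{A_0/V}$ suffices. For $V\le b\log\frac1\alpha$ the paper uses the Gaussian bound of Theorem \ref{mainthm}, integrated against $2ke^{2kV}$. Your truncated-Euler-product treatment would be an acceptable alternative for this bulk, but it does not address the tail.

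Your own derivation of the threshold does not hold up, and it cannot produce the exponent $1/(2k)$. Balancing $\exp\bigl(2k/(\alpha\beta_1^2\log T)\bigr)$ against $\exp(-c\beta_1^{-1}\log\log T)$ only requires $\alpha\beta_1\log T\log\log T\gg1$. With $\beta_1=(\log\log T)^{-2}$ that means $\alpha\gg\log\log T/\log T$, nowhere near the stated range.

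This stems from a misstated error term. In Lemma \ref{bf_lemma} the error is $\frac{\log T}{\log x}\log\frac1{1-x^{-\alpha}}$, which is exponentially small once $\alpha\log x\gg\log(\alpha\log T)$. Your $\frac{\log T}{\alpha(\log x)^2}$ is only a lossy upper bound for it. So $\log x\gg\sqrt{\log T/\alpha}$ is not forced, and in the range at hand the bad sets with $j\ge1$ cost only a factor $1+o(1)$. The real obstruction is the tail described above.

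A smaller point: the prime-square sum can be as large as $\frac12\log\frac1\alpha$ pointwise. Absorbing it pointwise would cost a factor $\alpha^{-k}$. It has to be kept inside the mean-value computation, where its variance is bounded.
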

We note that one can study the corresponding negative moment problem in random matrix theory as well.
The moment calculations of Keating and Snaith~\cite{KS-moments}
give order $N^{k^2}$ for negative $2k$-th moments of
characteristic polynomials of unitary matrices when
$0<k<1/2$. 
Negative moments were also studied by
Fyodorov~\cite{Fyodorov-negative} and Strahov and
Fyodorov~\cite{SF-negative}. Berry and
Keating~\cite{BK-negative} predicted that high negative
moments are governed by rare clusters of eigenvalues.
Related calculations were carried out by Fyodorov and
Keating~\cite{FK-negative} and Forrester and
Keating~\cite{Forrester-Keating}. For unitary families, these predictions lead to transitions in the asymptotic formulas at
$k=1/2,3/2,\ldots$, with logarithmic corrections at the
transition points. For the Riemann zeta-function, they suggest corrections to
Gonek's original prediction when $\alpha\leq1/\log T$ and
$k\geq3/2$ (see also \cite[Section~1]{BF}).
For integral $k$, negative moments can also be viewed as
special cases of averages of ratios of shifted
zeta-functions. We refer to Farmer~\cite{Farmer-ratios},
Conrey, Farmer, and Zirnbauer~\cite{CFZ-ratios}, and Conrey
and Snaith~\cite{CS-ratios} for conjectures and applications
in this direction. 

\subsection{Averages of the M\"obius function}
The main application of our negative moment bounds lies in obtaining cancellation in averages of the M\"obius function. Assuming RH,
Soundararajan~\cite{Sound-mobius} proved that
\begin{equation}\label{Soundbound}
 \sum_{n\leq x}\mu(n)
 \ll \sqrt{x}\exp\!\Big(
       \sqrt{\log x}(\log\log x)^{14}\Big).
\end{equation}
Balazard and de Roton~\cite{BDR} optimized his argument,
replacing the exponent $14$ by $5/2+\varepsilon$,
which is described in~\cite{BDR} as the limit of this method. For earlier bounds on $\sum_{n\leq x}\mu(n)$ under RH,
we refer the reader to Littlewood \cite{Littlewood}, Landau \cite{Landau},
Titchmarsh \cite{Titchmarsh}, and
Maier and Montgomery \cite{MM}.
Bui and Florea~\cite{BF} used negative-moment
estimates for $\zeta(s)$ to study averages of the generalized
M\"obius functions $\mu_k$, the Dirichlet coefficients of $\zeta(s)^{-k}$. They also obtained asymptotic formulas for non-integer $k$ and
cancellation bounds for integer $k$.

Following the method in \cite{BF}, we use
Theorems~\ref{negative1/2} and \ref{k>1/2} to control the mean of $|1/\zeta(s)^k|$
close to the critical line. This removes the $\log\log x$ factor 
 in the exponent of \eqref{Soundbound}.

\begin{thm}\label{partM}
Assume RH. For every $k\in\mathbb{N}$, there exists $C=C(k)>0$ such that
\begin{align*}
&\sum_{n \leq x} \mu_k(n)\ll \sqrt{x} \exp\bigg( C\Big(\frac{\log x}{\log\log x}\Big)^{k/(k+1)}  \bigg).
\end{align*}
\end{thm}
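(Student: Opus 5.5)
Following the method of \cite{BF}, I will start from Perron's formula for $1/\zeta(s)^k=\sum_n\mu_k(n)n^{-s}$ and move the contour close to the critical line, to the vertical line $\Re s=\tfrac12+\alpha$. This line is where Theorems~\ref{negative1/2} and \ref{k>1/2} control the mean of $|\zeta|^{-k}$. Concretely, for $x$ large, a parameter $T\le x$ and $c=1+1/\log x$,
\[
\sum_{n\le x}\mu_k(n)=\frac1{2\pi i}\int_{c-iT}^{c+iT}\frac{x^s}{\zeta(s)^k}\,\frac{ds}{s}+O\Big(\frac{x^{1+\varepsilon}}{T}\Big).
\]
I will smooth the cutoff slightly, or take $T=x$ with a short-interval averaging of the sum, so that the truncation error is $O(\sqrt x)$. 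Under RH, $1/\zeta(s)$ is holomorphic in $\Re s>1/2$. I will therefore shift to $\Re s=\tfrac12+\alpha$. The horizontal segments are handled by the standard RH bound $|\zeta(\sigma+it)|^{-1}\ll\exp(C\log t/\log\log t)$ for $\sigma\ge\tfrac12+1/\log\log t$, after choosing $T$ (in a dyadic range) so that $|t|=T$ avoids the small values of $\zeta$.

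The main term is then bounded dyadically. It is at most $x^{1/2+\alpha}\sum_{U\le T\ \mathrm{dyadic}}U^{-1}\int_U^{2U}|\zeta(\tfrac12+\alpha+it)|^{-k}\,dt$. By Cauchy--Schwarz or Hölder, the $k$-th absolute moment, i.e.\ $|\zeta|^{-2\cdot(k/2)}$, is controlled by $I_{-k/2}(\alpha,U)$. For $k=1$ this is exactly the object of Theorem~\ref{negative1/2}. For $k\ge2$ we have $k/2\ge1$, so Theorem~\ref{k>1/2} with exponent $k/2$ applies whenever $\alpha\ge C/((\log U)^{1/k}\log\log U)$. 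In that case it gives
\[
\frac1U\int_U^{2U}|\zeta(\tfrac12+\alpha+it)|^{-k}dt\ll\alpha^{-k^2/4}.
\]
Summing over $U\le x$, the total bound is $x^{1/2}\exp(\alpha\log x)\,\alpha^{-O(1)}\log x$.

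To optimize, I will take $\alpha\asymp (\log x)^{-1/(k+1)}(\log\log x)^{-k/(k+1)}$, chosen to balance the requirement of Theorem~\ref{k>1/2} against the factor $\exp(\alpha\log x)$. Then $\alpha\log x\asymp(\log x/\log\log x)^{k/(k+1)}$, which matches the exponent in the claimed bound. The difficulty is that the lower-bound constraint on $\alpha$ involves $\log U$ with $U$ up to $x$. This constraint is only needed for the large dyadic ranges, and at smaller heights the trivial pointwise RH bound $\exp(Ck\log U/\log\log U)$ suffices. I expect this balancing to be the main obstacle. The key point is that the pointwise bound $\exp(Ck\log U/\log\log U)$ at $\sigma-\tfrac12\approx(\log U)^{-1/k}$ loses exactly the $\log\log$ factor removed here, so the moment theorems must be used in exactly the range where they are sharp.

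For $k=1$, the constraint $\alpha\log T\ge C_1/\log\log T$ in \eqref{secondrange'} leads to the bound $\exp((\log U)^{\varepsilon})$, which is harmless. For $k=1$ the target is $\exp(C\sqrt{\log x/\log\log x})$. I will therefore run the same shift with $\alpha$ chosen as above and use whichever estimate of Theorem~\ref{negative1/2} (or the pointwise bound) is applicable in each dyadic range, verifying that every contribution is $\ll\exp(C\sqrt{\log x/\log\log x})$. The remaining verification is bookkeeping of the dyadic pieces and the horizontal integrals.
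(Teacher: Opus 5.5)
\textbf{The case $k\ge2$ works.} Your argument here is sound, and it is slightly simpler than the paper's.

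With the fixed shift $\alpha_1\asymp(\log x)^{-1/(k+1)}(\log\log x)^{-k/(k+1)}$, the hypothesis of Theorem~\ref{k>1/2} (with exponent $k/2$) is a lower bound on $\alpha$. That bound only weakens as the height $U$ grows. So it holds for every dyadic $U\in[x_1,x]$, where $\log x_1\asymp(\log x/\log\log x)^{k/(k+1)}$. Below $x_1$ the pointwise bound \eqref{pwb} gives $\exp(O(\log x_1))$. The paper instead uses a staircase contour, which is unnecessary for $k\ge2$.

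Two small corrections:
\begin{itemize}
\item On $\Re s=\frac12+\alpha_1$ the pointwise bound is $\exp\big((\frac k2+\varepsilon)\frac{\log U}{\log\log U}\log\frac1{\alpha_1}\big)$, not $\exp(Ck\log U/\log\log U)$. It still yields $\exp(O(\log x_1))$, because $\log\frac1{\alpha_1}\asymp\log\log x_1$.
\item On the top horizontal segment, the range $\sigma<\frac12+1/\log\log x$ is not covered by the bound you quote. It is harmless anyway: use the Carneiro--Chandee bound together with the saving from $1/|s|\approx 1/x$, with no need to choose $T$ specially.
\end{itemize}

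\textbf{The gap is in the case $k=1$, the headline case of the theorem.} Take a single line $\Re s=\frac12+\alpha$ with $\alpha\asymp(\log x\log\log x)^{-1/2}$. Every dyadic block with $\log U\gg\sqrt{\log x\log\log x}$ then has $\alpha\log U\gg1$, growing to $\alpha\log U\asymp\sqrt{\log x/\log\log x}$ at $U\approx x$. None of your tools applies there:
\begin{itemize}
\item Theorem~\ref{negative1/2} only covers $C_1/\log\log U\le\alpha\log U\ll1$.
\item Theorem~\ref{k>1/2} requires exponent strictly above $1/2$. H\"older with a fixed $k'>1/2$ only helps once $\alpha\gg(\log U)^{-1/(2k')}/\log\log U$, which leaves roughly $1\ll\alpha\log U\ll(\log U)^{1-1/(2k')}$ uncovered.
\item The pointwise bound at $U\approx x$ is about $\exp\big(\frac{\log x}{2\log\log x}\cdot\frac12\log\log x\big)=x^{1/4+o(1)}$, which is useless.
\end{itemize}
So ``use whichever estimate is applicable in each dyadic range'' breaks down for the top ranges.

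\textbf{The missing idea is to let the shift depend on the height.} On $2^{j-1}x_0\le|t|\le2^jx_0$, set $\Re s=\frac12+C/F_0(2^{j-1}x_0)$ with $F_0(y)=\log y\log\log y$. This keeps $\alpha\log|t|\asymp C/\log\log|t|$ inside \eqref{secondrange'} at every height, so each block costs only $\exp((\log x)^{1/3})$. Meanwhile $x^{\Re s-1/2}\le x^{\alpha_0}$, because the shift decreases as $|t|$ increases. The short horizontal connectors cost $(2^jx_0)^{-1/2+\varepsilon}x^{\alpha_0}$ by the pointwise bound. The final connector at height $\lfloor x\rfloor$ is $O(x^{\varepsilon})$.

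The alternative would be to supply mean-value bounds for $|\zeta(\tfrac12+\alpha+it)|^{-1}$ in the range $\alpha\log U\gg1$. Those are not among the results you are allowed to cite.
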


By comparison, Bui and Florea~\cite[Theorem~1.5]{BF} established the bound
\[
 \sqrt{x}\exp\!\Big(
   (\log x)^{k/(k+1)}
   (\log\log x)^{7/(k+1)+\varepsilon}
 \Big)
\]
for every $k\in\mathbb{N}$. We remark that our arguments also work for non-integer $k$ (see Theorem \ref{partM'} for a more general statement).

\subsection{The minimum on short intervals}

We next consider the minimum on a typical interval of bounded
length. On the critical line, any interval containing a zero
has minimum zero. We therefore shift a small distance
$\alpha>0$ to the right and consider
\begin{equation*}
\label{mt}
 M_\alpha(t):=\min_{|h|\leq1}
\Big|\zeta\Big(\tfrac12+\alpha+i(t+h)\Big)\Big|.
\end{equation*}
We are interested in small shifts $\alpha \to 0$. 
The size of $M_{\alpha}(t)$
measures how small the zeta-function becomes in a typical short
interval at distance $\alpha$ from the critical line.

We determine the leading logarithmic order of this minimum
in the following range.

\begin{thm}
\label{minimum_first_order}
Assume RH. Fix $0<c<1$. Suppose that
\[
0<\alpha\ll (\log T)^{-c},
\qquad
\frac{\alpha\log T}{\log\log T}\longrightarrow\infty.
\]
For every $\varepsilon,\delta>0$ and
for all sufficiently large $T$, we have
\[
\mathbb P\big(
\alpha^{1+\delta}
\leq M_\alpha(t)
\leq\alpha^{1-\delta}
\big)\geq1-\varepsilon.
\]
\end{thm}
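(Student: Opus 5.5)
We prove the two inequalities separately. The lower bound $M_\alpha(t)\ge \alpha^{1+\delta}$ is a statement about the absence of very small values; it should follow from a large-deviation upper bound for $\log|\zeta|^{-1}$ together with a discretization. The upper bound $M_\alpha(t)\le\alpha^{1-\delta}$ is a statement that a small value of size about $\alpha$ occurs with high probability. We would get it from a Selberg-type central limit theorem for the Dirichlet polynomial, combined with the zeros lying near $t$ on the critical line.

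\emph{Lower bound.} We cover $[t-1,t+1]$ by $N\asymp \alpha^{-1}(\log T)^{A}$ equally spaced points $t_j$, with spacing small compared with $\alpha$. To pass from the grid to the whole interval we use the fact that, on RH, $\zeta'/\zeta(\tfrac12+\alpha+iu)\ll \alpha^{-1}\log T$ is false in general. So we instead control the variation by a local argument. Write $\log|\zeta(\tfrac12+\alpha+iu)|$ via the Hadamard/Selberg formula as a short Dirichlet polynomial $\Re P(u)$ over primes $p\le X$, plus a sum over zeros $\gamma$ of $\log\big(|\alpha+i(u-\gamma)|/\ldots\big)$-type terms. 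Each zero term is $\ge \log\alpha+O(1)$. The number of zeros with $|\gamma-u|\le \alpha$ is typically $O(\alpha\log T+\log\log T)$, and the terms with $|\gamma-u|\ge\alpha$ are controlled as in Soundararajan's method.

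The key step is the following. Choose $X=T^{1/(\log\log T)^{2}}$-type parameters so that
\[
\log\frac1{|\zeta(\tfrac12+\alpha+iu)|}\le -\Re P_X(u)+\log\frac1\alpha+o(\log\tfrac1\alpha)
\]
uniformly for $|u-t|\le1$, outside an exceptional set of $t$ of small measure. Here $P_X$ is the prime sum damped by $p^{-\alpha}$. Its variance is $\tfrac12\log\min(1/\alpha,\log X)\asymp\tfrac12\log(1/\alpha)$. By the Gaussian large-deviation bound (the Arguin--Bailey--Roberts type bound mentioned in the abstract), together with a union bound over $\asymp \alpha^{-1}$ essentially independent sub-intervals, we get
\[
\max_{|h|\le1}\big(-\Re P_X(t+h)\big)\le (1+\delta/2)\log(1/\alpha)
\]
with probability $1-\varepsilon$. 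This needs a moment bound over the max of a branching-random-walk-like object; the leading order $\sqrt{2\cdot\tfrac12\log(1/\alpha)\cdot\log(1/\alpha)}=\log(1/\alpha)$ is exactly as in Fyodorov--Hiary--Keating. Adding the zero contribution, which gives at most $\log(1/\alpha)$ from the nearest zero, appears to give $\alpha^{2+\delta}$. So the bookkeeping must be sharper: the $\log(1/\alpha)$ from zeros and the prime maximum cannot both be extremal at the same point. In particular, $\zeta$ being small at the random-walk maximum forces no close zero. We would handle this by conditioning on the value of the walk at scale $\alpha^{-1}$ and bounding the local minimum over a sub-interval of length $\alpha$ via the negative moment bound of Theorem~\ref{negativemom} with $k$ close to $1/2$. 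Concretely, a first-moment bound
\[
\mathbb E\sum_{j}\mathbf 1\{|\zeta(\tfrac12+\alpha+it_j)|<\alpha^{1+\delta}\}
\]
via Markov with $|\zeta|^{-2k}$ gives $N\alpha^{2k(1+\delta)}\alpha^{-k^2}$. With $N\approx\alpha^{-1}$ this is $\alpha^{2k(1+\delta)-k^2-1}$, which tends to $0$ as $k\to1$. We therefore need Theorem~\ref{k>1/2} for $k$ near $1$; its range $\alpha\ge C(\log T)^{-1/(2k)}$ is met since $\alpha\gg(\log T)^{-c}$ with $c<1$, after choosing $k$ close to $1$ depending on $c$ and $\delta$. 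This first-moment route is the one we commit to. The main obstacle is the discretization: the derivative must be controlled so that the grid of spacing $\alpha^{1+\delta}$ suffices. Via Cauchy's estimate on a disk of radius $\alpha/2$, combined with the upper bound $|\zeta|\le\alpha^{-\delta/4}$ holding with high probability on the interval (itself from the Fyodorov--Hiary--Keating upper-bound literature), this costs only $\alpha^{-\delta/2}$ extra points.

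\emph{Upper bound.} Consider a zero $\tfrac12+i\gamma$ with $|\gamma-t|\le1$, which exists for large $T$. At $u=\gamma$, factor out that zero: $\zeta(\tfrac12+\alpha+i\gamma)$ has size $\alpha\cdot|\zeta'(\rho)|$-like. More robustly, we use the Selberg decomposition at $u=\gamma$: $\log|\zeta|=\Re P_X(\gamma)+\log\alpha+(\text{other zeros})$. The contribution of the other zeros is $\le O(\alpha\log T)\cdot O(1)$ in expectation, after averaging over the many zeros in the interval. We then pick, among the $\asymp\log T$ zeros in $[t-1,t+1]$, one where $\Re P_X(\gamma)\le \delta\log(1/\alpha)/2$. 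Since $\Re P_X$ has variance $\asymp\log(1/\alpha)$ and mean $0$, a positive proportion of zeros satisfy this, by a zero-weighted Selberg CLT (Hughes type). This gives $|\zeta|\le\alpha^{1-\delta}$ with probability $1-\varepsilon$. The hypothesis $\alpha\log T/\log\log T\to\infty$ is used to make the neighboring-zero terms $\log\big(1+\alpha^2/(\gamma-\gamma')^2\big)$ negligible in aggregate, and to ensure enough zeros lie in each window.
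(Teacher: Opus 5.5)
\textbf{Lower bound.} The first-moment route needs $I_{-k}(\alpha,T)\ll\alpha^{-k^2}$ for some $k$ with $(1+k^2)/(2k)<1+\delta$, which forces $k$ close to $1$. Theorem~\ref{k>1/2} gives this only for $\alpha\geq C(\log T)^{-1/(2k)}/\log\log T$, i.e.\ essentially $\alpha\geq(\log T)^{-1/2}$. You justified this by ``$\alpha\gg(\log T)^{-c}$'', but the hypothesis is $\alpha\ll(\log T)^{-c}$, which is an upper bound. The only lower constraint is $\alpha\log T/\log\log T\to\infty$, so the range from about $\log\log T/\log T$ up to about $(\log T)^{-1/2}$ is not covered. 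There only $k<1/2$ is available, and the same computation gives just $M_\alpha(t)\geq\alpha^{5/4+\delta}$; this is Proposition~\ref{weaker_bd}. That proposition also shows the right way to discretize, via subharmonicity of $|\zeta|^{-2k}$. Your input $\max_{|h|\leq1}|\zeta|\leq\alpha^{-\delta/4}$ is false: that maximum is $\alpha^{-1+o(1)}$ with high probability.

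The idea you abandoned is the one that works. Lemma~\ref{bf_lemma} with $x=\exp(u/\alpha)$ and $u=2\log(\alpha\log T/\log\log T)$ gives $W_\alpha(v)\leq\Re P_1(v)+\Re P_2(v)+o(\log\frac1\alpha)$ for \emph{every} $v$. The zeros enter only through $\frac{\log T}{\log x}\log\frac{1}{1-x^{-\alpha}}=o(\log\frac1\alpha)$, so a nearby zero adds no extra $\log\frac1\alpha$. Lemma~\ref{lem:maximal} with $U=(1+\delta/3)\log\frac1\alpha$ and $q=\lfloor U^2/B_1\rfloor$ then bounds the exceptional probability by $(1+q\log x)\sqrt q\,e^{-U^2/B_1}=\alpha^{(1+\delta/3)^2-1+o(1)}$. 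The hypothesis on $\alpha\log T$ is exactly what makes the remainder small and $q\log x=o(\log T)$.

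\textbf{Upper bound.} The single-zero heuristic fails in this regime. Since $\alpha\log T\to\infty$, there are $\asymp\alpha\log T$ zeros within distance $\alpha$ of $\gamma$. The function $\log|\zeta(\tfrac12+\alpha+iu)|$ is smooth on scale $\alpha$ and does not resolve individual zeros, so these neighbours compensate the $\log\alpha$ from the chosen zero rather than being negligible. Your own estimate $O(\alpha\log T)$ for them already exceeds $\log\frac1\alpha$, and the hypothesis $\alpha\log T/\log\log T\to\infty$ makes these terms larger, not smaller. Concretely, the majorant above gives $W_\alpha(\gamma)\leq-\Re P_X(\gamma)+o(\log\frac1\alpha)$. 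So at a zero where the prime sum has typical size, $|\zeta(\tfrac12+\alpha+i\gamma)|\geq\alpha^{o(1)}$, the opposite of what you need.

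A value $\leq\alpha^{1-\delta}$ occurs only where the prime field itself reaches roughly $-(1-\delta)\log\frac1\alpha$. At a single point this has probability about $\alpha^{(1-\delta)^2}$, and such a point must be found among $\asymp\alpha^{-1}$ decorrelated points. That is a lower bound for the extreme value of a log-correlated field, which a CLT at zeros cannot give. The paper proceeds as follows:
\begin{enumerate}
\item It uses the Carneiro--Chandee minorant \eqref{eq:minimum-CC-minorant} and isolates the undamped sum over $p\leq x_0=\exp(\alpha^{-1+\nu})$.
\item It controls the remainders $R_1,R_2$ uniformly over $|h|\leq1$ by Lemma~\ref{lem:maximal}.
\item It applies the ABBRS branching argument (Lemma~\ref{ABBRS}) to the isolated sum.
\item For $\alpha\leq(\log T)^{-1+\eta}$, it transfers the small value from the shift $\beta=(\log T)^{-1+\eta}$ to $\Re s=\tfrac12+\alpha$ by the maximum modulus principle, using the $k=1/4$ negative moment to control $1/\zeta$ on the rectangle.
\end{enumerate}
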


Note that in Theorem \ref{prop:main} we prove a lower bound result with the expected leading order term for the minimum in intervals of length $(\log T)^{\gamma}$, and moreover, we obtain a more specific $y$-dependent tail estimate. Theorem \ref{minimum_first_order} follows by combining Theorem \ref{expanded_range} and Theorem \ref{prop:minimum-upper}, and corresponds to the case $\gamma=0$ in Theorem \ref{prop:main}.

Theorem~\ref{minimum_first_order} gives the leading order
predicted by Conjecture~\ref{conj_specialcase} below. We
describe the connection with the maximum problem and the
heuristic behind this prediction.

The extreme values of the Riemann zeta-function in short intervals
have attracted considerable attention, in part because of their
connection with random matrix theory and logarithmically correlated
random fields. Fyodorov, Hiary, and Keating~\cite{FHK}, and
Fyodorov and Keating~\cite{FK} predicted the size and fluctuations
of the maximum on a typical interval of bounded length on the
critical line. Their prediction arose from modelling the zeta-function by characteristic
polynomials of random unitary matrices and applying the
freezing-transition heuristic from statistical mechanics~\cite{FHK,FK}. 
They predicted that
for every $\varepsilon>0$, there exists $C=C(\varepsilon)>0$ such that
for all sufficiently large $T$,
\[
 \P\Big(
 \Big|\max_{|h|\leq1}
       \log\Big|\zeta\Big(\tfrac12+i(t+h)\Big)\Big|
       -\log\log T+\frac34\log\log\log T\Big|
       \leq C\Big)\geq1-\varepsilon.
\]
Najnudel~\cite{Najnudel} proved
the first-order asymptotic under RH, and Arguin, Belius,
Bourgade, Radziwi\l\l, and Soundararajan~\cite{ABBRS} proved it
unconditionally. Namely, in probability, they showed that
\[
\max_{|h|\leq1}\log\Big|\zeta\Big(\tfrac12+i(t+h)\Big)\Big|
 =\big(1+o(1)\big)\log\log T.
\]
Najnudel's first-order upper bound is itself unconditional;
RH is used for the lower bound. Harper~\cite{Harper-partition} obtained an upper bound with the
correct second-order term; subsequent work of Arguin, Bourgade,
and Radziwi\l\l{}~\cite{ABR1} improved the error term and
established the predicted upper-tail bound. 
A matching lower bound was subsequently established by the same
authors~\cite{ABR2}.
For an account of the earlier developments,
see Harper's Bourbaki survey~\cite{Harper-survey}.

The Fyodorov--Hiary--Keating (FHK) Conjecture has extensions to intervals whose length is a
power of $\log T$, with different predictions according to whether
the intervals shrink or grow. For shrinking intervals, with
$-1<\gamma<0$, Fyodorov and Keating~\cite{FK} formulated a
mesoscopic version of the conjecture. Arguin and Hamdan~\cite{AH} proved the sharp
upper bound, including the second-order coefficient $3/4$.
For growing intervals, Arguin, Ouimet, and Radziwi\l\l{}~\cite{AOR}
proposed the following extension: for every fixed $\gamma>0$ and $t$
chosen uniformly from $[T,2T]$,
\[
\max_{|h|\leq(\log T)^\gamma}
\log\Big|\zeta\Big(\tfrac12+i(t+h)\Big)\Big|
=
\sqrt{1+\gamma}\log\log T
-\frac{1}{4\sqrt{1+\gamma}}\log\log\log T
+O_{\mathbb P}(1),
\]
where the error is bounded in probability. They established the
leading-order asymptotic unconditionally for $0<\gamma\leq3$;
for $\gamma>3$, their upper bound assumes RH, while their lower
bound remains unconditional. Arguin and Bailey~\cite{AB}
subsequently proved the corresponding upper bound with the
conjectured second-order term and an error bounded in probability
for $0<\gamma<3$. Note that the conjectured second-order coefficient
tends to $1/4$ as $\gamma\downarrow0$ through positive values,
whereas it equals $3/4$ for fixed-length intervals.

The random-matrix analogue of the FHK Conjecture concerns the
characteristic polynomial of an $N\times N$ Haar-distributed
unitary matrix $U_N$. Fyodorov, Hiary, and Keating~\cite{FHK}
predicted that
\[
 \max_{0\leq\theta<2\pi}
 \log\big|\det(I-e^{-i\theta}U_N)\big|
 =\log N-\frac34\log\log N+O_{\mathbb P}(1),
\]
corresponding to the zeta-function prediction under the
identification $N\asymp\log T$. Arguin, Belius, and
Bourgade~\cite{ABB-CUE} established the leading order, and
Paquette and Zeitouni~\cite{PZ-CUE} obtained the second-order
term.

To formulate an FHK-type conjecture for the minimum, we revisit the original conjecture of Fyodorov, Hiary and Keating. Their conjecture can be motivated by
approximating $\log|\zeta|$ by a sum over the primes; see
Harper~\cite{Harper-model}. For $t$ chosen uniformly from
$[T,2T]$, one models the phases $p^{-it}$ by independent random
variables $X(p)$, uniformly distributed on the unit circle.
The resulting model is
\[
 \log\Big|\zeta\Big(\tfrac12+i(t+h)\Big)\Big|
 \approx X_0(h):=\Re\sum_{p\leq T}\frac{X(p)}{p^{1/2+ih}}.
\]
Its covariance satisfies
\[
 \operatorname{Cov}(X_0(h),X_0(h'))
 =\frac12\sum_{p\leq T}\frac{\cos((h-h')\log p)}p
 =\frac12\log\frac1{|h-h'|+1/\log T}+O(1).
\]
In particular, the variance is $\frac12\log\log T+O(1)$,
and the correlations decay logarithmically for separations
between $1/\log T$ and $1$. Decomposing the sum into blocks
of comparable width in $\log\log p$ gives increments of
comparable variance. At two nearby points, the increments
from the smaller primes are strongly correlated, while those
from sufficiently large primes decorrelate.

A similar heuristic applies to the minimum of zeta to the
right of the critical line. We write
$$W_{\alpha}(t): = \log \big| \zeta (  \tfrac{1}{2}+\alpha+it) \big|^{-1}.$$
Suppose that $\alpha=o(1)$ and
$\alpha\log T\gg 1$. We model
\[
 W_{\alpha}(t+h)=\log\Big|\zeta\Big(\tfrac12+\alpha+i(t+h)\Big)\Big|^{-1}
 \approx-X_\alpha(h):=-\Re\sum_{p\leq T}
 \frac{X(p)}{p^{1/2+\alpha+ih}}.
\]
 Hence
\[
 \operatorname{Var}(X_\alpha(h))
 =\frac12\sum_{p\leq T}\frac1{p^{1+2\alpha}}
 =\frac12\log\frac1\alpha+O(1),
\]
and
\[
 \operatorname{Cov}(X_\alpha(h),X_\alpha(h'))
 =\frac12\log\frac1{\alpha+|h-h'|}+O(1).
\]
Thus the model retains the same logarithmic correlation
structure, with $\alpha$ in place of
$1/\log T$.

Since the joint distribution of the $X(p)$ is unchanged upon
replacing every $X(p)$ by $-X(p)$, the fields $X_\alpha$ and
$-X_\alpha$ have the same distribution. Applying the preceding
heuristic with $\log T$ replaced by $1/\alpha$, we are
therefore led to conjecture the following.
\begin{conj}
\label{conj_specialcase}
Suppose that
  $$
\alpha=o(1),\qquad \alpha\log T\gg1.
$$
For every $\varepsilon>0$, there exists $C=C(\varepsilon)>0$ 
such that for all sufficiently large $T$, we have
\[
\mathbb P \bigg(
\Big|
\log M_{\alpha}(t)
+\log\frac1\alpha
-\frac34\log\log\frac1\alpha
\Big|\leq C
\bigg)\geq1-\varepsilon.
\]
\end{conj}

In particular, the conjectured scale
of the minimum is $\alpha(\log\frac{1}{\alpha})^{3/4}$, up to
multiplicative fluctuations whose logarithms are bounded
in probability. (See the statement of Conjecture \ref{min_conj} for a generalization of this problem in intervals of logarithmically growing size.)

\subsection{Large deviations}
The main analytic input for the negative-moment estimates is
a precise upper bound for the probability that $-\log|\zeta|$
is large. On the positive side,
Selberg's central limit theorem  \cite{selberg} describes
$\log|\zeta(1/2+it)|$ on the scale $\sqrt{\log\log T}$ (For an extension of Selberg's CLT, see also \cite{vassaux}.)
Moments, however, depend on much rarer values of size comparable
to $\log\log T$. Soundararajan~\cite{Sound-moments} used
large-deviation estimates to obtain, under RH, positive-moment
upper bounds with exponent $k^2+\varepsilon$ on $\log T$.
Harper~\cite{Harper-moments} removed this $\varepsilon$ loss,
also under RH. Arguin and Bailey~\cite{AB} proved, unconditionally,
that for every fixed $0<\kappa<2$ and $V=\kappa\log\log T$,
\[
 \P\Big(\log\big|\zeta(\tfrac12+it)\big|>V\Big)
 \ll \frac1{\sqrt{\log\log T}}
                  \exp\Big(-\frac{V^2}{\log\log T}\Big).
\]
Their estimate recovers, for $0<\kappa<2$, the sharp positive-moment
upper bounds previously established by Heap, Radziwi\l\l{}, and
Soundararajan~\cite{HRS}. More recently,
Arguin, Bailey, and Roberts~\cite{ABR} established the same
large-deviation upper bound under RH for every fixed $\kappa>0$,
recovering Harper's sharp upper bounds for all fixed positive
moments. Their method combines the recursive  scheme
of~\cite{ABR1} with the methods of
Soundararajan~\cite{Sound-moments} and Harper~\cite{Harper-moments}. 

Lower bounds for large deviations have also been studied.
Arguin and
Bailey~\cite{AB-lower} obtained an unconditional lower bound
of the conjectured Gaussian order at
$1/2+c/\log T$, provided that $c$ is sufficiently large
in terms of $\kappa$. More recently, Arguin and
Creighton~\cite{AC} proved the corresponding unconditional
lower bound on the critical line, for every fixed $\kappa>0$
and $V\sim\kappa\log\log T$. For large-deviation results in other families of $L$-functions,
see, for example, Arguin and Creighton~\cite{AC-Dirichlet}
for Dirichlet $L$-functions, and Creighton~\cite{Creighton-twists}
for quadratic twists of elliptic curves.

Here we study the corresponding large deviations of
$-\log|\zeta(1/2+\alpha+it)|$. The shift changes the relevant
variance to $\frac12\log\frac1{\alpha}$, and the natural regime is
therefore $V=\kappa\log\frac1{\alpha}$. The main additional difficulty
is the contribution of zeros in the explicit formula. For small
shifts this contribution is significant. Our estimates preserve the Gaussian upper bound, down to $\alpha=c/\log T$ for every fixed $c>0$ and
$0<\kappa<1/2$. For larger shifts we obtain the same bound for
every fixed $\kappa>0$. The precise statement is as follows.

\begin{thm}
    \label{mainthm}
Assume RH. Fix $0<\kappa_0<\kappa_1<1/2$. Suppose that
\[
\alpha=o(1),
\qquad \alpha\log T \gg  1.
\]Then, uniformly for \(\kappa_0\leq\kappa\leq\kappa_1\) with \(V=\kappa\log\frac{1}{\alpha}\), we have
\[
\mathbb P \big(
W_\alpha(t)>V
\big)
\ll 
\frac{1}{\sqrt{\log(1/\alpha)}}
\exp\!\Big(-\frac{V^2}{\log(1/\alpha)}\Big).
\]
Moreover, for every \(\varepsilon>0\) and
\(0<\kappa_0<\kappa_1<\infty\), the same bound holds uniformly for
\(\kappa_0\leq\kappa\leq\kappa_1\), provided that
\[
\alpha\log T\gg\Big(\log\frac1\alpha\Big)^{12+\varepsilon}.
\]
\end{thm}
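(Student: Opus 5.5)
We adapt the recursive scheme of Arguin, Bailey and Roberts \cite{ABR} to $W_\alpha(t)$. The primes play the same role as on the critical line, but with $\log\frac1\alpha$ in place of $\log\log T$. The new feature is the zero contribution in the explicit formula.

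\emph{Step 1: upper bound for $W_\alpha$ by a Dirichlet polynomial.} Under RH, we use a Soundararajan-type explicit formula for $-\log|\zeta(\sigma+it)|$ with $\sigma=\tfrac12+\alpha$. For $2\le x\le T^{2}$ and $\sigma_0=\tfrac12+\lambda/\log x$, it gives
\[
W_\alpha(t)\le -\Re\sum_{n\le x}\frac{\Lambda(n)}{n^{\sigma+it}\log n}\,w_x(n)+\mathcal Z_x(t)+O(1).
\]
Here $w_x$ is a smooth weight. The term $\mathcal Z_x(t)$ comes from the zeros and is essentially $\sum_\gamma F\big(\alpha\log x,(t-\gamma)\log x\big)$, which is controlled by $\log T/\log x$ times a bounded kernel. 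Unlike the positive case, the zero term has the wrong sign for an upper bound. The key point is that the kernel is small once $\alpha\log x$ is large. More precisely, the local contribution of zeros within $1/\log x$ of $t$ is weighted by a factor like $\log(1+(\alpha\log x)^{-2})$. This factor is bounded once $\alpha\log x\gg1$, and that is exactly where the hypothesis $\alpha\log T\gg1$ enters.

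\emph{Step 2: multiscale decomposition.} Set $L=\log\frac1\alpha$. Split the primes into blocks $P_j=(\exp(e^{j-1}),\exp(e^{j})]$ for $j\le L$. The primes beyond $e^{1/\alpha}$ contribute only $O(1)$ in mean square, because $\sum_{p>e^{1/\alpha}}p^{-1-2\alpha}\ll1$. We then run the barrier/recursion argument of \cite{ABR1,ABR}. On the good event where the partial sums $S_j=\sum_{i\le j}(\text{block }i)$ stay below a linear barrier $\frac{V}{L}j+B_j$, we bound the probability that $S_L>V$ via high twisted moments with mollifiers. We use Lemma-type mean value estimates for Dirichlet polynomials of length $T^{\theta}$ together with the Gaussian comparison for each block. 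This gives the Gaussian tail $\frac1{\sqrt L}e^{-V^2/L}$. The ballot-type entropic factor is recovered from the barrier, as in \cite{AB,ABR}.

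On the bad events where the barrier fails at step $j$, we truncate the explicit formula at $x_j=\exp(e^{j})$ rather than at full length. We bound $\P(W_\alpha>V,\ S_j\text{ exceeds barrier})$ by a moment of the shorter polynomial. This is where the recursion closes. The losses at scale $j$ are $e^{-c(L-j)}$ times the main term, which is summable.

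\emph{Step 3: the main obstacle, the zero term at truncation $x_j$.} When $x_j$ is small, $\alpha\log x_j$ may be less than $1$. Then $\mathcal Z_{x_j}(t)$ is of size roughly $\frac{\log T}{\log x_j}\cdot N$, where $N$ is the number of zeros near $t$. This term can no longer be absorbed into $O(1)$. We would handle it by bounding its exponential moments. By the Montgomery-type pair-correlation-free estimate (Selberg's moments for $S(t)$, or the count of zeros in windows of length $1/\log x_j$), we get
\[
\E\big[e^{2k\,\mathcal Z_{x_j}}\big]\ll \exp\big(O(k^2\,\log T/\log x_j)\big)
\]
only for restricted $k$. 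The constraint $\kappa<1/2$ arises here: we need the moment order $2\kappa$ needed at the last scales to stay below the threshold where the zero contribution $\alpha^{-O(\kappa)}$-type losses remain $\ll e^{-V^2/L}$ when $\alpha\log T\asymp1$.

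For the second statement we impose $\alpha\log T\ge L^{12+\varepsilon}$. Then we may take every truncation with $\alpha\log x_j\ge L^{\varepsilon'}$ at the scales that matter, since the small scales are handled by the prime-only bound. The zero term is then $O(1)$ uniformly, and the argument works for all fixed $\kappa$. The power $12$ should come from the losses in the moment bounds, namely the length $x_j^{\,k}\le T^{\theta}$ with $k\asymp V^2/L\cdot$(polylog). We expect verifying this bookkeeping, rather than the Gaussian comparison, to be the hardest step.
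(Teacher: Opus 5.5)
\textbf{There is a genuine gap, located in Step~3, which is the crux of the small-shift case.}

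The majorant argument behind Lemma~\ref{bf_lemma} (Carneiro--Chandee with the Guinand--Weil formula) does not leave a random zero term $\mathcal Z_x(t)$ whose exponential moments must be controlled. Under RH it converts the zero sum into the prime sum plus a \emph{deterministic} bias $\frac{\log(t/2\pi)}{\log x}\log\frac{1}{1-x^{-\alpha}}$, valid pointwise. This bias is the unavoidable price of truncating at $x$; it is not a rare event.

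At your scales $\log x_j=e^j$ with $\alpha\log x_j<1$, the bias is roughly $\alpha\log T\,e^{L-j}(L-j)$. This exceeds $V\asymp L$ for every $j$ outside the last $O(\log L)$ scales. So truncating ``at the scale where the barrier fails'' gives nothing there. Your estimate $\E[e^{2k\mathcal Z_{x_j}}]\ll\exp(O(k^2\log T/\log x_j))$ is unjustified, and even if true it would lose a factor $e^{O(\log T/\log x_j)}$, which is fatal. The same confusion affects your account of where $\kappa<1/2$ comes from.

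Your top blocks are also a problem. In this regime $e^{1/\alpha}=T^{1/(\alpha\log T)}$ is itself a power of $T$, as large as $T^{1/M_0}$. Those blocks are far too long for high-moment arguments, and ``$O(1)$ in mean square'' does not give the Gaussian-level control you need.

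\textbf{What is missing is the Soundararajan--Harper trade-off, in the form used by Arguin--Bailey--Roberts.}

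All truncations sit near the top: $T_j=T^{\beta_j}$, with $\beta_j$ geometric from $(\log\log T)^{-30}$ up to a small $\theta$. There $\alpha\log T_j\ge M_0\beta_j$, so the bias is only about $\beta_j^{-1}\log(1/\beta_j)$. The barriers are imposed on $S_\ell^{(j)}$ for all $j\ge\ell$ simultaneously, because the weights $b_\alpha(p;T_j)$ depend on the truncation.

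When $S_\ell^{(j)}\in[u-1,u]$ falls below the lower barrier while $W_\alpha>V$, the argument proceeds as follows.
\begin{enumerate}
\item Truncate at a \emph{larger} point $T_m$, with $\beta_m\asymp\log(V-u)/(V-u)$ chosen so that the bias is at most $(V-u)/D$.
\item Bound $S_m^{(m)}-S_\ell^{(m)}$ by a moment of order $\lfloor a/\beta_m\rfloor$ twisted against the good event (Proposition~\ref{comparison21}).
\item Bound $S_\ell^{(m)}-S_\ell^{(j)}$ by the two-point comparison (Proposition~\ref{comparison22}).
\end{enumerate}
The length constraint $q\beta_m\le a<1$ caps the decay at $e^{-\rho(V-u)}$ with $\rho<1$. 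Combined with the Gaussian factor this gives $\exp\big(-\frac{V^2}{\log(1/\alpha)}-(\rho-2\kappa)(V-u)\big)$, and that is exactly why $\kappa<1/2$ is needed.

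For the second statement no recursion is needed. The single truncation $x=\exp(2\log(\alpha\log T)/\alpha)$ makes the bias $o(1)$. The exponent $12+\varepsilon$ comes from requiring $x^{2d}=T^{o(1)}$ for the local-comparison polynomials of degree $d\ll(\log\frac1\alpha)^{11}$ (Proposition~\ref{comparison1}).
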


The proof separates larger shifts from smaller shifts, which require
a recursive argument. For larger deviation levels at small shifts, we obtain the
following weaker estimate. 

\begin{thm}\label{bigk}
Assume RH. Fix $\varepsilon,\varepsilon_1>0$, and $\kappa_1\geq1/2$. Suppose that
\[
1 \ll \alpha\log T
\ll \Big(\log\frac1\alpha\Big)^{12+\varepsilon}.
\]
Then, uniformly for $1/2\leq\kappa\leq\kappa_1$ with
$V=\kappa\log\frac{1}{\alpha}$, we have
\[
\mathbb P\big(
W_\alpha(t)>V
\big)
\ll
\exp\bigg(
-V+\Big(\frac14+\varepsilon_1\Big)\log\frac1\alpha
\bigg)
=\alpha^{\kappa-1/4-\varepsilon_1}.
\]
\end{thm}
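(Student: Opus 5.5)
We bound $W_\alpha(t)$ by a Dirichlet polynomial over primes plus a zero contribution, and then control the prime part by a high moment and the zero part by a separate exceptional-set estimate. The tool is a Soundararajan-type inequality for $\log|\zeta|^{-1}$ off the critical line. Under RH, for $\sigma=\tfrac12+\alpha$ and $2\le x\le T^2$, the explicit formula applied to $-\Re\,\zeta'/\zeta$ and integrated from $\sigma$ to $\infty$ gives
\[
W_\alpha(t)\le -\Re\sum_{n\le x}\frac{\Lambda(n)}{n^{\sigma+it}\log n}\frac{\log(x/n)}{\log x}+ \frac{C}{\log x}\sum_{\gamma}\frac{\alpha}{\alpha^2+(t-\gamma)^2}\cdot(\text{weight})+O(1).
\]
This is the lower-bound analogue of the standard inequality. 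Unlike the positive case, the zero term enters with an unfavourable sign, since zeros near $t$ make $|\zeta|$ small. It is controlled by $\Re\,\zeta'/\zeta(\sigma+it)$ and hence by $\frac{1}{\alpha\log x}\log T$ on average. Since $\alpha\log T\gg1$, we choose $x=T^{\theta}$ with $\theta$ small but fixed. The zero term then contributes $\ll \frac{1}{\alpha\log x}\sum_\gamma \frac{\alpha^2}{\alpha^2+(t-\gamma)^2}$, which has mean $O(1/\theta)$.

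We do not attempt the sharp Gaussian bound here; we only need the exponent $-V+(\tfrac14+\varepsilon_1)\log\frac1\alpha$. This is the Chernoff-type bound for the moment of order $k$ with $k$ near $1/2$. Indeed, $\alpha^{\kappa-1/4}=e^{-V}\alpha^{-1/4}$ is exactly Markov's inequality $\P(W_\alpha>V)\le e^{-2kV}\,\E|\zeta|^{-2k}$ with $k=1/2$ and $\E|\zeta|^{-1}\ll\alpha^{-1/4-\varepsilon_1}$. So the plan is as follows.

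(i) Prove $\E\, e^{W_\alpha(t)}\ll \alpha^{-1/4-\varepsilon_1}$ in the range $1\ll\alpha\log T\ll(\log\frac1\alpha)^{12+\varepsilon}$. Write $W_\alpha\le P(t)+Z(t)+O(1)$. Split $P$ into prime blocks $P_j$ over $x_{j-1}<p\le x_j$ in Harper's style, with $x_0=e^{1/\alpha}$-ish. The primes $p\le e^{1/\alpha}$ carry the variance $\frac12\log\frac1\alpha$, while primes beyond contribute variance $O(1)$ because of the damping $p^{-2\alpha}$.

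(ii) On the good set where every block satisfies $|P_j|\le \beta_j^{-3/4}$, expand $e^{P}$ by truncated Taylor series of length $\asymp\beta_j^{-3/4}$. Then use mean values of Dirichlet polynomials to get $\E\, e^{P}\mathbf 1_{\text{good}}\ll \exp(\tfrac14\sum_p p^{-1-2\alpha})\ll\alpha^{-1/4}$. The bad sets are handled by high moments of the offending block, exactly as in Harper's argument; they are available because $\prod x_j^{\text{length}}\le T$.

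(iii) Control the zero term $Z$ by Cauchy–Schwarz or Hölder with exponent $1+\eta$. This costs the $\varepsilon_1$. We need $\E\exp((1+\eta)Z)\ll\alpha^{-\varepsilon'}$. Split by the distance from $t$ to the nearest ordinates, using the count $N(t+1)-N(t)\ll \log T$ on average together with the Montgomery-type estimate (proved under RH via $\Re\,\zeta'/\zeta$ mean values) that $\frac1{\alpha\log T}\sum_\gamma\frac{\alpha^2}{\alpha^2+(t-\gamma)^2}$ has exponential moments bounded by $\exp(O(1/\theta))$. This uses the pair-counting from the explicit formula: $\sum_\gamma h(t-\gamma)$ equals a prime sum plus $\hat h(0)\log T$, and its exponential moment is controlled by another Dirichlet-polynomial moment.

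The main obstacle is step (iii). The zero contribution is a positive quantity that can be large on a set of measure comparable to the one we care about. The restriction $\alpha\log T\ll(\log\frac1\alpha)^{12+\varepsilon}$ is exactly the regime where we cannot make $x$ small enough to absorb it, so we must accept the loss $\alpha^{-\varepsilon_1}$. I would first try to bound it crudely via $\E\,e^{cZ}$ with $c$ slightly above $1$, choosing $\theta=\theta(\varepsilon_1)$ small so that $Z$'s exponential moment is $\alpha^{-\varepsilon_1/2}$. The uniformity in $\kappa\in[1/2,\kappa_1]$ is automatic, since the bound is $e^{-V}$ times a $\kappa$-independent moment.
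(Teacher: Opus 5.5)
There is a genuine gap, and it is in step (ii), not step (iii).

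Your reduction asks for $\E e^{W_\alpha}\ll\alpha^{-1/4-\varepsilon_1}$, i.e.\ $I_{-1/2}(\alpha,T)\ll\alpha^{-1/4-\varepsilon_1}$, throughout $1\ll\alpha\log T\ll(\log\frac1\alpha)^{12+\varepsilon}$. That is strictly stronger than the theorem: it would give the tail bound for every $V$, with no role for $\kappa_1$. It is also far stronger than Theorem~\ref{negative1/2}, which is the best the paper can say about $I_{-1/2}$ at this scale. The ``automatic'' uniformity in $\kappa$ is a symptom of this.

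The step that breaks is Harper's treatment of the bad sets. On the event that block $j+1$ is the first large one, you must re-apply the explicit inequality with $x_j=T^{\beta_j}$. When $\alpha\log T\asymp1$, one has $\alpha\log x_j=\alpha\beta_j\log T\to0$ at the small scales Harper's scheme needs.
\begin{itemize}
\item In that regime the zero contribution is not of the Selberg form $\frac1{\log x}\sum_\gamma\frac{\alpha}{\alpha^2+(t-\gamma)^2}$. That form needs $\alpha\log x\gg1$, which already makes your inequality at $x=T^\theta$ doubtful when $\alpha\log T$ is a small constant.
\item The contribution is instead logarithmically singular. The best available bound for it is the term $\frac1{\beta_j}\log\frac1{1-T_j^{-\alpha}}\approx\beta_j^{-1}\log\frac1{\alpha\beta_j\log T}$ of Lemma~\ref{bf_lemma}. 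This is $\approx\beta_j^{-1}\log\beta_j^{-1}$ when $\alpha\log T\asymp1$.
\item With the weight $e^{W_\alpha}$, this costs $\exp\big(\beta_j^{-1}\log\beta_j^{-1}\big)$.
\item The gain from $P_{j+1}$ being large is capped by the length constraints (Taylor degree on the good set, and $T_{j+1}^{q}\le T$ for the Markov moment). It is at most about $\exp\big(-a\beta_{j+1}^{-1}\log\beta_{j+1}^{-1}\big)$ with $a<1$, and roughly half of that with your threshold $\beta_{j+1}^{-3/4}$.
\end{itemize}
For exponent $1$ the loss always wins. This is exactly the obstruction to negative moments with $k\ge1/2$ near $1/\log T$. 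Your step (iii) controls the zero term only at the single scale $T^\theta$, which is not where it is needed.

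The paper never bounds $\E e^{W_\alpha}$. Its ingredients are:
\begin{itemize}
\item the deterministic inequality of Lemma~\ref{bf_lemma}, which has no zero sum at all;
\item the barrier recursion with corridors $[L_\ell,U_\ell]$, together with the localization Propositions~\ref{comparison21}--\ref{comparison22};
\item in Lemma~\ref{lower}, a truncation scale chosen according to the excess over the barrier, $\beta_m\asymp D\log(V-u)/(V-u)$.
\end{itemize}
This choice makes $R_m\le(V-u)/D$, and a moment of order $\lfloor a/\beta_m\rfloor$ then gives decay $e^{-\rho(V-u)}$ for any fixed $\rho<a/(rD)<1$. Combined with the Gaussian weight in $u$, this yields $\P(W_\alpha>V)\ll\exp\big(-\rho V+\frac{\rho^2}{4}\log\frac1\alpha\big)$. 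That is a Chernoff bound with exponent $\rho<1$, not $1$.

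The identity $-\rho\kappa+\frac{\rho^2}4=-\kappa+\frac14+(1-\rho)(\kappa-\frac12)+\frac{(1-\rho)^2}4$ then converts this into the stated bound. It does so only because $\kappa\le\kappa_1$ allows $\rho=\rho(\kappa_1,\varepsilon_1)$ close to $1$.

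To repair your plan, you would have to work with $e^{\rho W_\alpha}$ for $\rho<1$. You would also need to replace Harper's fixed thresholds with a truncation scheme adapted to the size of the deviation, which is what the barrier argument supplies.
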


\subsection{Further applications}
The lower bound for the minimum in Theorem \ref{minimum_first_order} (or, to be more precise, Theorem \ref{prop:main})
also gives bounds for negative moments on almost all short
intervals. The following result is the analogue of Corollary
$1.4$ in \cite{AB} and Corollary $3.2$ in \cite{ABR}.
Here $k_\gamma$ is as in \eqref{kgamma}. 

 \begin{thm}
      \label{momshortint}
      Assume RH. Fix $k>0$, $\varepsilon>0$,
      $0<c<1$, $M>0$, $\gamma \geq 0$ and let $H=(\log T)^{\gamma}$. Suppose that
\begin{equation*}
 0<\alpha\ll(\log T)^{-c}
 \qquad\text{and}\qquad
\alpha\log T\gg\Big(\log\frac1\alpha\Big)^{12+\varepsilon}.
\end{equation*}
Then, uniformly for
\begin{equation*}
 1\leq A\ll
 \alpha^{-M},
\end{equation*}
 we have 
      \begin{equation*} \label{subcritical} \frac{1}{2H} \int_{|h| \leq H} \Big|\zeta\Big(\tfrac12+\alpha+i(t+h)\Big)\Big|^{-2k} dh \ll  A  \alpha^{-k^2},\end{equation*}
      for all $t \in [T,2T]$ except on a subset of measure $\ll T/A$. 

      For $k > k_{\gamma}$, we also have
\begin{align}\label{supercritical}
&\frac1{2H}\int_{|h|\leq H}
\Big|\zeta\Big(\tfrac12+\alpha+i(t+h)\Big)\Big|^{-2k}\,dh \ll
 \frac{A^{k/k_\gamma}\alpha^{-(2kk_\gamma-k_\gamma^2)}}
      {(k-k_\gamma)\sqrt{\log(1/\alpha)}}
 \Big( \log \frac{1}{\alpha}  \Big)^{C(k-k_{\gamma})},
\end{align}
for all $t \in [T,2T]$ except on a subset of measure $\ll T/A$.
  \end{thm}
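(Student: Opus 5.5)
The plan is to reduce both statements to tail estimates for the field $W_\alpha(t+h)$ uniformly over $|h|\leq H$, via Markov's inequality in $t$. Set
\[
\mathcal I(t):=\frac1{2H}\int_{|h|\leq H}e^{2kW_\alpha(t+h)}\,dh .
\]
Write the integral through the level-set decomposition
\[
\mathcal I(t)=\int_{-\infty}^{\infty}2k\,e^{2kV}\,\frac{|\{|h|\le H: W_\alpha(t+h)>V\}|}{2H}\,dV .
\]
Then split the range of $V$ at a truncation $V_{\max}(t)$, namely the maximum of $W_\alpha$ over the interval, i.e. $-\log$ of the minimum.

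\textbf{Subcritical case (the first bound).} I would first control the average of the integral over $t$. Averaging over $t\in[T,2T]$ makes the $h$-average irrelevant by translation invariance (up to negligible boundary effects from $|h|\le H\ll T$). This gives
\[
\frac1T\int_T^{2T}\mathcal I(t)\,dt=\int 2k e^{2kV}\P(W_\alpha>V)\,dV+O(\cdot).
\]
Theorem \ref{mainthm} applies in the stated range $\alpha\log T\gg(\log\frac1\alpha)^{12+\varepsilon}$ for every fixed $\kappa$. It gives the Gaussian tail $(\log\frac1\alpha)^{-1/2}e^{-V^2/\log(1/\alpha)}$ for $V=\kappa\log\frac1\alpha$ with $\kappa$ in compact ranges. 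Integrating against $e^{2kV}$ peaks at $V=k\log\frac1\alpha$ and yields $\alpha^{-k^2}$.

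The very large $V$, beyond a large multiple of $\log\frac1\alpha$, and the small $V$ must be handled separately:
\begin{itemize}
\item Small $V$ are trivial.
\item For large $V$ one needs some cruder input. The RH bound $W_\alpha(t)\ll \log T/\log\log T$ is not enough by itself. Instead I would use the minimum lower bound of Theorem \ref{prop:main}, with the interval length $H=(\log T)^\gamma$, to truncate: outside a set of $t$ of measure $\ll T/A$, $\max_{|h|\le H}W_\alpha(t+h)\le V_{\max}$ with $V_{\max}=O(\log\frac1\alpha)$ depending on $A\ll\alpha^{-M}$.
\end{itemize}
Markov's inequality applied to the truncated integral then gives the bound $A\alpha^{-k^2}$ off an exceptional set of measure $\ll T/A$.

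\textbf{Supercritical case (the second bound, $k>k_\gamma$).} Here the Gaussian integral would be dominated by $V$ beyond the typical maximum, so we cannot average first. Instead, write
\[
e^{2kW}=e^{2k_\gamma W}\,e^{2(k-k_\gamma)W}\le e^{2k_\gamma W}\,e^{2(k-k_\gamma)V_{\max}(t)} .
\]
\begin{enumerate}
\item Theorem \ref{prop:main} supplies a tail estimate in $y$. With $V_{\max}=k_\gamma\log\frac1\alpha\cdot$(appropriate normalization), and the definition \eqref{kgamma} presumably making $2k_\gamma$ the critical exponent with maximum $\approx 2k_\gamma\cdot\frac12\log\frac1\alpha$ up to $\log\log$ corrections, it should give $\max_{|h|\le H}W_\alpha(t+h)\le k_\gamma\log\frac1\alpha - c\log\log\frac1\alpha+\frac{\log A}{2k_\gamma}$ off a set of measure $\ll T/A$.
\item Rather than the cruder factorization above, I would split the $V$-integral at $V_{\max}$. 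On the region $V\le V_{\max}$, the integrand $e^{2kV-V^2/\log(1/\alpha)}$ is increasing, since $k>k_\gamma$ puts the peak to the right of $V_{\max}$.
\item The expectation over $t$ of the truncated integral is then dominated by its endpoint. The integral near $V_{\max}$ yields the factor $1/((k-k_\gamma)\sqrt{\log(1/\alpha)})$ and the value $e^{2kV_{\max}-V_{\max}^2/\log(1/\alpha)}$.
\item With $V_{\max}=k_\gamma\log\frac1\alpha+\frac{\log A}{2k_\gamma}+O(\log\log)$, this becomes $A^{k/k_\gamma}\alpha^{-(2kk_\gamma-k_\gamma^2)}(\log\frac1\alpha)^{C(k-k_\gamma)}$, with the extra $\log A$ in $V_{\max}$ costing the factor $A^{k/k_\gamma}$.
\item A final Markov step, with constant loss, absorbs the exceptional set.
\end{enumerate}

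\textbf{Main obstacle.} I expect the hardest part to be the interplay between the truncation and the exceptional sets. The large-deviation bound of Theorem \ref{mainthm} must be applied jointly with the event that the maximum is small, and keeping uniformity in $A$ up to $\alpha^{-M}$ means $V_{\max}$ may reach $\kappa\log\frac1\alpha$ with $\kappa$ in a bounded but enlarged range. This is why the hypothesis $\alpha\log T\gg(\log\frac1\alpha)^{12+\varepsilon}$ is needed, so that Theorem \ref{mainthm} holds for all fixed $\kappa$. One must also check that the Markov exceptional sets and the Theorem \ref{prop:main} exceptional sets combine to total measure $\ll T/A$.
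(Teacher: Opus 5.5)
Your overall plan matches the paper's. You use the level-set formula for the $h$-average and truncate pointwise off the exceptional set $\mathcal E_y$ of Theorem~\ref{prop:main}, with $y=\frac{\log A}{2k_\gamma}$, which has measure $\ll T/A$. You then average the middle range of $V$ over $t$ using Theorem~\ref{mainthm} and finish with Markov. The subcritical half is fine as you describe it. The ``joint'' difficulty you anticipate does not arise. The truncation level depends only on $A,\alpha,T$, so you only need $\frac1T\int_T^{2T}S_t(V)\,dt\ll\P(W_\alpha>V)$ for each fixed $V$, and the two exceptional sets simply add.

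The supercritical half does not work as written. Put $L=\log\frac1\alpha$ and $U=k_\gamma L+C\log L+\frac{\log A}{2k_\gamma}$.

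First, Theorem~\ref{prop:main} gives $+C\log L$ in $U$. The $-c\log L$ in your step 1 is the conjectured second-order term and is not available.

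Second, your step 2 needs the peak $kL$ of $V\mapsto 2kV-V^2/L$ to lie to the right of $U$ with room to spare, say $\frac{\log A}{2k_\gamma}+C\log L\le\frac12(k-k_\gamma)L$. This fails when $A$ is a large power of $\alpha^{-1}$ (allowed, since $A\ll\alpha^{-M}$). It also fails when $k$ is close to $k_\gamma$, which depends on $T$ and $\alpha$; the bound is stated with an explicit $1/(k-k_\gamma)$. Once $U\geq kL$, the integral picks up the full Gaussian mass $\asymp\sqrt L\,\alpha^{-k^2}$. The endpoint value $e^{2kU-U^2/L}=\alpha^{-k^2}e^{-(U-kL)^2/L}$ does not control this. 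For $U$ just below $kL$, the endpoint argument gives the factor $1/(2k-2U/L)$, not $1/(k-k_\gamma)$.

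Third, your powers of $A$ are misattributed. $e^{2kU-U^2/L}$ carries at most $A^{(k-k_\gamma)/k_\gamma}$, because $e^{-U^2/L}$ removes one power of $A$. A Markov step with exceptional set $\ll T/A$ costs a factor $A$, not a constant. It is the product of the two that gives $A^{k/k_\gamma}$.

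The fix, which is what the paper does, is to replace monotonicity by the global bound
\[
2kV-\frac{V^2}{L}=k_\gamma^2L+2(k-k_\gamma)V-\frac{(V-k_\gamma L)^2}{L}\leq k_\gamma^2L+2(k-k_\gamma)V,
\]
valid for all $V$. Integrating $e^{2(k-k_\gamma)V}$ up to $U$ gives mean $\ll\alpha^{-k_\gamma^2}e^{2(k-k_\gamma)U}/((k-k_\gamma)\sqrt L)$. Then Markov together with $e^{2(k-k_\gamma)U}=\alpha^{-2k_\gamma(k-k_\gamma)}L^{2C(k-k_\gamma)}A^{(k-k_\gamma)/k_\gamma}$ yields \eqref{supercritical} uniformly in $1\leq A\ll\alpha^{-M}$. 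The range $V\leq aL$ contributes only $O(\alpha^{-1/2})$, since $2ak<1/2$.
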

For positive moments on the critical
line in typical intervals, Arguin, Ouimet, and Radziwi\l\l{}~\cite[Theorem~1.1]{AOR}
determined the leading powers of $\log T$, with a transition
from quadratic to linear dependence on the moment parameter
at $k=\sqrt{1+\gamma}$ when $\gamma\geq0$. Their upper bounds
are unconditional for $0\leq\gamma\leq3$ and assume RH for
$\gamma>3$; the lower bounds are unconditional.
Arguin and Bailey~\cite[Corollary~1.4]{AB} obtained sharper
upper bounds for $0\leq\gamma<3$, and Arguin, Bailey, and
Roberts~\cite[Corollary~3.2]{ABR} extended these bounds to
every fixed $\gamma\geq0$ under RH. In particular, for every fixed
$k>\sqrt{1+\gamma}$ and $A>1$, they obtain
\[
 \frac1{(\log T)^\gamma}
 \int_{|h|\leq(\log T)^\gamma}
 \Big|\zeta\Big(\tfrac12+i(t+h)\Big)\Big|^{2k}\,dh
 \ll
 \frac{(\log T)^{2k\sqrt{1+\gamma}-(1+\gamma)}}
      {(\log\log T)^{k/(2\sqrt{1+\gamma})}},
\]
outside a set of measure $\ll T/A$. At the critical exponent
$k=1$ on intervals of fixed length, Harper~\cite[Corollary~1]{Harper-partition}
proved an upper bound of order $\log T/\sqrt{\log\log T}$
in probability.

We note that for our result, when $k>k_{\gamma}$, the exponent $2kk_\gamma-k_\gamma^2$ is linear in $k$ and is
smaller than $k^2$.  A key ingredient in the proof is our lower bound for the
minimum in Theorem \ref{prop:main}. 
The extra exponential factor in the bound \eqref{supercritical} comes from the error of
size $(\log\frac{1}{\alpha})^{C}$ in the logarithm of this
minimum bound (see \eqref{eq:V-form} in Theorem \ref{prop:main}). In the positive-moment problem, Arguin and
Bailey~\cite[Corollary~1.3]{AB} have a more precise bound for
the maximum, which allows them to obtain the logarithmic
factor in their moment estimate. Our bound therefore has the
analogous leading power, but is less precise in its
lower-order terms.

\subsection{Ideas of proof}
The main analytic issue is to estimate small values close
enough to the critical line that the contribution of zeros begins to
matter. Large deviations give the sharp negative-moment
bounds, while separate estimates for more extreme values
provide the bound needed for the M\"obius application. 

To obtain the large-deviation estimates, we build on the
 method of Arguin, Bailey, and
Roberts~\cite{ABR}. The main additional difficulty is that the
contribution of zeros becomes increasingly important as the
shift approaches $1/\log T$. The explicit inequalities of
Carneiro and Chandee~\cite{CC}, in the form used by
Bui and Florea~\cite{BF}, bound $-\log|\zeta|$ by a sum over
primes and an error depending on the shift. Taking more primes
reduces this error, but makes it harder to estimate the rare
large values of the sum. The proof must balance these two
requirements throughout the range of $\alpha$.

For larger shifts (i.e., larger than $(\log \log T)^{O(1)}/\log T)$, one suitably chosen prime sum suffices.
For smaller shifts (i.e., $1/\log T\ll \alpha \ll (\log \log T)^{O(1)}/\log T$), we divide the primes into successive intervals
and control how their contributions accumulate, following the
recursive approach of~\cite{ABR}. We estimate separately the contribution from points where all
partial sums remain within the prescribed barriers and from
points where one of these bounds first fails. We keep the dependence on the shift precise enough
to recover the full Gaussian upper bound, including the factor
$1/\sqrt{\log\frac1{\alpha}}$. This gives the sharp bound
in Theorem~\ref{mainthm}. In the smaller-shift regime, the same argument gives a weaker
large-deviation bound when $\kappa\geq1/2$, as stated in
Theorem~\ref{bigk}.

We then bound negative moments by integrating the deviation
probabilities against $2ke^{2kV}$. For every fixed $k>0$,
the contribution from the range where the Gaussian bound
applies is $O(\alpha^{-k^2})$. 
This calculation is the same for all $k$, with the 
Gaussian range depending on the shift. The Gaussian integral
is centered at $V=k\log\frac1{\alpha}$. Theorem~\ref{mainthm}
controls a neighbourhood of this level for $0<k<1/2$ down
to shifts of order $1/\log T$, and for $k>1/2$ in the
larger-shift range used in Theorem~\ref{k>1/2}.
In either case, the Gaussian estimate alone does not control
the full moment; we also need separate (non-Gaussian) bounds for larger deviation levels. 

For $0<k<1/2$, we first use Theorem \ref{bigk} beyond the
Gaussian range. For all $V\geq B\log\log T$, the high-moment
argument in Lemma \ref{lem:negative-moment-high-tail} bounds
the deviation probability by $O(e^{-\rho V})$, where
$2k<\rho<1$ and $B$ is sufficiently large.
Integrating this bound against $2ke^{2kV}$ shows that the
entire contribution from these levels is $o(1)$.

For $k>1/2$, the weight $e^{2kV}$ requires stronger bounds
at large deviation levels. We obtain these from high moments
of prime sums, following the method of
Soundararajan~\cite{Sound-moments} and keeping track of the
shift-dependent error in the explicit formula. In the range
of Theorem~\ref{k>1/2}, these probabilities decay fast enough
to make their integrals against $e^{2kV}$ negligible. 

The M\"obius application requires a bound on the moment
$I_{-1/2}(\alpha,T)$. Although the bound for $I_{-1/2}$ in Theorem \ref{negative1/2} is not of the
conjectured order, it is sufficient for the arithmetic
application. Following the contour argument of Bui and
Florea \cite{BF}, we use Perron's formula to express the
average of the M\"obius function in terms of $1/\zeta(s)$.
The improved bound for $I_{-1/2}$ controls the vertical portions
of the contour close to the critical line and gives
Theorem \ref{partM} in the case $k=1$. The same contour argument, using
Theorem~\ref{k>1/2}, yields the estimates for the generalized
M\"obius function in the case $k>1$.

Finally, we turn to the minimum in short intervals, for which
the two directions require different arguments. A lower bound
for the minimum must rule out values that are too small
\emph{anywhere} in the interval. We therefore combine the
explicit inequality for $\log |\zeta|^{-1}$ with moment estimates for the maximum of
a prime sum.

An upper bound for the minimum instead requires the existence
of at least one sufficiently small value. For the prime sums,
we adapt the argument of Arguin, Belius, Bourgade,
Radziwi\l\l{}, and Soundararajan~\cite{ABBRS} to produce a
large negative value. We then show that the large negative value of the prime sum
gives a correspondingly small value of $|\zeta|$ at the same point. For larger shifts $\alpha$, the inequalities of
Carneiro and Chandee~\cite{CC} bound $\log|\zeta|$ above by the
prime sum, with error terms that we can control uniformly over
the interval. For shifts of order $1/\log T$, we first find a small value
of the zeta-function at a larger shift, where the prime-sum argument
applies. We then use the maximum modulus principle to show
that a small value also occurs on the line
$\Re s=1/2+\alpha$ within the original short interval.
The negative-moment bounds provide the control on the
horizontal sides of the rectangle needed for this argument.

Finally, the upper bounds for the negative moments in almost all short intervals in Theorem \ref{momshortint} are an application of Theorem \ref{prop:main}.

\subsection{Organization of the paper}
In Section \ref{preliminary} we first gather a few preliminary results we will use throughout the paper. We prove Theorem \ref{mainthm} when the shift $\alpha$ is large enough in Section \ref{section_large} (contingent on a certain localization lemma proven in Section \ref{section_localization}), and begin the proof of Theorem \ref{mainthm} in the case of small $\alpha$ in Section \ref{section_prelim_small}. In Section \ref{section_localization} we prove some key localization results needed in the proof of Theorems \ref{mainthm} and \ref{bigk}. We finish the proofs of Theorems \ref{mainthm} and \ref{bigk} in Section \ref{section_largedev}. In Section \ref{section_negativemom} we prove the upper bounds in Theorems \ref{negativemom}, \ref{negative1/2} and \ref{k>1/2}, and we prove Theorem \ref{partM}\ in Section \ref{section_mobius}. Theorem \ref{minimum_first_order} is proved in Section \ref{section_min}, and finally, we prove Theorem \ref{momshortint} in Section \ref{section_shortintervals}.

\subsection{Notation}

We write $\P$ and $\E$ for probability and expectation with
respect to $t$ chosen uniformly from $[T,2T]$. 

Let $(X(p))_p$ be independent Steinhaus random variables uniformly
distributed on the unit circle. We extend $X$ completely multiplicatively by setting
\[
 X(1)=1,\qquad X(n)=\prod_{p^r\parallel n}X(p)^r.
\]
We denote probability and expectation for these variables by
$\P_{\rm rand}$ and $\E_{\rm rand}$. For a Dirichlet polynomial
$
 P(t)=\sum_{n\leq N}a_n n^{-it},
$
its random counterpart is
\[
 \widetilde P=\sum_{n\leq N}a_nX(n).
\]
Thus a tilde means that each $p^{-it}$ is replaced by $X(p)$;
in particular, $p^{-2it}$ is replaced by $X(p)^2$.

The notation $f\ll g$ means $|f|\leq Cg$ for some constant
$C>0$, and $f\asymp g$ means that both $f\ll g$ and $g\ll f$
hold for positive $f,g$. Throughout the paper, $\varepsilon$ denotes an arbitrarily small positive number and $C$ denotes a positive constant, both of which may change from line to line.

Throughout, $\sum_{u\in[L,U]}$ denotes a sum over
$u\in\mathbb Z\cap[L,U+1]$.

For ease of notation, we will write $ M_{\alpha}(t)=M_{\alpha,0}(t)$, where
\[
M_{\alpha,\gamma}(t)
:=
\min_{|h|\leq(\log T)^\gamma}
\Big|\zeta\Big(\tfrac12+\alpha+i(t+h)\Big)\Big|.
\]
We also write
$$W_{\alpha}(t) = \log \big| \zeta (  \tfrac{1}{2}+\alpha+it) \big|^{-1}.$$
\newline

\textbf{AI Disclosure:}
The authors used ChatGPT to assist with the proofs of Propositions \ref{comparison21} and \ref{comparison22}. They also used ChatGPT to test the choice of parameters, and with checking and proofreading the
mathematical arguments. Additionally, after finishing a draft of this paper, the authors asked Claude for a report on the paper; the report suggested a possible improvement in the $\log \log x$ term in the bound in Theorem \ref{partM} (going from $\exp\big( C(\log x)^{k/(k+1)}\big )$ to $\exp\big( C(\frac{\log x}{\log\log x})^{k/(k+1)}  \big)$). This suggestion was worked out and incorporated in the current paper. All other ideas are due to the authors,
who take full responsibility for the content of the paper.

\textbf{Acknowledgments:}
The authors would like to thank Louis-Pierre Arguin and Adam Harper for helpful comments. AF was supported by the National Science
Foundation under NSF CAREER grant DMS-2339274.

\section{Preliminary results}
\label{preliminary}
We gather a few results we will need in the proof of the main theorems. First we need the following lemma from \cite{BF}.
\begin{lem}
\label{bf_lemma}
Assume RH. For $\alpha > 0$, we have 
\begin{align*}
    \log  \Big|\zeta(\tfrac12+\alpha+it)\Big|^{-1} &\leq \Re \bigg( \sum_{n \leq x} \frac{ b_{\alpha}(n;x)}{n^{1/2+\alpha+it}} \bigg)+ \frac{ \log \frac{t}{2 \pi}}{\log x} \log \frac{1}{1-x^{-\alpha}} \\
    &\qquad+ O \Big(  \frac{(\log x)^2 \sqrt{x}}{1+(\log x) t}+ \frac{ (\log x) \log (1+ (\log x) \sqrt{t})}{\sqrt{t}}+1\Big),\nonumber 
\end{align*}
where \begin{equation*}b_{\alpha}(n;x)= -\Lambda(n) a_{\alpha}(n;x), \label{balpha}
\end{equation*} and 

$$a_{\alpha}(n;x) = \sum_{j=0}^{\infty} \Big( \frac{ j+1}{\log n+ j \log x} x^{-\alpha j}- \frac{(j+1)n^{2\alpha}}{(j+2) \log x - \log n} x^{-\alpha(j+2)} \Big).$$

\end{lem}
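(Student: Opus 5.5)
This is the Carneiro--Chandee type inequality as used in \cite{BF}. The plan is to apply the Guinand--Weil explicit formula to an extremal bandlimited majorant/minorant of the function $f_\alpha(x)=\log\frac{4+x^2}{\alpha^2+x^2}$. Under RH this function measures the contribution of each zero $\tfrac12+i\gamma$ to $\log|\zeta(\tfrac12+\alpha+it)|^{-1}$.

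\textbf{Step 1: Hadamard factorisation.} Using the Hadamard product and Stirling's formula, compare $\log|\zeta(\tfrac12+\alpha+it)|$ with $\log|\zeta(\tfrac52+it)|=O(1)$. This gives
\[
\log|\zeta(\tfrac12+\alpha+it)|^{-1}=\frac12\sum_\gamma f_\alpha(t-\gamma)-\Big(\frac32-\alpha\Big)\frac12\log\frac{t}{2\pi}+O(1).
\]
The term $\frac12\log\frac{t}{2\pi}$ times $(2-\alpha)/2$ comes from the Gamma factors, exactly as in the Chandee--Soundararajan and Carneiro--Chandee arguments.

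\textbf{Step 2: Majorant and explicit formula.} Let $\Delta=\log x/(2\pi)$. Replace $f_\alpha$ by the Gaussian-subordination extremal majorant $m_{\alpha,\Delta}\ge f_\alpha$ of exponential type $2\pi\Delta$ from Carneiro--Chaves--Vaaler, which is available because $f_\alpha$ is a difference of Gaussian integrals $\int (e^{-\pi\lambda x^2})\,d\nu(\lambda)$. Then $\sum_\gamma f_\alpha(t-\gamma)\le\sum_\gamma m_{\alpha,\Delta}(t-\gamma)$. Apply the Guinand--Weil formula to this sum:
\begin{itemize}
\item The zero sum becomes an integral term $\frac1{2\pi}\int m_{\alpha,\Delta}(u)\log\frac{t}{2\pi}\,du$, plus a prime sum $-\frac1\pi\sum_{n\le x}\frac{\Lambda(n)}{\sqrt n}\widehat m_{\alpha,\Delta}\big(\frac{\log n}{2\pi}\big)\cos(t\log n)$, since the Fourier transform is supported in $[-\Delta,\Delta]$.
\item There are also the archimedean error terms. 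These produce the $O\big(\frac{(\log x)^2\sqrt x}{1+(\log x)t}+\dots\big)$ terms, coming from the poles at $s=0,1$ and the Gamma-factor error evaluated against $m$.
\end{itemize}

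\textbf{Step 3: Explicit evaluation of the transforms.} Compute explicitly:
\begin{itemize}
\item The total integral $\int m_{\alpha,\Delta}=\int f_\alpha+\frac{1}{\Delta}\log\frac{1}{1-e^{-2\pi\alpha\Delta}}+(\text{terms for the }4+x^2\text{ part})$. This produces $\frac{\log(t/2\pi)}{\log x}\log\frac1{1-x^{-\alpha}}$ after cancelling $\frac12\int f_\alpha\cdot\frac{1}{2\pi}\log\frac t{2\pi}$ against the main term of Step 1.
\item The Fourier transform at $\xi=\log n/2\pi$. This is the series in $j$ defining $a_\alpha(n;x)$: the first geometric-type sum in $j$ with $x^{-\alpha j}/(\log n+j\log x)$, and the reflected terms with $n^{2\alpha}x^{-\alpha(j+2)}$. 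Rewriting the cosine sum as $\Re\sum n^{-1/2-\alpha-it}$ times $-\Lambda(n)a_\alpha(n;x)$ gives the stated prime sum.
\item The $4+x^2$ part contributes only $O(1)$ overall.
\end{itemize}

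\textbf{Main obstacle.} The main obstacle is Step 3: the exact computation of $\widehat m_{\alpha,\Delta}$ via the Gaussian subordination framework, including interchange of the integral over $\lambda$ with the interpolation series. Everything else is routine bookkeeping. Since the statement is precisely Lemma 2.1-type output of \cite{BF} (their proof adapts \cite{CC}), I would present it by quoting those computations and only verifying the bookkeeping of the error terms.
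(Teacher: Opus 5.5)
Your plan is the right one and is the argument behind this lemma. The paper gives no proof of its own but quotes the lemma from \cite{BF}, where it is obtained by inserting the Carneiro--Chandee extremal majorant of $f_\alpha$ (Gaussian subordination in the sense of Carneiro--Littmann--Vaaler, interpolating $f_\alpha$ and $f_\alpha'$ on $\Delta^{-1}\mathbb Z$) into the Guinand--Weil formula. Your Step 3 identification checks out: for $0<\xi<\Delta$ one has $\widehat m_{\alpha,\Delta}(\xi)=\sum_{k\in\mathbb Z}(k+1)\widehat f_\alpha(\xi+k\Delta)$ with $\widehat f_\alpha(\eta)=(e^{-2\pi\alpha|\eta|}-e^{-4\pi|\eta|})/|\eta|$, and at $\xi=\frac{\log n}{2\pi}$ the $e^{-2\pi\alpha|\eta|}$-part of this is exactly $2\pi n^{-\alpha}a_\alpha(n;x)$.

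Two constants in your sketch must be corrected, because as written the bookkeeping does not close. First, the coefficient of $\log\frac{t}{2\pi}$ in Step 1 is $\frac{2-\alpha}{2}=\frac{1}{4\pi}\int_{\mathbb R}f_\alpha$. Your displayed $\frac12(\frac32-\alpha)$ would leave an unbounded $\frac14\log\frac{t}{2\pi}$ after the cancellation in Step 3. Second, Poisson summation gives $\int m_{\alpha,\Delta}-\int f_\alpha=\sum_{k\neq0}\widehat f_\alpha(k\Delta)=\frac{2}{\Delta}\big(\log\frac{1}{1-e^{-2\pi\alpha\Delta}}-\log\frac{1}{1-e^{-4\pi\Delta}}\big)$. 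It is this factor $\frac{2}{\Delta}$, not your $\frac1\Delta$, that produces exactly $\frac{1}{4\pi}\cdot\frac{2}{\Delta}=\frac{1}{\log x}$ in front of $\log\frac{t}{2\pi}\log\frac{1}{1-x^{-\alpha}}$; with $\frac1\Delta$ you would get only half of the stated term.
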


We will also prove the following refinement of the inequality $(2.6)$ in \cite{BF}.
\begin{lem}
\label{asympta}
    For $n \leq e^{2 \pi \Delta}$, we have
    $$ a_{\alpha}(n;e^{2 \pi \Delta}) = 
    \begin{cases}
        \frac{1}{\log n} + O(  \frac{1}{\Delta} ) & \mbox{ if } 2 \pi \Delta \alpha \geq 1,\\
        \frac{1}{\log n} + \frac{2 (2\pi \Delta-\log n)}{(2 \pi \Delta)^2} \log \frac{1}{2 \pi \Delta \alpha} + O (  \frac{1}{\Delta}) &\mbox{ if } 2 \pi \Delta \alpha < 1.
    \end{cases}$$
\end{lem}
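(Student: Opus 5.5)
The plan is a direct computation from the definition in Lemma \ref{bf_lemma}. Put $x=e^{2\pi\Delta}$, $L=\log x=2\pi\Delta$, $u=\log n/L\in(0,1]$ and $q=x^{-\alpha}=e^{-\alpha L}$; we may assume $n\geq 2$, since only $\Lambda(n)a_\alpha(n;x)$ is ever used. Since $n^{2\alpha}=q^{-2u}$, the definition becomes
\[
a_\alpha(n;x)=\frac1{\log n}+\frac1L\sum_{j\geq1}\frac{j+1}{j+u}q^j-\frac{q^{-2u}}{L}\sum_{m\geq2}\frac{m-1}{m-u}q^{m},
\]
where the first term is the $j=0$ part of the first sum and the second sum has been reindexed by $m=j+2$.

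\emph{Case $\alpha L\geq1$.} Bound each term crudely.
\begin{itemize}
\item In the first series, $(j+1)/(j+u)\leq 2$, so it is $\ll L^{-1}\sum_{j\geq1}e^{-j}\ll 1/L$.
\item In the second series, $(m-1)/(m-u)\leq1$ and $q^{-2u}q^{m}\leq q^{m-2}$, so it is $\ll L^{-1}\sum_{m\geq2}e^{-(m-2)}\ll 1/L$.
\end{itemize}
This gives $a_\alpha=1/\log n+O(1/\Delta)$.

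\emph{Case $\alpha L<1$.} Here both series are of size $1/(\alpha L)$, so they must be matched. Use the identities
\[
\frac{j+1}{j+u}=1+\frac{1-u}{j+u},\qquad \frac{m-1}{m-u}=1-\frac{1-u}{m-u}.
\]
\begin{itemize}
\item \emph{The pure geometric parts.} They give $L^{-1}\big(\sum_{j\geq1}q^j-q^{-2u}\sum_{m\geq2}q^m\big)$. Write $q^{-2u}=1+O(\alpha L)$, which is valid because $2u\alpha L\leq2$. The main parts then telescope to $q/L=O(1/L)$. The correction is $O(\alpha L)\cdot L^{-1}\sum_m q^m\ll \alpha L\cdot(\alpha L)^{-1}L^{-1}=O(1/L)$.
\item \emph{The harmonic-type parts.} They give
\[
\frac{1-u}{L}\Big(\sum_{j\geq1}\frac{q^j}{j+u}+q^{-2u}\sum_{m\geq2}\frac{q^m}{m-u}\Big).
\]
\end{itemize}

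The key step is to show that each of these two sums equals $\log\frac1{1-q}+O(1)$, uniformly in $u\in[0,1]$. For this, note $|1/(j+u)-1/j|\leq 1/j^2$, and for $m\geq2$ we have $m-u\geq m/2$, hence $|1/(m-u)-1/m|\ll 1/m^2$. So both sums equal $\sum_{k\geq1}q^k/k+O(1)=\log\frac1{1-q}+O(1)$. The factor $q^{-2u}=1+O(\alpha L)$ only adds $O(\alpha L\log\frac1{\alpha L})=O(1)$. Finally, $1-q\asymp\alpha L$ when $0<\alpha L<1$, so $\log\frac1{1-q}=\log\frac1{\alpha L}+O(1)$.

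Collecting terms gives
\[
a_\alpha=\frac1{\log n}+\frac{2(1-u)}{L}\log\frac1{\alpha L}+O\Big(\frac1L\Big).
\]
Since $2(1-u)/L=2(2\pi\Delta-\log n)/(2\pi\Delta)^2$, this is the claimed second case.

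The only delicate point is uniformity as $u\to1$, i.e.\ $n$ near $x$. There the term $m=2$ has $m-u$ close to $1$, but it never vanishes, so the comparison with $1/m$ still costs only $O(1)$. Everything else is routine summation of geometric series.
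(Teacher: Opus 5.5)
Your proof is correct and follows essentially the same route as the paper. When $2\pi\Delta\alpha\geq1$ you use crude geometric bounds; otherwise you discard $n^{2\alpha}-1$, cancel the constant parts of the two series, and compare the remaining coefficient of $q^j$ with $2(1-u)/(Lj)$, so that everything reduces to $\sum_j q^j/j=\log\frac{1}{1-q}$. The only differences are bookkeeping: your split $\frac{1-u}{L}\big(\frac{1}{j+u}+\frac{1}{j-u}\big)$ is the partial-fraction form of the paper's combined term $\frac{2j(2\pi\Delta-\log n)}{(2\pi\Delta j)^2-(\log n)^2}$, and you absorb $q^{-2u}=1+O(\alpha L)$ against the sizes of the sums at the end, whereas the paper first bounds $(n^{2\alpha}-1)\sum_j$ by an exact geometric sum.
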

\begin{proof}
We first consider the regime $2 \pi \Delta \alpha \geq 1$.  We write
\begin{align*}
 a_{\alpha}(n;e^{2\pi \Delta})& = \frac{1}{\log n} - \frac{n^{2 \alpha}}{4 \pi \Delta - \log n} e^{-4 \pi \Delta \alpha}\\
 &\qquad+ \sum_{j \geq 1}  \Big( \frac{j+1}{\log n+ 2 \pi j \Delta }e^{-2 \pi j \Delta \alpha} - \frac{(j+1) n^{2\alpha}}{2 \pi \Delta (j+2)  - \log n} e^{-2 \pi (j+2) \Delta \alpha} \Big)  .
\end{align*}
Let $E$ denote $a_{\alpha}(n;e^{2 \pi \Delta})-1/\log n$. Note that we have 
$$ \sum_{j \geq 1}  \frac{j+1}{\log n+ 2 \pi j \Delta }e^{-2 \pi j \Delta \alpha}  \leq \frac{1}{\pi \Delta} \sum_{j \geq 1} e^{-2 \pi j \Delta \alpha} \leq \frac{1}{\pi \Delta(e-1)}.$$
We also have that 
$$ \sum_{j \geq 1}   \frac{(j+1) n^{2\alpha}}{2 \pi \Delta (j+2)  - \log n} e^{-2 \pi (j+2) \Delta \alpha} \leq \frac{1}{2 \pi \Delta(e-1)} .$$
It follows that
$$|E| \leq \frac{1}{2 \pi \Delta} \Big(  1+ \frac{3}{ e-1}\Big) .$$

Now suppose that $2 \pi \Delta \alpha < 1.$ We write
$$a_{\alpha}(n;e^{2\pi \Delta}) = \frac{1}{\log n} +  \sum_{j=1}^{\infty} \frac{ j+1}{\log n+  2 \pi \Delta j} e^{-2 \pi \Delta \alpha j}- \sum_{j=0}^{\infty} \frac{(j+1)n^{2\alpha}}{ 2 \pi \Delta (j+2)  - \log n} e^{-2 \pi \Delta \alpha(j+2)} .$$
Note that we can replace $n^{2\alpha}$ by $1$ as follows. Since $\log n\le2\pi\Delta$, we have
$
0\ \le  n^{2\alpha}-1 \le e^{4\pi\Delta\alpha}-1,
$
and, for every $j\ge0$,
\[
\frac{j+1}{2\pi\Delta(j+2)-\log n}\ \le \frac{1}{2\pi\Delta}\,.
\]
Summing the geometric series,
\[
\sum_{j\ge0}\frac{(j+1)\,e^{-2\pi\Delta\alpha(j+2)}}{2\pi\Delta(j+2)-\log n}
\ \le  \frac1{2\pi\Delta}\sum_{j\ge0}e^{-2\pi\Delta\alpha(j+2)}
 = \frac1{2\pi\Delta}\cdot\frac{e^{-4\pi\Delta\alpha}}{1-e^{-2\pi\Delta\alpha}}\,,
\]
and hence, 
\begin{align*}
\bigl(n^{2\alpha}-1\bigr)\sum_{j\ge0}\frac{(j+1)\,e^{-2\pi\Delta\alpha(j+2)}}{2\pi\Delta(j+2)-\log n}
& \le \frac{\bigl(e^{4\pi\Delta\alpha}-1\bigr)e^{-4\pi\Delta\alpha}}{2\pi\Delta\bigl(1-e^{-2\pi\Delta\alpha}\bigr)}
  = \frac{1+e^{-2\pi\Delta\alpha}}{2\pi\Delta} \le \frac{1}{\pi\Delta}\,.
\end{align*}
Then we write
$$a_{\alpha}(n;e^{2\pi\Delta}) = \frac{1}{\log n} + \sum_{j=1}^{\infty} \frac{ j+1}{\log n+  2 \pi \Delta j} e^{-2 \pi \Delta \alpha j}- \sum_{j\ge2}\frac{(j-1)\,e^{-2\pi\Delta\alpha j}}{2\pi\Delta j-\log n}+O \Big(  \frac{1}{\Delta}\Big).$$
 The $j=1$ term of the first sum is $2e^{-2\pi\Delta\alpha}/(2\pi\Delta+\log n)\le1/(\pi\Delta)$, and for $j\ge2$,
\begin{align*}
\frac{j+1}{\log n+2\pi\Delta j}-\frac{j-1}{2\pi\Delta j-\log n}
=\frac{2j\,(2\pi\Delta-\log n)}{(2\pi\Delta j)^2-(\log n)^2} .
\end{align*}
Therefore
\[
a_\alpha\bigl(n;e^{2\pi\Delta}\bigr)
=\frac1{\log n}+2\,(2\pi\Delta-\log n)\sum_{j\ge2}\frac{j\,e^{-2\pi\Delta\alpha j}}{(2\pi\Delta j)^2-(\log n)^2}+O\Bigl(\frac1\Delta\Bigr).
\]
Since $\log n\le2\pi\Delta$ and $j\ge2$,
\[
0\le\frac{j}{(2\pi\Delta j)^2-(\log n)^2}-\frac{1}{(2\pi\Delta)^2\,j}
=\frac{(\log n)^2}{(2\pi\Delta)^2\,j\,\bigl((2\pi\Delta j)^2-(\log n)^2\bigr)}
\ll \frac{1}{\Delta^2 j^3}.
\]
Hence
\[
a_\alpha\bigl(n;e^{2\pi\Delta}\bigr)
=\frac1{\log n}+\frac{2\,(2\pi\Delta-\log n)}{(2\pi\Delta)^2}\sum_{j\ge2}\frac{e^{-2\pi\Delta\alpha j}}{j}+O\Bigl(\frac1\Delta\Bigr).
\]
Finally, 
\[
\sum_{j\ge2}\frac{e^{-2\pi\Delta\alpha j}}{j}=\log\frac{1}{1-e^{-2\pi\Delta\alpha}}-e^{-2\pi\Delta\alpha}
=\log\frac{1}{2\pi\Delta\alpha}+O(1).
\]
This finishes the proof.
\end{proof}

Let 
\begin{equation} 
\label{sx} 
S_x(t) = \Re \bigg(  \sum_{p \leq x} \frac{b_{\alpha}(p;x)}{p^{1/2+\alpha+it}}+ \sum_{p^2 \leq x} \frac{b_{\alpha}(p^2;x)}{p^{1+2\alpha+2it}}\bigg).
\end{equation}
Using Lemma \ref{asympta}, if $\alpha \log x \gg 1$ we have that 
\begin{equation}
\label{bpx}
b_{\alpha}(p;x) = -1+ O \Big( \frac{\log p}{\log x} \Big) \qquad\text{and} \qquad b_{\alpha}(p^2;x) = -\frac{1}{2} + O \Big( \frac{\log p}{\log x} \Big).
\end{equation}

For simplicity of notation, for a sequence $\beta_1 <  \ldots < \beta_K$ of parameters we will define later, we let
$$b_{\alpha}(n;j) = b_{\alpha}(n;T^{\beta_j}).$$
Let
$$S_{\ell}^{(j)}(t) = \Re \bigg( \sum_{p \leq T_{\ell}} \frac{b_{\alpha}(p;j)} {p^{1/2+\alpha+it}}  + \sum_{\substack{p \leq T_{\ell}\\ p^2 \leq T_j}} \frac{ b_{\alpha}(p^2;j)}{p^{1+2\alpha+2it}}\bigg).$$ 
Note that the definition of $S_{\ell}^{(j)}$ is slightly different from the one in \cite{ABR}; unlike in \cite{ABR}, we do not consider the prime squares for which $p^2 \leq T_{\ell}$. 

We have the following lemma which computes the variance of $S_{\ell}^{(j)}$.

\begin{lem}
\label{variances}
Let $j\ge\ell$ and suppose that $2 \leq T_\ell\le T^{1/4-\varepsilon}$ and $\alpha \geq T_j^{-1}$, with $\alpha <1$. If $\alpha\log T_j\ge1$, then we have
\[
\frac1T\int_T^{2T}S_\ell^{(j)}(t)^2\,dt=\frac12\min\Bigl\{\log\frac1\alpha,\ \log\log T_\ell\Bigr\}+O(1).
\]
If $\alpha\log T_j<1$, then we have
$$\frac1T\int_T^{2T}S_\ell^{(j)}(t)^2\,dt
=\frac12\log\log T_\ell
+O\Big(1+\log^2\frac1{\alpha\log T_j}\Big).$$

\end{lem}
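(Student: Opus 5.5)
We expand the square and use the standard mean value theorem for Dirichlet polynomials. Write $S_\ell^{(j)}(t)=\Re D(t)$ with
\[
D(t)=\sum_{p\le T_\ell}\frac{b_\alpha(p;j)}{p^{1/2+\alpha+it}}+\sum_{\substack{p\le T_\ell\\ p^2\le T_j}}\frac{b_\alpha(p^2;j)}{p^{1+2\alpha+2it}} .
\]
Then $(\Re D)^2=\tfrac12|D|^2+\tfrac12\Re D^2$. Since $T_\ell\le T^{1/4-\varepsilon}$, the polynomials $D$ and $D^2$ have length at most $T_\ell^4\le T^{1-4\varepsilon}$, so in $\frac1T\int_T^{2T}$ off-diagonal terms $m\neq n$ contribute $O(T^{-1}\sum|a_m a_n|/|\log(m/n)|)$, which is $O(T^{-\varepsilon})$. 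The term $D^2$ has no diagonal, because all frequencies $n^{-it}$ have $n\ge 2$. So it contributes only off-diagonal terms, which are negligible. For the same reason there are no cross terms between $p$ and $q^2$ in $|D|^2$. Hence
\[
\frac1T\int_T^{2T}S_\ell^{(j)}(t)^2\,dt=\frac12\sum_{p\le T_\ell}\frac{|b_\alpha(p;j)|^2}{p^{1+2\alpha}}+O(1),
\]
since the prime-square diagonal $\sum_p|b_\alpha(p^2;j)|^2p^{-2}$ is $O(1)$ because $b_\alpha(p^2;j)$ is bounded by Lemma \ref{asympta}.

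Now insert Lemma \ref{asympta} with $2\pi\Delta=\beta_j\log T=\log T_j$ (where $T_j=T^{\beta_j}$), and recall $b_\alpha(p;j)=-\log p\,a_\alpha(p;T_j)$.

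\emph{Case $\alpha\log T_j\ge1$.} Here $b_\alpha(p;j)=-1+O(\log p/\log T_j)$, so $|b|^2=1+O(\log p/\log T_j)$. The error contributes $\sum_{p\le T_\ell}\log p/(p\log T_j)\ll \log T_\ell/\log T_j\le 1$, using $\ell\le j$. The main term is $\sum_{p\le T_\ell}p^{-1-2\alpha}$, which by the prime number theorem and partial summation equals $\log\min\{1/\alpha,\log T_\ell\}+O(1)$: for $p\le e^{1/\alpha}$ we have $p^{-2\alpha}\asymp1$, and the tail beyond $e^{1/\alpha}$ is $\int_{1/\alpha}^\infty e^{-2\alpha u}du/u=O(1)$. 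This gives the first formula.

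\emph{Case $\alpha\log T_j<1$.} Write $L=\log\frac{1}{\alpha\log T_j}$. Lemma \ref{asympta} gives
\[
b_\alpha(p;j)=-1-\frac{2\log p\,(\log T_j-\log p)}{(\log T_j)^2}\,L+O\Bigl(\frac{\log p}{\log T_j}\Bigr),
\]
so $|b|^2=1+O\big((\log p/\log T_j)(L+1)+(\log p/\log T_j)^2(L+1)^2\big)$. Summing against $1/p$ over $p\le T_\ell\le T_j$ gives errors $O(L+1)+O((L+1)^2)=O(1+L^2)$. Moreover $\alpha\ge T_j^{-1}$ gives $L\le \log T_j$, so nothing degenerates. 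In the main term $\sum_{p\le T_\ell}p^{-1-2\alpha}$ we have $\alpha\log T_\ell\le \alpha\log T_j<1$, so $p^{-2\alpha}=1+O(\alpha\log p)$. This gives $\log\log T_\ell+O(1)$, since $\alpha\sum_{p\le T_\ell}\log p/p\ll \alpha\log T_\ell<1$.

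The only point needing care is the off-diagonal bound. It needs the coefficients to be bounded, which holds because $|b_\alpha(p;j)|\ll 1+L$ is at most polylogarithmic. The length condition then makes the off-diagonal sum $O(T_\ell^{4}\log T/T)$ times polylogarithmic factors, which is $o(1)$. Everything else is routine partial summation.
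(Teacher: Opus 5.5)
Your proposal is correct and follows the paper's proof essentially step for step: reduce to the diagonal via the mean-value theorem, insert the coefficient asymptotics of Lemma~\ref{asympta}, and finish with Mertens' theorem in the two cases; you are also more explicit than the paper about why $\Re D^2$ and the $p$--$q^2$ cross terms drop out. There are two harmless slips. First, when $\alpha\log T_j<1$ the coefficient $b_\alpha(p^2;j)$ is not uniformly bounded (it can be of size $\asymp\log\frac{1}{\alpha\log T_j}$); however, $|b_\alpha(p^2;j)|\ll 1+\log p$ (using $\alpha\ge T_j^{-1}$) still makes the prime-square diagonal $O(1)$, which is how the paper argues. Second, in the first case you need $p^{-2\alpha}=1+O(\alpha\log p)$ for $p\le e^{1/\alpha}$, as you use in the second case, rather than just $p^{-2\alpha}\asymp1$, to get an additive $O(1)$.
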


\begin{proof}
The Dirichlet polynomial whose real part is $S_\ell^{(j)}$
has length at most $T_\ell^2\leq T^{1/2-\varepsilon}$.
So the mean-value theorem  for Dirichlet polynomials gives
\begin{equation}\label{eq:mvt}
\frac1T\int_T^{2T}S_\ell^{(j)}(t)^2\,dt
=\big(1+O(T^{-\varepsilon})\big)\bigg(\frac12 \sum_{p \leq T_{\ell}} \frac{b_\alpha(p;j)^2}{p^{1+2\alpha}}
+\frac12\sum_{\substack{p\le T_{\ell}\\ p^2 \leq T_j}}\frac{b_\alpha(p^2;j)^2}{p^{2+4\alpha}}\bigg).
\end{equation}

Put
\[
Z=\max\Big\{0,\log\frac1{\alpha\log T_j}\Big\}.
\]
By Lemma~2.2, for $r=1,2$ and $p^r\leq T_j$,
\[
b_\alpha(p^r;j)
=-\frac1r+O\Big(\frac{(1+Z)\log p}{\log T_j}\Big).
\]
Since $\alpha\geq T_j^{-1}$, we have $Z\ll\log T_j$.
Hence the prime-square contribution satisfies
\[
\sum_{\substack{p\leq T_\ell\\p^2\leq T_j}}
\frac{b_\alpha(p^2;j)^2}{p^{2+4\alpha}}
\ll
\sum_p\frac{1+(\log p)^2}{p^2}\ll1.
\]

Using the coefficient estimate above and
\[
\sum_{p\leq T_\ell}\frac{(\log p)^n}{p}
\ll(\log T_\ell)^n,
\]
for $n=1,2$,
we obtain
\begin{align*}
\sum_{p\leq T_\ell}
\frac{|b_\alpha(p;j)^2-1|}{p^{1+2\alpha}}
&\ll
(1+Z)\frac{\log T_\ell}{\log T_j}
+(1+Z)^2\frac{(\log T_\ell)^2}{(\log T_j)^2} \ll 1+Z^2.
\end{align*}
Finally, Mertens' theorem and partial summation give
\[
\sum_{p\leq T_\ell}\frac1{p^{1+2\alpha}}
=
\min\Big\{\log\frac1\alpha,\log\log T_\ell\Big\}
+O(1).
\]
If $\alpha\log T_j\geq1$, then $Z=0$, proving the first
statement. If $\alpha\log T_j<1$, then
$\alpha\log T_\ell<1$, so the minimum is
$\log\log T_\ell$, proving the second statement.
\end{proof}

We recall that $(X(p))_p$ are independent Steinhaus random variables uniformly
distributed on the unit circle.

\begin{lem}\label{lem_moments_random}
For any \(q\in\mathbb{N}\) and any finitely supported complex coefficients
\(\gamma_p,\delta_p\), we have
\[
 \E_{\rm rand}\bigg(\Big|\sum_p\gamma_pX(p)+\sum_p\delta_pX(p)^2\Big|^{2q}\bigg)
 \leq 2^qq!\Big(\sum_p\big(|\gamma_p|^2+|\delta_p|^2\big)\Big)^q.
\]
\end{lem}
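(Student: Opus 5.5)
The plan is to expand the $2q$-th moment directly and use the orthogonality of Steinhaus monomials. Write $Y=\sum_p\gamma_pX(p)+\sum_p\delta_pX(p)^2$, so that
\[
\E_{\rm rand}|Y|^{2q}=\E_{\rm rand}\big(Y^q\overline{Y}^q\big).
\]
Each $Y^q$ expands as a sum over ordered $q$-tuples of "atoms", an atom being either a pair $(p,1)$ with weight $\gamma_p$ or a pair $(p,2)$ with weight $\delta_p$. The monomial attached to a tuple is $\prod_p X(p)^{e_p}$, where $e_p$ is the total exponent contributed at $p$.

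Since $\E_{\rm rand}X(p)^{a}\overline{X(p)}^{b}=\mathbf 1_{a=b}$ for independent Steinhaus variables, we get
\[
\E_{\rm rand}|Y|^{2q}=\sum_{\mathbf e}\Big|\sum_{\text{tuples with exponent vector }\mathbf e}\prod(\text{weights})\Big|^2 .
\]

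The cleanest route is to reduce to a Gaussian or independent comparison. Put $A=\sum_p\gamma_pX(p)$ and $B=\sum_p\delta_pX(p)^2$. By Minkowski's inequality in $L^{2q}$ we have $\|A+B\|_{2q}\le\|A\|_{2q}+\|B\|_{2q}$, but this loses a factor of order $2^{2q}$, which is too crude. So I will instead bound the full expansion directly.

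The key observation is that the variables $X(p)^2$ are again independent Steinhaus variables. Hence both $A$ and $B$ individually satisfy the classical bound $\E|{\sum c_pX(p)}|^{2q}\le q!\,(\sum|c_p|^2)^q$. That bound is proved by expanding, applying Cauchy--Schwarz to the multinomial coefficients, and using $\binom{q}{\mathbf a}\le q!$.

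For the mixed sum I will mimic this. Group the expansion of $Y^q$ by the multiset of atoms used, so that $Y^q=\sum_{\mathbf m}\binom{q}{\mathbf m}\prod w^{m}X^{(\cdot)}$. Different multisets may give the same exponent vector $\mathbf e$: at a single prime, $e_p=m_{p,1}+2m_{p,2}$. Applying Cauchy--Schwarz within each fibre $\mathbf e$ gives
\[
\E|Y|^{2q}\le\sum_{\mathbf e}N(\mathbf e)\sum_{\mathbf m\mapsto\mathbf e}\binom{q}{\mathbf m}^2\prod|w|^{2m},
\]
where $N(\mathbf e)$ is the number of multisets in the fibre. Using $\binom{q}{\mathbf m}\le q!$ once, we then need to control $N(\mathbf e)$ against the remaining factor. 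The expected output is the bound
\[
q!\sum_{\mathbf m}\binom{q}{\mathbf m}\prod|w|^{2m}=q!\Big(\sum_p(|\gamma_p|^2+|\delta_p|^2)\Big)^q,
\]
multiplied by whatever the fibre multiplicity costs.

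The main obstacle is bounding that multiplicity cost by $2^q$. At each prime the number of decompositions $e_p=a+2b$ is at most $e_p/2+1\le 2^{e_p/2}$, and possibly more finely at most $2^{m_{p,2}}$-type factors. The product over $p$ should therefore be at most $2^{\sum_p m_{p,2}}\le 2^q$.

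If the Cauchy--Schwarz bookkeeping becomes messy, my fallback is the following. Compare with $A'+B'$, where $B'=\sum\delta_pX'(p)$ with fresh independent Steinhaus variables $X'(p)$. Each fibre of $(A,B)$ is a union of at most $2^q$ fibres of $(A',B')$, so $\E|A+B|^{2q}\le2^q\,\E|A'+B'|^{2q}$. Then $A'+B'$ is a linear form in independent Steinhaus variables, and the classical bound gives $q!\,(\sum|\gamma_p|^2+|\delta_p|^2)^q$.
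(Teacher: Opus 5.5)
Your proof is correct, but it is not the paper's route, and your reason for discarding Minkowski is mistaken. The paper uses exactly the approach you dismissed. From $\|A+B\|_{2q}\le\|A\|_{2q}+\|B\|_{2q}$ it applies $(a+b)^2\le 2(a^2+b^2)$ \emph{before} raising to the power $q$, which gives $\E_{\rm rand}|A+B|^{2q}\le 2^q\bigl(\|A\|_{2q}^2+\|B\|_{2q}^2\bigr)^q$. It then inserts the single-form bounds $\|A\|_{2q}^2\le (q!)^{1/q}\sum_p|\gamma_p|^2$ and $\|B\|_{2q}^2\le (q!)^{1/q}\sum_p|\delta_p|^2$; the latter holds because the $X(p)^2$ are again independent Steinhaus variables. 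The loss is exactly $2^q$. A loss of order $2^{2q}$ only appears if you split as $(a+b)^{2q}\le 2^{2q-1}(a^{2q}+b^{2q})$.

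Your argument instead expands $Y^q$, applies Cauchy--Schwarz inside each fibre of $\mathbf m\mapsto\mathbf e$, and pays the fibre size $N(\mathbf e)$. Equivalently, it proves the decoupling inequality $\E_{\rm rand}|A+B|^{2q}\le 2^q\,\E_{\rm rand}|A'+B'|^{2q}$. Your fallback is the same computation, since the fibres of $A'+B'$ are singletons. This is a somewhat more structural statement, but it requires a combinatorial count. The paper's proof is two lines and extends at once to $r$ summands with a factor $r^q$.

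In that count, two details need fixing. First, the number of decompositions $e_p=a+2b$ with $a,b\ge 0$ is $\lfloor e_p/2\rfloor+1$, and it is this quantity that is at most $2^{e_p/2}$. The inequality $e_p/2+1\le 2^{e_p/2}$ as you wrote it fails at $e_p=1$. Second, and more importantly, the product over $p$ is bounded by $2^{|\mathbf e|/2}$. The correct finish is $|\mathbf e|=\sum_p(m_{p,1}+2m_{p,2})\le 2q$, which gives $N(\mathbf e)\le 2^q$. Your intermediate claim $N(\mathbf e)\le 2^{\sum_p m_{p,2}}$ is false in general. For $q=5$ and $\mathbf e=(2,2,2)$ on three primes, every element of the fibre has $\sum_p m_{p,2}=|\mathbf e|-q=1$, yet $N(\mathbf e)=3$. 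With these corrections, your Cauchy--Schwarz step and $\binom{q}{\mathbf m}\le q!$ give exactly $2^q q!\bigl(\sum_p(|\gamma_p|^2+|\delta_p|^2)\bigr)^q$.
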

\begin{proof}
Let 
$$\widetilde{\xi_{1}}:=\sum_{p}\gamma_pX(p)\qquad\text{and}\qquad \widetilde{\xi_{2}}:=\sum_{p}\delta_pX(p)^2.$$
By Minkowski's inequality we have
\begin{align}\label{expin}
\E_{\rm rand}\big(|\widetilde{\xi_{1}}+\widetilde{\xi_{2}}|^{2q}\big)^{1/(2q)}&\leq \E_{\rm rand}\big(|\widetilde{\xi_{1}}|^{2q}\big)^{1/(2q)}+\E_{\rm rand}\big(|\widetilde{\xi_{2}}|^{2q}\big)^{1/(2q)}\nonumber\\
&\leq \sqrt{2}\sqrt{\E_{\rm rand}\big(|\widetilde{\xi_{1}}|^{2q}\big)^{1/q}+\E_{\rm rand}\big(|\widetilde{\xi_{2}}|^{2q}\big)^{1/q}}.
\end{align}

Now
\begin{align*}
\E_{\rm rand}\big(|\widetilde{\xi_{1}}|^{2q}\big)&=\E_{\rm rand}\Bigg(\bigg|\sum_{\substack{p_1,\ldots,p_r\\ p_i\ne p_j}}\sum_{\substack{\alpha_i\geq 1\\\sum_{i=1}^{r}\alpha_i=q}}\binom{q}{\alpha_1,\ldots,\alpha_r}\prod_{i=1}^{r}\gamma_{p_i}^{\alpha_i}X(p_i)^{\alpha_i}\bigg|^2\Bigg)\\
&=\sum_{\substack{p_1,\ldots,p_r\\ p_i\ne p_j}}\sum_{\substack{\alpha_i\geq 1\\\sum_{i=1}^{r}\alpha_i=q}}\binom{q}{\alpha_1,\ldots,\alpha_r}^2\prod_{i=1}^{r}|\gamma_{p_i}|^{2\alpha_i}\\
&\leq q!\sum_{\substack{p_1,\ldots,p_r\\ p_i\ne p_j}}\sum_{\substack{\alpha_i\geq 1\\\sum_{i=1}^{r}\alpha_i=q}}\binom{q}{\alpha_1,\ldots,\alpha_r}\prod_{i=1}^{r}|\gamma_{p_i}|^{2\alpha_i}=q!\bigg(\sum_p|\gamma_p|^2\bigg)^q.
\end{align*}
A similar expression holds for $\E_{\rm rand}(|\widetilde{\xi_{2}}|^{2q})$, and so we obtain from \eqref{expin} that
\[
\E_{\rm rand}\big(|\widetilde{\xi_{1}}+\widetilde{\xi_{2}}|^{2q}\big)\leq 2^qq!\Big(\sum_p\big(|\gamma_p|^2+|\delta_p|^2\big)\Big)^q.
\]
This proves the lemma.
\end{proof}
We also have the following. (See also Lemma A.1 in \cite{ABR}.)

\begin{lem}
\label{lem_moments}
    For $j \geq 1$, and for any $q\in\mathbb{N}$ such that $2q \leq \log T/\log T_1$, we have 
    $$ \frac{1}{T} \int_T^{2T} \big(S_1^{(j)}(t)\big)^{2q} \, dt \ll  4^qq!\text{Var}(S_1^{(j)})^q.$$
    Moreover, for $1 \leq k < \ell \leq j$, and for any $q\in\mathbb{N}$ such that $2q \leq \log T/\log T_\ell$, we have
    $$ \frac{1}{T} \int_T^{2T}   \big(S_{\ell}^{(j)}(t)-S_k^{(j)}(t)\big)^{2q} \, dt \ll 4^qq! \Big( \emph{Var}(S_{\ell}^{(j)}) - \emph{Var}(S_{k}^{(j)}) \Big)^q.$$
\end{lem}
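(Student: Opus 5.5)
The plan is to reduce to the random model of Lemma~\ref{lem_moments_random} through the mean value theorem for Dirichlet polynomials. I treat the increment $S_\ell^{(j)}-S_k^{(j)}$; the first statement is the case where $S_k^{(j)}$ is replaced by $0$ and $\ell=1$.

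\textbf{Writing the increment as a Dirichlet polynomial.} Write $S_\ell^{(j)}(t)-S_k^{(j)}(t)=\Re D(t)$, where
\[
D(t)=\sum_{T_k<p\leq T_\ell}\frac{b_\alpha(p;j)}{p^{1/2+\alpha+it}}+\sum_{\substack{T_k<p\leq T_\ell\\ p^2\leq T_j}}\frac{b_\alpha(p^2;j)}{p^{1+2\alpha+2it}} .
\]
This is a Dirichlet polynomial supported on $p$ and $p^2$ with $p\leq T_\ell$, so $D(t)^q$ has length at most $T_\ell^{2q}$.

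\textbf{Reducing to complex moments.} Since $(\Re D)^{2q}\leq |D|^{2q}=|D^q|^2$, it suffices to bound the mean of $|D^q|^2$. The mean value theorem gives
\[
\frac1T\int_T^{2T}|D(t)^q|^2\,dt=\Big(1+O\big(T_\ell^{2q}/T\big)\Big)\sum_n|c_n|^2 ,
\]
where $D^q=\sum c_n n^{-it}$. By unique factorisation, $\sum_n |c_n|^2=\E_{\rm rand}|\widetilde D|^{2q}$, where $\widetilde D=\sum_p\gamma_pX(p)+\sum_p\delta_pX(p)^2$ with $\gamma_p=b_\alpha(p;j)p^{-1/2-\alpha}$ and $\delta_p=b_\alpha(p^2;j)p^{-1-2\alpha}$.

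\textbf{Handling the length condition.} Here I expect the main care to be needed. The hypothesis $2q\leq\log T/\log T_\ell$ gives $T_\ell^{2q}\leq T$, so the factor $1+O(T_\ell^{2q}/T)$ is $O(1)$. If the version of the mean value theorem in use is $(T+O(N))\sum|c_n|^2$ with $N$ the length, the constraint is exactly what is needed. Lemma~\ref{lem_moments_random} then bounds the moment by
\[
2^qq!\Big(\sum_p\big(|\gamma_p|^2+|\delta_p|^2\big)\Big)^q .
\]

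\textbf{Comparing with the variance.} It remains to show that $\sum_p(|\gamma_p|^2+|\delta_p|^2)\leq 2\big(\mathrm{Var}(S_\ell^{(j)})-\mathrm{Var}(S_k^{(j)})\big)+O(\text{small})$. By \eqref{eq:mvt}, the variance difference equals $\tfrac12\sum_p(|\gamma_p|^2+|\delta_p|^2)$ up to a factor $1+O(T^{-\varepsilon})$, because the cross terms between $p$ and $p^2$ vanish. This factor of $2$ is where the extra $2^q$ comes from, giving $4^q q!$.

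The one subtlety is that \eqref{eq:mvt} is multiplicative with error $T^{-\varepsilon}$, which is harmless. The same computation, applied to the disjoint prime ranges $(T_k,T_\ell]$, shows that the variances are additive, so the variance difference is exactly this sum. For the first statement, take $T_0=1$ and the sum over $p\leq T_1$.
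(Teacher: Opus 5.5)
Your proposal is correct and follows the paper's proof step for step. Both arguments bound $(\Re D)^{2q}$ by $|D^q|^2$, pass to the Steinhaus model through the mean value theorem (using $T_\ell^{2q}\leq T$), apply Lemma~\ref{lem_moments_random}, and then use \eqref{eq:mvt} to identify $\sum_p(|\gamma_p|^2+|\delta_p|^2)$ with twice the variance increment, which is where the factor $4^q$ comes from. The comparison you need is $\sum_p(|\gamma_p|^2+|\delta_p|^2)\ll \mathrm{Var}(S_\ell^{(j)})-\mathrm{Var}(S_k^{(j)})$, and that is the direction you state; the paper's \eqref{formdiff2} is printed with $\ll$ the other way, but both rest on the near-equality coming from \eqref{eq:mvt}.
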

\begin{proof}

We prove the second statement. The treatment of the first one is similar. 

We denote
$$\xi_{1}(t):=\sum_{p}\gamma_pp^{-it},\qquad \xi_{2}(t):=\sum_{p}\delta_pp^{-2it},$$
and
$$\widetilde{\xi_{1}}:=\sum_{p}\gamma_pX(p),\qquad \widetilde{\xi_{2}}:=\sum_{p}\delta_pX(p)^2,$$
where 
\[
\gamma_p=\frac{b_\alpha(p;j)}{p^{1/2+\alpha}}\mathbf1_{\substack{T_k<p\leq T_\ell}}
\qquad\text{and}\qquad
\delta_p=\frac{b_\alpha(p^2;j)}{p^{1+2\alpha}}\,\mathbf 1_{\substack{T_k<p\leq T_\ell\\p^2\le T_j}}.
\]
Then
\begin{equation}\label{formdiff}
S_{\ell}^{(j)}-S_k^{(j)}=\Re\big(\xi_{1}(t)+\xi_{2}(t)\big),
\end{equation}
and, by \eqref{eq:mvt}, 
\begin{equation}\label{formdiff2}
\text{Var}(S_{\ell}^{(j)}) - \text{Var}(S_{k}^{(j)})\ll\frac12\Big(\sum_p|\gamma_p|^2+\sum_p|\delta_p|^2\Big).
\end{equation}

By expanding through the binomial formula and using the mean-value theorem for Dirichlet polynomials, we have
\begin{equation}\label{eq:reduction}
\frac{1}{T} \int_T^{2T}|\xi_1(t)+\xi_2(t)|^{2q}dt\ll\E_{\rm rand}\big(|\widetilde{\xi_{1}}+\widetilde{\xi_{2}}|^{2q}\big).
\end{equation}
Also, from Lemma \ref{lem_moments_random} we obtain that
\begin{equation}\label{eqafter2.4}
\E_{\rm rand}\big(|\widetilde{\xi_{1}}+\widetilde{\xi_{2}}|^{2q}\big)\leq 2^qq!\Big(\sum_p\big(|\gamma_p|^2+|\delta_p|^2\big)\Big)^q.
\end{equation}
Observe that by \eqref{formdiff},
\[
\frac{1}{T} \int_T^{2T}   (S_{\ell}^{(j)}-S_k^{(j)})^{2q} \, dt=\frac{1}{T} \int_T^{2T}   \Re\big(\xi_{1}(t)+\xi_{2}(t)\big)^{2q} \, dt\leq \frac{1}{T} \int_T^{2T}|\xi_1(t)+\xi_2(t)|^{2q}dt.
\]
The second statement of the lemma now follows by combining with \eqref{eq:reduction}, \eqref{eqafter2.4} and \eqref{formdiff2}.
\end{proof}

\section{Proof of Theorem \ref{mainthm} in the large $\alpha$ regime}
\label{section_large}
We will split the proof of Theorem \ref{mainthm} into two parts, depending on the size of $\alpha$. The smaller $\alpha$ is, the more difficult the problem becomes. In this section, we prove Theorem \ref{mainthm} in the large $\alpha$ regime, relying on a localization result we prove in Section \ref{section_localization}. 

We denote by $H$ the event
\begin{equation}
     H=\Big\{ \log \Big|\zeta(\tfrac12+\alpha+it)\Big|^{-1} >V\Big\}.
    \end{equation}

We first assume that 
\begin{equation} 
\label{first_regime}
\alpha \log T \gg \Big(\log \frac{1}{\alpha} \Big)^{12+\varepsilon}.
\end{equation} 
Let \begin{equation} x = \exp \Big(  \frac{2 \log (\alpha \log T)}{\alpha}\Big), \label{x_choice}
\end{equation}
and let 
\begin{equation}
\label{sigmax}
\sigma_x^2 : = \E (S_x^2) = \frac{1}{2}  \log \frac{1}{\alpha}+ O(1), \end{equation} which follows from Lemma \ref{variances}. From Lemma \ref{bf_lemma}, since the choice of $x$ in \eqref{x_choice} gives 
$$ \frac{\log \frac{t}{2\pi}}{\log x} \log \frac{1}{1-x^{-\alpha}} = o(1), $$ we have that
$$ \log \big|\zeta(\tfrac12+\alpha+it)\big|^{-1}\leq S_x(t) +C,$$ for some absolute constant $C$.
Then for any $A>1$ we have 
\begin{align} 
\label{ph_ub}
    \P (H) \leq \P \big(S_x(t) \geq V-C\big) \leq \P \big(S_x(t) \geq (A-1) \sigma_x^2\big) + \sum_{V-C \leq u \leq A \sigma_x^2} \P \big(S_x(t) \in [u-1,u]\big).
\end{align}

We first evaluate the first term in \eqref{ph_ub}. By Markov's inequality, we have that 
\begin{align*}
    \P \big(S_x(t) \geq (A-1) \sigma_x^2\big) \leq \frac{1}{((A-1)\sigma_x^2)^{2q}} \E \big(S_x(t)^{2q}\big),
\end{align*}
where $q$ is any integer such that $q \leq \frac{\alpha \log T}{4 \log(\alpha \log T)}$. 
Lemma \ref{lem_moments} and Stirling's formula give
\begin{align}
\label{pxt}
    \P \big(S_x(t) \geq (A-1) \sigma_x^2\big) \ll \exp \Big( q \log q + q \log \frac{4}{e(A-1)^2} + \frac{\log q}{2} - 2q \log \sigma_x \Big).
\end{align}
Choosing
$ q= [\sigma_x^2],$ noticing that $q \leq \frac{\alpha \log T}{4 \log (\alpha \log T)}$ (where we used the fact that $\alpha \log T \gg ( \log \frac{1}{\alpha} )^{12+\varepsilon}$), and choosing the constant $A$ large enough with respect to $\kappa$, we have that 
\begin{equation}
\label{first_bound}
    \eqref{pxt} \leq \frac{1}{\sigma_x} \exp \Big(  - \frac{V^2}{2 \sigma_x^2}\Big).
\end{equation}

We are left with bounding the second term in inequality \eqref{ph_ub}. Using Proposition \ref{comparison1}, we have that 
\begin{align*}
  \sum_{V-C \leq u \leq A \sigma_x^2}  \P \big(S_x(t) \in [u-1,u]\big) &\ll    \sum_{V-C \leq u \leq A \sigma_x^2} \frac{1}{\sigma_x} \exp \Big(  - \frac{u^2}{2\sigma_x^2}\Big)\\
  &\ll  \frac{\sigma_x}{V-C} \exp \Big( - \frac{(V-C)^2}{2 \sigma_x^2} \Big)\ll \frac{1}{\sigma_x} \exp \Big(  - \frac{V^2}{2 \sigma_x^2}\Big).
\end{align*}
Combining this and the bound \eqref{first_bound} finishes the proof of Theorem \ref{mainthm} in the regime \eqref{first_regime}.

\section{Preliminary steps --  small $\alpha$ regime}
\label{section_prelim_small}
In this section, we consider the more difficult regime when
\begin{equation} 
\label{small_regime}
1 \ll \alpha\log T \ll  \Big( \log \frac{1}{\alpha}\Big)^{12+\varepsilon}.
\end{equation}
More specifically, assume 
$$ M_0 \leq \alpha\log T \leq M_1\Big ( \log \frac{1}{\alpha}\Big)^{12+\varepsilon},$$ for some $M_0, M_1>0$.
Let 
     $T_j = T^{\beta_j},$
     where 
     \begin{equation} \label{rdef} \beta_1 = \frac{1}{(\log \log T)^{30}}, \qquad \beta_j =r^{j-1} \beta_1, 
     \end{equation} for some $r>1$ ($r$ will be chosen sufficiently close to $1$, see the beginning of Section \ref{section_largedev} for more details).  We stop when $\theta/r<\beta_K\le\theta$, 
     for some small enough $\theta$. Restrictions on the size of $\theta$ arise in the proofs of
Propositions \ref{comparison21} and \ref{comparison22} and Lemmas \ref{lem:apriori}, \ref{upper}, \ref{lower}, \ref{g1c}. We do not record
them all here, but choose $\theta$ sufficiently small to
satisfy them.

The ``good events'' $G_{\ell}$ for $1 \leq \ell \leq K$ are defined such that
\begin{equation*} 
\label{def_goodevent} G_{\ell} = \big\{ S_{\ell}^{(j)} \in [L_{\ell}, U_{\ell}], \forall j \geq \ell \big\} \cap G_{\ell-1},\end{equation*}
where
     \begin{equation} \label{new_corridor} L_{\ell} = \frac{V}{\log \log T} \log \log T_{\ell}-c_{\ell}, \qquad U_{\ell} = \frac{V}{\log \log T} \log \log T_{\ell}+c_{\ell},
     \end{equation}
with
$$c_{\ell} = \Big( \frac{1}{\beta_{\ell}}\Big)^{1/90}.$$
We use the conventions
\[
 S_0^{(j)}=0,\qquad G_0=[T,2T].
\]
For simplicity of notation, let
$$z_{\ell} = \log \frac{1}{\beta_{\ell}}.$$
Let 
$$\sigma_{\ell,j}^2= \text{Var}(S_{\ell}^{(j)}).$$
Using Lemma \ref{variances}, we have that
\begin{equation} \label{new_variance}
    \sigma_{\ell,j}^2 =\frac{t_{\ell}}{2}  + 
\begin{cases}
O(1) & \mbox{ if } \alpha \log T_j \geq 1, \\ O(z_{\ell}^2) & \mbox{ if } \alpha \log T_j <1,
    \end{cases}
 \end{equation} where 
 $$t_{\ell}: = \min \Big\{ \log   \frac{1}{\alpha}, \log \log T_{\ell} \Big\}.$$ 
 
We will frequently use the following corollary to Lemma \ref{asympta}.

\begin{cor}
\label{cor_b}
    We have
    \begin{align*} b_{\alpha}(p;j) & = -1+ O \Big( \frac{z_j \log p}{\log T_j}\Big),  \qquad p \leq T_j , \\
    b_{\alpha}(p^2;j) &= - \frac{1}{2} + O \Big( \frac{z_j \log p}{\log T_j}\Big), \qquad p^2 \leq T_j.
    \end{align*} 
\end{cor}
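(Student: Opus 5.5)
Corollary \ref{cor_b} follows directly from Lemma \ref{asympta}, applied with $e^{2\pi\Delta}=T_j$, i.e.\ $2\pi\Delta=\log T_j$. Since $b_\alpha(n;j)=-\Lambda(n)a_\alpha(n;T_j)$, for $n=p^r\le T_j$ with $r\in\{1,2\}$ we have
\[
b_\alpha(p^r;j)=-\log p\cdot a_\alpha(p^r;T_j)=-\frac1r+\log p\cdot\big(a_\alpha(p^r;T_j)-\tfrac{1}{r\log p}\big).
\]
It therefore suffices to show that the correction $a_\alpha(p^r;T_j)-1/\log p^r$ is $O(z_j/\log T_j)$. We split into the two cases of Lemma \ref{asympta}.

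\textbf{Case $\alpha\log T_j\ge1$.} The lemma gives a correction of size $O(1/\Delta)=O(1/\log T_j)$. Multiplying by $\log p$ gives $O(\log p/\log T_j)$, which is at most $O(z_j\log p/\log T_j)$ because $z_j=\log(1/\beta_j)\ge\log(1/\theta)\gg1$ for $\theta$ small.

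\textbf{Case $\alpha\log T_j<1$.} The correction is
\[
\frac{2(\log T_j-\log p^r)}{(\log T_j)^2}\log\frac{1}{\alpha\log T_j}+O\Big(\frac1{\log T_j}\Big)\ll\frac{1}{\log T_j}\Big(1+\log\frac1{\alpha\log T_j}\Big).
\]
We bound the logarithm using the standing assumption \eqref{small_regime}, namely $\alpha\log T\ge M_0$. Then
\[
\frac{1}{\alpha\log T_j}=\frac{1}{\alpha\beta_j\log T}\le\frac{1}{M_0\beta_j},
\]
so $\log\frac{1}{\alpha\log T_j}\le z_j+O(1)\ll z_j$, again using $z_j\gg1$. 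Multiplying by $\log p$ gives the claimed error.

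The only point needing care is this second case: the factor $\log\frac{1}{\alpha\log T_j}$ must be controlled by $z_j$, and that relies on the lower bound $\alpha\log T\gg1$ from the small-$\alpha$ regime. Everything else is direct substitution into Lemma \ref{asympta}.
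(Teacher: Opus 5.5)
Your proof is correct and is essentially the argument the paper intends (it gives no separate proof, calling this a corollary of Lemma \ref{asympta}). You take $2\pi\Delta=\log T_j$, multiply the correction by $\Lambda(p^r)=\log p$, and bound $\log\frac{1}{\alpha\log T_j}\le z_j+O(1)\ll z_j$ using $\alpha\log T\ge M_0$ from \eqref{small_regime} together with $z_j\ge\log(1/\theta)\gg1$.
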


For $2 \leq \ell \leq K$, let
$$A_{\ell} = \big\{ |S_{\ell}^{(j)}- S_{\ell-1}^{(j)}| \leq 2 c_{\ell},  \, \, \forall \ell \leq j \leq K \big\}.$$
We have that
\begin{align}
    \P(H) & \leq \P (H \cap G_1^c)+ \sum_{\ell=2}^K \Big(\P (G_{\ell-1} \cap A_{\ell}^c)+ \P \big(G_{\ell-1} \cap A_{\ell} \cap \{ \exists j \geq \ell: S_{\ell}^{(j)}>U_{\ell}\}\Big) \nonumber \\
    &\qquad + \P \big(H \cap G_{\ell-1} \cap A_{\ell} \cap \{ \exists j \geq \ell : S_{\ell}^{(j)}<L_{\ell} \}\big) \Big) + \P(H \cap G_K). \label{ph}
\end{align}
We will bound the probabilities above in Section \ref{section_largedev}. Before doing that, in the next section, we prove some key estimates which are used to bound \eqref{ph}.

\section{Some localization results}
\label{section_localization}
In this section, we will prove three key localization bounds, which are analogues of Lemmas B.1 and B.2 in \cite{ABR}. These will be used in the proof of Theorem \ref{mainthm}. 

We first assume that 
$$ 1 \ll \alpha \log T \ll \Big(  \log \frac{1}{\alpha}\Big)^{12+\varepsilon}. $$
We denote by 
\begin{equation*}\label{cmp:events}
 G_\ell^{(j)}(u)=G_{\ell-1}\cap
                 \big\{S_\ell^{(j)}\in[u-1,u]\big\}.
\end{equation*}
For a sufficiently large constant \(C\), put
\begin{equation}\label{eq:delta-loc}
 \delta_{\rm loc}
 =C\bigg(\beta_1(\log\log T)^{11}
       +\sum_{i=2}^{K}\beta_i(1+z_i^2)c_i^{45/2}\bigg).
\end{equation}
Fix $\eta<1$. By choosing $\theta$ sufficiently small, and then taking $T$
sufficiently large, we obtain
\begin{equation*} 
\label{deltaloc}
 \delta_{\rm loc}\leq\frac{1-\eta}{2}.
\end{equation*}

\begin{prop}\label{comparison21}
Let \(1\leq\ell\leq j\leq K\).
Let \(\mathcal Q\) be a Dirichlet polynomial of length
at most \(T^\eta\), supported on integers all of whose prime factors exceed \(T_\ell\),
where \(\eta<1\) is fixed and
\begin{equation}\label{eq:length-condition}
 \eta+\delta_{\rm loc}<1.
\end{equation}
Put
\[
 L_\ell':=L_\ell-3c_\ell
   =\frac{V}{\log\log T}\log\log T_\ell-4c_\ell,
 \qquad
 U_\ell':=U_\ell+3c_\ell
   =\frac{V}{\log\log T}\log\log T_\ell+4c_\ell.
\]
Assume \(|u|\leq A\sigma_{\ell,j}^2\), for some $A >0$, and if
\(\ell\geq2\), assume also that \(u\in[L_\ell',U_\ell']\).
Then we have
\begin{equation*}
 \E\Big(|\mathcal Q|^2\mathbf1 \big(G_\ell^{(j)}(u)\big)\Big)
 \ll
 \E_{\mathrm{rand}}\big(|\widetilde{\mathcal Q}|^2\big) \,
 \frac{e^{-u^2/(2\sigma_{\ell,j}^2)}}
      {\sigma_{\ell,j}}.
\end{equation*}
\end{prop}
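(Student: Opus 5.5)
The plan is the standard localization argument of Arguin--Bailey--Roberts (Lemma B.1 in \cite{ABR}): replace the indicator functions by smooth polynomial approximations, expand into Dirichlet polynomials, use the mean-value theorem to pass to the random model, and then use independence of primes in disjoint ranges to factor out $\E_{\rm rand}(|\widetilde{\mathcal Q}|^2)$.

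First I will majorize the indicators. Write $G_{\ell-1}$ as the intersection over $1\le m\le \ell-1$ and $m\le i\le K$ of the events $\{S_m^{(i)}-S_{m-1}^{(i)}\in I_{m}\}$ together with constraints on partial sums. It is cleaner to use the increments $Y_m^{(i)}=S_m^{(i)}-S_{m-1}^{(i)}$. Then I will bound $\mathbf 1(G_\ell^{(j)}(u))$ by a product over $m$ of smooth bump functions $F_m$ of the increments, times a bump $F$ localizing $S_\ell^{(j)}$ to $[u-1,u]$. Each bump will be chosen with compactly supported Fourier transform (a Beurling--Selberg type majorant), so that it can be written as a Fourier integral. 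I will then truncate the Taylor expansion of $e^{i\xi Y}$ at degree roughly $c_m^{45/2}$ for the increment at level $m$ (degree $(\log\log T)^{11}$ at level $1$). The truncation error is controlled by the moment bound of Lemma \ref{lem_moments}: the increments have variance $\ll 1+z_m^2$, so on the support the Taylor tail is $e^{-c}$-small. The resulting polynomial in $Y_m^{(i)}$ is a Dirichlet polynomial of length at most $T_m^{O(c_m^{45/2})}=T^{O(\beta_m c_m^{45/2})}$. Taking the product over $m$ and the $O(K)$ indices $i$ (I expect $K$ and the counting of $i$ to be absorbed, possibly by working with a single representative $i$ per level, or using that only $j$-dependence via coefficients matters), the total length is $T^{\delta_{\rm loc}}$. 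This is exactly why $\delta_{\rm loc}$ has the form \eqref{eq:delta-loc}.

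Next I will apply the mean-value theorem. Because $\mathcal Q$ has length $T^\eta$ and $\eta+\delta_{\rm loc}<1$, the mean of $|\mathcal Q|^2\times(\text{polynomial})$ equals the corresponding random expectation up to a negligible error. In the random model, $\widetilde{\mathcal Q}$ involves only primes $>T_\ell$, while the localizing polynomials involve only primes $\le T_\ell$; here the restriction $p^2\le T_j$ with $p\le T_\ell$ keeps everything at primes $\le T_\ell$. By independence the expectation factors as $\E_{\rm rand}(|\widetilde{\mathcal Q}|^2)$ times the random expectation of the localizing polynomials. Undoing the Taylor truncation in the random model then bounds this by $\P_{\rm rand}(\widetilde S_\ell^{(j)}\in[u-2,u+1])$, plus negligible terms; at this point I drop the barrier conditions, which is an upper bound.

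It then remains to prove a local limit upper bound $\P_{\rm rand}(\widetilde S_\ell^{(j)}\in[u-2,u+1])\ll \sigma_{\ell,j}^{-1}e^{-u^2/(2\sigma_{\ell,j}^2)}$ for $|u|\le A\sigma^2_{\ell,j}$. I will do this by exponential tilting. Choose $\lambda=u/\sigma_{\ell,j}^2=O(A)$. The quantity $\E_{\rm rand}e^{\lambda \widetilde S}$ is computed prime by prime via Bessel functions and equals $e^{\lambda^2\sigma^2/2+O(1)}$, since the cubic corrections sum over $p^{-3/2}$ and converge. Under the tilted measure, a Berry--Esseen or characteristic-function bound gives probability $\ll 1/\sigma$ for an interval of length $3$. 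I expect this to be the main obstacle. The characteristic function of the tilted sum must decay for $|\xi|\ge\sigma^{-1}$, uniformly in $\lambda$, and the coefficients $b_\alpha(p;j)$ deviate from $-1$ by $O(z_j\log p/\log T_j)$ per Corollary \ref{cor_b}. I would handle this by using only primes in a range like $[\exp(\sqrt{\log\log T_\ell}),T_\ell^{1/2}]$ where the coefficients are $\asymp1$, giving decay $\prod_p|J_0(\ldots)|\le e^{-c\xi^2\sigma^2}$ for small $\xi$ and sufficient decay for larger $\xi$ using the many small primes. The other delicate point is the bookkeeping in the first step: the Taylor degrees must be large enough for the error to be small relative to the Gaussian factor $e^{-u^2/2\sigma^2}$. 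The choice $c_m^{45/2}$ against the barrier width $c_m$ should provide this with room to spare.
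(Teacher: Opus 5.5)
\textbf{There is a genuine gap in the approximation step.} Your skeleton is the right one: polynomial approximation, the mean-value theorem, factoring out $\E_{\rm rand}(|\widetilde{\mathcal Q}|^2)$ by disjointness of prime supports, then tilting. But organized as you propose, that step cannot produce the Gaussian factor.

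You localize the whole sum with one bump $F$, write $F(S_\ell^{(j)})=\int\hat F(\xi)\prod_m e^{i\xi Y_m}\,d\xi$, Taylor-truncate each block, and claim the truncation errors are negligible relative to $e^{-u^2/(2\sigma_{\ell,j}^2)}/\sigma_{\ell,j}$. They are not.
\begin{itemize}
\item \emph{The degree budget is bounded at late blocks.} $Y_m$ involves primes up to $T_m=T^{\beta_m}$, and the total length must stay below $T^{1-\eta}$. So the degree available at block $m$ is $O(\beta_m^{-1})$; your $c_m^{45/2}$ is of this type. For $m$ near $K$ we have $\beta_m\asymp\theta$, so the degree, and hence the accuracy of the truncation, is $O_\theta(1)$. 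The error attached to such a block is therefore a fixed constant times $\E_{\rm rand}(|\widetilde{\mathcal Q}|^2)$.
\item \emph{The target is much smaller.} It is polynomially small in $\log T$; for $\ell=K$ and $u\approx V$ it is about $\alpha^{\kappa^2}$. Your claim that degree $c_m^{45/2}$ beats the Gaussian factor ``with room to spare'' is false for these blocks.
\item \emph{The error terms lose all localization.} Once one factor is replaced by its Taylor remainder, the $\xi$-integral no longer reconstructs $F(S_\ell^{(j)})$. That error term then carries no localization of $S_\ell^{(j)}$ and cannot recover Gaussian decay.
\item \emph{Enlarging the Fourier support does not help.} Doing so to gain accuracy only raises the degrees needed at every block.
\item \emph{The main term also fails.} A Beurling--Selberg majorant has tails of size $(\Delta(x-u))^{-2}$. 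So $\E_{\rm rand}F(\widetilde S_\ell^{(j)})$ is dominated by $\widetilde S_\ell^{(j)}$ near $0$, and is only polynomially small in $\sigma_{\ell,j}$ when $|u|\asymp\sigma_{\ell,j}^2$. It is not bounded by $\P_{\rm rand}(\widetilde S_\ell^{(j)}\in[u-2,u+1])$ plus something negligible.
\end{itemize}

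\textbf{The missing idea is to localize each increment separately.} Then every error term still carries the constraint $\sum_i u_i\approx u$ and gets multiplied by the tilt. The paper does this in stages:
\begin{enumerate}
\item It puts each $Y_i$ on a grid of mesh $\Delta_i^{-1}$, with a random phase $\tau$ so that the $Y_i$ avoid the gaps of length $2\Delta_i^{-4}$.
\item It bounds $\mathbf 1(G_\ell^{(j)}(u))$ by $\sum_{\mathbf u\in\mathcal I(u)}\prod_{i\le\ell}\mathbf 1(Y_i\in[u_i,u_i+\Delta_i^{-1}))$. This is where the barriers are used: they confine $|u_i|\ll c_i$ for $i\ge2$.
\item It replaces each cell indicator by $|\mathcal D_{\Delta_i,X_i}(Y_i-u_i)|^2$. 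This has additive error $e^{-\Delta_i^2}$, and the region $|x|>X_i$ is handled by exponential moments.
\item Only after passing to the random model does it insert $1\le e^{\lambda(\sum_iu_i-u)+2|\lambda|}$ with $\lambda=u/\widetilde\sigma^2$.
\end{enumerate}

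With this structure, each block $i\ge2$ contributes at most $e^{O(c_i^{-1})}\E_{\rm rand}(e^{\lambda\widetilde Y_i})\bigl(1+O(c_i^2e^{-c_i^2+O(c_i)})\bigr)$. This is a \emph{relative} error, and its product over $i$ is bounded.

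The first block supplies the factor $1/\sigma_{\ell,j}$. There $T_1=T^{(\log\log T)^{-30}}$ allows $\Delta_1\asymp\sqrt{\log\log T}$, so $e^{-\Delta_1^2}$ really does beat the Gaussian. Once the other $u_i$ are fixed, $u_1$ is confined to an interval of length $O(1)$, and Berry--Esseen under the tilted measure gives $1/\sigma_{\ell,j}$.

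Finally, the step you flagged as the main obstacle is routine here. Bounded third moments and tilted variance $\asymp\sigma_{\ell,j}^2$ suffice, with no characteristic-function decay and no restriction of the prime range.
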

\begin{proof}
    Let 
\begin{equation*}
\label{yi}
    Y_i =S_{i}^{(j)}-S_{i-1}^{(j)}, \end{equation*} for $1 \leq i \leq \ell$. Note that we have dropped the superscript $j$ in the definition of $Y_i$, since $j$ is fixed throughout the proof.
Put
\[
 \Delta_1=C_1 \sigma_{1,j},\qquad\Delta_i=c_i\qquad(2\le i\le\ell),
 \]
where $C_1$ is sufficiently large, and, after decreasing
$\theta$, assume that
$$\sum_{i\le K}\Delta_i^{-1}\le\frac12\qquad \text{and}\qquad
2\sum_{i\le K}\Delta_i^{-3}\le\frac12.$$
Fix $\tau\in[0,1)$ and consider the intervals $[u_i,u_i+\Delta_i^{-1})$, where
\[
 u_i:=(n_i+\tau)(\Delta_i^{-1}+2\Delta_i^{-4}),
\]
with $n_i\in\mathbb Z$. For fixed $i$, these intervals are disjoint, and as $n_i$ varies, the gap between the interval with
left endpoint $u_i$ and the next one has length $2\Delta_i^{-4}$.

 Suppose that each $Y_i$, for $1\leq i\leq\ell$,
belongs to one of the intervals defined above, and let $u_i$ be
the left endpoint of that interval. Then
\[
0\leq Y_i-u_i<\Delta_i^{-1}.
\]
Summing these inequalities, we obtain that 
\[
0\leq S_q^{(j)}-\sum_{i=1}^{q}u_i
=\sum_{i=1}^{q}(Y_i-u_i)
<\sum_{i=1}^{q}\Delta_i^{-1}
\leq\frac12,
\]
for every $1\leq q\leq\ell$. Consequently, $S_q^{(j)}\in[L_q,U_q]$ implies
$\sum_{i\le q}u_i\in[L_q-1,U_q]$, while
$S_\ell^{(j)}\in[u-1,u]$ implies
$\sum_{i\le\ell}u_i\in[u-3/2,u]$.

Now let $\mathcal I(u)$ denote the set of tuples $\mathbf u=(u_1,\ldots,u_\ell)$ with the following properties:
\begin{align*} 
& u_i=(n_i+\tau)(\Delta_i^{-1}+2\Delta_i^{-4}),\qquad
 n_i\in\mathbb Z\qquad(1\le i\le\ell),\\
&\sum_{i\le q}u_i\in[L_q-1,U_q]\qquad(q<\ell),\\
& \sum_{i\le\ell}u_i\in\Big[u-\frac32,u\Big].
 \end{align*}
If $t\in G_{\ell-1}$, $S_\ell^{(j)}(t)\in[u-1,u]$,
and each $Y_i(t)$ belongs to one of the intervals defined above,
then the corresponding left endpoints satisfy
$
(u_1,\ldots,u_\ell)\in\mathcal I(u).
$

For each $i$, the measure of $\tau$ such that $Y_i$ lies in a gap of length $2 \Delta_i^{-4}$ is  
\[
 \frac{2\Delta_i^{-4}}
      {\Delta_i^{-1}+2\Delta_i^{-4}}
 \leq 2\Delta_i^{-3}.
\]
So we have
$$
 \operatorname{meas}\big\{
 \tau\in[0,1): Y_i \text{ lies in a gap for some }i\leq\ell
 \big\}
 \leq 2\sum_{i\leq\ell}\Delta_i^{-3}
 \leq\frac12,
$$
and, hence, for a set of phases of measure at least $1/2$,
every $Y_i$ belongs to one of the intervals in the grid.

 Multiplying by
$|\mathcal Q|^2\mathbf1(G_\ell^{(j)}(u))$, taking expectation, and then
integrating in $\tau$ gives
\[
 \int_0^1\E\!\Big(|\mathcal Q|^2\mathbf1(G_\ell^{(j)}(u))
   \mathbf1_{\{\text{every }Y_i\text{ lies in one of the displayed
                    intervals at phase }\tau\}}\Big)
 \,d\tau
 \geq\frac12\E\!\Big(|\mathcal Q|^2\mathbf1\big(G_\ell^{(j)}(u)\big)\Big).
\]
Hence, for at least one phase $\tau$, the integrand is at
least \(\frac12\E\big(|\mathcal Q|^2\mathbf1(G_\ell^{(j)}(u))\big)\), and it is, therefore, suffices to bound 
\begin{equation*}
 \sum_{\mathbf u\in\mathcal I(u)}
 \E\!\Big(|\mathcal Q|^2
   \prod_{i\le\ell}
   \mathbf 1\big(Y_i\in[u_i,u_i+\Delta_i^{-1})\big)\Big).
   \label{tobound}
\end{equation*}

Note that we must have
\begin{equation}\label{eq:centers}
 |u_1|\ll V
 \qquad \text{and}\qquad |u_i|\ll  c_i\qquad(i\ge2).
\end{equation} 

We use the polynomials in equation (B.7) of \cite{ABR} with the parameter $a=4$: for $\Delta$ large and $X\ge1$, there exists a polynomial $\mathcal{D}_{\Delta,X}$ of degree at most $100X\Delta^{12}$ with $n$-th coefficient at most $\frac{(2\pi)^n}{n!}\,2\Delta^{8(n+2)}$ such that for $|x|\le X$, we have
\begin{equation}\label{eq:sandwich}
\mathbf 1\bigl(x\in[0,\Delta^{-1}]\bigr)\bigl(1-e^{-\Delta^{2}}\bigr) \le |\mathcal{D}_{\Delta,X}(x)|^2 \le \mathbf 1\bigl(x\in[-\Delta^{-4},\Delta^{-1}+\Delta^{-4}]\bigr)+e^{-\Delta^{2}} .
\end{equation}
Take
\[
 X_1=C_2\Delta_1^{10},\qquad X_i=C_2(1+z_i^2)c_i^8\qquad(i\geq2)
\]
with \(C_2\) a large enough constant, and write
\(\mathcal D_i=\mathcal D_{\Delta_i,X_i}\). The degrees satisfy $$
 d_1\leq100C_2\Delta_1^{22}\ll (\log\log T)^{11}
\qquad \text{and}\qquad
 d_i\leq100C_2(1+z_i^2)c_i^{20}\qquad (i \geq 2).$$ 
Expanding 
the product $|\mathcal Q|^2\prod_i|\mathcal D_i(Y_i-u_i)|^2$, we get terms of the form
\[
 \Big(\mathcal Q\prod_{i\leq\ell}
       (\mathcal S_i^{(j)}-\mathcal S_{i-1}^{(j)})^{r_i}\Big)
 \overline{\Big(\mathcal Q\prod_{i\leq\ell}
       (\mathcal S_i^{(j)}-\mathcal S_{i-1}^{(j)})^{s_i}\Big)},
 \qquad r_i+s_i\leq2d_i.
\]
Each of the two Dirichlet polynomials in such a term has length at
most $T^\eta\prod_{i\le\ell}T_i^{4d_i}$, and we have
\begin{equation*}\label{eq:budget}
 T^\eta\prod_{i\leq\ell}T_i^{4d_i}
 \leq T^{\eta+C \beta_1(\log\log T)^{11}
              +C\sum_{i=2}^{\ell}\beta_i(1+z_i^2)c_i^{20}}
 \leq T^{\eta+\delta_{\rm loc}}<T,
\end{equation*}
by \eqref{eq:length-condition}. 
Applying Montgomery--Vaughan \cite[Corollary 2, (1.8)]{MV}  gives an error of size $O(T^{\eta+\delta_{\rm loc}-1})$, and
combining this with \eqref{eq:sandwich} 
gives
\begin{equation}\label{eq:B9}
\E\Bigl(|\mathcal{Q}|^2\,\mathbf 1\bigl(G_\ell^{(j)}(u)\bigr)\Bigr)
 \ll  \E_{\rm rand}\bigl(|\widetilde{\mathcal Q}|^2\bigr) \sum_{\mathbf u\in\mathcal{I}(u)}\ \prod_{i\le\ell}\E_{\rm rand}\Big(\big|\mathcal{D}_i(\widetilde Y_i-u_i)\big|^2\Big).
\end{equation}

Note that
\[
\text{Var}(\widetilde Y_1)=\sigma_{1,j}^2+o(1)\asymp\log\log T,
\qquad
 \text{Var}(\widetilde Y_i)\ll z_i^2\qquad(i\geq2).
\]
Let
\begin{equation}
\label{random_var}
\widetilde{\sigma}^2
=\text{Var}\bigl(\widetilde S_\ell^{(j)}\bigr)
=\sigma_{\ell,j}^2+o(1).
\end{equation}
We write
\[
\widetilde Y_i=\sum_{T_{i-1}<p\leq T_i}C_j(p),
\]
where $T_0=1$ and
\begin{equation}\label{cjp}
C_j(p)=\Re\Big(
 \frac{b_\alpha(p;j)X(p)}{p^{1/2+\alpha}}
 +\frac{b_\alpha(p^2;j)X(p)^2}{p^{1+2\alpha}}
       \mathbf1_{\{p^2\leq T_j\}}
\Big).
\end{equation}
Using Corollary \ref{cor_b}, we have
\begin{equation}\label{defmpvp}
m_p:=\|C_j(p)\|_\infty
\leq \frac{C}{p^{1/2+\alpha}}
\Big(1+\frac{z_j \log p}{\log T_j}\Big).
\end{equation}
Moreover,
\[
\sum_{T_{i-1}<p\leq T_i}
\E_{\rm rand}\big(C_j(p)^2\big)
=\text{Var}(\widetilde Y_i).
\]

Now Hoeffding's lemma gives
\begin{equation}\label{eq:hoeff}
\E_{\rm rand}(e^{\lambda\widetilde Y_i}) \le  \exp\Bigl(\frac{\lambda^2w_i}{2}\Bigr)
\end{equation}
for every $\lambda\in\mathbb R$, with
\[
w_i:=\sum_{T_{i-1}<p\le T_i}m_p^2.
\]
We have
\[
w_1\ll \log\log T \qquad\text{and}\qquad w_i\ll z_i^2\qquad (i\ge2).
\]
For $|\lambda|\leq3A$, Taylor's formula and the fact that
$\E_{\rm rand}(C_j(p))=0$ give
\[
\log\big(\E_{\rm rand}(e^{\lambda C_j(p)})\big)
=\frac{\lambda^2}{2}\E_{\rm rand}(C_j(p)^2)+O(m_p^3).
\]
By independence,
\begin{equation}\label{eq:mgf}
\E_{\rm rand}(e^{\lambda\widetilde Y_i})
=\exp\bigg(
 \frac{\lambda^2}{2}\text{Var}(\widetilde Y_i)
 +O \Big(\sum_{T_{i-1}<p\leq T_i}m_p^3\Big)
\bigg).
\end{equation}
From the coefficient bound \eqref{defmpvp}, we get that
\[
\sum_{T_{i-1}<p\leq T_i}m_p^3
\ll
\begin{cases}
1 & i=1,\\
z_i^3T_{i-1}^{-1/2} & i\geq2.
\end{cases}
\]

Now note that we have 
$|\mathcal{D}_i(x)|\le2\Delta_i^{16}e^{2\pi\Delta_i^{8}|x|}$. We also have that \begin{equation}
\mathbf 1\bigl(|x|>X_i\bigr)\ \le\ e^{4\pi\Delta_i^{8}(|x|-X_i)}.
\end{equation}
Hence 
\[
|\mathcal{D}_i(\widetilde{Y}_i-u_i)|^2\,\mathbf 1\bigl(|\widetilde{Y}_i-u_i|>X_i\bigr)
 \le 4\Delta_i^{32}\,e^{-4\pi\Delta_i^{8}X_i}\,e^{8\pi\Delta_i^{8}|\widetilde{Y}_i-u_i|}.
\]
Using the triangle inequality,
\begin{equation}
\E_{\rm rand}\Bigl(|\mathcal{D}_i(\widetilde{Y}_i-u_i)|^2 \mathbf 1\bigl( |\widetilde{Y}_i-u_i|>X_i \bigr)\Bigr)
 \le 4\Delta_i^{32}\,e^{-4\pi\Delta_i^{8}X_i}\,e^{8\pi\Delta_i^{8}|u_i|}\ \E_{\rm rand}(e^{8\pi\Delta_i^{8}|\widetilde{Y}_i|}).
\end{equation}
Since
\[
\E_{\rm rand}(e^{8\pi\Delta_i^{8}|\widetilde{Y}_i|}) \le \E_{\rm rand}(e^{8\pi\Delta_i^{8}\widetilde{Y}_i})+ \E_{\rm rand}(e^{-8\pi\Delta_i^{8}\widetilde{Y}_i}),
\]
and, by Hoeffding's bound \eqref{eq:hoeff}, 
\[
\E_{\rm rand}(e^{\pm8\pi\Delta_i^{8}\widetilde{Y}_i}) \le e^{32\pi^2\Delta_i^{16}w_i},
\]
it follows that
\begin{equation} \label{bigyi}
\E_{\rm rand}\Bigl(|\mathcal{D}_i(\widetilde{Y}_i-u_i)|^2 \mathbf 1\bigl( |\widetilde{Y}_i-u_i|>X_i \bigr)\Bigr)
 \le  8 \Delta_i^{32} \exp\Bigl(-4\pi\Delta_i^{8}X_i+8\pi\Delta_i^{8}|u_i|+32\pi^2\Delta_i^{16}w_i\Bigr) \le\ e^{-\Delta_i^{2}}.
\end{equation}
Combining \eqref{bigyi} with \eqref{eq:sandwich} on the range $|\widetilde Y_i-u_i|\le X_i$, we get that
\begin{equation}\label{eq:blockbd}
\E_{\rm rand}\Bigl(\bigl|\mathcal{D}_i(\widetilde Y_i-u_i)\bigr|^2\Bigr) \le  \P_{\rm rand}\Bigl(\widetilde Y_i\in[u_i-\Delta_i^{-4},\,u_i+\Delta_i^{-1}+\Delta_i^{-4}]\Bigr) + 2e^{-\Delta_i^{2}}.
\end{equation}

For simplicity of notation, let 
$$P_i(u_i) = \P_{\rm rand} \Big(  \widetilde{Y}_i \in [u_i - \Delta_i^{-4}, u_i+\Delta_i^{-1}+\Delta_i^{-4}]\Big).$$
Take
\begin{equation}
\label{lambda}
\lambda=\frac{u}{\widetilde\sigma^2}.
\end{equation}
Since $|u|\leq A\sigma_{\ell,j}^2$ and
$\widetilde\sigma^2=\sigma_{\ell,j}^2+o(1)$ by
\eqref{random_var}, we have $|\lambda|\leq3A$ for sufficiently
large $T$. Thus we may apply \eqref{eq:mgf} with this choice.

For a tuple $\mathbf u\in\mathcal{I}(u)$ we have $\sum_iu_i \in [u-2,u]$, so $1\le e^{\lambda(\sum_iu_i-u)+2|\lambda|}$, and \eqref{eq:B9}--\eqref{eq:blockbd} give
\begin{align*}
\sum_{\mathbf u\in\mathcal{I}(u)} &\prod_{i\le\ell}\Big(P_i(u_i) + 2e^{-\Delta_i^{2}} \Big)
 \le  e^{2|\lambda|-\lambda u} \sum_{\mathbf u\in\mathcal{I}(u)}\ \prod_{i\le\ell}e^{\lambda u_i}\Big( P_i(u_i) + 2e^{-\Delta_i^{2}} \Big).
\end{align*}

Let $B_i(u_i)$ denote the $i^{\text{th}}$ factor above.  Then
\begin{align*}
 \sum_{\mathbf u\in\mathcal I(u)}\prod_{i\leq\ell}B_i(u_i)
 &=
 \sum_{\substack{|u_i|\leq C c_i\\2\leq i\leq\ell}}
 \bigg(\prod_{i=2}^{\ell}B_i(u_i)\bigg)
 \sum_{\substack{u_1:\,
       (u_1,\ldots,u_\ell)\in\mathcal I(u)}}B_1(u_1).
\end{align*}
Fixing $(u_i)_{i\ge2}$, since $(u_1,\ldots, u_{\ell}) \in \mathcal{I}(u)$, we must have that $u_1$ is confined to an interval $I$ of length $O(1)$. Let $\widehat I$
be obtained by extending $I$ by $2$ at each end, so that
$|\widehat I|=|I|+4=O(1)$.

If $u_1\in I$ and
$
\widetilde Y_1\in
[u_1-\Delta_1^{-4},\,u_1+\Delta_1^{-1}+\Delta_1^{-4}],
$
then $|\widetilde Y_1-u_1|\leq2$. Thus
$\widetilde Y_1\in\widehat I$ and, since $|\lambda|\leq3A$,
$
e^{\lambda u_1}\ll e^{\lambda\widetilde Y_1}.
$
The enlarged intervals meet only at their endpoints.
Consequently, for any fixed value of $\widetilde Y_1$,
at most one of these intervals contains it. Hence
\begin{align}\label{eq:first-probability}
\sum_{u_1\in I}e^{\lambda u_1} 
\P_{\rm rand}\Bigl(\widetilde Y_1\in
[u_1-\Delta_1^{-4},\,u_1+\Delta_1^{-1}+\Delta_1^{-4}]\Bigr) &\ll  \E_{\rm rand}\Bigl(
e^{\lambda\widetilde Y_1}
\mathbf1(\widetilde Y_1\in\widehat I)\Bigr)\nonumber\\
&= \E_{\rm rand}\bigl(e^{\lambda\widetilde Y_1}\bigr)
\P_\lambda(\widetilde Y_1\in\widehat I),
\end{align}
where $\P_\lambda$ is the normalized tilted measure $$\P_\lambda(E):=\frac{\E_{\rm rand}[e^{\lambda\widetilde Y_1}\mathbf 1_E]}{\E_{\rm rand}( e^{\lambda\widetilde Y_1})}.$$ 
We denote $\E_{\P_\lambda}$ and $\operatorname{Var}_{\P_\lambda}$ the expectation and variance with respect to $\P_\lambda$.  The Berry--Esseen theorem then gives
\begin{equation}\label{eq:BE}
\sup_{x} \P_\lambda\Bigl(\widetilde Y_1\in[x,\,x+O(1)]\Bigr) \ll \frac1{\sigma_{\ell,j}} ,
\end{equation}
where we used the fact that $\operatorname{Var}_{\P_\lambda}(\widetilde Y_1)
 =\sigma_{1,j}^2+O(1)
 \asymp\sigma_{\ell,j}^2$.
For the error term at the initial block we sum  over the
whole admissible range \eqref{eq:centers}, and we have
\[
 \sum_{|u_1|\ll \log\log T}e^{\lambda u_1}2e^{-\Delta_1^2}
 \ll \Delta_1\log\log T\,
        e^{C \log\log T-\Delta_1^2}
 \leq e^{-\Delta_1^2/2}\leq\frac1{\sigma_{\ell,j}}.
\]
Since $\E_{\rm rand} (e^{\lambda \widetilde{Y}_1}) \geq 1$, using Jensen's inequality we obtain that 
\begin{equation} 
\label{firstblock} \sum_{u_1 : (u_1,\ldots,u_{\ell}) \in \mathcal{I}(u)} B_1(u_1) \ll \frac{ \E_{\rm rand} (e^{\lambda \widetilde{Y}_1})}{\sigma_{\ell,j}}.
\end{equation}

Now for $i\ge2$, recall that $|u_i|\ll  c_i$.
We have that 
\[
 \sum_{|u_i| \ll c_i}e^{\lambda u_i}2e^{-\Delta_i^2}
 \ll c_i^2e^{-c_i^2+O( c_i)}.
\]
For the term involving $P_i(u_i)$, a similar argument as in the case of $i=1$ gives 
\[
\sum_{|u_i|\ll c_i}e^{\lambda u_i}P_i(u_i)
\leq e^{O(\Delta_i^{-1}+\Delta_i^{-4})}
     \E_{\rm rand}(e^{\lambda\widetilde Y_i}).
\]
Since $\E_{\rm rand}(e^{\lambda\widetilde Y_i})\ge1$ by Jensen, combining the two bounds above leads to
\begin{align*}
&\sum_{u_i}B_i(u_i)\le e^{O(\Delta_i^{-1}+\Delta_i^{-4})}
 \E_{\rm rand}\big(e^{\lambda\widetilde Y_i}\big)
 \Big(1+O\big( c_i^2e^{-c_i^2+O(c_i)}\big)\Big).
\label{eq:Xi}
\end{align*}
For $\theta$ small enough, we have
\begin{equation*}\label{eq:errprod}
 \exp \bigg(O\Big(\sum_{i\ge2}
       (\Delta_i^{-1}+\Delta_i^{-4}) \Big)\bigg)
 \prod_{i\ge2}\Bigl(1+O\big(c_i^2e^{-c_i^2+O(c_i)}\big)\Bigr)\ll 1.
\end{equation*}
Hence, combining the above with \eqref{firstblock} and \eqref{eq:mgf}, we get that 
\begin{align*}
\sum_{\mathbf u\in\mathcal{I}(u)} \prod_{i\le\ell}  \bigg(\P_{\rm rand}\Bigl(\widetilde Y_i\in[u_i-\Delta_i^{-4},\,u_i+\Delta_i^{-1}+\Delta_i^{-4}]\Bigr)+2 e^{-\Delta_i^2} \bigg)
 &\ll \frac{e^{-\lambda u}}{\sigma_{\ell,j}}\ \prod_{i\le\ell}\E_{\rm rand}\big(e^{\lambda\widetilde Y_i}\big) e^{O(1)}\\
& \ll \frac1{\sigma_{\ell,j}}\exp\Bigl(-\lambda u+\frac{\lambda^2\widetilde\sigma^2}{2}\Bigr)
  ,
\end{align*}
using \eqref{random_var}. 
Since
\begin{equation*}
-\lambda u+\frac{\lambda^2\widetilde\sigma^2}{2}
=-\frac{u^2}{2\widetilde\sigma^2},
\end{equation*} by the choice for $\lambda$ in \eqref{lambda}, we obtain the proposition.
\end{proof}

We also need the following proposition.

\begin{prop}\label{comparison22}
Let \(2\leq\ell\leq K\) and \(\ell\leq j,m\leq K\).
Put
\[
 L_\ell':=L_\ell-3c_\ell
   =\frac{V}{\log\log T}\log\log T_\ell-4c_\ell,
 \qquad
 U_\ell':=U_\ell+3c_\ell
   =\frac{V}{\log\log T}\log\log T_\ell+4c_\ell.
\]
Assume
\[
 u,w\in[L_\ell',U_\ell'],\qquad
 |u|\leq A\sigma_{\ell,j}^2\qquad\text{and}\qquad
 |w|\leq A\sigma_{\ell,m}^2.
\]
Then there exists $C>0$ such that
\begin{align}
 &\P\Big(G_{\ell-1}\cap\{S_\ell^{(j)}\in[u-1,u]\}
                       \cap\{S_\ell^{(m)}\in[w-1,w]\}\Big)\notag\\
 &\qquad\ll \frac1{\sigma_{\ell,j}}
 \exp\Big(-\frac{(u+w)^2}{8\sigma_{\ell,j}^2}
       -C\frac{(w-u)^2}{z_\ell^2}+O(z_\ell^2)\Big).
\label{eq:two-point}
\end{align}
Also, for \(\ell=1\), \(1\leq j,m\leq K\) and \(H>0\) fixed, uniformly for 
\(|u|\leq A\sigma_{1,j}^2\),
\(|w|\leq A\sigma_{1,m}^2\), we have
\begin{align}
 &\P\Big(\{S_1^{(j)}\in[u-1,u]\}
                       \cap\{S_1^{(m)}\in[w-1,w]\}\Big)\ll \frac1{\sigma_{1,j}}
 \exp\Big(-\frac{(u+w)^2}{8\sigma_{1,j}^2}
       -H|w-u|+O(z_1^2)\Big). \label{eq:initial-two-point}
\end{align}
\end{prop}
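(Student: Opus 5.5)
The plan is to repeat the localization argument of Proposition~\ref{comparison21} for the pair $(S_\ell^{(j)},S_\ell^{(m)})$, with $\mathcal Q=1$, and to tilt \emph{both} coordinates at once. Write $Y_i^{(j)}=S_i^{(j)}-S_{i-1}^{(j)}$ and $Y_i^{(m)}=S_i^{(m)}-S_{i-1}^{(m)}$. I would discretize each $Y_i^{(j)}$ with the random-phase grid $[u_i,u_i+\Delta_i^{-1})$ as before. For $S_\ell^{(m)}$ I would not localize block by block; I would add one extra factor $|\mathcal D_{\Delta,X}(S_\ell^{(m)}-w')|^2$, with $\Delta$ a large constant and $X$ of size $O(\sigma^2)$. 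This costs only a bounded extra power of $T_\ell$, which fits within the length budget $\delta_{\rm loc}$ after possibly shrinking $\theta$. The Montgomery--Vaughan step then turns the expectation over $t$ into one over the Steinhaus model, up to $O(T^{\delta_{\rm loc}-1})$, exactly as in \eqref{eq:B9}. We are left to bound
\[
\P_{\rm rand}\Big(\widetilde Y_i^{(j)}\approx u_i\ (i\le\ell),\ \widetilde S_\ell^{(m)}\in[w-2,w+1]\Big),
\]
summed over tuples $\mathbf u\in\mathcal I(u)$. Here the barriers $\sum_{i\le q}u_i\in[L_q-1,U_q]$ for $q<\ell$ are kept only for $j$.

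The key step is an exponential tilt by $e^{\lambda\widetilde S_\ell^{(j)}+\mu\widetilde S_\ell^{(m)}}$. For this, decompose $\widetilde S_\ell^{(m)}=\widetilde S_\ell^{(j)}+\widetilde D$, where
\[
\widetilde D=\sum_{p\le T_\ell}\Re\Big(\frac{(b_\alpha(p;m)-b_\alpha(p;j))X(p)}{p^{1/2+\alpha}}\Big)+\text{(prime squares)}.
\]
By Corollary~\ref{cor_b}, the coefficients of $\widetilde D$ are $O(z_\ell\log p/\log T_{\min(j,m)})$ with $\log p\le\log T_\ell$. Hence $\operatorname{Var}\widetilde D\ll z_\ell^2$, and the cross-covariance with $\widetilde S_\ell^{(j)}$ is $O(z_\ell)$. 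The joint cumulant generating function then follows from \eqref{eq:mgf} blockwise:
\[
\frac{(\lambda+\mu)^2}{2}\sigma_{\ell,j}^2+\mu(\lambda+\mu)O(z_\ell)+\frac{\mu^2}{2}\operatorname{Var}\widetilde D+O(1).
\]
I would write $u,w$ in terms of $s=(u+w)/2$ and $d=w-u$. Set $\lambda+\mu=s/\sigma_{\ell,j}^2$, which is bounded by $O(A)$. Choose $\mu=c\,d/z_\ell^2$ for a small constant $c$, truncated at a bounded size for large $|d|$; since $|d|\ll c_\ell$, $\mu$ stays bounded. The Chebyshev-type factor $e^{-\lambda u-\mu w}$ combined with the generating function then gives
\[
\exp\Big(-\frac{s^2}{2\sigma_{\ell,j}^2}-c'\frac{d^2}{z_\ell^2}+O(z_\ell^2)\Big),
\]
and $s^2/2=(u+w)^2/8$. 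The cross term $\mu(\lambda+\mu)z_\ell$ and the mismatch between $\sigma_{\ell,m}^2$ and $\sigma_{\ell,j}^2$, which is $O(z_\ell^2)$ by \eqref{new_variance}, are absorbed into $O(z_\ell^2)$. The prefactor $1/\sigma_{\ell,j}$ comes from the first block as in \eqref{firstblock}: under the tilted measure, Berry--Esseen applied to $\widetilde Y_1^{(j)}$ confines it to an $O(1)$ window. The remaining blocks contribute $e^{O(1)}$ exactly as before.

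For $\ell=1$ the same argument applies, but there are no barriers and the block is the whole of $\widetilde S_1$. There the difference $\widetilde D$ has coefficients $O(z_1\log p/\log T_{\min})$ on $p\le T_1$, so $\operatorname{Var}\widetilde D\ll z_1^2\beta_1^2/\beta_{\min}^2\ll z_1^2$. I would tilt with a constant $\mu=\pm H'$, with the sign chosen as that of $w-u$. This yields a factor $e^{-H'|w-u|+O(H'^2z_1^2)}$, and since $H$ is fixed the $O(z_1^2)$ absorbs the quadratic cost.

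The main obstacle I expect is the joint localization on the first block. We need the tilted probability that $\widetilde Y_1^{(j)}$ lies in a unit window to be $\ll1/\sigma$ uniformly while simultaneously tilting by $\mu\widetilde D$. This requires checking that the Berry--Esseen variance under the two-parameter tilt remains $\asymp\sigma_{\ell,j}^2$. It also requires that the third-moment error $\sum_p m_p^3$, now with coefficients enlarged by $\mu z_\ell$, stays $O(1)$ for $i=1$ and summable for $i\ge2$; the choice $\mu=O(1)$ is made precisely to ensure this.
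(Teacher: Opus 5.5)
The overall scheme is the paper's: tilt in the directions $(u+w)/2$ and $w-u$, use a cumulant bound with an $O(z_\ell)$ cross term, and apply Berry--Esseen on the first block. There are, however, two genuine gaps.

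\textbf{Localizing $S_\ell^{(m)}$ with one factor fails.} The factor $|\mathcal D_{\Delta,X}(S_\ell^{(m)}-w')|^2$ is not ``a bounded extra power of $T_\ell$''. The polynomial $\mathcal D_{\Delta,X}$ has degree of order $X\Delta^{12}$, and $X$ cannot be small. In the regime \eqref{small_regime}, $\widetilde S_\ell^{(m)}$ has variance $\asymp\log\log T$ for every $\ell$, and $|w'|$ is as large as $V$ for $\ell$ near $K$. So the truncation step \eqref{bigyi} forces $X\gtrsim|w'|+\Delta^8\sigma_{\ell,m}^2\asymp\log\log T$.

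Expanding then produces $(S_\ell^{(m)})^r$ with $r\asymp\log\log T$. This is a Dirichlet polynomial of length $T_\ell^{2r}=T^{\asymp\beta_\ell\log\log T}$, which exceeds $T$ whenever $\beta_\ell\gg1/\log\log T$, in particular when $\beta_\ell\asymp\theta$. Shrinking the fixed $\theta$ cannot absorb a factor $\log\log T$, so the Montgomery--Vaughan transfer fails. Only the first block, where $\beta_1=(\log\log T)^{-30}$, can afford localization at full-variance scale.

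A constant $\Delta$ is also too coarse in its own right. After summing over $\mathbf u$, the additive error $e^{-\Delta^2}$ in \eqref{eq:sandwich} multiplies the one-point probability $\asymp\sigma^{-1}e^{-u^2/(2\sigma^2)}$, which has no decay in $w-u$. To see $e^{-C(w-u)^2/z_\ell^2}$ with $|w-u|\asymp c_\ell$ you would need $\Delta\gtrsim c_\ell/z_\ell$ (and $\Delta^2\gtrsim H\log\log T$ when $\ell=1$), making the degree larger still.

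The paper instead localizes $Y_i'=S_i^{(m)}-S_{i-1}^{(m)}$ block by block, on the same random-phase grid as $Y_i$. For $i\ge2$ it takes $\Delta_i=c_i$, $a_i=9/2$ and $X_i=C_2z_i^2c_i^9$. These increments have variance $O(z_i^2)$, so every length stays below $T^{\delta_{\rm loc}}$. The errors $e^{-c_i^{5/2}}$ beat the tilt weights, which are at most $e^{O(c_i^2)}$.

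\textbf{$\mu$ is not bounded.} Your choice $\mu=c\,d/z_\ell^2$ with $|d|$ up to $\asymp c_\ell$ gives $|\mu|$ up to $c_\ell/z_\ell^2=\beta_\ell^{-1/90}/\log^2(1/\beta_\ell)$. This is large, and it tends to infinity for $\ell=2$. Truncating $\mu$ at a constant $M$ only yields $e^{-M|w-u|+O(M^2z_\ell^2)}$. That is weaker than the claimed $e^{-C(w-u)^2/z_\ell^2}$ once $|w-u|\gg z_\ell^2$, and the $O(z_\ell^2)$ slack cannot recover a loss of order $c_\ell^2/z_\ell^2$. (A linear bound of this type would actually suffice for Lemma~\ref{lower}, but it is not the statement.)

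Boundedness is unnecessary. The paper takes $\mu=\eta(w-u)/z_\ell^2$ and bounds $K(\lambda,\mu)=\log\E_{\rm rand}e^{\lambda\mathcal Z+\mu\mathcal F}$ prime by prime via Hoeffding's lemma. This gives $K(\lambda,\mu)\le\lambda^2\sigma_+^2/2+C_1z_\ell^2(|\mu|+\mu^2)+C_2$ for every $\mu$, and then $\eta$ is chosen with $C_1\eta^2\le\eta/4$. Your third-moment worry disappears because $\|F_p\|_\infty\ll z_\ell p^{-1/2}\log p/\log T_\ell$. The perturbation per prime is therefore of size $|\mu|z_\ell\log p/\log T_\ell$, not $|\mu|z_\ell$. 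The first-block Berry--Esseen step only needs $|\mu|\bigl(\max_p\|F_p\|_\infty+\sum_p\|C_j(p)\|_\infty^2\|F_p\|_\infty\bigr)=o(1)$, which holds.
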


\begin{proof}
The proof is similar to the proof of Lemma B.2 in \cite{ABR} and follows along similar lines as the proof of Proposition \ref{comparison21}.

Put
\[
 Y_i=S_i^{(j)}-S_{i-1}^{(j)}\qquad\text{and}\qquad
 Y_i'=S_i^{(m)}-S_{i-1}^{(m)}.
\]
Take
\begin{equation}\label{delta1}
 \Delta_1=C_3 \sigma_{1,j},\qquad a_1=4,
 \qquad X_1=C_3\Delta_1^{10},
\end{equation}
and, for $i\geq2$,
\begin{equation}\label{deltai}
 \Delta_i=c_i,\qquad a_i=\frac92,
 \qquad X_i={C_2z_i^2c_i^9},
\end{equation}
where $C_3$ is sufficiently large.
After decreasing $\theta$, we also assume that
\[
 \sum_{i\le K}\Delta_i^{-1}\le\frac12,\qquad
 4\sum_{i\le K}\Delta_i^{1-a_i}\le\frac12.
\]
For $\tau\in[0,1)$, we consider the intervals $[u_i,u_i+\Delta_i^{-1})$ and
$[w_i,w_i+\Delta_i^{-1})$, where
\[
 u_i=(n_i+\tau)(\Delta_i^{-1}+2\Delta_i^{-a_i}),\qquad
 w_i=(m_i+\tau)(\Delta_i^{-1}+2\Delta_i^{-a_i})
\]
with $n_i,m_i\in\mathbb Z$.    We define $\mathcal I(u)$ and $\mathcal I(w)$
as in
Proposition~\ref{comparison21}, using the grids above.
For $\mathbf u\in\mathcal I(u)$ and
$\mathbf w\in\mathcal I(w)$, when $\ell\geq2$ we have
\begin{equation*}\label{eq:b2-center-size}
|u_i|+|w_i|\ll  c_i\qquad(2\leq i\leq\ell),
\end{equation*}
and
\begin{equation}
\label{u1w1}
|u_1|+|w_1|\ll  V \ll \log\log T,
\qquad
|u_1-w_1|\ll c_1.
\end{equation}

By the averaging argument of
Proposition~\ref{comparison21},  
we may fix a phase $\tau$ for which
\begin{align*}
&\P\Bigl(
G_{\ell-1}
\cap\big\{S_\ell^{(j)}\in[u-1,u]\big\}
\cap\big\{S_\ell^{(m)}\in[w-1,w]\big\}
\Bigr)\notag\\
&\qquad\ll
\sum_{\substack{\mathbf u\in\mathcal I(u)\\
                 \mathbf w\in\mathcal I(w)}}
\P\Bigl(
Y_i\in[u_i,u_i+\Delta_i^{-1}),\
Y_i'\in[w_i,w_i+\Delta_i^{-1})
\text{ for all }i\leq\ell
\Bigr).
\label{tbc}
\end{align*}
We choose the polynomial $\mathcal{D}_i$ as in \eqref{eq:sandwich}
with the choices of parameters as in \eqref{delta1} and \eqref{deltai}.
Recall the definition \eqref{cjp} for $C_j(p)$
and define
\[
 \widetilde Y_i=\sum_{T_{i-1}<p\le T_i}C_j(p)\qquad\text{and}\qquad
 \widetilde Y_i'=\sum_{T_{i-1}<p\le T_i}C_m(p).
\]

Before passing to the random model, we check the lengths
required by the mean-value theorem. As in
Proposition~\ref{comparison21}, expanding
$
\prod_{i\leq\ell}
|\mathcal D_i(Y_i-u_i)|^2
|\mathcal D_i(Y_i'-w_i)|^2
$
gives products of a Dirichlet polynomial and the conjugate
of another. Each has length at most
\begin{equation*}\label{eq:b2-length}
 \prod_{i\leq\ell}T_i^{8d_i}
 \leq T^{C \beta_1(\log\log T)^{11}
    +C \sum_{i=2}^{\ell}\beta_i(1+z_i^2)c_i^{45/2}}
 \leq T^{\delta_{\rm loc}}<T.
\end{equation*}
The same mean-value argument as in the proof of Proposition \ref{comparison21} shows that 
\begin{align*}
 \P\Big(&G_{\ell-1}
 \cap\big\{S_\ell^{(j)}\in[u-1,u]\big\}
 \cap\big\{S_\ell^{(m)}\in[w-1,w]\big\}\Big)\notag\\
 &\quad\ll\sum_{\mathbf u\in\mathcal I(u),\,\mathbf w\in\mathcal I(w)}
 \E_{\rm rand}\bigg(\prod_{i=1}^{\ell}
 |\mathcal D_i(\widetilde Y_i-u_i)|^2
 |\mathcal D_i(\widetilde Y_i'-w_i)|^2\bigg).
 \label{eq:b2-transfer}
\end{align*}

We first remove the
ranges on which either $|\widetilde Y_i-u_i|>X_i$ or
$|\widetilde Y_i'-w_i|>X_i$.  The same exponential-moment calculation
as in \eqref{bigyi}, applied to the two variables, shows that their
total contribution is
$
 \ll e^{-\Delta_i^{a_i-2}}.
$
 Now using \eqref{eq:sandwich}, we get that
the probability on the left hand side of \eqref{eq:two-point} is
$$ \ll\sum_{\textbf{u} \in \mathcal{I}(u), \textbf{w} \in \mathcal{I}(w)} \prod_{i=1}^{\ell} \big( P_i(u_i,w_i)  + \varepsilon_i \big), $$
where, for simplicity of notation,
$$ P_i(u_i,w_i) = \P_{\rm rand} \Big( \widetilde Y_i \in [u_i-\Delta_i^{-a_i}, u_i + \Delta_i^{-1}+\Delta_i^{-a_i}], \widetilde Y_i' \in [w_i-\Delta_i^{-a_i}, w_i + \Delta_i^{-1}+\Delta_i^{-a_i}]\Big) $$
and 
$$\varepsilon_i = C e^{-\Delta_i^{a_i-2}}.$$

Define
\begin{equation*}
\label{zp}
 Z_p=\frac{C_j(p)+C_m(p)}2,\qquad F_p=C_m(p)-C_j(p),
\end{equation*}
where $C_j(p)$ is defined in equation \eqref{cjp}. Let $\mathcal Z_i,\mathcal F_i$ be their respective sums over
$T_{i-1}<p\le T_i$ and let
$$ \mathcal{Z} := \sum_{i \leq \ell}  \mathcal{Z}_i= \frac{1}{2} \big(  \widetilde{S_{\ell}}^{(j)}+\widetilde{S_{\ell}}^{(m)} \big),$$
and $$\mathcal{F} :=  \sum_{i \leq \ell} \mathcal{F}_i = \widetilde S_{\ell}^{(m)}-\widetilde S_{\ell}^{(j)}.$$
Corollary \ref{cor_b} gives
\[
 \|F_p\|_\infty\ll
 z_\ell\frac{\log p}{\log T_\ell}p^{-1/2-\alpha},
\]
and using \eqref{defmpvp} as well gives 
\begin{equation}\label{bdZF}
\sum_p\|F_p\|_\infty^2\ll z_\ell^2,\qquad
 \sum_p\|Z_p\|_\infty\|F_p\|_\infty \ll z_\ell^2,\qquad \sum_p\|Z_p\|_\infty^3 \ll1
 \end{equation}
 and
 \begin{align}
\sum_p\big(\|Z_p\|_\infty^2+\|C_j(p)\|_\infty^2\big)\|F_p\|_\infty
 +\max_p\|F_p\|_\infty
 &\ll\frac{z_{\ell}}{\log T_{\ell}}.
 \label{eq:b2-coefficient-sums-short}
\end{align}   
Hence
$$ \text{Var}(\mathcal F) \ll z_\ell^2\qquad\text{and}\qquad \sigma_{+}^{2}:=\text{Var}  (\mathcal{Z})= \sigma_{\ell,j}^2+O(z_\ell^2).$$

Let
\begin{equation*}
    \label{k}
 K(\lambda,\mu)=\log \E_{\rm rand} (e^{\lambda\mathcal Z+\mu\mathcal F}) .
\end{equation*}
For bounded $|\lambda|$, by Taylor expansion we have 
\[
\begin{aligned}
K(\lambda,0)
&=\sum_p\log \E_{\rm rand} (e^{\lambda Z_p})=\frac{\lambda^2}{2}\sum_p\E_{\rm rand} (Z_p^2)
  +O \Big(\sum_p\|Z_p\|_\infty^3\Big)=\frac{\lambda^2\sigma_+^2}{2}+O(1).
\end{aligned}
\]
Moreover, we have
\[
K(\lambda,\mu)-K(\lambda,0)
=
\sum_p
\log\frac{\E_{\rm rand} (e^{\lambda Z_p+\mu F_p})}
          {\E_{\rm rand} (e^{\lambda Z_p})}.
\]

For each $p$, Hoeffding's lemma gives
\[
\log\frac{\E_{\rm rand} (e^{\lambda Z_p+\mu F_p})}
          {\E_{\rm rand} ( e^{\lambda Z_p})}
\le
\mu\frac{\E_{\rm rand}(F_p e^{\lambda Z_p})}
          {\E_{\rm rand} (e^{\lambda Z_p})}
+\frac{\mu^2}{2}\|F_p\|_\infty^2.
\]
Since $\E_{\rm rand} ( F_p)=0$, $\E_{\rm rand}( e^{\lambda Z_p})\ge1$ and
$|e^{\lambda Z_p}-1|\ll |Z_p|$, it follows that
\[
\Big|
\frac{\E_{\rm rand}(F_p e^{\lambda Z_p})}
     {\E_{\rm rand} (e^{\lambda Z_p})}
\Big|
=
\Big|
\frac{\E_{\rm rand}(F_p(e^{\lambda Z_p}-1))}
     {\E_{\rm rand} (e^{\lambda Z_p})}
\Big|
\ll
\|Z_p\|_\infty\|F_p\|_\infty.
\]
Therefore, by \eqref{bdZF} we have
\begin{align*}
K(\lambda,\mu)-K(\lambda,0) \ll z_\ell^2(|\mu|+\mu^2).
\end{align*}
Hence, for some constants $C_1, C_2>0$,
\begin{equation}\label{eq:b2-cumulant-short}
K(\lambda,\mu)
\le
\frac{\lambda^2\sigma_+^2}{2}
+C_1 z_\ell^2(|\mu|+\mu^2)+C_2.
\end{equation}

For \(\ell\geq2\), we take
\begin{equation}
\label{lambdamu1}
 \lambda=\frac{u+w}{2\sigma_+^2},\qquad
 \mu=\eta\frac{w-u}{z_\ell^2},
\end{equation}
where \(\eta>0\) will be chosen sufficiently small. Then
\[
 |\lambda|\ll 1,\qquad |\mu|\ll \frac{c_\ell}{z_\ell^2},
\]
and, by \eqref{eq:b2-coefficient-sums-short},
\begin{equation}\label{eq:b2-mu-remainder}
 |\mu|\Big(\max_p\|F_p\|_\infty+
          \sum_p\|C_j(p)\|_\infty^2\|F_p\|_\infty\Big)=o(1).
\end{equation}

If $\ell=1$, we take instead
\begin{equation*}
\label{lambdamu2}
 \lambda=\frac{u+w}{2\sigma_+^2},\qquad
 \mu=(H+1)\sgn(w-u).
\end{equation*}
Here \(H>0\) is fixed, so \eqref{eq:b2-mu-remainder} also holds.

For an admissible pair of tuples $(\textbf{u})$ and $(\textbf{w})$, the  conditions in
$\mathcal I(u)$ and $\mathcal I(w)$ give
\begin{equation}\label{eq:b2-master-short}
 1\le
 \exp\bigg(-\lambda \frac{w+u}{2}-\mu (w-u)+C\big(|\lambda|+|\mu|\big)
 +\sum_{i\le\ell}\Big(
   \lambda\frac{u_i+w_i}{2}+\mu(w_i-u_i)\Big)\bigg).
\end{equation}
On the event in $P_i(u_i,w_i)$ we have
\[
 \Big|\frac{u_i+w_i}{2}-\mathcal Z_i\Big|
 \le\Delta_i^{-1}+\Delta_i^{-a_i}
 \qquad\text{and}\qquad
 \big|(w_i-u_i)-\mathcal F_i\big|
 \le2(\Delta_i^{-1}+\Delta_i^{-a_i}).
\]

Now we deal with the first block.  Fix all the $(u_i)_{i \geq 2}$ and $(w_i)_{i \geq 2}$. The conditions in the definition of $\mathcal{I}(u)$ and $\mathcal{I}(w)$ force $u_1$ and $w_1$ to be in intervals of
length $O(1)$. As in Proposition \ref{comparison21},
$\widetilde Y_1$ is confined to an interval $\widehat I$ of length $O(1)$, and
\begin{equation*}\label{eq:initP}
 \sum_{u_1,w_1}e^{\lambda(u_1+w_1)/2+\mu(w_1-u_1)}P_1(u_1,w_1)
 \ll
 \E_{\rm rand}\Big(e^{\lambda\mathcal Z_1+\mu\mathcal F_1}
          \mathbf1(\widetilde Y_1\in\widehat I)\Big).
\end{equation*}

Define the tilted probability $\P_{\lambda,\mu}$ by
\[
 \P_{\lambda,\mu}(E)
 :=
 \frac{
 \E_{\rm rand}(
 e^{\lambda\mathcal Z_1+
 \mu \mathcal{F}_1}
 \mathbf 1_E
)
 }{
 \E_{\rm rand} ( e^{\lambda\mathcal Z_1+
 \mu\mathcal{F}_1 })
 }.
\]
Then
\[
\E_{\rm rand}\Big(
 e^{\lambda\mathcal Z_1+
 \mu \mathcal{F}_1}
 \mathbf 1(\widetilde Y_1\in\widehat I)
\Big)
=
\E_{\rm rand} (e^{\lambda\mathcal Z_1+
 \mu \mathcal{F}_1})
\P_{\lambda,\mu}(\widetilde Y_1\in\widehat I).
\]
We write $\E_{\lambda,\mu}$ and
$\operatorname{Var}_{\lambda,\mu}$ for expectation and variance with
respect to this probability measure.
We have
\[
\sum_{p\leq T_1}\E_{\lambda,\mu}
 |C_j(p)-\E_{\lambda,\mu}C_j(p)|^3\ll1\qquad\text{and}\qquad \operatorname{Var}_{\lambda,\mu}(\widetilde Y_1)
=\sigma_{1,j}^2+O(1)\asymp\sigma_{\ell,j}^2 .
\]
The Berry--Esseen theorem therefore gives
\begin{equation*}\label{eq:BEpair}
 \P_{\lambda,\mu}(\widetilde Y_1\in\widehat I)
 \ll \frac1{\sigma_{\ell,j}}.
\end{equation*}

We now bound the error from the block $i=1$, keeping
$u_i,w_i$ with $i\geq2$ fixed.

If $\ell\geq2$, using \eqref{u1w1} we have that
\[
 \sum_{u_1,w_1}
 e^{\lambda(u_1+w_1)/2+\mu(w_1-u_1)}\varepsilon_1
 \ll  \Delta_1^2
 e^{-\Delta_1^2+O(V+Cc_1^2)}
 \le \frac{\E_{\rm rand}( e^{\lambda\mathcal Z_1+\mu\mathcal F_1})}
 {\sigma_{\ell,j}}.
\]
If \(\ell=1\), we use \(|u_1|+|w_1|\ll \log\log T\) and
\(|\mu|\leq H+1\) instead. The corresponding error is at most
\[
 \ll  \Delta_1^2
       \exp\big(-\Delta_1^2+O( \log\log T) \big)
 \leq\frac{\E_{\rm rand} (e^{\lambda\mathcal Z_1+\mu\mathcal F_1})}
                {\sigma_{\ell,j}},
\]
where we used Jensen's inequality in the last step. Hence
\begin{equation}\label{eq:b2-first-combined-short}
 \sum_{u_1,w_1}e^{\lambda(u_1+w_1)/2+\mu(w_1-u_1)}
 \bigl(P_1(u_1,w_1)+\varepsilon_1\bigr)
 \ll \frac{\E_{\rm rand} (e^{\lambda\mathcal Z_1+\mu\mathcal F_1})}
               {\sigma_{\ell,j}}.
\end{equation}

We now treat the subsequent blocks.  Recall  that $|u_i|+|w_i|\ll c_i$ for $i\ge2$.
For $i\ge2$, the enlarged rectangles meet without overlap
apart from endpoints.  Hence, as in Proposition \ref{comparison21},
\begin{align*}
 &\sum_{u_i,w_i}
 e^{\lambda(u_i+w_i)/2+\mu(w_i-u_i)}P_i(u_i,w_i) \le
 e^{O((|\lambda|+|\mu|)(\Delta_i^{-1}+\Delta_i^{-a_i}))}
 \E_{\rm rand}( e^{\lambda\mathcal Z_i+\mu\mathcal F_i}),
\end{align*}
and again as in Proposition \ref{comparison21}, we get that
\begin{align*}
 &\prod_{i=2}^{\ell}\sum_{u_i,w_i}
 e^{\lambda(u_i+w_i)/2+\mu(w_i-u_i)}
 \bigl(P_i(u_i,w_i)+C\varepsilon_i\bigr)
 \ll  \prod_{i=2}^{\ell}
 \E_{\rm rand} (e^{\lambda\mathcal Z_i+\mu\mathcal F_i}).
\end{align*}
Combining this with \eqref{eq:b2-first-combined-short} and \eqref{eq:b2-master-short} shows that
\begin{align*}
 &\P\Big(G_{\ell-1}\cap\big\{S_\ell^{(j)}\in[u-1,u]\big\}
                    \cap\{S_\ell^{(m)}\in[w-1,w]\}\Big) \nonumber\\
                    &\qquad\ll\frac1{\sigma_{\ell,j}}
 \exp\Big(-\lambda \frac{u+w}{2}-\mu (w-u)+C\big(|\lambda|+|\mu|\big)
                              +K(\lambda,\mu)\Big).
 \label{eq:b2-before-optimization}
\end{align*}

For \(\ell\geq2\), with the choices
\eqref{lambdamu1} and
using \eqref{eq:b2-cumulant-short}, the exponent above is at most
\[
 -\frac{(u+w)^2}{8\sigma_+^2}
 -\frac{\eta (w-u)^2}{z_\ell^2}
 +\frac{C_1\eta^2(w-u)^2}{z_\ell^2}
 +C \eta|w-u|+O(1).
\]
We choose \(\eta\) so that \(C_1\eta^2\leq\eta/4\). Since
\[
 C \eta|w-u|\leq\frac{\eta (w-u)^2}{4z_\ell^2}
                       +  C^2 \eta z_{\ell}^2,
\]
the exponent is at most
\[
 -\frac{(u+w)^2}{8\sigma_+^2}
 -\frac{\eta (w-u)^2}{2z_\ell^2}+O(z_\ell^2).
\]
Finally, \(\sigma_+^2=\sigma_{\ell,j}^2+O(z_\ell^2)\) and
\((u+w)^2\ll \sigma_{\ell,j}^4\), so this is
\[
 -\frac{(u+w)^2}{8\sigma_{\ell,j}^2}
 -C\frac{(w-u)^2}{z_\ell^2}+O(z_\ell^2),
\] which proves \eqref{eq:two-point}.

For \(\ell=1\), 
the choice \(\mu=(H+1)\sgn(w-u)\) gives
\(-\mu (w-u)=-(H+1)|w-u|\), and \eqref{eq:b2-cumulant-short} bounds
the remaining terms involving \(\mu\) by \(O(z_1^2)\).
This proves \eqref{eq:initial-two-point}.
\end{proof}

We now prove a simplified version of Proposition \ref{comparison21}, in the easier regime \eqref{first_regime}.

\begin{prop}\label{comparison1}
Assume \eqref{first_regime} and \(\alpha=o(1)\). Let \(x,S_x,\sigma_x\) be
as in \eqref{x_choice}, \eqref{sx}, \eqref{sigmax}, respectively.
For every \(A\geq1\), uniformly for \(|u|\leq A\sigma_x^2\), we have
\begin{equation}\label{cmp:large-result}
 \P\big(S_x\in[u-1,u]\big)
 \ll \frac1{\sigma_x}\exp\Big(-\frac{u^2}{2\sigma_x^2}\Big).
\end{equation}
\end{prop}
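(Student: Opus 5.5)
This is the single-block case of Proposition~\ref{comparison21}: there is no barrier event, no $\mathcal Q$, and one prime sum $S_x$ of length $x^2$. I would prove it by the same three ingredients used there: a polynomial sandwich of the indicator, transfer to the Steinhaus model via the mean-value theorem, and an exponential tilt followed by Berry--Esseen.

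\emph{Step 1: smoothing.} Set $\Delta=C_1\sigma_x$ with $C_1$ large, and $X=C_2\Delta^{10}$. Take the polynomial $\mathcal D=\mathcal D_{\Delta,X}$ from \eqref{eq:sandwich}, whose degree is $d\ll\Delta^{22}\ll(\log\frac1\alpha)^{11}$. Cover $[u-1,u]$ by $O(\Delta)$ intervals $[v,v+\Delta^{-1}]$. For each such interval, the lower bound in \eqref{eq:sandwich} gives
\[
\P(S_x\in[v,v+\Delta^{-1}])\ll\E\big(|\mathcal D(S_x-v)|^2\big),
\]
where on the range $|S_x-v|>X$ we use the crude bound $|\mathcal D(x)|\le 2\Delta^{16}e^{2\pi\Delta^8|x|}$.

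\emph{Step 2: transfer to the random model.} Expanding $|\mathcal D(S_x-v)|^2$ produces Dirichlet polynomials of length at most $x^{4d}$. By \eqref{x_choice},
\[
\log x^{4d}\ll \frac{(\log\frac1\alpha)^{11}\log(\alpha\log T)}{\alpha}.
\]
Condition \eqref{first_regime} says $\alpha\log T\gg(\log\frac1\alpha)^{12+\varepsilon}$, so this is $o(\log T)$ and the length is $T^{o(1)}$. Montgomery--Vaughan then replaces $\E$ by $\E_{\rm rand}$ with a negligible error, giving
\[
\E\big(|\mathcal D(S_x-v)|^2\big)\ll \E_{\rm rand}\big(|\mathcal D(\widetilde S_x-v)|^2\big)+O(T^{-1/2}).
\]
This length check is the main obstacle. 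The choice of exponent $12+\varepsilon$ and of $x$ is exactly what makes the degree $\Delta^{22}$ times $\log x$ fit inside $\log T$. If the losses turn out tighter than expected, I would try a smaller degree, for instance $\Delta=C\sqrt{\log(1/\alpha)}$ with $a=4$, and check that this still fits.

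\emph{Step 3: the random model.} The coefficients satisfy $\|C(p)\|_\infty\ll p^{-1/2-\alpha}$ by \eqref{bpx}. Hoeffding's lemma therefore bounds the region $|\widetilde S_x-v|>X$ by $e^{-\Delta^2}$, exactly as in \eqref{bigyi}. On the remaining region, \eqref{eq:sandwich} gives
\[
\E_{\rm rand}|\mathcal D(\widetilde S_x-v)|^2\le \P_{\rm rand}\big(\widetilde S_x\in[v-\Delta^{-4},v+\Delta^{-1}+\Delta^{-4}]\big)+2e^{-\Delta^2}.
\]
Summing over the $O(\Delta)$ values of $v$ reduces the claim to bounding $\P_{\rm rand}(\widetilde S_x\in[u-2,u+1])$.

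Tilt by $\lambda=u/\widetilde\sigma^2$, which satisfies $|\lambda|\le 2A$. Then
\[
\P_{\rm rand}(\widetilde S_x\in[u-2,u+1])\le e^{-\lambda u+3|\lambda|}\,\E_{\rm rand}(e^{\lambda\widetilde S_x})\,\P_\lambda(\widetilde S_x\in[u-2,u+1]).
\]
Two estimates finish this step.
\begin{itemize}
\item The Taylor expansion \eqref{eq:mgf} gives $\log\E_{\rm rand}(e^{\lambda\widetilde S_x})=\lambda^2\widetilde\sigma^2/2+O(\sum_p\|C(p)\|_\infty^3)=\lambda^2\widetilde\sigma^2/2+O(1)$.
\item Under $\P_\lambda$ the variance is $\asymp\sigma_x^2$ and the third absolute moments sum to $O(1)$, so Berry--Esseen gives $\P_\lambda(\widetilde S_x\in[u-2,u+1])\ll 1/\sigma_x$.
\end{itemize}
Combining these yields $\sigma_x^{-1}\exp(-u^2/(2\widetilde\sigma^2))$. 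Since $\widetilde\sigma^2=\sigma_x^2+o(1)$ and $u^2\ll\sigma_x^4$, the difference between $\widetilde\sigma^2$ and $\sigma_x^2$ in the exponent costs only $O(1)$.

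Finally, the $e^{-\Delta^2}$ errors, multiplied by $\Delta$, are much smaller than $\sigma_x^{-1}e^{-u^2/(2\sigma_x^2)}$, because $u^2/\sigma_x^2\le A^2\sigma_x^2$ and $C_1$ is large. This gives \eqref{cmp:large-result}.
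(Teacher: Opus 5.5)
Your proposal is correct and is essentially the paper's proof. It is the single-block version of Proposition~\ref{comparison21} with $\Delta=C\sigma_x$ and $X=C\Delta^{10}$, the length check $x^{O(d)}=T^{o(1)}$ from $\alpha\log T\gg(\log\frac1\alpha)^{12+\varepsilon}$, the Hoeffding truncation of \eqref{bigyi}, and the tilt $\lambda=u/\widetilde\sigma^2$ followed by Berry--Esseen. You also make explicit a point the paper leaves implicit: the constant in $\Delta=C_1\sigma_x$ must be large in terms of $A$, so that the $\Delta e^{-\Delta^2}$ error is dominated by $\sigma_x^{-1}e^{-u^2/(2\sigma_x^2)}$.
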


\begin{proof}

    We repeat the proof of Proposition \(\ref{comparison21}\) for the single sum \(S_x(t)\), with \(\mathcal Q=1\). We check below the polynomial length and variance estimates needed in this regime.

Take
\[
 \Delta=C\sigma_x,\qquad X=C \Delta^{10},
 \qquad \mathcal D=\mathcal D_{\Delta,X}
\]
from \eqref{eq:sandwich} with \(a=4\). 
Cover $[u-1,u]$ by consecutive intervals of length $\Delta^{-1}$,
allowing the last one to extend past $u$. We replace each interval $
[v,v+\Delta^{-1}]$
by $
[v-\Delta^{-4},\,v+\Delta^{-1}+\Delta^{-4}].
$

The degree of $\mathcal{D}$ is \(d\ll \sigma_x^{22}\ll (\log\frac{1}{\alpha})^{11}\).
Each factor obtained by expanding
\(|\mathcal D(S_x-v)|^2\) has length at most
\begin{equation*}\label{cmp:large-length}
 x^{2d}
 =T^{O \big((\log(1/\alpha))^{11}
                  \frac{\log(\alpha\log T)}{\alpha\log T}\big)}
 =T^{o(1)}.
\end{equation*}
Since \(\alpha\log x=2\log(\alpha\log T)\to\infty\), we have that \eqref{bpx} holds.
The coefficient bounds therefore give
\[
\sum_{p\leq x}m_p^2\ll\log\frac1\alpha,
\qquad
\sum_{p\leq x}m_p^3\ll 1,
\]
and the variance is \(\sigma_x^2+o(1)\).

The truncation estimate \eqref{bigyi} applies with
\(|v|\ll \log\frac{1}{\alpha}\). The arguments in
\eqref{eq:first-probability}, \eqref{eq:BE} then give
\[
 \P\big(S_x\in[u-1,u]\big)
 \ll \frac1{\sigma_x}
       \exp\Big(-\lambda u+
          \frac{\lambda^2}{2}\operatorname{Var}_{\rm rand}(\widetilde S_x)\Big)
       +O (\Delta e^{-\Delta^2})
\]
for bounded \(\lambda\). Choosing
\(\lambda=u/\operatorname{Var}_{\rm rand}(\widetilde S_x)\) and
 replacing the random variance by \(\sigma_x^2\)
prove \eqref{cmp:large-result}.
\end{proof}

\section{Proof of Theorems \ref{mainthm} and \ref{bigk} in the small $\alpha$ regime}
\label{section_largedev}

Throughout this section we assume \eqref{small_regime}, and let
$V=\kappa\log\frac{1}{\alpha}$ with
$0<\kappa_0\leq\kappa\leq\kappa_1<\infty$ fixed.
The implied constants in this section may depend on the
implied constants in \eqref{small_regime}.
In the range \eqref{small_regime}, we have
\[
 \log\frac1{\alpha}=\log\log T+O(\log\log\log T).
\]

 For Theorem~\ref{mainthm}, where $\kappa_1<1/2$,
we fix $2\kappa_1<\rho<1,$ while for Theorem~\ref{bigk},
we fix $0<\rho<1$ sufficiently close to $1$ such that
\begin{equation}\label{rho-choice1}
 (1-\rho)\Big(\kappa_1-\frac12\Big)
       +\frac{(1-\rho)^2}{4}<\frac{\varepsilon_1}{2}.
\end{equation}
Recall the definition of $r>1$ from \eqref{rdef} in the definition of $T^{\beta_j}$. We choose $r>1$ and two parameters $D>1$ and $a<1$ so that
\begin{equation}
\label{rho-choice2}
 \rho rD<a<1.
\end{equation}
Also recall that 
$\theta$ is sufficiently small, and  that
\begin{equation}\label{deltalocbound}
 \delta_{\rm loc}\leq\frac{1-a}{2},
\end{equation}
with $\delta_{\rm loc}$ given in equation \eqref{eq:delta-loc}.
Thus $$q\beta_m+\delta_{\rm loc}\leq\frac{1+a}{2}<1$$ whenever
$q=\lfloor a/\beta_m\rfloor$.

We will need the following auxiliary lemmas. 

\begin{lem}\label{lem:apriori}
For \(2\leq\ell\leq K\), 
\begin{equation*}
 \P(G_{\ell-1}\cap A_\ell^c)
 \ll 
 \frac{e^{-C c_\ell z_{\ell}}}{\sqrt{\log(1/\alpha)}}\exp\Big( - \frac{V^2}{\log(1/\alpha)} \Big),
\end{equation*}
for some $C>0$.
\end{lem}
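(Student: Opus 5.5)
We bound $\P(G_{\ell-1}\cap A_\ell^c)$ by a union bound over $\ell\le j\le K$, followed by a high-moment Markov estimate for the increment $Y_\ell^{(j)}:=S_\ell^{(j)}-S_{\ell-1}^{(j)}$, weighted by the localized probability of $G_{\ell-1}$ supplied by Proposition \ref{comparison21}. Since $K\ll \log\frac1{\beta_1}\ll\log\log\log T$ and $z_\ell\geq z_K\gg1$, the union over $j$ costs at most a factor $K$. We will absorb this into the exponential gain, since the gain will be of the form $e^{-cc_\ell z_\ell}$ with $c_\ell=\beta_\ell^{-1/90}\geq\theta^{-1/90}$, which is large when $\theta$ is small. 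It therefore suffices, for fixed $j\ge\ell$, to bound $\P\big(G_{\ell-1}\cap\{|Y_\ell^{(j)}|>2c_\ell\}\big)$.

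\textbf{Step 1: splitting according to $S_{\ell-1}^{(j)}$.} On $G_{\ell-1}$ we have $S_{\ell-1}^{(j)}\in[L_{\ell-1},U_{\ell-1}]$. We decompose this into unit intervals $[u-1,u]$. For each $u$ we use Markov's inequality with an even power $2q$:
\[
\P\big(G_{\ell-1}\cap\{S_{\ell-1}^{(j)}\in[u-1,u]\}\cap\{|Y_\ell^{(j)}|>2c_\ell\}\big)\le (2c_\ell)^{-2q}\,\E\big(|Y_\ell^{(j)}|^{2q}\mathbf 1(G_{\ell-1}^{(j)}(u))\big),
\]
where $G_{\ell-1}^{(j)}(u)=G_{\ell-2}\cap\{S_{\ell-1}^{(j)}\in[u-1,u]\}$ is the event of Proposition \ref{comparison21} with $\ell-1$ in place of $\ell$. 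Write $Y_\ell^{(j)}=\Re(\xi_1+\xi_2)$ and bound $|Y_\ell^{(j)}|^{2q}\le|\xi_1+\xi_2|^{2q}=|\mathcal Q|^2$ with $\mathcal Q=(\xi_1+\xi_2)^q$. This $\mathcal Q$ is a Dirichlet polynomial supported on integers whose prime factors all exceed $T_{\ell-1}$, and its length is at most $T_\ell^{2q}$. Choosing $q\asymp c_\ell^2/z_\ell^2$ (or simply $q=\lfloor c_\ell\rfloor$), the length is $T^{2q\beta_\ell}\le T^{\eta}$ with $\eta$ small, since $q\beta_\ell\ll\beta_\ell^{1-1/45}$ is tiny. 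Condition \eqref{eq:length-condition} therefore holds, and Proposition \ref{comparison21} gives
\[
\E\big(|\mathcal Q|^2\mathbf1(G_{\ell-1}^{(j)}(u))\big)\ll\E_{\rm rand}|\widetilde{\mathcal Q}|^2\,\frac{e^{-u^2/(2\sigma_{\ell-1,j}^2)}}{\sigma_{\ell-1,j}}.
\]
Lemma \ref{lem_moments_random} bounds $\E_{\rm rand}|\widetilde{\mathcal Q}|^2\le 2^qq!\,v^q$, where $v=\sum_{T_{\ell-1}<p\le T_\ell}(|\gamma_p|^2+|\delta_p|^2)\ll z_\ell^2$. The $z_\ell^2$ factor comes from the $O(z_j\log p/\log T_j)$ correction in Corollary \ref{cor_b}; the main part is $\log r+O(1)$. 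Hence the Markov factor is at most
\[
(2c_\ell)^{-2q}2^qq!\,(Cz_\ell^2)^q\le \exp\Big(-q\log\frac{c_\ell^2}{Cqz_\ell^2}\Big),
\]
and with $q\asymp c_\ell^2/z_\ell^2$ this is $\exp(-C'c_\ell^2/z_\ell^2)$. Since $c_\ell=e^{z_\ell/90}$, this is far smaller than $e^{-Cc_\ell z_\ell}$.

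\textbf{Step 2: summing over $u$.} We sum $\sigma_{\ell-1,j}^{-1}e^{-u^2/(2\sigma_{\ell-1,j}^2)}$ over $u\in[L_{\ell-1},U_{\ell-1}+1]$, a range of length $O(c_{\ell-1})$ around $\frac{V}{\log\log T}\log\log T_{\ell-1}$. Using $\sigma_{\ell-1,j}^2=\frac12\log\log T_{\ell-1}+O(z_{\ell}^2)$ from \eqref{new_variance}, together with $\log\log T_{\ell-1}=\log\log T-z_{\ell-1}$ and $\log\frac1\alpha=\log\log T+O(\log\log\log T)$, the Gaussian factor at such $u$ equals
\[
\exp\Big(-\frac{V^2}{\log(1/\alpha)}+O\big(z_\ell+c_\ell+z_\ell^2+\log\log\log T\cdot o(1)\big)\Big),
\]
and the prefactor is $\asymp(\log(1/\alpha))^{-1/2}$. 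This step is the main obstacle, and it requires some care. The loss $e^{O(c_\ell+z_\ell^2)}$ from the corridor width and the variance error, as well as the mismatch between $\log\log T$ and $\log\frac1\alpha$ (where $V\log\log T_{\ell-1}/\log\log T$ is compared with $V$ at variance $\log\log T_{\ell-1}$), must be dominated by the Markov gain. The difference $V^2/\log\log T_{\ell-1}-V^2/\log\frac1\alpha$ is nonnegative up to $O(1)$ in the favorable direction, since the corridor is centered on the linear interpolation. I would therefore check that the quadratic form gives $-\frac{V^2}{\log\log T}\cdot\frac{\log\log T_{\ell-1}}{\log\log T}$, which is at most $-V^2/\log\frac1\alpha+O(\kappa^2 z_{\ell})$.

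\textbf{Step 3: handling the first block separately.} If the losses are too large, I would first try to raise $q$ so that the Markov gain $c_\ell^2/z_\ell^2$ dominates $c_\ell+z_\ell^2$; this is possible because $c_\ell$ grows faster than any power of $z_\ell$. Multiplying by the $K$ choices of $j$ and the $O(c_{\ell-1})$ values of $u$ then yields the claimed $e^{-Cc_\ell z_\ell}$. The case $\ell=2$ needs separate treatment, because $G_1$ gives a corridor of width $c_1$ and we apply Proposition \ref{comparison21} with $\ell-1=1$, which needs only $|u|\le A\sigma^2$; the same computation applies there.
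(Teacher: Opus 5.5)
Your route is the paper's: a union bound over $j$, splitting $S_{\ell-1}^{(j)}$ into unit cells $[u-1,u]$ inside $[L_{\ell-1},U_{\ell-1}]$, Markov with the $2q$-th moment of the increment, Proposition~\ref{comparison21} at level $\ell-1$ with $\mathcal Q=(\xi_1+\xi_2)^q$, then Lemma~\ref{lem_moments_random} and Stirling. The paper takes $q=\lfloor\delta c_\ell\rfloor$, which gives $e^{-Cc_\ell z_\ell}$ directly. Your $q\asymp c_\ell^2/z_\ell^2$ is also admissible and gives a stronger gain. However, two of your bookkeeping steps fail as written.

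First, the union over $j$ costs a factor $K-\ell+1$, not $K$, and the difference matters. $K\asymp\log\log\log T/\log r$ is unbounded, while for $\ell$ near $K$ the gain ($e^{-cc_\ell z_\ell}$, or your $e^{-cc_\ell^2/z_\ell^2}$) is a constant depending only on $\theta$. So a factor $K$ cannot be absorbed uniformly in $\ell$, and the sum over $\ell$ in \eqref{ph} would carry an unbounded factor. The correct count is $K-\ell+1=1+(z_\ell-z_K)/\log r\ll z_\ell$.

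Second, and more seriously, Step 2 takes $\sigma_{\ell-1,j}^2=\frac12\log\log T_{\ell-1}+O(z_\ell^2)$. By \eqref{new_variance}, the main term is in fact $\frac12\min\{\log\frac1\alpha,\log\log T_{\ell-1}\}$. When $\alpha\log T_{\ell-1}\ge1$ these differ by $\frac12(\log(\alpha\log T)-z_{\ell-1})$. Under \eqref{small_regime}, $\log(\alpha\log T)$ can be of order $\log\log\log T$, while $z_\ell=O_\theta(1)$ for $\ell$ near $K$. With your variance, $V^2\log\log T_{\ell-1}/(\log\log T)^2$ falls short of $V^2/\log\frac1\alpha$ by about $\kappa^2(z_{\ell-1}+\log(\alpha\log T))$, not by $O(\kappa^2 z_\ell)$. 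The resulting loss, as large as $(\alpha\log T)^{\kappa^2}$, cannot be absorbed by raising $q$: the length condition caps $q$ at $O(1/\beta_\ell)=O_\theta(1)$ there.

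The fix is the monotone comparison the paper uses. Since $2\sigma_{\ell-1,j}^2\le\log\frac1\alpha+O(z_\ell^2)$ and $u\ge L_{\ell-1}>0$, we have
\[
-\frac{u^2}{2\sigma_{\ell-1,j}^2}
\le-\frac{L_{\ell-1}^2}{\log(1/\alpha)}+O(z_\ell^2)
\le-\frac{V^2}{\log(1/\alpha)}+2\kappa\,(V-L_{\ell-1})+O(z_\ell^2),
\]
with $V-L_{\ell-1}=Vz_{\ell-1}/\log\log T+c_{\ell-1}\ll z_\ell+c_\ell$. The total loss is therefore $e^{O(c_\ell+z_\ell^2)}$, which the Markov gain absorbs once $\theta$ is small. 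The prefactor is handled by $\sigma_{\ell-1,j}^2\asymp\log\frac1\alpha$. With these two corrections your argument coincides with the paper's proof.
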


\begin{proof}

As in \cite{ABR}, we have that
    \begin{equation*} 
    \label{ineqp}
    \P (  G_{\ell-1} \cap A_{\ell}^c ) \ll \sum_{j=\ell}^K \sum_{u \in [L_{\ell-1},U_{\ell-1}]} \E \bigg( \frac{(S_{\ell}^{(j)}-S_{\ell-1}^{(j)})^{2q}}{(2c_{\ell})^{2q}} \mathbf{1}\big(G_{\ell-1}^{(j)}(u)\big)\bigg).
    \end{equation*}
We choose $ q = [\delta c_{\ell}]$ with $\delta>0$ small, and note that 
condition \eqref{eq:length-condition} is satisfied. 
Using Proposition \ref{comparison21}, after writing $S_{\ell}^{(j)}-S_{\ell-1}^{(j)} = \Re \mathcal{Q}(t)$ and bounding $|\Re \mathcal Q(t)|^{2q}\le |\mathcal Q(t)|^{2q}$, we have that 
\begin{align*}
 &\E \bigg(  \frac{(S_{\ell}^{(j)}-S_{\ell-1}^{(j)})^{2q}}
 {(2c_{\ell})^{2q}} \mathbf{1}\big(G_{\ell-1}^{(j)}(u)\big) \bigg) \ll
 \frac{\E_{\rm rand} \big(| \mathcal{\widetilde  Q}|^{2q}\big)}
 {(2c_{\ell})^{2q} \sigma_{\ell-1,j}}
 \exp\Big(-\frac{u^2}{2 \sigma_{\ell-1,j}^2}+O( z_{\ell}^2)\Big).
\end{align*}
  Summing over $u \in [L_{\ell-1},U_{\ell-1}]$ and over $j \geq \ell$, and using Lemma \ref{lem_moments_random}, we obtain that 
  \begin{align*}
  \P (  G_{\ell-1} \cap A_{\ell}^c) &  \ll \frac{(K-\ell+1)c_{\ell-1}}{\sigma_{\ell-1,j}}
 \frac{(2q)!(C z_{\ell}^2 )^q}{2^q q!(2c_{\ell})^{2q}} \exp \Big( - \frac{L_{\ell-1}^2}{2 \sigma_{\ell-1,j}^2}+O(z_{\ell}^2) \Big),
  \end{align*}  
  for some $C>0$.
Using Stirling's formula and the choice \eqref{new_corridor}, we get that the above is 
$$ \ll  \frac{e^{-C c_\ell z_{\ell}}}{\sigma_{\ell-1,j} }  \exp \Big(  - \frac{V^2}{2 \sigma_{\ell-1,j}^2}\Big) \ll 
 \frac{e^{-C c_\ell z_{\ell}}}{\sqrt{\log(1/\alpha)}}\exp\Big( - \frac{V^2}{\log(1/\alpha)} \Big),$$ 
where we used the fact that in the regime \eqref{small_regime}, we have 
\begin{equation}
\label{touse} \sigma_{\ell,j}^2= \frac{1}{2}\log\frac{1}{\alpha}+O(1),
\end{equation}
by the variance estimate \eqref{new_variance}. This proves the lemma.
\end{proof}

\begin{lem}
\label{upper}
    For $2 \leq \ell \leq K$, we have 
   $$ \P \Big(G_{\ell-1} \cap A_{\ell} \cap\big \{ \exists j \geq \ell: S_{\ell}^{(j)}>U_{\ell}\big\}\Big) \ll  \frac{e^{-C c_{\ell}}}{\sqrt{\log(1/\alpha)}} \exp\Big( - \frac{V^2}{\log(1/\alpha)} \Big), $$ for some $C>0$. 
\end{lem}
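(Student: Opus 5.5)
Since $G_{\ell-1}\cap A_\ell$ forces $S_{\ell-1}^{(j)}\in[L_{\ell-1},U_{\ell-1}]$ and $|S_\ell^{(j)}-S_{\ell-1}^{(j)}|\le 2c_\ell$, any $t$ in the event has, for some $j\ge \ell$, $S_\ell^{(j)}\in(U_\ell, U_{\ell-1}+2c_\ell]$. Note that
\[
U_{\ell-1}+2c_\ell-U_\ell=-\frac{V}{\log\log T}\log r+c_{\ell-1}+c_\ell\le 3c_\ell ,
\]
so this range lies inside $[L_\ell',U_\ell']$. I would therefore apply a union bound over $j\in[\ell,K]$ and over integer $u\in(U_\ell,U_\ell+3c_\ell+1]$, obtaining
\[
\P\big(G_{\ell-1}\cap A_\ell\cap\{\exists j\geq\ell:S_\ell^{(j)}>U_\ell\}\big)\le\sum_{j=\ell}^K\sum_{U_\ell<u\le U_\ell+3c_\ell+1}\P\big(G_\ell^{(j)}(u)\big).
\]
Each summand is controlled by Proposition~\ref{comparison21} with $\mathcal Q=1$ (length $1$, so \eqref{eq:length-condition} holds trivially). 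This is admissible because $|u|\ll V\le A\sigma_{\ell,j}^2$ by \eqref{touse}, and $u\in[L_\ell',U_\ell']$. It gives
\[
\P\big(G_\ell^{(j)}(u)\big)\ll \sigma_{\ell,j}^{-1}e^{-u^2/(2\sigma_{\ell,j}^2)}.
\]

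The key computation is the Gaussian exponent at $u\approx U_\ell=\frac{V}{\log\log T}\log\log T_\ell+c_\ell$. With $\sigma_{\ell,j}^2=\frac12\log\frac1\alpha+O(1)$ and $\log\frac1\alpha=\log\log T+O(\log\log\log T)$ in the regime \eqref{small_regime}, I would write $\log\log T_\ell=\log\log T-z_\ell$ and $V=\kappa\log\frac1\alpha$. Then
\[
\frac{u^2}{2\sigma^2_{\ell,j}}\ge\frac{V^2}{\log(1/\alpha)}-2\kappa z_\ell+2\kappa c_\ell+O\Big(\frac{(z_\ell+c_\ell)^2+\log\log\log T}{\log\log T}\Big)+O(1).
\]
The saving over the target bound is $e^{-2\kappa c_\ell+2\kappa z_\ell}$. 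Since $c_\ell=\beta_\ell^{-1/90}=e^{z_\ell/90}$ dominates $z_\ell$ once $\theta$ is small, this saving is at most $e^{-\kappa c_\ell}$. The quadratic error is harmless because $c_\ell\le\beta_1^{-1/90}=(\log\log T)^{1/3}$, so $c_\ell^2/\log\log T=o(1)$. The shift of $u$ up to $4c_\ell+1$ only improves the bound.

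Finally I would sum: over $u$ there are $O(c_\ell)$ terms, and over $j$ there are at most $K\ll\log\log\log T/\log r$ terms. The factor $K$ is the main obstacle, since it is not absorbed by $e^{-\kappa c_\ell}$ when $\ell$ is near $K$, where $c_\ell$ is bounded. I would first try to avoid it by noting that, by Corollary~\ref{cor_b}, $S_\ell^{(j)}$ and $S_\ell^{(\ell)}$ differ by a sum of variance $O(z_\ell^2)$. One could then take the union over $j$ inside a single localization argument. Alternatively, I would use Proposition~\ref{comparison22} to pass from $S_\ell^{(j)}$ to $S_\ell^{(\ell)}$ at cost $e^{O(z_\ell^2)}$, with the term $e^{-C(w-u)^2/z_\ell^2}$ making the double sum effectively single. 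If neither works, a cruder route remains: bound $K\le C z_1\ll\log\log\log T$ and absorb it into $e^{-\kappa c_\ell/2}$ when $\ell$ is small, and for $\ell$ near $K$ (where $K-\ell=O(1)$) the count is bounded. Indeed the number of $j\ge\ell$ is $K-\ell+1\ll z_\ell/\log r+1$, which $e^{-\kappa c_\ell/2}$ absorbs uniformly. This yields the bound $e^{-Cc_\ell}(\log(1/\alpha))^{-1/2}\exp(-V^2/\log(1/\alpha))$.
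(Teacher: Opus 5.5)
Your proposal is essentially the paper's proof: restrict to $S_\ell^{(j)}\in[U_\ell,U_\ell+3c_\ell]$, take a union bound over $j\geq\ell$ and unit intervals, apply Proposition~\ref{comparison21} with $\mathcal Q=1$, and use \eqref{touse} to compare $U_\ell^2/(2\sigma_{\ell,j}^2)$ with $V^2/\log(1/\alpha)$. The worry about $K$ is not a real obstacle; it is handled exactly as the paper's factor $(K-\ell+1)c_\ell$ is, because $K-\ell+1\ll z_\ell+1=90\log c_\ell+1$ is absorbed by $e^{-Cc_\ell}$. One small correction: the cross term is $-2\kappa^2 z_\ell(1+o(1))$ rather than $-2\kappa z_\ell$, which is equally harmless.
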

\begin{proof}
  Note that on the event $G_{\ell-1} \cap A_{\ell}$ we have that $S_{\ell}^{(j)} \leq U_{\ell-1}+2c_{\ell} \leq U_{\ell}+3c_{\ell}=:U_{\ell}'$. Combining with the hypothesis, it follows that $S_{\ell}^{(j)} \in [U_{\ell},U_{\ell}'].$  So
  \begin{align}
  \label{int4}
   \P \Big(G_{\ell-1} \cap A_{\ell} \cap\big \{ \exists j \geq \ell: S_{\ell}^{(j)}>U_{\ell}\big\}\Big)   \leq  \sum_{j =\ell}^K \sum_{u \in [U_{\ell},U_{\ell}']} \P \Big(G_{\ell-1} \cap \big\{  S_{\ell}^{(j)} \in [u-1,u]\big\}\Big). 
  \end{align}
  Note that because we are in the regime \eqref{small_regime}, we have that $|u| \leq A \sigma_{\ell,j}^2$, for some $A>0$, and hence we can apply Proposition \ref{comparison21} which gives that 
  \begin{align*}
      \eqref{int4}  &\ll \sum_{j=\ell}^K \sum_{u \in [U_{\ell},U_{\ell}']} \frac{1}{\sigma_{\ell,j}} \exp\Big(- \frac{u^2}{2\sigma_{\ell,j}^2}\Big) \ll   \frac{(K-\ell+1) c_{\ell}}{\sigma_{\ell,j}} \exp\Big(- \frac{U_{\ell}^2}{2\sigma_{\ell,j}^2}\Big) \\
      & \ll  \frac{e^{-C c_{\ell}}}{\sqrt{\log(1/\alpha)}} \exp\Big( - \frac{V^2}{\log(1/\alpha)} \Big), 
  \end{align*}
  for some $C>0$, where we used equation \eqref{touse} again.
\end{proof}

\begin{lem}
\label{lower}
Suppose $0<\rho<1$,
with the parameters chosen as in \eqref{rho-choice1} and \eqref{rho-choice2}.
For $2\leq\ell\leq K$, we have
\begin{align}\label{firstbound63}
 &\P\Big(H\cap G_{\ell-1}\cap A_\ell
       \cap\big\{ \exists j \geq \ell : S_{\ell}^{(j)}<L_{\ell} \big\}\Big)
 \ll\frac{e^{-C c_\ell}}{\sqrt{\log(1/\alpha)}}
       \exp\Big(-\rho V+\frac{\rho^2}{4}\log\frac1\alpha\Big),
\end{align}
for some $C>0$. 
If, in addition, $2\kappa_1<\rho$, then
    \begin{equation} \label{improved_bd} \P \Big(H \cap G_{\ell-1} \cap A_{\ell} \cap \big\{ \exists j \geq \ell : S_{\ell}^{(j)}<L_{\ell}\big \}\Big) \ll \frac{e^{-C c_{\ell}}}{\sqrt{\log(1/\alpha)}}  \exp \Big(  - \frac{V^2}{\log(1/\alpha)}\Big),
    \end{equation}for some $C>0$.
\end{lem}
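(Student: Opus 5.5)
The plan is to follow the argument for the corresponding step in \cite{ABR}. We use the explicit inequality of Lemma \ref{bf_lemma} to turn the event $H$ into a large deviation of the \emph{tail} prime sum beyond $T_\ell$. We then decouple that tail from the event $G_{\ell-1}\cap\{S_\ell^{(j)}\in[u-1,u]\}$ using Proposition \ref{comparison21}, and finally optimize over the level $u$.

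\emph{Step 1: localizing the relevant range of $u$.} On $G_{\ell-1}\cap A_\ell$ we have $S_\ell^{(j)}\geq L_{\ell-1}-2c_\ell\geq L_\ell'$ for every $j\ge\ell$. So if some $j\geq\ell$ has $S_\ell^{(j)}<L_\ell$, then $S_\ell^{(j)}\in[L'_\ell,L_\ell]$. We take a union bound over $j\in[\ell,K]$ and over integer levels $u\in[L_\ell',L_\ell]$, which costs a factor $(K-\ell+1)c_\ell$ that will be absorbed later. We then need to bound, for each $j$ and $u$, the quantity $\P(H\cap G_\ell^{(j)}(u))$.

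\emph{Step 2: converting $H$ into a tail event.} We apply Lemma \ref{bf_lemma} with $x=T_m$ for a suitable index $m\geq\ell$. The natural first choice is $m=j$, so that the prime sum is $S_j^{(j)}$ and the coefficients match those of $S_\ell^{(j)}$; otherwise we pass between $S^{(j)}_\ell$ and $S^{(m)}_\ell$ using the two-point bound of Proposition \ref{comparison22}, whose penalty $e^{-C(w-u)^2/z_\ell^2+O(z_\ell^2)}$ pays for the change. The lemma gives
\[
W_\alpha\le S_m^{(m)}+E_m,\qquad E_m\ll \beta_m^{-1}\log\frac{e}{\alpha\log T_m},
\]
so on $H$ the tail $\mathcal T:=S_m^{(m)}-S_\ell^{(m)}$ exceeds $V-u-E_m$. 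We write $\mathcal T=\Re\mathcal Q_0$, where $\mathcal Q_0$ is a Dirichlet polynomial supported on primes greater than $T_\ell$. We take $\mathcal Q=\mathcal Q_0^{\,q}$ with $q=\lfloor a/\beta_m\rfloor$; by \eqref{deltalocbound} its length $T^{q\beta_m}$ satisfies \eqref{eq:length-condition}. Markov's inequality combined with Proposition \ref{comparison21} gives
\[
\P(H\cap G_\ell^{(j)}(u))\ll \frac{\E_{\rm rand}|\widetilde{\mathcal Q}_0|^{2q}}{(V-u-E_m)^{2q}}\cdot\frac{e^{-u^2/(2\sigma_{\ell,j}^2)}}{\sigma_{\ell,j}}.
\]
The random moment is bounded by Lemma \ref{lem_moments_random}, using that the tail variance is $\frac12\log(\log T_m/\log T_\ell)+O(z_\ell^2)$. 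Rather than a single power, we intend to use the moment in the form of a tilted bound $\exp\bigl(-\rho(V-u)+\frac{\rho^2}{2}\mathrm{Var}(\mathcal T)+\rho E_m\bigr)$. This is obtained by choosing $q$ (equivalently $m$) of size roughly $\rho(V-u)$ divided by the tail variance. The constraint $\rho rD<a$ from \eqref{rho-choice2} guarantees that such an $m$ exists with $q\le a/\beta_m$.

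\emph{Step 3: summing over $u$.} By \eqref{touse}, $2\sigma_{\ell,j}^2=\log\frac1\alpha+O(1)$. The exponent to maximize is therefore
\[
-\frac{u^2}{\log(1/\alpha)}-\rho(V-u),
\]
whose unconstrained maximum occurs at $u=\frac\rho2\log\frac1\alpha$ and has value $-\rho V+\frac{\rho^2}{4}\log\frac1\alpha$; this yields \eqref{firstbound63}. If $2\kappa_1<\rho$, this maximizer exceeds $V>L_\ell$, so on the admissible range $u\le L_\ell$ the maximum is attained at $u=L_\ell$. There the Gaussian factor is essentially $e^{-V^2/\log(1/\alpha)}$, and the gap $V-L_\ell\geq c_\ell$ contributes an extra factor $e^{-\rho' c_\ell}$ with $\rho'=\rho-2\kappa_1>0$, which gives \eqref{improved_bd}. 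The factors $(K-\ell+1)c_\ell$, $e^{O(z_\ell^2)}$ and the tail variance are all $e^{o(c_\ell)}$, since $c_\ell=\beta_\ell^{-1/90}$ is much larger than any power of $z_\ell$; hence they are absorbed into $e^{-Cc_\ell}$.

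\emph{Main obstacle.} The difficulty is the balance in Step 2. The explicit-formula error $\rho E_m\asymp\rho\beta_m^{-1}\log\frac{e}{\alpha\log T_m}$ is large for small $m$, while larger $m$ shrinks the admissible $q\le a/\beta_m$ and enlarges the tail variance. I would first try taking $m$ to be the first index with $q=\lfloor a/\beta_m\rfloor$ just above $\rho(V-u)/\mathrm{Var}(\mathcal T)$. The geometric spacing $\beta_{m+1}=r\beta_m$ then keeps both $E_m$ and the tail variance at $O(z_\ell^2)$-type size relative to $c_\ell$. Checking that this choice is compatible with \eqref{rho-choice2} is where I expect the real work to be.
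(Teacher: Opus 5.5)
Your Step 1, the passage from $S_\ell^{(j)}$ to $S_\ell^{(m)}$ via Proposition \ref{comparison22}, and the completing of the square in Step 3 match the paper. The core of the lemma, however, is left open: the bound $e^{-\rho(V-u)}$ for the tail beyond $T_\ell$ is not established, and the balance you propose for it cannot produce rate $\rho$.

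The level $V-u\asymp c_\ell$ is far beyond the tail's standard deviation, since $\mathrm{Var}(\mathcal T)\ll z_\ell^2\asymp(\log c_\ell)^2$. So the $2q$-th moment gives $\exp\bigl(-q\log\frac{y^2}{q\,\mathrm{Var}(\mathcal T)}+O(q)\bigr)$. The decay is governed by $q\log(V-u)$ with $q$ as large as the length allows, not by a sub-Gaussian tilt $-\rho y+\frac{\rho^2}{2}\mathrm{Var}(\mathcal T)$. Your choice $q\approx\rho(V-u)/\mathrm{Var}(\mathcal T)$ therefore yields only $\exp(-\rho(V-u)\log(V-u)/\mathrm{Var}(\mathcal T))$. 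When $\alpha\log T\asymp1$ the tail variance is at least about $\frac12\log(\beta_m/\beta_\ell)\approx\frac{89}{2}\log c_\ell$, so this rate is far below $\rho$.

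Worse, keeping $E_m$ of lower order than $c_\ell$ goes in the wrong direction. When $\alpha\log T\asymp1$ one has $E_m\asymp\beta_m^{-1}\log(1/\beta_m)$. Then $E_m\le\epsilon(V-u)$ forces $\beta_m\gtrsim\log(V-u)/(\epsilon(V-u))$, hence $q\le a/\beta_m\lesssim a\epsilon(V-u)/\log(V-u)$, and no admissible moment gives better than $\exp(-a\epsilon(V-u)(1+o(1)))$. The decay rate is proportional to the fraction of the gap that the explicit-formula error is allowed to consume. For the same reason, carrying $+\rho E_m$ in the exponent would throw away a fixed fraction of the rate.

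The missing idea is to let $E_m$ eat almost all of the gap, using the fact that the moment bound sees the threshold only through its logarithm. Your initial choice $q=\lfloor a/\beta_m\rfloor$ is the right one; what must change is how $m$ is chosen and how $E_m$ is accounted for.

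The paper takes $m$ minimal with $\beta_m\ge D\log(V-u)/(V-u)$, with $D>1$ close to $1$. If no such $m\le K$ exists, then $V-u=O(1)$ and Proposition \ref{comparison21} with $\mathcal Q=1$ suffices. Otherwise $E_m\le(V-u)/D$, $m>\ell$ and $\beta_m<rD\log(V-u)/(V-u)$. The remaining excess $(1-1/D)(V-u)$ is split as $S_m^{(m)}-S_\ell^{(m)}>W$ or $S_\ell^{(m)}-S_\ell^{(j)}>W$, with $W=\frac{D-1}{2D}(V-u)$.

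For the first event, the maximal power $q=\lfloor a/\beta_m\rfloor$ in Proposition \ref{comparison21}, together with the crude bound $q!(Cz_\ell^2)^q$ from Lemma \ref{lem_moments_random}, gives
\[
\exp\Big(-q\log\frac{W^2}{qz_\ell^2}+O(q)\Big)\le\exp\Big(-\frac{a}{rD}(V-u)+O\Big(\frac{(V-u)\log\log(V-u)}{\log(V-u)}\Big)\Big).
\]
This holds because $q\ge a(V-u)/(rD\log(V-u))-1$ and $\log\frac{W^2}{qz_\ell^2}=\log(V-u)+O(\log\log(V-u))$. The $\log(V-u)$ in $1/\beta_m$ cancels the one from the moment, and the constant $\frac{D-1}{2D}$ costs only $O(q)=o(V-u)$. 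This is exactly where $\rho rD<a$ enters.

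For the second event, Proposition \ref{comparison22} with $w-u>W/2$ and $V-u\ge c_\ell\gg z_\ell^2$ gives an extra factor $e^{-(\rho+1)(V-u)}$.

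Finally, since $\rho<a/(rD)$ strictly, one runs the argument with $(\rho+a/(rD))/2$ in place of $\rho$. The reserved factor $e^{-\frac12(a/(rD)-\rho)c_\ell}$ absorbs the $O(c_\ell z_\ell)$ choices of $(u,j)$ and the $e^{Cz_\ell^2}$ losses, and it is the source of $e^{-Cc_\ell}$ in \eqref{firstbound63}. At the unconstrained maximiser $u=\frac\rho2\log\frac1\alpha$ in your Step 3 there is no other source for that factor. So your claim that these losses are ``absorbed into $e^{-Cc_\ell}$'' needs this reserve.
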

\begin{proof}
    Note that on the event $G_{\ell-1} \cap A_{\ell}$, we have that $S_{\ell}^{(j)} > L_{\ell}':=L_{\ell}-3c_{\ell}.$ Therefore, 
    \begin{align}
\label{prelim}
 &\P \Big(H \cap G_{\ell-1} \cap A_{\ell} \cap\big
 \{ \exists j \geq \ell : S_{\ell}^{(j)}<L_{\ell}\big\}\Big)\notag\\
 &\qquad\leq \sum_{j=\ell}^K\sum_{u \in [L_{\ell}',L_{\ell}]}
 \P \Big(\big\{\log |\zeta(\tfrac12+\alpha+it)|^{-1}
 -S_{\ell}^{(j)}>V-u\big\}
 \cap G_{\ell}^{(j)}(u) \cap A_\ell\Big).
\end{align}

Recall that $D>1$ is sufficiently close to $1$. 
Using Lemma \ref{bf_lemma} with $x=T_m$, we have that 
    $$ \log \big|\zeta(\tfrac12+\alpha+it)\big|^{-1} \leq S_m^{(m)} + R_m+C, $$ for some constant $C>0$, where 
    $$R_m = \frac{1}{\beta_m} \log \frac{1}{1-T_m^{-\alpha}}.$$
    Hence if 
    $$  \log \big|\zeta(\tfrac12+\alpha+it)\big|^{-1}  -S_{\ell}^{(j)}>V-u,$$
    then 
    \begin{equation}
    \label{vu} V-u < S_m^{(m)}-S_{\ell}^{(j)} +R_m+C.
    \end{equation}
    
    Note that we have 
    $$c_{\ell} < V-u <C_1 c_{\ell},$$ for some constant $C_1>0$. We first consider the case
\[
D\frac{\log(V-u)}{V-u}>\beta_K.
\]
Note that this implies that $V-u = O(1)$, and hence $c_{\ell} = O(1)$ and $z_{\ell} =O(1)$. Applying Proposition \ref{comparison21}
with $\mathcal Q=1$, we obtain that
\begin{align*}
\P\bigl(H\cap G_\ell^{(j)}(u)\cap A_\ell\bigr)
&\leq \P\bigl(G_\ell^{(j)}(u)\bigr)
\ll
\frac1{\sqrt{\log(1/\alpha)}}
\exp\Big(-\frac{V^2}{\log(1/\alpha)}\Big),
\end{align*}
where we used equation \eqref{touse} again. There are $O(1)$ relevant choices of $u$ and $j$.
Since $c_\ell=O(1)$, we may also include the factor
$e^{-Cc_\ell}$ in the upper bound above, which proves the required Gaussian bound in this case.

We now assume that
\[
D\frac{\log(V-u)}{V-u}\leq\beta_K.
\]
Choose $m$ to be the smallest index satisfying
\[
\beta_m\geq D\frac{\log(V-u)}{V-u}.
\]
Such an index now exists with $m\leq K$. Moreover,
$V-u\asymp c_\ell$ and $\beta_\ell=c_\ell^{-90}$ imply,
for $\theta$ sufficiently small, that
\[
\beta_\ell<
D\frac{\log(V-u)}{V-u}.
\]
Thus $m>\ell$, and the minimality of $m$ gives
\begin{equation}
\label{min_cond}
D\frac{\log(V-u)}{V-u}
\leq\beta_m<
rD\frac{\log(V-u)}{V-u}.
\end{equation}

From the choice of $m$, it follows that 
$$ R_m \leq \frac{V-u}{D \log(V-u)} \log \frac{1}{1-e^{-M_0 D \log(V-u)/(V-u)}} \leq \frac{V-u}{D \log(V-u)} \log \Big( 1 + \frac{V-u}{M_0 D \log(V-u)}\Big).$$
Since $V-u\geq c_\ell\geq c_K \geq \theta^{-1/90}$, the preceding
estimate gives, for $\theta$ sufficiently small,
$$ R_m+C \leq \frac{V-u}{D}.$$
Combining this with \eqref{vu} we get
$$ S_m^{(m)}-S_{\ell}^{(j)}> (V-u) \Big(  1 - \frac{1}{D}\Big).$$
Hence, by writing 
    $$S_m^{(m)}-S_{\ell}^{(j)} = S_m^{(m)} - S_{\ell}^{(m)}+S_{\ell}^{(m)}- S_{\ell}^{(j)},$$ we obtain that
    \begin{align}
        \P \big(H \cap G_{\ell}^{(j)}(u) \cap A_{\ell} \big) &\leq \P \Big(  G_{\ell}^{(j)}(u) \cap\big \{S_m^{(m)}-S_{\ell}^{(m)}> W\big \} \Big) \nonumber \\
        &\qquad+ \P \Big(  G_{\ell}^{(j)}(u) \cap A_{\ell} \cap \big\{S_{\ell}^{(m)}-S_{\ell}^{(j)}> W  \big\}  \Big),\label{second}
    \end{align}
where $W= \frac{D-1}{2D} (V-u).$

We first bound the first probability on the right hand side of \eqref{second}. Write $S_m^{(m)}-S_\ell^{(m)}=\Re A(t)$, and for $q \in \mathbb{N}$, let $\mathcal{Q} = A(t)^q$. Note that $\mathcal{Q}$ is only supported on integers whose primes are greater than $T_{\ell}$. Using the fact that $|\Re A(t)|^{2q}\le |A(t)|^{2q}$, we have 
    \begin{align}
    \label{toapply}
 \P \Big(  G_{\ell}^{(j)}(u) \cap \big\{S_m^{(m)}-S_{\ell}^{(m)}> W\big \} \Big) \leq   \E \bigg( \frac{|\mathcal{Q}|^2}{W^{2q}} \mathbf 1(G_{\ell}^{(j)}(u))\bigg) .
    \end{align}
Now choose
\begin{equation} 
\label{second_choice}
q = \Big\lfloor \frac{a}{\beta_m} \Big\rfloor.
\end{equation} Note that the length of $\mathcal{Q}$ is at most 
$$T_m^q \leq T^{ \beta_m q} <T,$$ and the condition \eqref{eq:length-condition} in Proposition \ref{comparison21} follows from \eqref{deltalocbound}. Proposition \ref{comparison21} then gives 
\begin{align} 
\label{from_comp}
    \E \bigg( \frac{|\mathcal{Q}|^2}{W^{2q}} \mathbf 1\big(G_{\ell}^{(j)}(u)\big)\bigg)  \ll \frac{1}{\sigma_{\ell,j}} \exp \Big( - \frac{u^2}{2 \sigma_{\ell,j}^2} \Big)\frac{ \E_{\text{rand}}(|\mathcal{\widetilde Q}|^2)}{W^{2q}}.
\end{align}
Using Lemma \ref{lem_moments_random} and \eqref{new_variance} gives that
\begin{align} \label{expect}
    \E_{\text{rand}}\big(| \mathcal{\widetilde Q}|^2\big) \ll q! (Cz_{\ell}^2)^q,
\end{align}
for some $C>0$. Using Stirling's formula and combining equations \eqref{toapply}, \eqref{from_comp} and \eqref{expect}, it follows that 
\begin{align*}
   &\P \Big(  G_{\ell}^{(j)}(u) \cap \big\{S_m^{(m)}-S_{\ell}^{(m)}> W \big\} \Big)\\
   &\qquad\ll \frac{1}{\sigma_{\ell,j}} \exp \Big(  - \frac{u^2}{2 \sigma_{\ell,j}^2}\Big)\exp \Big( q \log q+2q \log z_{\ell} -2q \log W +O (q)\Big) .
\end{align*}
Hence by \eqref{min_cond} and \eqref{second_choice}, we have
 \begin{align}
 \label{onebound}
    & \P \Big(  G_{\ell}^{(j)}(u) \cap \big\{S_m^{(m)}-S_{\ell}^{(m)}> W\big \} \Big)\nonumber\\
    &\qquad\ll \frac{1}{\sigma_{\ell,j}} \exp \Big(  - \frac{u^2}{2 \sigma_{\ell,j}^2}\Big) \exp \bigg(  - \frac{a(V-u)}{rD}+ O \Big( \frac{(V-u) \log \log (V-u)}{\log(V-u)} \Big) \bigg).
 \end{align}
Since $\rho < \frac{a}{rD}$, we get that 
\begin{equation}
    \label{onebound1}
    \eqref{onebound} \ll \frac{1}{\sqrt{\log(1/\alpha)}} \exp \Big(  - \frac{u^2}{2 \sigma_{\ell,j}^2}-\rho(V-u)\Big).
\end{equation}

Now we bound the second term on the right hand side of \eqref{second}. Similarly as before, since we are on the event $G_{\ell-1} \cap A_{\ell}$, it follows that 
   $S_{\ell}^{(m)} \in [L_{\ell}',U_{\ell}']$. We condition on $S_{\ell}^{(m)} \in [w-1,w]$, and then we must have $w \in [L_{\ell}',U_{\ell}']$. Since $S_{\ell}^{(m)}-S_{\ell}^{(j)}>W$, we get that $w-u>W-1$. Note that with $\theta$ small enough, we have that $W-1>\frac{W}{2}$, so after applying Proposition \ref{comparison22}, we get that 
   \begin{align*}
    &\P \Big(  G_{\ell}^{(j)}(u) \cap A_{\ell} \cap \big\{S_{\ell}^{(m)}-S_{\ell}^{(j)}> W \big \}  \Big) \\
    &\qquad\ll \frac{1}{\sqrt{\log(1/\alpha)}} \sum_{\substack{w \in [L_{\ell}',U_{\ell}'] \\ w-u> \frac{W}{2}}}  \exp\Big(
  -\frac{(u+w)^2}{8\sigma_{\ell,j}^2}
 -C\frac{(w-u)^2}{z_\ell^2} +O(z_\ell^2)
 \Big). 
   \end{align*}

Note that $w>u$, and hence
$(u+w)^2/(8 \sigma_{\ell,j}^2)> u^2/(2 \sigma_{\ell,j}^2)$. We also have that
\begin{align*}
    \frac{(w-u)^2}{z_{\ell}^2} \geq \frac{(D-1)^2}{16D^2} \frac{c_{\ell}(V-u)}{z_{\ell}^2} \geq (\rho+1) (V-u).
\end{align*}
 Summing over $w$, we get that 
 \begin{align*}
    \P \Big(  G_{\ell}^{(j)}(u) \cap A_{\ell} \cap \big\{S_{\ell}^{(m)}-S_{\ell}^{(j)}> W \big \}  \Big) \ll \frac{1}{\sqrt{\log(1/\alpha)}} \exp \Big( - \frac{u^2}{2\sigma_{\ell,j}^2} - \rho(V-u) \Big).
    \end{align*}
    Combining this with \eqref{onebound1} and \eqref{prelim} gives that
    \begin{align*}
        \P \Big(H \cap G_{\ell-1} \cap A_{\ell} \cap
 \big\{ \exists j \geq \ell : S_{\ell}^{(j)}<L_{\ell}\big\}\Big) \ll  \frac{1}{\sqrt{\log(1/\alpha)}} \sum_{j=\ell}^K \sum_{u \in [L_{\ell}',L_{\ell}]} \exp \Big( - \frac{u^2}{2\sigma_{\ell,j}^2} - \rho(V-u) \Big).
    \end{align*}
    
The preceding argument used only $\rho<a/(rD)$, so it also
applies with $\rho$ replaced by $(\rho+a/(rD))/2$. Hence
\begin{align*}\label{eq:lower-cell-reserved}
 \P(H\cap G_\ell^{(j)}(u)\cap A_\ell)
 &\ll\frac1{\sigma_{\ell,j}}
 \exp\bigg(-\frac{u^2}{2\sigma_{\ell,j}^2}-\rho(V-u)
       -\frac12\Big(\frac a{rD}-\rho\Big)(V-u)\bigg).
\end{align*}
Using the fact that
\begin{equation}
\label{fact}
- \frac{u^2}{2\sigma_{\ell,j}^2} \leq -\frac{u^2}{\log(1/\alpha)}+ C z_{\ell}^2,
\end{equation}
for some $C>0$, it follows that
\begin{align*}
 -\frac{u^2}{2\sigma_{\ell,j}^2}-\rho(V-u)
 &\leq-\rho V+\frac{\rho^2}{4}\log\frac1\alpha
       -\frac{\bigl(u-\frac\rho2\log(1/\alpha)\bigr)^2}     {\log(1/\alpha)}+Cz_\ell^2\\
 &\leq-\rho V+\frac{\rho^2}{4}\log\frac1\alpha +C z_\ell^2.
\end{align*}
There are $O(c_\ell)$ choices of $u$ and $O(z_\ell)$
choices of $j$. We can choose $c_K$ sufficiently large (or, equivalently, $\theta$ sufficiently small) such
that uniformly for $\ell\leq K$,
\[
 C z_\ell^2+\log\bigl(Cc_\ell(1+z_\ell)\bigr)
 \leq\frac14\Big(\frac a{rD}-\rho\Big)c_\ell.
\]
Hence, combining the equations above and using the fact that $V-u \geq c_{\ell}$, we get that 
\begin{align*}
    \sum_{j=\ell}^K \sum_{u \in [L_{\ell}',L_{\ell}]} 
 \P\big(H\cap G_\ell^{(j)}(u)\cap A_\ell\big)
 &\ll\frac{1}{\sqrt{\log(1/\alpha)}} \exp \Big(-\rho V + \frac{\rho^2}{4} \log \frac{1}{\alpha}- \frac{1}{4} \Big(  \frac{a}{rD}-\rho\Big) c_{\ell} \Big) ,
\end{align*}
which proves the bound \eqref{firstbound63}.

Finally, assume $2\kappa_1<\rho$.
Using \eqref{fact} and the fact that 
$u^2=V^2-2V(V-u)+(V-u)^2$ gives
  $$  - \frac{u^2}{2\sigma_{\ell,j}^2} - \rho(V-u) \leq - \frac{V^2}{\log(1/\alpha)}-(\rho-2\kappa) (V-u)+Cz_{\ell}^2. $$
  Since $\rho-2\kappa_1 >0$ and $V-u \geq c_{\ell}$, summing over $j$ and $u$ then yields
  \begin{align*}
     \P \Big(H \cap G_{\ell-1} \cap A_{\ell} \cap
\big \{ \exists j \geq \ell : S_{\ell}^{(j)}<L_{\ell}\big\}\Big) \ll  \frac{e^{-C c_{\ell}}}{\sqrt{\log(1/\alpha)}}  \exp \Big(  - \frac{V^2}{\log(1/\alpha)}\Big) ,
  \end{align*}
    for some $C>0$. This proves \eqref{improved_bd}.
\end{proof}

\begin{lem}
\label{g1c}
Suppose $0<\rho<1$,
with the parameters chosen as in \eqref{rho-choice1} and \eqref{rho-choice2}.
We have
\begin{equation}\label{eq:initial-general}
 \P(H\cap G_1^c)
 \ll \frac{e^{-C c_1}}{\sqrt{\log(1/\alpha)}} \exp\Big(-\rho V+\frac{\rho^2}{4}\log\frac1\alpha \Big),
\end{equation}
for some $C>0$.
If, in addition, $2\kappa_1<\rho$, then
\begin{equation}\label{eq:initial-result}
 \P(H\cap G_1^c)
 \ll\frac{e^{-C c_1}}{\sqrt{\log(1/\alpha)}}
           \exp\Big(-\frac{V^2}{\log(1/\alpha)}\Big)
\end{equation}
for some $C>0$.
\end{lem}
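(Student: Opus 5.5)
We follow the scheme of Lemmas \ref{upper} and \ref{lower}, now at the first block $\ell=1$, where $G_0=[T,2T]$ and there is no conditioning. On $G_1^c$ there is some $j\geq1$ with $S_1^{(j)}\notin[L_1,U_1]$. We therefore split
\[
\P(H\cap G_1^c)\leq\sum_{j=1}^K\Big(\P\big(S_1^{(j)}>U_1\big)+\P\big(H\cap\{S_1^{(j)}<L_1\}\big)\Big),
\]
and note that $K\ll\log\log\log T$ is harmless.

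For the upper excursion, split at $A\sigma_{1,j}^2$ with $A$ large. For $u\in[U_1,A\sigma_{1,j}^2]$, apply Proposition \ref{comparison21} with $\ell=1$ and $\mathcal Q=1$, which gives $\ll\sigma_{1,j}^{-1}e^{-u^2/(2\sigma_{1,j}^2)}$. The beyond-$A\sigma^2$ tail is handled by Lemma \ref{lem_moments} with $q=[\sigma_{1,j}^2]$, exactly as in Section \ref{section_large}; this is allowed since $2q\leq\log T/\log T_1=(\log\log T)^{30}$. By \eqref{touse}, $U_1=\frac{V}{\log\log T}\log\log T_1+c_1$ with $\log\log T_1=\log\log T-30\log\log\log T$, so $U_1\geq V+c_1/2$. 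Summing the Gaussian over $u\geq U_1$ then gives $\ll\sigma^{-1}e^{-V^2/\log(1/\alpha)}e^{-Cc_1}$, since $2\kappa c_1/\log(1/\alpha)\cdot V\gg c_1$. This is stronger than both claimed bounds, because $-V^2/\log(1/\alpha)\leq-\rho V+\frac{\rho^2}{4}\log\frac1\alpha$.

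For the lower excursion, bound $\P(H\cap\{S_1^{(j)}\in[u-1,u]\})$ for $u\leq L_1$. There is no $A_1$ event, so $u$ is not confined near $L_1$. However, for $u\leq -A'\sigma^2$ a crude bound suffices: $H$ forces $S^{(m)}_m-S^{(j)}_1$ to be huge, and Lemma \ref{lem_moments} makes that negligible. For $u$ in the bounded range, repeat the argument of Lemma \ref{lower} with $\ell=1$. Set $V-u\geq c_1$ and choose the minimal $m$ with $\beta_m\geq D\log(V-u)/(V-u)$. If no such $m\leq K$ exists, then $V-u\gg1/\theta$ is large, and we take $m=K$ with the remainder $R_K$ bounded using $\alpha\log T\geq M_0$. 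Lemma \ref{bf_lemma} reduces $H$ to the event $S_m^{(m)}-S_1^{(m)}>W$ or $S_1^{(m)}-S_1^{(j)}>W$ with $W\asymp V-u$. The first event is controlled by Proposition \ref{comparison21}, taking $\mathcal Q=(S_m^{(m)}-S_1^{(m)})^q$ and $q=\lfloor a/\beta_m\rfloor$. This gives
\[
\sigma^{-1}e^{-u^2/(2\sigma^2)-\rho(V-u)-\frac12(\frac a{rD}-\rho)(V-u)}.
\]
The second event is controlled by the $\ell=1$ case \eqref{eq:initial-two-point} of Proposition \ref{comparison22}, with $H$ chosen larger than $\rho+1$ times the relevant constant, so that $e^{-H|w-u|}$ beats $e^{-\rho(V-u)}$.

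Finally, sum over $u$ and $j$ and complete the square as in the end of the proof of Lemma \ref{lower}:
\[
-\frac{u^2}{\log(1/\alpha)}-\rho(V-u)\leq-\rho V+\frac{\rho^2}{4}\log\frac1\alpha .
\]
The saved factor $e^{-c(V-u)}\leq e^{-cc_1}$ absorbs the $O(z_1^2)$ errors and the number of terms $u$ and $j$. This holds because $c_1=(\log\log T)^{1/3}$ dominates $z_1^2\asymp(\log\log\log T)^2$, and it gives \eqref{eq:initial-general}. When $2\kappa_1<\rho$, write $u^2=V^2-2V(V-u)+(V-u)^2$ to get the exponent $-V^2/\log(1/\alpha)-(\rho-2\kappa)(V-u)$, which gives \eqref{eq:initial-result}.

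The main obstacle is the first block in the lower excursion. The variance there is $\asymp\log\log T$ rather than $O(z^2)$, and the $u$-range is unbounded. We must check two things. First, Proposition \ref{comparison21} applies with $|u|\leq A\sigma^2$ and with the length budget $q\beta_m+\delta_{\rm loc}<1$. Second, the range where $m$ does not exist, i.e.\ $V-u$ large, is genuinely handled by the crude moment tail.
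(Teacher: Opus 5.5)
Your proposal follows the paper's proof essentially step for step: you confine $S_1$ to a range of size $O(\log\log T)$ using moments, treat upward exits with Proposition~\ref{comparison21}, and treat downward exits by the scheme of Lemma~\ref{lower} (minimal $m$, then Proposition~\ref{comparison21} with $q=\lfloor a/\beta_m\rfloor$ and \eqref{eq:initial-two-point} with $H$ large), and finally complete the square. A few small corrections are needed, and none affects the argument:
\begin{itemize}
\item The case where no admissible $m\le K$ exists corresponds to $V-u$ being \emph{small}, not large, and it is empty here because $V-u\ge c_1-1\to\infty$.
\item You also need to truncate $w$ to $|w|\le A\sigma_{1,m}^2$. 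The paper does this by bounding $\max_m|S_1^{(m)}|$ at the outset.
\item When $u<0$, the cross term $-u(w-u)/(2\sigma_{1,j}^2)$ can be positive. Bound it by $\frac A2(w-u)$ and absorb it into the choice of $H$.
\end{itemize}
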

\begin{proof}
The proof is similar to the proofs of Lemmas \ref{lower} and \ref{upper}, except that we first restrict the values of $S_1^{(j)}$ to intervals $[u-1,u]$ with $|u| \leq C \log \log T$, for some large constant $C>0$. Indeed, using Markov's inequality and Lemma \ref{lem_moments}, it follows that
$$ \P \Big(  \max_{1\leq m \leq K} |S_1^{(m)}| > C \log \log T \Big) \ll \exp(-c \log \log T),$$ for some $c>0$. Note that this is smaller than the bound in the statement of the lemma. Writing
\begin{align}
 \P(H\cap G_1^c)
 &\leq
 \P\Big(\max_{1\leq m\leq K}|S_1^{(m)}|>C \log\log T\Big)
\nonumber \\
 &\qquad+\sum_{j=1}^{K}
 \P\Big(H\cap
   \big\{\max_{1\leq m\leq K}|S_1^{(m)}|\leq C \log\log T\big\}
   \cap\{S_1^{(j)}>U_1\}\Big)
  \label{3terms}\\
 &\qquad+\sum_{j=1}^{K}
 \P\Big(H\cap
   \big\{\max_{1\leq m\leq K}|S_1^{(m)}|\leq C \log\log T\big\}
   \cap\{S_1^{(j)}<L_1\}\Big),\nonumber
\end{align}
we are left with the second and the third probabilities on the right hand side of \eqref{3terms}. 

For the second term in \eqref{3terms}, the same argument as in Lemma \ref{upper} goes through yielding the bound \eqref{eq:initial-result}, and also \eqref{eq:initial-general}.

For the third term, proceeding as in Lemma \ref{lower}, on $H \cap \{S_1^{(j)} \in [u-1,u]\}$, we must have that 
$$ S_m^{(m)}-S_1^{(j)} > \frac{D-1}{D}(V-u),$$
where we similarly choose $m$ to be the smallest index with 
$$ \beta_m \geq D \frac{ \log(V-u)}{V-u}.$$ Here, 
$$ -C \log \log T \leq u \leq L_1+1,$$ and note that 
$c_1-1 \leq V-u \ll \log \log T$. As before, we write
 \begin{align}\label{2prob}
 &\P\Big(H \cap \big\{ S_1^{(j)}\in[u-1,u] \big\} \cap 
             \big\{  \max_{m \le K}|S_1^{(m)}|\le C \log\log T\big \} \Big)\nonumber\\
             &\qquad\leq
 \P\Big( \big\{ S_1^{(j)}\in[u-1,u] \big\} \cap 
          \big \{S_m^{(m)}-S_1^{(m)}>W \big\}\Big)  \\
 &\qquad\qquad+
 \P\Big(\big \{S_1^{(j)}\in[u-1,u]\big\} \cap 
          \big \{S_1^{(m)}-S_1^{(j)}>W \big\} \cap 
          \big\{ \max_{m\le K}|S_1^{(m)}|\le C \log\log T\big \} \Big), \nonumber
 \end{align}
with $W= \frac{D-1}{2D} (V-u)$. 

For the first probability in \eqref{2prob},  as in Lemma \ref{lower}, take \(q=\lfloor a/\beta_m\rfloor\) and
$$
 \mathcal Q(t)=
 \Big(
 \sum_{T_1<p\le T_m}\frac{b_\alpha(p;m)}{p^{1/2+\alpha+it}}
 +\sum_{\substack{T_1<p\\p^2\le T_m}}
       \frac{b_\alpha(p^2;m)}{p^{1+2\alpha+2it}}
 \Big)^q.
$$
Proposition \ref{comparison21}  then implies that
 \begin{align*}
 &\P\Big(\big\{S_1^{(j)}\in[u-1,u] \big\} \cap 
          \big\{ S_m^{(m)}-S_1^{(m)}>W \big\}\Big)\ll 
 \frac1{\sigma_{1,j}}\exp\Big(-\frac{u^2}{2\sigma_{1,j}^2}\Big)
       \frac{\E_{\rm rand}|\widetilde{\mathcal Q}|^2}{W^{2q}}.
 \end{align*}

By Corollary \ref{cor_b}, we have
 \begin{align*}
 &\sum_{T_1<p\le T_m}\frac{|b_\alpha(p;m)|^2}{p^{1+2\alpha}}
 +\sum_{\substack{T_1<p\\p^2\le T_m}}
       \frac{|b_\alpha(p^2;m)|^2}{p^{2+4\alpha}}
 \ll z_1^2.
 \end{align*}
Hence Lemma \ref{lem_moments_random} yields
\[
 \E_{\rm rand}|\widetilde{\mathcal Q}|^2\ll q!(Cz_1^2)^q.
\]
Proceeding as in Lemma \ref{lower} we get that 
the first probability in \eqref{2prob} is 
$$
 \ll\frac1{\sigma_{1,j}}
       \exp\Big(-\frac{u^2}{2\sigma_{1,j}^2}-\rho (V-u)-\frac12\Big(\frac a{rD}-\rho\Big)(V-u)\Big).
$$

For the second probability in \eqref{2prob}, suppose 
 $S_1^{(m)} \in [w-1,w],$ with
$$
 -C \log\log T\le w\le C\log\log T+1.
$$
We use Lemma \ref{comparison21} (in particular, the bound \eqref{eq:initial-two-point}). Note that we have
$$
 -\frac{(u+w)^2}{8\sigma_{1,j}^2}
 =-\frac{u^2}{2\sigma_{1,j}^2}
   -\frac{u(w-u)}{2\sigma_{1,j}^2}
   -\frac{(w-u)^2}{8\sigma_{1,j}^2}
 \le-\frac{u^2}{2\sigma_{1,j}^2}+\frac A2(w-u).
$$
Hence, for some $H$ large enough, we have that 
 \begin{align*}
 &\P\Big( \big\{S_1^{(j)}\in[u-1,u]\big\} \cap 
           \big\{S_1^{(m)}-S_1^{(j)}>W \big\} \cap 
         \big \{ \max_{m\le K}|S_1^{(m)}|\le C \log\log T \big\} \Big)\\
 &\qquad\ll
 \frac{\exp(-u^2/(2\sigma_{1,j}^2)+O(z_1^2))}
      {\sigma_{1,j}}
 \sum_{w-u>W/2}
       e^{-(H-A/2)(w-u)}\\
  &\qquad\ll
 \frac1{\sigma_{1,j}}
 \exp\Big(-\frac{u^2}{2\sigma_{1,j}^2}
           -\frac a{rD}(V-u)+O(z_1^2)\Big)\\
 &\qquad\ll
 \frac1{\sigma_{1,j}}
 \exp\Big(-\frac{u^2}{2\sigma_{1,j}^2}-\rho (V-u)-\frac12\Big(\frac a{rD}-\rho\Big)(V-u)\Big).
 \end{align*}
In the last step we used $z_1^2=o(c_1)$ and $V-u\ge c_1-1$. Thus, combining the bounds for the two probabilities in \eqref{2prob} gives that 
 \begin{align*}
 &\P\Big(H \cap \big\{ S_1^{(j)}\in[u-1,u] \big\} \cap 
            \big\{   \max_{m\le K}|S_1^{(m)}|\le C \log\log T\big \} \Big)\\
 &\qquad\ll
 \frac1{\sqrt{\log(1/\alpha)}}
 \exp\bigg(-\frac{u^2}{\log(1/\alpha)}
           -\rho(V-u)
           -\frac12\Big(\frac a{rD}-\rho\Big)(V-u)
           +O(z_1^2)\bigg).
 \end{align*}
Similar argument as in the end of Lemma \ref{lower} finishes the proof of the lemma. 
\end{proof}

Finally, the following lemma holds.
\begin{lem}
\label{terminal_event}
    We have that
    $$ \P (  H \cap G_K ) \ll \frac{1}{\sqrt{\log(1/\alpha)}} \exp\Big(  - \frac{V^2}{\log(1/\alpha)}\Big).$$
\end{lem}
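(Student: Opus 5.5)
On the event $G_K$ every partial sum $S_K^{(j)}$, $j\geq K$, lies in $[L_K,U_K]$. In particular $S_K^{(K)}\leq U_K$, and $U_K = V\log\log T_K/\log\log T + c_K = V + O(1)$, because $\beta_K\asymp\theta$ and $c_K=\beta_K^{-1/90}=O(1)$. The plan is to apply Lemma \ref{bf_lemma} with $x=T_K=T^{\beta_K}$. This gives
\[
W_\alpha(t)\le S_K^{(K)}(t)+R_K+C,\qquad R_K=\frac1{\beta_K}\log\frac1{1-T_K^{-\alpha}}.
\]
Since $\alpha\log T_K=\beta_K\alpha\log T\geq \theta M_0/r$, we have $R_K=O(1)$, with a constant depending on $\theta$ and $M_0$. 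So on $H\cap G_K$ we get $S_K^{(K)}>V-C'$ for some constant $C'$, and also $S_K^{(K)}\le U_K\le V+C''$.

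Hence
\[
\P(H\cap G_K)\le \sum_{u\in[V-C',\,V+C''+1]}\P\big(G_{K-1}\cap\{S_K^{(K)}\in[u-1,u]\}\big),
\]
a sum of $O(1)$ terms. For each $u$ we invoke Proposition \ref{comparison21} with $\ell=j=K$ and $\mathcal Q=1$. Its hypotheses hold: $u\in[L_K',U_K']$ because $V-C'\geq L_K-3c_K$ once $c_K$ is large compared with $C'$ (which we may arrange by shrinking $\theta$), and if needed we enlarge the window slightly. Also $|u|\ll V\ll\sigma_{K,K}^2$. The proposition gives each term $\ll \sigma_{K,K}^{-1}e^{-u^2/(2\sigma_{K,K}^2)}$.

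Finally we use \eqref{touse}: $\sigma_{K,K}^2=\frac12\log\frac1\alpha+O(1)$, and $u=V+O(1)$ with $V\asymp\log\frac1\alpha$. Together these give
\[
\frac{u^2}{2\sigma_{K,K}^2}=\frac{V^2}{\log(1/\alpha)}+O(1),
\]
since the cross terms $O(V)/\log(1/\alpha)$ and $V^2\cdot O(1)/\log^2(1/\alpha)$ are both bounded. Summing the $O(1)$ terms yields the claimed bound.

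The main point to check is that $R_K$ and the window $[V-C',V+C'']$ are genuinely $O(1)$. This relies on $\alpha\log T\gg1$ and on $\beta_K$ being bounded below by $\theta/r$. Equivalently, we need the terminal good event to pin $S_K^{(K)}$ to within $O(1)$ of $V$, so that the Gaussian factor is not lost. One could worry that $L_K'$ might exceed $V-C'$. If that happens, the part of the window with $u<L_K'$ is empty on $G_K$, because $S_K^{(K)}\geq L_K$ there, so it contributes nothing.
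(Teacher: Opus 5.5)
Your argument is correct and is essentially the paper's: on $G_K$ one has $S_K^{(K)}\in[L_K,U_K]$ with $L_K,U_K=V+O(1)$. Proposition \ref{comparison21} with $\ell=j=K$ and $\mathcal Q=1$, applied over $O(1)$ unit intervals together with $\sigma_{K,K}^2=\frac12\log\frac1\alpha+O(1)$, gives the bound. The detour through $H$ and Lemma \ref{bf_lemma} is unnecessary, since the paper never uses $H$ here. Your claim that shrinking $\theta$ makes $c_K$ dominate $C'$ is false when $\alpha\log T\asymp1$: then $R_K\asymp\theta^{-1}\log(1/\theta)$ while $c_K\asymp\theta^{-1/90}$. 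However, your closing observation that $S_K^{(K)}\ge L_K$ on $G_K$ already disposes of this.
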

\begin{proof}
    Note that on $G_K$, we have that $S_K^K \in [L_K,U_K]$, and $L_K=V+O(1)$, $U_K=V+O(1).$ Hence
    $$ \P (  H \cap G_K ) \leq \sum_{u \in [L_K,U_K]} \P \big(  G_K^{(K)}(u)\big) \ll \frac{1}{\sqrt{\log(1/\alpha)}} \exp\Big(  - \frac{V^2}{\log(1/\alpha)}\Big),$$ where we used Proposition \ref{comparison21} and the fact that there are $O(1)$ unit intervals in $[L_K,U_K]$.
\end{proof}

\begin{proof}[Proof of Theorems \ref{mainthm} and \ref{bigk}]
For the proof of Theorem \ref{mainthm}, we have that $2 \kappa_1 <\rho$. The proof now follows by combining \eqref{ph} and Lemmas \ref{lem:apriori}, \ref{upper}, \ref{lower} (namely equation \eqref{improved_bd}), \ref{g1c} (namely equation \eqref{eq:initial-result}) and \ref{terminal_event}.

For the proof of Theorem \ref{bigk}, we have $\rho$ as in \eqref{rho-choice1}. Combining Lemmas \ref{lem:apriori}, \ref{upper}, \ref{lower} (namely equation \eqref{firstbound63}), \ref{g1c} (namely equation \eqref{eq:initial-general}) and \ref{terminal_event}, we have that
\begin{equation}\label{eq:small-linear-rho}
 \P(H)\ll\frac1{\sqrt{\log(1/\alpha)}}
 \exp\Big(-\rho V+\frac{\rho^2}{4}\log\frac1\alpha\Big).
\end{equation}
Finally, by the choice \eqref{rho-choice1} and the identity
\[
 -\rho\kappa+\frac{\rho^2}{4}
 =-\kappa+\frac14
   +(1-\rho)\Big(\kappa-\frac12\Big)
   +\frac{(1-\rho)^2}{4},
\]
we have
$-\rho\kappa+\rho^2/4\leq-\kappa+1/4+\varepsilon_1/2$.
Substitution into \eqref{eq:small-linear-rho} yields
\[
 \P(H)\ll
 \exp\bigg(-V+\Big(\frac14+\varepsilon_1\Big)
                      \log\frac1\alpha\bigg),
\]
which finishes the proof.    
\end{proof}

\section{Negative moments of $\zeta(s)$}
\label{section_negativemom}

Before proving Theorem \ref{negativemom}, we need to prove the following lemma.

\begin{lem}\label{lem:negative-moment-high-tail}
Assume RH. Fix $0<c<1$ and $0<\rho<1$. Suppose that
\begin{equation}\label{bigrange}
 \frac1{(\log\log\log T)^c}
 \ll \alpha\log T\ll \log\log T.
\end{equation}
There exist $C_0,C_1,a>0$
such that for
\[
 V_0=C_0\log\log T
 \Big(\frac{\log\log\log T}{\log\log\log\log T}\Big)^2,
\]
we have
\begin{equation}\label{eq:negative-moment-high-tail}
 \P(W_\alpha>V)\ll 
 \begin{cases}
 e^{-\rho V},&C_1\log\log T\leq V<V_0,\\[2mm]
 \displaystyle\exp\Big(-V-a\frac{V}{\log V}\Big),&V\geq V_0.
 \end{cases}
\end{equation}
\end{lem}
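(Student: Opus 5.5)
We follow Soundararajan's high-moment method, as sketched in the introduction, keeping track of the shift-dependent term in Lemma \ref{bf_lemma}. For a parameter $x=T^{\beta}$ with $\beta\le 1$ to be chosen depending on $V$, Lemma \ref{bf_lemma} gives
\[
W_\alpha(t)\le S_x(t)+R(x)+C,\qquad R(x):=\frac{1}{\beta}\log\frac{1}{1-x^{-\alpha}}.
\]
Since $x^{-\alpha}=e^{-\beta\alpha\log T}$, we have
\[
R(x)\le \frac1\beta\log\Big(1+\frac{1}{\beta\alpha\log T}\Big).
\]
In the range \eqref{bigrange} one has $\alpha\log T\gg(\log\log\log T)^{-c}$. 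We therefore choose $\beta$ so that $R(x)\le \varepsilon V$, for some small $\varepsilon$ depending on $\rho$. Concretely, we take
\[
\beta\asymp \frac{\log\big(V/(\alpha\log T)\big)+\log\log V}{V}.
\]
With this choice, $W_\alpha>V$ forces $S_x>(1-\varepsilon)V-C$. We split $S_x$ into the prime part over $p\le x$ and the prime-square part; the latter has bounded variance, and its large values are handled by a separate, easy moment bound.

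By Markov's inequality and Lemma \ref{lem_moments} (first statement, with $T_1$ replaced by $x$), for any $q\le \log T/(2\log x)=1/(2\beta)$ we get
\[
\P\big(S_x>(1-\varepsilon)V\big)\ll \frac{4^q q!\,\sigma_x^{2q}}{((1-\varepsilon)V)^{2q}},\qquad \sigma_x^2\le \tfrac12\log\log T+O\big(1+\log^2\tfrac{1}{\alpha\log x}\big),
\]
where the variance bound comes from Lemma \ref{variances}. In the first range, $C_1\log\log T\le V<V_0$, take $q\approx \lfloor a_0/\beta\rfloor$, which is of size $V/\log(\cdot)$. Stirling's formula then gives a bound $\exp\big(-2q\log\frac{V}{\sqrt{q}\,\sigma_x}+O(q)\big)$. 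Because $V\ge C_1\log\log T$ with $C_1$ large, $V^2/(q\sigma_x^2)$ is at least a large power of $\log V$. Hence $2q\log(V/(\sqrt q\sigma_x))\approx q\log(V/\log\log T)$, which is at least $\rho V$ once the constants $C_1$ and $a_0$ are tuned. This uses that $\beta V\asymp\log(V/(\alpha\log T))$, which is itself $\gg\log\log\log T$-ish, while $\log(V^2/(q\sigma_x^2))$ compensates. The term $\log^2(1/(\alpha\log x))$ in the variance is harmless because $\alpha\log x\ge1$ for our $\beta$.

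For $V\ge V_0$ we need more than $e^{-V}$, namely a saving of $aV/\log V$. Here $R(x)$ can no longer be made $\le\varepsilon V$ for free. Instead we choose $\beta=B\log V/V$ with $B$ large, which makes $R(x)\ll V/(B\log V)\cdot\log(V/\alpha\log T)$. We then balance against the moment bound with $q\asymp V/(B\log V)$. The Markov bound becomes
\[
\exp\big(-q\log(V^2/(q\log\log T))\big)\approx \exp\Big(-\frac{V}{B\log V}\log\frac{V\log V}{\log\log T}\Big).
\]
This beats $e^{-V-aV/\log V}$ exactly when $\log(V/\log\log T)\gtrsim B\log V$ up to lower order. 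That threshold is where $V_0=C_0\log\log T(\log_3T/\log_4T)^2$ comes from: the factor $(\log_3/\log_4)^2$ absorbs the loss $R(x)$ coming from $\log(1/(\alpha\log T))\ll c\log\log\log T$.

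The main obstacle is this second range: making the optimization of $(\beta,q)$ simultaneously give exponent at least $V+aV/\log V$, while keeping $W_\alpha-S_x$ (i.e., $R(x)$) at most $o(V/\log V)$ relative loss. We would try the standard Soundararajan refinement first. We split $W_\alpha\le S_{x_0}+(S_x-S_{x_0})+R(x)$ with $x_0=x^{1/\log V}$, bound the short sum $S_{x_0}$ by a moment of order $\asymp V/\log V\cdot\log\log V$, and bound the long part $S_x-S_{x_0}$ (variance $O(\log\log V)$) by a moment of order $\asymp 1/\beta$. If the constants do not close, we would enlarge $C_0$ and the power of $\log_3T/\log_4T$ in $V_0$ accordingly.
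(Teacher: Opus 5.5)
Your first-range argument already fails. Requiring $R(x)\le\varepsilon V$ uniformly in \eqref{bigrange} (say at $\alpha\log T\asymp 1$) forces $\beta\gg\log V/V$, and hence $q\le 1/\beta\ll V/\log V$. Meanwhile $\sigma_x^2\asymp\log\log T\asymp V$ when $V\asymp C_1\log\log T$.

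So $V^2/(q\sigma_x^2)\ll\log V$; it is not a large power of $\log V$. The Markov--Stirling bound is then at best $\exp\big(-O(V\log\log V/\log V)\big)=e^{-o(V)}$, nowhere near $e^{-\rho V}$. No single polynomial of length $T^{\beta}$ with $\beta\gg\log V/V$ can be raised to a power of order $V$. You must isolate the primes $p\le T^{1/V}$, where the $\lfloor V\rfloor$-th power still has length $\le T$ and gives $(C\log\log T/V)^{\lfloor V\rfloor}$. The larger primes must then be handled separately.

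Your claim $\alpha\log x\ge 1$ is also false: $\alpha\log x=\beta\alpha\log T\asymp\alpha\log T\,\log V/V\to 0$ at the bottom of \eqref{bigrange}. Lemma \ref{asympta} then shows $|b_\alpha(p;x)|$ is genuinely of size $1+J\frac{\log p}{\log x}\big(1-\frac{\log p}{\log x}\big)$ with $J\asymp\log\frac{1}{\alpha\log x}\asymp\log V$. In particular, the long part $x^{1/\log V}<p\le x$ of your fallback has variance $\asymp(\log V)^2$, not $O(\log\log V)$.

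In the second range the single moment fails, as you notice, and so does the two-piece fallback. With $x=T^{\lambda/V}$, the long part admits moments only up to $V/\lambda$. Its Markov bound at threshold $t\le V-R(x)$ is therefore at best $\exp\big(-\frac{V}{\lambda}\log\frac{Ct^2\lambda}{V\sigma^2}\big)$. Since $R(x)\ge\frac{V}{\lambda}\ell$ with $\ell=\log\frac{V}{\lambda\alpha\log T}$, beating $e^{-V}$ requires $(\lambda-\ell)^2e^{-(\lambda-\ell)}\ge\sigma^2/(C\alpha\log T)$. This is impossible when $\alpha\log T\ll 1$ and $\sigma^2\to\infty$.

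The missing idea is the full multi-scale decomposition the paper uses:
\begin{itemize}
\item Take $J=\lfloor\log V\rfloor$ and $x=T^{J/V}$, so that $R(x)$ absorbs all of $V$ except $w=\frac{(1-c)V\log J}{4J}$.
\item Split $Q_x$ into the $J$ blocks $T^{j/V}<p\le T^{(j+1)/V}$. On block $j$, $|b_\alpha(p;x)|\ll J-j$ and the variance is $\ll (J-j)^2/(j+1)$.
\item Use thresholds $w/(4(J-j)^2)$ and moments $q_j=\lfloor V/(j+1)\rfloor$, so that each block by itself saturates the length $T$.
\end{itemize}
Since $\log V\ge J$, each block $j\ge 1$ contributes $\ll e^{-q_j(j+5)}\le e^{-V-3V/J}$. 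The block $j=0$ contributes $\big(CJ^2\log\log T/(V(\log J)^2)\big)^{\lfloor V\rfloor}\le e^{-2V}$ once $V\ge V_0$ with $C_0$ large.

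That block-$0$ bound is the true origin of the factor $(\log\log\log T/\log\log\log\log T)^2$; it does not come from absorbing $R(x)$. Note also that $\log\frac{1}{\alpha\log T}\le c\log\log\log\log T+O(1)$ in this range, not $c\log\log\log T$.

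The first range is the same block argument with $J=\lfloor D\log V\rfloor$, $1<D<1/\rho$, and $w=\frac{D-1}{2D}V$, giving $e^{-V/D}$. Enlarging the power in $V_0$, as you suggest, would prove a different statement.
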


\begin{proof}
We first prove the bound for $V\geq V_0$.
 First,
\cite[Lemma~2.2, equation~(2.8)]{BF} gives
\begin{equation}\label{eq:above-half-pointwise-cap}
 W_\alpha(t)\leq V_*:=
 \frac{\log T}{2\log\log T}
 \Big(\log\frac1\alpha-\log\log\log T+C\Big)
 \asymp\log T,
\end{equation}
for some $C>0$.
It suffices to consider $V_0\leq V\leq V_*$.
Take
\begin{equation*}\label{eq:endpoint-cutoff}
 J=\lfloor\log V\rfloor,\qquad
 \beta=\frac JV,\qquad x=T^{J/V}.
\end{equation*}
Using Lemma $2.1$ in \cite{BF}, we have that
\begin{align}
\label{ineq_bf}
 W_\alpha(t)
 &\leq\Re Q_x(t)
  +\frac{V}{J}\log\frac1{1-T^{-\alpha J/V}}
  +O\Big(\log\log\log T+
                     \max \Big\{0,  \log \frac1{\alpha  \beta\log T } \Big\}\Big),
\end{align}where
$$ Q_x(t) =  \sum_{p \leq x} \frac{b_{\alpha}(p;x)}{p^{1/2+\alpha+it}}.$$

From \eqref{bigrange}, we have that
$$ \frac{V}{J} \log \frac1{1-T^{-\alpha J/V}} \leq \frac{V}{J} \big( \log V - \log J + c \log \log \log \log T+ O(1) \big),$$
and hence  
\begin{align*}
 V-\frac VJ\log\frac1{1-T^{-\alpha J/V}}
 & \geq\frac VJ\big(\log J- c \log \log \log \log T - O(1)\big)
 \geq\frac{3(1-c)V\log J}{4J}.
 \label{eq:endpoint-remainder-gap}
\end{align*}
So, if $W_{\alpha}(t) >V$, then $$|Q_x(t) | >w:= \frac{(1-c)V \log J}{4J}.$$
Using Lemma \ref{asympta}, we also have that 
\begin{equation}\label{bp_again}
 |b_\alpha(p;x)|\ll
 1+J\frac{\log p}{\log x}\Big(1-\frac{\log p}{\log x}\Big).
\end{equation}

Now we divide the primes in consecutive intervals
$1,T^{1/V},T^{2/V},\ldots,T^{J/V}=x$, and write
\begin{equation}\label{eq:high-tail-prime-blocks}
 Q_x=\sum_{j=0}^{J-1}Q_j,\qquad
 Q_j(t)=\sum_{T^{j/V}<p\leq T^{(j+1)/V}}
             \frac{b_\alpha(p;x)}{p^{1/2+\alpha+it}}.
\end{equation}
For the primes in $Q_0$, using \eqref{bp_again} we have that $b_{\alpha}(p;x) \ll 1$.  For $1\leq j<J$, the primes from $Q_j$ satisfy (by
\eqref{bp_again} again) 
$b_\alpha(p;x)\ll J-j$, and hence
\begin{align}
 \sum_{p\leq T^{1/V}}\frac{|b_\alpha(p;x)|^2}{p^{1+2\alpha}}
 &\ll\log\log T,\notag\\
 \sum_{T^{j/V}<p\leq T^{(j+1)/V}}
       \frac{|b_\alpha(p;x)|^2}{p^{1+2\alpha}}
 &\ll\frac{(J-j)^2}{j+1},
 \label{eq:above-half-coefficient-masses}
\end{align}
for $1 \leq j <J$. 

Now note that 
\[
 \P(W_\alpha>V)
 \leq\P\Big(|Q_0|>\frac{w}{4}\Big)
 +\sum_{j=1}^{J-1}
       \P\Big(|Q_j|>\frac{w}{4(J-j)^2}\Big).
\]
For $0 \leq j <J$, take 
\begin{equation}\label{eq:above-half-moment-orders}
 q_j=\Big\lfloor\frac{V}{j+1}\Big\rfloor.
\end{equation}
The polynomial $Q_j^{q_j}$ has length at most
$T^{q_j(j+1)/V}\leq T$. Using the mean value theorem and \eqref{eq:above-half-coefficient-masses}, followed by Markov's inequality
and Stirling's formula, we obtain that
\begin{align}
 \P\Big(|Q_0|>\frac{w}{4}\Big) \label{q0}
 &\ll\Big(\frac{CV\log\log T}{w^2}\Big)^{\lfloor V\rfloor},\\
 \P\Big(|Q_j|>\frac{w}{4(J-j)^2}\Big)
 &\ll\Big(\frac{CV(J-j)^6}{(j+1)^2w^2}\Big)^{q_j}
 \leq\Big(\frac{CV(J-j)^8}{J^2w^2}\Big)^{q_j}.
 \label{eq:above-half-log-rates}
\end{align}

Substituting $w=(1-c) V \log J/(4J)$ in the first bound gives
\[
 \P\Big(|Q_0|>\frac{w}{4}\Big)
 \ll
 \Big(\frac{CJ^2\log\log T}{V(\log J)^2}\Big)^{\lfloor V\rfloor}
 \ll e^{-2V}.
\]

For $j\geq1$, the second bound in
\eqref{eq:above-half-log-rates} becomes
\[
 \P\Big(|Q_j|>\frac{w}{4(J-j)^2}\Big)
 \ll\Big(\frac{C(J-j)^8}{V(\log J)^2}\Big)^{q_j}.
\]
We have that
\[
 \log\frac{V(\log J)^2}{C(J-j)^8}
 \geq j+2\log\log J-\log C-9\geq j+5
\]
for sufficiently large $T$. Hence,
\begin{equation*}\label{eq:high-tail-single-block}
 \P\Big(|Q_j|>\frac{w}{4(J-j)^2}\Big)
 \ll e^{-q_j(j+5)},
\end{equation*}
and since
\[
 q_j(j+5)
 \geq V+\frac{3V}{J},
\]
it follows that 
\[
 \sum_{j=1}^{J-1}\P\Big(|Q_j|>\frac{w}{4(J-j)^2}\Big)
 \ll J e^{-V-3V/J}\leq e^{-V-2V/J}.
\]
Together with the bound for $Q_0$, this
proves the second bound in
\eqref{eq:negative-moment-high-tail}.

For $C_1\log\log T\leq V<V_0$, the argument is similar, so we will only sketch it. Fix $D$ with $1<D<1/\rho$, and take
\begin{equation*}\label{eq:negative-moment-moderate-cutoff}
 J=\lfloor D\log V\rfloor,\qquad
 \beta=\frac JV,\qquad x=T^{J/V}.
\end{equation*}
Using \eqref{bigrange}, we have that 
\[
 \frac{V}{J}\log\frac{1}{1-T^{-\alpha J/V}}
 \leq\frac VJ \big( \log V - \log J +c \log \log \log \log T +O(1) \big)
 =\frac{V}{D}+o(V).
\]
The error in \eqref{ineq_bf} is
$o(V)$, so it follows that
\[
 \{W_\alpha>V\}\subseteq\{|Q_x|>w\},
 \]
 for $w=\frac{D-1}{2D}V$.
The bound \eqref{bp_again},
the division \eqref{eq:high-tail-prime-blocks}, and 
the estimate \eqref{eq:above-half-log-rates} hold as before.
Now we have
\[
 \log\frac{J^2w^2}{CV(J-j)^8}
 \geq\frac{j+1}{D}+2\log J-C 
 \geq\frac{j+1}{D}+4, 
\]
so 
\[
 \sum_{j=1}^{J-1}\P\Big(|Q_j|>\frac{w}{4(J-j)^2}\Big)
 \ll  J\exp\bigg(-\frac{V-J}{D}
                         -4\Big(\frac VJ-1\Big)\bigg)
 \ll  e^{-V/D}.
\]
Including the contribution from $Q_0$ gives
\[
 \P(W_\alpha>V)
 \ll 
 \Big(\frac{C \log\log T}{V}\Big)^{\lfloor V\rfloor}
 +e^{-V/D}.
\]
Since $1/D>\rho$, choosing $C_1$ sufficiently large makes both
terms $O(e^{-\rho V})$. This proves the first
bound in \eqref{eq:negative-moment-high-tail}.
\end{proof}

Using similar arguments, we can also prove the following variant. We will skip the proof since it is very similar to the proof of Lemma \ref{lem:negative-moment-high-tail}.
\begin{lem}
\label{newlem}
   Assume RH. Fix $0<c<1$ and $0 < \varepsilon<1$. There exists $C>0$ such that uniformly for 
    $$ \frac{C}{  \log \log T} \leq \alpha \log T \ll \frac{1}{(\log \log \log T)^c}$$
    and $(\log T)^{\varepsilon} \leq V \leq V_{*}$ (with $V_{*}$ given in \eqref{eq:above-half-pointwise-cap}), we have
    $$ \P (W_{\alpha} >V) \ll \exp \Big( -V - \frac{2V}{\log V}\Big).$$
\end{lem}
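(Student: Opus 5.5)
I will follow the proof of the second bound in Lemma \ref{lem:negative-moment-high-tail} (the range $V\geq V_0$), adjusting the cutoff so that it tolerates the smaller shift $\alpha\log T\geq C/\log\log T$. By \eqref{eq:above-half-pointwise-cap} it suffices to treat $(\log T)^\varepsilon\leq V\leq V_*$. I will take $J=\lfloor\log V\rfloor$, $\beta=J/V$, $x=T^{J/V}$, and apply \eqref{ineq_bf}. The first point to check is the size of the zero contribution $\frac VJ\log\frac1{1-T^{-\alpha J/V}}$. We have $\alpha J\log T/V\leq \alpha\log T\,\log V/(\log T)^\varepsilon$, which is tiny. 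So this term is $\frac VJ\big(\log V-\log J-\log(\alpha\log T)+O(1)\big)$. The lower bound $\alpha\log T\geq C/\log\log T$ gives $-\log(\alpha\log T)\leq\log\log\log T-\log C$. Since $\log V\geq\varepsilon\log\log T$, we have $\log\log\log T\leq\log J+O_\varepsilon(1)$.

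This produces a penalty of $\frac VJ(\log J+O(1))$ on the same scale as the gain $\frac VJ\log J$, and that is the main obstacle. Compared with \eqref{bigrange}, the gap $V-\frac VJ\log\frac{1}{1-T^{-\alpha J/V}}$ now only has size $\frac VJ(\log J - \log J + \log C + O(1))$. I would repair this using the freedom in $C$ and in $J$. I would take $J=\lfloor\log V-\log\log V\cdot A\rfloor$ or, more simply, $J=\lfloor \log V/\lambda\rfloor$ with $\lambda>1$ fixed. Then $\frac VJ\log\frac1{\alpha\beta\log T}\approx \frac VJ(\log V-\log J+\log\log\log T)$ with $\log V\approx\lambda J$, so this term is $\approx V/\lambda\cdot(1+o(1))$ plus $\frac VJ\cdot O(\log J)$, leaving a gap $w\gg V$. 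The $O(\max\{0,\log\frac1{\alpha\beta\log T}\})$ error in \eqref{ineq_bf} is $O(\log V)$, which is harmless.

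The cost of shrinking $J$ is that the block moments give only $e^{-q_j(j+O(1))}$ with $q_j=\lfloor V/(j+1)\rfloor$, i.e.\ roughly $e^{-V}$ per block, with no extra saving. I would therefore keep $x=T^{J/V}$ but use the finer moment choice $q_j=\lfloor V/(j+1)\rfloor$ on each block \eqref{eq:high-tail-prime-blocks} together with a gap $w\asymp V$. Then \eqref{eq:above-half-log-rates} gives $\big(CV(J-j)^6/((j+1)^2w^2)\big)^{q_j}\leq \big(C(J-j)^6/V\big)^{q_j}$. Since $\log V\geq\lambda J\geq j+1+(\lambda-1)J$, this yields $e^{-q_j(j+1)-(\lambda-1)Jq_j+O(q_j\log J)}\leq e^{-V-cV}$.

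That savings far exceeds the required $2V/\log V$. The block $Q_0$ is handled as before via $(C\log\log T/V)^{\lfloor V\rfloor}\ll e^{-2V}$, using $V\geq(\log T)^\varepsilon$. Summing over the $J$ blocks costs only a factor $\log V$. To conclude, I would pick $\lambda$ close enough to $1$, depending on $\varepsilon$, so that the zero term stays below $(1-\delta)V$. I would also take $C$ large, so that the residual $O(\frac VJ)$ constants are absorbed.
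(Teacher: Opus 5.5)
There is a genuine gap, and it sits in the change of cutoff. With $J=\lfloor\log V/\lambda\rfloor$ and $\lambda>1$, the zero term equals $\frac VJ\big(\log V-\log J-\log(\alpha\log T)+o(1)\big)$. Its leading part is $\frac VJ\log V\approx\lambda V$, not $V/\lambda$. So it already exceeds $V$, and \eqref{ineq_bf} no longer forces $|Q_x|>w$ for any $w>0$. The same happens for $J=\lfloor\log V-A\log\log V\rfloor$.

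Tuning $\lambda$ toward $1$ does not help. For any fixed $\lambda>1$, the excess $(\lambda-1)V$ dwarfs the available slack, which is only of size $\frac VJ\log(C\varepsilon)$. More structurally, the top block $Q_{J-1}$ has variance $\asymp 1/J$. By the length condition it can only be raised to powers $q\le V/J$, so the moment method bounds its tail by at best $\exp\big(-\frac VJ(\log V+O(\log J))\big)$.

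The same quantity $\frac VJ\log V$ therefore controls two things at once. It is the leading part of the zero term, which must stay below $V$. It is also the achievable decay rate. This pins $J$ to $\log V$ up to lower-order terms. A saving $e^{-V-cV}$ with fixed $c>0$ is out of reach; only a lower-order gain such as $2V/\log V$ survives.

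What you called the main obstacle is in fact the proof. Keep $J=\lfloor\log V\rfloor$. Your own computation gives $V-\frac VJ\log\frac{1}{1-T^{-\alpha J/V}}\geq\frac VJ\big(\log(C\varepsilon)-1-o(1)\big)$. The reason is that $\log J\geq\log\log\log T+\log\varepsilon-o(1)$ cancels the penalty $-\log(\alpha\log T)\leq\log\log\log T-\log C$. Meanwhile the error in \eqref{ineq_bf} is $O(\log\log T)=o(V/J)$, because $V\geq(\log T)^\varepsilon$. Choosing $C=C(\varepsilon)$ large, you may therefore take $w=c_0V/J$ with $c_0$ as large as needed.

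In Lemma \ref{lem:negative-moment-high-tail}, the extra factor $\log J$ in $w$ was used only to beat a constant in the block estimate. A large $c_0$ does the same job. Let $C'$ be the constant in \eqref{eq:above-half-log-rates}, and use $\log V\geq J$ together with $m-8\log m\geq-9$ for $m\geq1$. This gives $\log\frac{c_0^2V}{C'(J-j)^8}\geq j-9+2\log c_0-\log C'\geq j+5$. Hence $\P\big(|Q_j|>w/(4(J-j)^2)\big)\ll e^{-q_j(j+5)}\leq e^{-V-3V/J}$, and the sum over $1\leq j<J$ is $\ll e^{-V-2V/J}\leq e^{-V-2V/\log V}$.

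Two remaining checks go through:
\begin{itemize}
\item The coefficient bound \eqref{bp_again} still holds, because $\log\frac{1}{\alpha\beta\log T}\leq\log V+O_\varepsilon(1)\ll J$.
\item The block $Q_0$ contributes $\big(C'J^2\log\log T/(c_0^2V)\big)^{\lfloor V\rfloor}\ll e^{-2V}$, since $V\geq(\log T)^\varepsilon$.
\end{itemize}
This is the direct adaptation of Lemma \ref{lem:negative-moment-high-tail} that the paper leaves to the reader.
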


Now suppose that $k>1/2$, and suppose that 
\begin{equation}
\label{largealpha}
\frac{C_k}{(\log T)^{1/(2k)} \log \log T} \leq \alpha \ll \frac{ (\log \log T)^{4/k+\varepsilon}}{(\log T)^{1/(2k)}}.
\end{equation}
We will prove the following lemma which holds in this regime.

\begin{lem}\label{lem:above-half-tail}
Assume RH. Fix $k>1/2$.
There exist constants $b,B,c_k,C_k>0$
such that uniformly in the range \eqref{largealpha}, we have
\begin{equation}\label{eq:above-half-tail}
 \P(W_\alpha>V)\ll 
 \begin{cases}
 e^{-(2k+2)V},
   & \mbox b \log(1/\alpha)\leq V\leq B \alpha \log T,\\
 \displaystyle\exp\Big(-\Big(2k+\frac{c_k}{\log\log T}\Big)V\Big),
   & \mbox V>B \alpha \log T.
 \end{cases}
\end{equation}
\end{lem}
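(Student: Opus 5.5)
We follow the scheme of Lemma \ref{lem:negative-moment-high-tail}: apply Lemma \ref{bf_lemma} with a Dirichlet polynomial length $x=T^{\beta}$ tuned to $V$, bound the shift term $\frac{1}{\beta}\log\frac{1}{1-x^{-\alpha}}$, and control the resulting prime sum by high moments. In the range \eqref{largealpha}, $\alpha\log T\asymp(\log T)^{1-1/(2k)}$ up to powers of $\log\log T$, so $\alpha\log T$ is a large power of $\log T$. The two regimes in \eqref{eq:above-half-tail} correspond to whether $V$ is small or large compared with $\alpha\log T$.

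\emph{First range, $b\log(1/\alpha)\le V\le B\alpha\log T$.} We take $\beta=c_0/V$ with $c_0$ a small constant, so $x=T^{c_0/V}$. Then $\alpha\log x=c_0\alpha\log T/V\ge c_0/B$. Choosing $B$ small relative to $c_0$ makes $x^{-\alpha}\le e^{-c_0/B}$ tiny, so the shift term $\frac{V}{c_0}\log\frac{1}{1-x^{-\alpha}}\ll \frac{V}{c_0}e^{-c_0/B}\le V/4$. By \eqref{bpx} we have $|b_\alpha(p;x)|\ll1$, and the error term in Lemma \ref{bf_lemma} is $O(1)$. Hence $W_\alpha>V$ forces $S_x>V/2$. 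The moment bound of Lemma \ref{lem_moments}, applied with $q=\lfloor 1/(2c_0)\cdot V\rfloor$ (admissible since $x^{2q}\le T$), combined with Markov's inequality gives
\[
\P(S_x>V/2)\ll\Big(\frac{Cq\log(1/\alpha)}{V^2}\Big)^{q}.
\]
Since $q\asymp V/c_0$ and $V\ge b\log(1/\alpha)$, the base is $\ll C/(c_0 b)$. Taking $b$ large makes this $\le e^{-(2k+2)c_0\cdot 2}$ per unit of $q$, which gives $e^{-(2k+2)V}$. The constants must be chosen in the order $c_0$ (small, with $2qc_0\ge 1$ irrelevant), then $B$, then $b$. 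I expect the prime squares to be harmless, contributing $O(1)$ to the variance.

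\emph{Second range, $V>B\alpha\log T$.} This is the main obstacle: the shift term is no longer negligible, and the exponent must beat $2k$ by $c_k/\log\log T$. We use the block decomposition of Lemma \ref{lem:negative-moment-high-tail}. Take $J=\lfloor D\log V\rfloor$, $\beta=J/V$ and $x=T^{J/V}$. The shift term is then $\frac VJ\log\frac{1}{1-T^{-\alpha J/V}}\le\frac VJ\log\frac{V}{\alpha J\log T}+O(V/J)$. Since $V\le V_*\ll\log T$ by \eqref{eq:above-half-pointwise-cap} and $\alpha\log T\gg(\log T)^{1-1/(2k)}/\log\log T$, we have $\log\frac{V}{\alpha\log T}\le\frac{1}{2k}\log\log T+\log\log\log T+O(1)$, while $J\approx D\log V$. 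So the shift term is at most about $\frac{V}{2kD}$ in the worst case $V\asymp\log T$, and smaller otherwise. Consequently $|Q_x|\gg(1-\frac{1}{2kD'})V$ for a suitable $D'$, which requires $2k>1$; this is exactly where $k>1/2$ enters. The blocks $Q_j$ are then handled with $q_j=\lfloor V/(j+1)\rfloor$ and coefficient bounds \eqref{bp_again}, exactly as in \eqref{eq:above-half-log-rates}. Each block contributes roughly $\exp(-q_j\cdot(j+1)\cdot\log(\text{ratio}))\approx\exp(-V\log(\cdot))$. Here $\log$ of the ratio $w^2J^2/(V(J-j)^8)$ is about $\log V\asymp\log\log T$ uniformly, which far exceeds $2k$.

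The delicate part is the low block $Q_0$ together with the regime where $V$ is only slightly larger than $B\alpha\log T$. There the gain over $2k$ must come from choosing $J$ so that the remainder leaves a margin of size $V/\log\log T$. For this we would take $\beta$ with $\alpha\beta\log T\asymp\log\log T$ when $V$ is near $\alpha\log T$, and interpolate between the two choices of $\beta$. Checking that a single constant $c_k$ works uniformly across this interpolation is the step I expect to require the most care.
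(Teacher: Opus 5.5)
Your first range is fine. Your $\beta=c_0/V$ with $c_0$ small and $B$ small relative to $c_0$ does the same job as the paper's $\beta=A_0/V$ with $A_0$ large. The second range has a genuine gap, caused by the choice $J=\lfloor D\log V\rfloor$.

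Your heuristic $\exp(-q_j(j+1)\log(\text{ratio}))\approx\exp(-V\log(\cdot))$ has a spurious factor $j+1$. Markov's inequality with $2q_j$ moments gives $(\text{ratio})^{-q_j}=\exp\bigl(-\frac{V}{j+1}\log(\text{ratio})\bigr)$, so the decay rate per unit of $V$ is about $\frac{\log V}{j+1}$, not $\log V$. For the top block $Q_{J-1}$, the length condition forces $q\le V/J$ and its variance is $\asymp 1/J$. So even if you gave it the whole threshold $W=V$, the best the method yields is
\[
\Big(\frac{Cq}{JW^2}\Big)^{q}
=\exp\Big(-\frac VJ\big(\log V+O(\log J)\big)\Big)
=\exp\Big(-\frac VD\big(1+o(1)\big)\Big)
\qquad\Big(q=\frac VJ,\ W=V\Big).
\]
This is weaker than $e^{-V}$, let alone $\exp(-(2k+c_k/\log\log T)V)$ with $2k>1$.

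No fixed $D$ rescues this. When $V\asymp\log T$, the shift term $\frac VJ\log\frac1{1-T^{-\alpha J/V}}\approx\frac VJ\log\frac{V}{\alpha J\log T}$ is about $\frac{V}{2kD}$. So you need $2kD>1$ to keep it below $V$, but $1/D>2k$ for the rate, and these are incompatible. This tension, not the positivity of $1-\frac1{2kD}$, is where $k>1/2$ and the lower bound on $\alpha$ in \eqref{largealpha} must enter.

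The missing idea is to take $J$ as small as the shift term permits, as the paper does: $J=\lceil\log\frac{V}{\alpha\log T}-\log\log\frac{V}{\alpha\log T}+2\rceil$.
\begin{itemize}
\item This leaves only the margin $V-\frac VJ\log\frac1{1-T^{-\alpha J/V}}\ge\frac{3V}{4J}$, so one works with $w=V/(2J)$ rather than $w\gg V$.
\item The smaller margin is exactly compensated by the top-block variance $\asymp 1/J$ and $q_j\asymp V/J$. The base in \eqref{eq:above-half-log-rates} becomes $C(J-j)^8/V$.
\item Summing over blocks gives $\P(W_\alpha>V)\ll\exp\bigl(-\frac VJ(\log V-C)\bigr)$. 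The block $Q_0$ is negligible since $V>B\alpha\log T$.
\item The crux is $\log V-2kJ\ge C+1$. The choice of $J$ gives $(\alpha\log T)Je^J\ll V$. If the inequality failed, then $J\gg\log\log T$ and $\alpha\log T\log\log T\ll V^{1-1/(2k)}\ll(\log T)^{1-1/(2k)}$, using $k>1/2$ and $V\le V_*\ll\log T$. This contradicts \eqref{largealpha} once $C_k$ is large.
\item Hence $\frac{\log V-C}{J}\ge 2k+\frac1J\ge 2k+\frac{c_k}{\log\log T}$.
\end{itemize}
In the worst case this margin is only a constant depending on $C_k$, so even an $O(\log J)$ loss inside $\log V-C$ would be fatal.

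Your diagnosis of the delicate regime is also reversed. For $V$ just above $B\alpha\log T$, this $J$ is $O(1)$ and the rate is of order $\log\log T$. The tight case is $V\asymp\log T$, where $J\approx\log\frac1\alpha\approx\frac1{2k}\log\log T$ and $\frac{\log V}{J}$ is barely above $2k$. No interpolation between two choices of $\beta$ is needed.
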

\begin{proof}
    The proof follows along similar lines as Lemma \ref{lem:negative-moment-high-tail}, so we will only sketch the details. 
We first prove the bound in the second case.  The pointwise bound
\eqref{eq:above-half-pointwise-cap} still holds, so we restrict to $V\leq V_*$.
Pick
\begin{equation*}\label{eq:above-half-cutoff}
 J=\Big\lceil\log\Big( \frac{V}{\alpha \log T}\Big)-\log\log(\frac{V}{\alpha \log T})+2\Big\rceil,
 \qquad \beta=\frac JV,\qquad x=T^{J/V}.
\end{equation*}
As in Lemma \ref{lem:negative-moment-high-tail}, we have
\begin{align*}
 V-\frac VJ\log\frac1{1-T^{-J \alpha/V}}
 \geq\frac{3V}{4J}.
 \label{eq:above-half-remainder-gap}
\end{align*}
Then
\begin{equation*}\label{eq:above-half-forced-primes}
 \{W_\alpha>V\}\subseteq\{|Q_x|>w\},
\end{equation*}
for 
$$ w=\frac{V}{2J}.$$
The coefficient bound \eqref{bp_again},
the partition \eqref{eq:high-tail-prime-blocks}, and the $q_j$ in equation
 \eqref{eq:above-half-moment-orders} are unchanged.
In particular, \eqref{eq:above-half-log-rates} applies with the choice of
$w$. 
As $q_j(j+1)\geq V-J$ and $q_j\geq V/J-1$, it follows that
\[
 q_j\log\frac{V}{C(J-j)^8}
 \geq(V-J)+\Big(\frac VJ-1\Big)(\log V-J-C)
 =\Big(\frac VJ-1\Big)(\log V-C).
\]
Thus the sum over $j\geq1$ is at most
\[
 J\exp\bigg(-\Big(\frac VJ-1\Big)(\log V-C)\bigg)
 \ll\exp\Big(-\frac VJ(\log V-C)\Big).
\]

For the contribution from the small primes, \eqref{q0} still holds, and since $V>B \alpha \log T$ we have
\[
 \Big(\frac{CJ^2\log\log T}{V}\Big)^{\lfloor V\rfloor}
 \leq\exp\Big(-\frac V3\log V\Big).
\]
 Hence
\begin{equation}\label{eq:above-half-common-tail}
 \P(W_\alpha>V)
 \ll_k\exp\Big(-\frac VJ(\log V-C)\Big).
\end{equation}

We next show that
\begin{equation}\label{eq:above-half-rate-gap}
 \log V-2kJ\geq C+1.
\end{equation}
We have 
\[
 J\leq\log\frac{V}{\alpha \log T}\qquad\text{and} 
\qquad e^J\leq\frac{e^3V}{\alpha \log T\log(V/\alpha \log T)},\]
and hence
\begin{equation} \label{ubalpha}
( \alpha \log T) J e^J\ll V.
\end{equation}
Moreover, \eqref{largealpha} and
\eqref{eq:above-half-pointwise-cap} give
\[
 \log V\asymp \log\log T
\]
uniformly for
$B \alpha \log T<V\leq V_*$. If \eqref{eq:above-half-rate-gap} failed, we would have
\[
 J>\frac{\log V-C-1}{2k}\gg \log\log T.
\]
Substituting this lower bound into \eqref{ubalpha}
would give
\[
 \alpha \log T \log\log T\ll V^{1-1/(2k)}
 \ll (\log T)^{1-1/(2k)},
\]
where the last inequality uses $k>1/2$. Taking $C_k$ sufficiently large in \eqref{largealpha} gives a contradiction and hence 
 \eqref{eq:above-half-rate-gap} follows.

Finally, 
\[
 \frac{\log V-C}{J}\geq2k+\frac1J
 \geq2k+\frac{c_k}{\log\log T}.
\]
Together with \eqref{eq:above-half-common-tail}, this proves
the second bound in \eqref{eq:above-half-tail}.

For $b \log\frac{1}{\alpha}\leq V\leq B \alpha \log T$, we apply the same arguments directly to $Q_x$.
Take $\beta=A_0/V$ and $x=T^\beta$, with
$A_0$ sufficiently large. As in \eqref{ineq_bf}, since
$ \alpha \log T \beta\geq A_0/B$, the second term in \eqref{ineq_bf}
 is at most $V/8$, and its error
is $O(\log\log\log T)=o(V)$. Thus
\[
 \P(W_\alpha>V)\leq\P\Big(|Q_x|>\frac{V}{2}\Big).
\]
Moreover, Lemma \ref{asympta} gives
$|b_\alpha(p;x)|\ll1$, so
\[
 \sum_{p\leq x}\frac{|b_\alpha(p;x)|^2}{p^{1+2\alpha}}
 \ll\log\frac1\alpha.
\]
Take $q=\lfloor V/A_0\rfloor$. Since $Q_x^q$ has length at
most $x^q\leq T$, we get
\[
 \P(W_\alpha>V)
 \ll\Big(\frac{Cq\log(1/\alpha)}{V^2}\Big)^q
 \leq\Big(\frac{C\log(1/\alpha)}{A_0V}\Big)^q.
\]
Choose $b$ so that $C/(A_0b)\leq e^{-4A_0(k+1)}$.
For $V\geq b \log\frac{1}{\alpha}$ and sufficiently large $T$,
we have $q\geq V/(2A_0)$, and the last bound is
$O(e^{-(2k+2)V})$. This proves the first bound in
\eqref{eq:above-half-tail}.
\end{proof}

\begin{proof}[Proof of Theorem~\ref{negativemom}]
We have
\begin{align}
\label{negative_momallk}
 I_{-k}(\alpha,T)
 &=\frac1T\int_T^{2T}
       \Big|\zeta(\tfrac12+\alpha+it)\Big|^{-2k}\,dt=2k\int_{-\infty}^{\infty}
                   e^{2kV}\P(W_\alpha>V)\,dV.
\end{align}
First assume that we are in the range \eqref{firstrange}.
 Fix
\[
 \sqrt{k(1-k)}<b<\frac12,
 \qquad 2k<\rho<1,
\]
and let $C_1>1$ be given by Lemma~\ref{lem:negative-moment-high-tail}.
We split the integral as follows:
\begin{align*}
 I_{-k}(\alpha,T)
 &=2k\Bigg(
       \int_{-\infty}^{(k/4)\log\frac{1}{\alpha}}
      +\int_{(k/4)\log\frac{1}{\alpha}}^{b\log\frac{1}{\alpha}}
      +\int_{b\log\frac{1}{\alpha}}^{\frac12\log\frac{1}{\alpha}}+\int_{\frac12\log\frac{1}{\alpha}}^{B\log\log T}
      +\int_{B\log\log T}^{\infty}
       \Bigg)e^{2kV}\P(W_\alpha>V)\,dV\\
       &=\sum_{j=1}^{5}I_j,
 \label{eq:negative-moment-integral-split}
\end{align*}
say. The endpoints are in the displayed order for sufficiently large $T$,
since \eqref{firstrange} gives
\[
 \log\frac1\alpha
 =\log\log T+O (\log\log\log T).
\]

For the first integral, we use $\P(W_\alpha>V)\leq1$ to obtain
\[ I_1 = 
 2k\int_{-\infty}^{(k/4)\log\frac{1}{\alpha}}
                  e^{2kV}\P(W_\alpha>V)\,dV
 \leq\alpha^{-k^2/2}.
\]
For the second integral, Theorem~\ref{mainthm}  gives
\begin{align*}
 I_2 \ll \frac{\alpha^{-k^2}}{\sqrt{\log(1/\alpha)}}
      \int_{\mathbb R}
      \exp\Big(-\frac{(V-k\log(1/\alpha))^2}
                       {\log(1/\alpha)}\Big)\,dV
 \ll \alpha^{-k^2}.
 \label{eq:negative-moment-gaussian-integral}
\end{align*}
For the third integral, monotonicity and Theorem~\ref{mainthm} give
\[
 \P(W_\alpha>V)
 \leq\P\Big(W_\alpha>b\log\frac{1}{\alpha}\Big)
 \ll \frac{\alpha^{b^2}}{\sqrt{\log(1/\alpha)}}
\]
for $V\geq b\log\frac{1}{\alpha}$. Hence
\[
 I_3
 \ll \frac{\alpha^{-(k-b^2)}}{\sqrt{\log(1/\alpha)}}
 \ll \alpha^{-k^2},
\]
since $k-b^2<k^2$.

For the fourth integral, we use Theorem~\ref{bigk} with
$\varepsilon_1=(1/2-k)^2/2$ and $\kappa_1=2B$.
Since $2k<1$, we obtain that
\begin{align*}
 I_4 \ll \alpha^{-1/4-\varepsilon_1}
       \int_{\frac12\log\frac{1}{\alpha}}^{B\log\log T}
                         e^{-(1-2k)V}\,dV \ll \alpha^{-(k-1/4+\varepsilon_1)}
 =\alpha^{-k^2+(1/2-k)^2/2}\ll \alpha^{-k^2}.
\end{align*}
For the fifth integral, Lemma \ref{lem:negative-moment-high-tail}
and $\rho>2k$ (and noticing that in the second range in the Lemma the first bound dominates the second) give
\begin{align}
\label{i5}
I_5
 &\ll \int_{B\log\log T}^{\infty}e^{-(\rho-2k)V}\,dV
 \ll  e^{-(\rho-2k)B\log\log T} = o(1).
\end{align}
 Combining all the five bounds finishes the proof for the first range.

 In the second range \eqref{secondrange}, we instead split the integral into 
 \begin{align*}
     I_{-k}(\alpha, T) =2k\Bigg(
     \int_{-\infty}^{B\log\log T}
      +\int_{B\log\log T}^{\infty}
       \Bigg)e^{2kV}\P(W_\alpha>V)\,dV.
 \end{align*}
The first integral is bounded by $e^{2k B\log \log T} = (\log T)^{O(1)},$ and for the second the same bound \eqref{i5} holds. 
This finishes the proof in this case.
\end{proof}

We now prove Theorem \ref{negative1/2}.
\begin{proof}[Proof of Theorem \ref{negative1/2}]
  Let $V_0$ be as in Lemma~\ref{lem:negative-moment-high-tail},
and let $V_*$ be the
pointwise upper bound in \eqref{eq:above-half-pointwise-cap}. Let $\alpha$ be as in \eqref{firstrange'}.
We use Lemma \ref{lem:negative-moment-high-tail} to obtain that 
\begin{align*}
 I_{-1/2}(\alpha,T)
 &=\int_{-\infty}^{V_0}e^V\P(W_\alpha>V)\,dV
   +\int_{V_0}^{V_*}e^V\P(W_\alpha>V)\,dV\\
 &\leq e^{V_0}
   +O \bigg(
       (\log T)\exp\Big(-a\frac{V_0}{\log V_0}\Big)\bigg)
 =e^{V_0}+o(1).
\end{align*}

For $\alpha$ in the second range \eqref{secondrange'}, we write
\begin{align*}
     I_{-1/2}(\alpha, T) =\Bigg(
     \int_{-\infty}^{(\log T)^{\varepsilon}}
      +\int_{(\log T)^{\varepsilon}}^{V_{*}}
       \Bigg)e^{V}\P(W_\alpha>V)\,dV,
 \end{align*}
 where $V_{*}$ is as in \eqref{eq:above-half-pointwise-cap}. The first integral is bounded by $\exp ((\log T)^{\varepsilon})$, and for the second integral we use Lemma \ref{newlem}, which shows that the second integral is $o(1)$. This finishes the proof. 
  \end{proof}

  Finally, we prove Theorem \ref{k>1/2}.
  \begin{proof}[Proof of Theorem \ref{k>1/2}]
  We note that this theorem is already proven in \cite{BF} in the range
  $$ \alpha \gg \frac{ (\log \log T)^{4/k+\varepsilon}}{(\log T)^{1/(2k)}}. $$
  Hence, we assume that
  \begin{equation}
      \frac{C_k}{(\log T)^{1/(2k)} \log \log T } \leq \alpha \ll \frac{ (\log \log T)^{4/k+\varepsilon}}{(\log T)^{1/(2k)}}.
      \label{newrange}
  \end{equation}
Equation \eqref{negative_momallk} still holds, and we write 
\begin{align*}
 I_{-k}(\alpha,T)
 &=2k\Bigg(
       \int_{-\infty}^{(k/4)\log\frac{1}{\alpha}}
      +\int_{(k/4)\log\frac{1}{\alpha}}^{b\log\frac{1}{\alpha}}
+\int_{b\log\frac{1}{\alpha}}^{B \alpha \log T}
    +\int_{B \alpha \log T}^{V_*}
       \Bigg)e^{2kV}\P(W_\alpha>V)\,dV\\
       &=I_1+I_2+I_3+I_4,
 \label{eq:negative-moment-integral-split}
\end{align*}
say. 

For $I_1$,  we have
$$ I_1 \ll \int_{-\infty}^{(k/4) \log(1/\alpha)} e^{2kV} \, dV \ll \alpha^{-k^2/2}.$$
We use Theorem \ref{mainthm} for the second integral. Since from \eqref{newrange} it follows that
$$ \alpha \log T \gg \Big( \log \frac{1}{\alpha}\Big)^{12+\varepsilon},$$
we then have that $$I_2 \ll \frac{1}{\sqrt{\log(1/\alpha)}} \int_{(k/4)\log\frac{1}{\alpha}}^{b\log\frac{1}{\alpha}} \exp \Big( 2kV - \frac{V^2}{\log(1/\alpha)} \Big) \, dV \ll \alpha^{-k^2}.$$
Finally, for the third and fourth integrals, we use Lemma \ref{lem:above-half-tail} which gives that $I_3=o(1)$ and $I_4=o(1)$. Gathering the four bounds finishes the proof.
  \end{proof}

\section{Partial sums of the generalized  M\"obius function}\label{section_mobius}

Our arguments work for general $k>0$. Let $\mu_k(n)$ denote the $n^{\text{th}}$ Dirichlet coefficient of $\zeta(s)^{-k}$ and let
\begin{align*}
Z_{-k}(s)&=\frac{1}{s((s-1)\zeta(s))^k}.
\end{align*}
The function $Z_{-k}(s)$ is holomorphic in the disc $|s-1|<1$ and has a Taylor series,
\[
Z_{-k}(s)=\sum_{j=0}^{\infty}\gamma_{-k,j}(s-1)^j.
\]
The following more general theorem holds.

\begin{thm}\label{partM'}
Assume RH. For every $k>0$, there exists $C=C(k)>0$ such that
\begin{align*}
&\sum_{n \leq x} \mu_k(n)\\
&\quad  =-\frac{\sin(k\pi)}{\pi}\frac{x}{(\log x)^{k+1}}\Bigg(\sum_{j=0}^{N}\frac{(-1)^j\Gamma(k+j+1)\gamma_{-k,j}}{(\log x)^{j}}+O\bigg((N+1)^{k+1/2}\Big(\frac{N+1}{\log x}\Big)^{N+1}\bigg)\Bigg)\\
&\qquad\qquad+O\bigg(\sqrt{x} \exp\Big( C\Big(\frac{\log x}{\log\log x}\Big)^{k/(k+1)}  \Big)\bigg),
\end{align*}
for any $N\geq0$.
\end{thm}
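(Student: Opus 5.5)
Following the contour argument of Bui and Florea \cite{BF}, I will start from a truncated Perron formula,
\[
\sum_{n\le x}\mu_k(n)=\frac{1}{2\pi i}\int_{c-iT}^{c+iT}\frac{x^s}{s\,\zeta(s)^k}\,ds+O\Big(\frac{x^{1+\varepsilon}}{T}\Big),
\]
with $c=1+1/\log x$, and I will choose $T$ as a suitable power of $x$ (e.g.\ $T=x^2$) so that the error is negligible. Under RH, $\zeta(s)^{-k}$ is holomorphic in $\Re s>1/2$ except at the branch point $s=1$. I therefore deform the contour to the line $\Re s=1/2+\alpha$, with a keyhole around the cut $[1/2+\alpha,1]$ on the real axis. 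The Hankel-type integral around the cut is treated exactly as in \cite{BF}. I write $1/(s\zeta(s)^k)=(s-1)^kZ_{-k}(s)$, expand $Z_{-k}$ in its Taylor series to order $N$, and use Hankel's formula for $1/\Gamma$. This produces the main term with the coefficients $(-1)^j\Gamma(k+j+1)\gamma_{-k,j}\sin(k\pi)/\pi$. The remainder $O((N+1)^{k+1/2}((N+1)/\log x)^{N+1})$ comes from Cauchy estimates for the Taylor coefficients on a disc of radius $\asymp 1$, together with Stirling's formula. The part of the cut near $1/2+\alpha$ contributes $O(x^{1/2+\alpha})$ times bounded factors.

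It remains to bound the horizontal segments and the vertical integral
\[
x^{1/2+\alpha}\int_{-T}^{T}\frac{|\zeta(\tfrac12+\alpha+it)|^{-k}}{|1/2+\alpha+it|}\,dt .
\]
I split this dyadically in $|t|\in[U,2U]$ and apply Cauchy--Schwarz, or directly H\"older's inequality, reducing to $I_{-k/2}(\alpha,U)$. For $k\le1$ I use Theorem \ref{negative1/2} and Theorem \ref{negativemom} (with $k/2\le1/2$; for $k/2<1/2$ the bound $(\log U)^{O(1)}$ in range \eqref{secondrange} already suffices). For $k>1$, I use Theorem \ref{k>1/2} with $k/2>1/2$, which gives $I_{-k/2}(\alpha,U)\ll\alpha^{-k^2/4}$ provided $\alpha\ge C/((\log U)^{1/k}\log\log U)$. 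The horizontal segments at height $\pm T$ are handled by the same moment bounds, averaging $T$ over a dyadic range, as in \cite{BF}.

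The main step is the optimisation of $\alpha$. With $U\le T=x^{O(1)}$, so $\log U\ll\log x$, the vertical contribution for $k>1$ is roughly
\[
x^{1/2}\exp\Big(\alpha\log x+\tfrac{k^2}{4}\log\tfrac1\alpha\Big)\log x .
\]
The constraint from Theorem \ref{k>1/2} is $\alpha\gg(\log x)^{-1/k}/\log\log x$. Taking $\alpha$ at this boundary gives exponent $\alpha\log x\asymp(\log x)^{1-1/k}/\log\log x$, which is not yet the claimed $(\log x/\log\log x)^{k/(k+1)}$.

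The mismatch means I should apply H\"older with a different exponent: bound $|\zeta|^{-k}$ using $I_{-m}$ for a tuned $m$, with an extra $L^\infty$ factor, or let the moment parameter depend on $x$. Concretely, I would use H\"older with $|\zeta|^{-k}=|\zeta|^{-2m\cdot k/(2m)}$ and $2m>k$, and choose $m$ and $\alpha$ jointly so that $\alpha\asymp(\log x)^{-1/(2m)}/\log\log x$ balances $\alpha\log x$. The natural balance for the $\mu$ case ($k=1$) comes from Theorem \ref{negative1/2} in range \eqref{secondrange'}. There the cost $\exp((\log T)^\varepsilon)$ must be traded against $x^{\alpha}$ with $\alpha\log T\asymp1/\log\log T$. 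Choosing $T$ with $\log T\asymp(\log x\log\log x)^{1/2}$ (rather than a power of $x$), and paying the Perron truncation error $x/T$, balances $\alpha\log x\asymp\log x/(\log T\log\log T)$ against $\log T$. This yields exactly $\sqrt{\log x/\log\log x}$. This balancing of the Perron height against the shift, using the $(\log T)^{O(1)}$-type moment bounds near $\alpha\asymp1/(\log T\log\log T)$, is the step I expect to be the main obstacle. For general $k$, the analogous balance of $x/T$ against $\alpha^{-k}$-type losses at $\alpha\log T\asymp1/\log\log T$ should give the exponent $k/(k+1)$.
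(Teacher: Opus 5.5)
Your treatment of the main term through the truncated Hankel contour agrees with what the paper takes from \cite{BF}. The error-term analysis, however, has a genuine gap, and that analysis is the real content of the theorem.

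\textbf{The fixed-shift argument.} In your fixed-shift computation you read the hypothesis of Theorem \ref{k>1/2} as $\alpha\gg(\log x)^{-1/k}/\log\log x$, using $\log U\ll\log x$. But the moment theorems must apply at \emph{every} dyadic height $U$ you feed them. Their hypotheses are:
\begin{itemize}
\item for Theorem \ref{k>1/2} with $k/2$ in place of $k$: $\alpha\ge C(\log U)^{-1/k}/\log\log U$;
\item in \eqref{secondrange'}: $C_1/\log\log U\le\alpha\log U\ll(\log\log\log U)^{-c}$.
\end{itemize}
These conditions are most restrictive at the \emph{lowest} heights, and both theorems say nothing for bounded $U$. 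So with a single shift there is always a range of small $|t|$ where you have no estimate at all. This is also why your exponent $(\log x)^{1-1/k}/\log\log x$ comes out \emph{smaller} than the claimed $(\log x/\log\log x)^{k/(k+1)}$, since $1-1/k<k/(k+1)$ for every $k$. Your argument would prove more than the theorem, and retuning H\"older exponents does not address this.

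\textbf{The final balancing.} This proposal fails outright. A Perron height with $\log T\asymp\sqrt{\log x\log\log x}$ means $T=x^{o(1)}$, which leaves a truncation error of size $x^{1-o(1)}$. That is far larger than $\sqrt x\exp(C\sqrt{\log x/\log\log x})$. Moreover, the moment bound in \eqref{secondrange'} costs only $\exp((\log T)^{\varepsilon})$, so nothing of size $\log T$ is present to balance against $\alpha\log x$.

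\textbf{The missing idea.} Use \emph{pointwise} bounds at low heights and moment bounds only at high heights, keeping the Perron height a power of $x$ (the paper uses height $\lfloor x\rfloor$). Put
$$\log x_1=\Big(\frac{\log x}{\log\log x}\Big)^{k/(k+1)},\qquad \alpha_1=\frac{C}{(\log x_1)^{1/k}\log\log x_1}.$$
\begin{itemize}
\item \emph{Low heights.} On $\Re s=\tfrac12+\alpha_1$ with $|t|\le x_1$, the Carneiro--Chandee bound \eqref{pwb}, raised to the power $k$, gives $|\zeta(s)|^{-k}\ll\exp(O(\log x_1))$, because $\log(1/\alpha_1)\asymp\log\log x_1$. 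So this piece is $\ll\sqrt x\exp(\alpha_1\log x+O(\log x_1))$.
\item \emph{High heights, $k>1$.} For $|t|\ge x_1$ the shift $\alpha_1$ is admissible, and Theorem \ref{k>1/2} with $k/2$ gives a loss of only $\alpha^{-k^2/4}=\exp(O(\log\log x))$.
\item \emph{High heights, $k=1$.} Here \eqref{secondrange'} also bounds $\alpha\log U$ from above, so the paper lets the shift decrease with height. It takes $\Re s=\tfrac12+C/F_0(2^{j-1}x_0)$ on $2^{j-1}x_0\le|t|\le2^jx_0$, where $F_0(y)=\log y\log\log y$ and $x_0=x_1$. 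These pieces are joined by short horizontal segments, so each block costs $\exp((\log x)^{1/3})$.
\end{itemize}
The total exponent is $\alpha_1\log x+O(\log x_1)$. The balance $\alpha_1\log x\asymp\log x_1$ is exactly $(\log x_1)^{1+1/k}\log\log x_1\asymp\log x$, which is where $(\log x/\log\log x)^{k/(k+1)}$ comes from.

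For $0<k<1$, the needed shifts lie well to the left of $1/\log U$, outside \eqref{secondrange}. There the paper uses \cite[(1.8)]{BF} rather than Theorem \ref{negativemom}, as your plan does.
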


We remark that the main term above vanishes if $k\in\mathbb{N}$, and in that case the theorem reduces to Theorem \ref{partM}.

We shall prove the case $k\in\mathbb{N}$. For $k\notin\mathbb{N}$, we need to include a truncated Hankel-type contour in the below contour deformation as in \cite[Section 9.1]{BF}. As in \cite[Section 9.1]{BF}, the main term in the theorem comes from the Hankel-type contour and only appears when $k\notin\mathbb{N}$.

\subsection{The case $k=1$}\label{sectionk1}

From \cite[(9)]{Sound-mobius} we have
\begin{equation*}\label{mobiusstart}
	\sum_{n \leq x} \mu(n)=\frac{1}{2\pi i}\int_{c-i\lfloor x\rfloor}^{c+i\lfloor x\rfloor}\frac{x^s}{\zeta(s)}\frac{ds}{s}+O(\log x)
\end{equation*}
with $c=1+\frac{1}{\log x}$.  Let
\[
F_0(x)=\log x\log\log x
\]
and let
\[
x_0=\exp \bigg(\sqrt{\frac{\log x}{\log\log x}}\bigg),\qquad \alpha_0=\frac{C}{F_0(x_0)},\qquad J=\bigg\lceil \log_2\frac{\lfloor x\rfloor}{x_0} \bigg\rceil\ll\log x,
\]
where $C=C_1(1/3)$ as in Theorem \ref{negative1/2}.

We now deform the contour as follows. We replace the line segment $c+it$, $|t|\leq \lfloor x\rfloor$, with $$\bigcup_{j=1}^{J} \big(V_j\cup H_j\big) \bigcup V_0,$$ 
where
\begin{align*}
&V_0:\qquad s=\frac{1}{2}+\alpha_0+it,\qquad |t|\leq x_0,\\
	&V_j:\qquad s=\frac{1}{2}+\frac{C}{F_0(2^{j-1}x_0)}+it,\qquad 2^{j-1}x_0\leq |t|\leq \min\{2^jx_0,\lfloor x\rfloor\},\qquad 1\leq j\leq J,\\
	&H_j:\qquad s=\sigma\pm i2^jx_0,\qquad \frac12+\frac{C}{F_0(2^{j}x_0)}\leq \sigma\leq \frac{1}{2}+\frac{C}{F_0(2^{j-1}x_0)},\qquad 1\leq j\leq J-1,\\
	& H_J:\qquad s=\sigma\pm i\lfloor x\rfloor,\qquad \frac12+\frac{C}{F_0 (2^{J-1}x_0)}\leq \sigma\leq 1+\frac{1}{\log x}.
\end{align*}
We encounter no singularity in moving the contour.

From \cite[Theorem 2]{CC} we have
\begin{equation}\label{pwb}
	|\zeta(s)|^{-1} \ll (|t|+2)^{(1/2+\varepsilon)\frac{\log\frac{1}{\alpha_0}}{\log\log(|t|+10)}}
\end{equation}
for $s\in V_0$. 
So the integral along $V_0$ is
\begin{align}\label{V0'}
	&\ll \sqrt{x}\exp\bigg(\alpha_0\log x+\Big(\frac 12+\varepsilon\Big)\frac{\log x_0}{\log\log x_0}\log\frac{1}{\alpha_0}\bigg)\ll \sqrt{x}\exp \bigg(C_0\sqrt{\frac{\log x}{\log\log x}}\bigg),
\end{align}
for some $C_0>0$

For the contribution from the vertical segments $\cup_{j=1}^{J} V_j$, we deduce from Theorem \ref{negative1/2}\footnote{If $k<1$, then we instead apply (1.8) in Theorem 1.3 of \cite{BF}.} that it is bounded by
\begin{align*}\label{estimateVj}
	&\ll \sqrt{x}\sum_{j=1}^{J}\exp\Big(\frac{\log x}{F_0(2^{j-1}x_0)}+\big(\log (2^jx_0)\big)^{1/3}\Big)\ll \sqrt{x}J\exp\Big(\alpha_0\log x+(\log x)^{1/3}\Big)\nonumber\\
    &\ll\sqrt{x}\exp \bigg(C_0\sqrt{\frac{\log x}{\log\log x}}\bigg).
\end{align*}

For $s\in H_j$, $1\leq j\leq J-1$, similar to \eqref{pwb} we have
\[
|\zeta(s)|^{-1} \ll (2^jx_0)^{1/2+\varepsilon}.
\]
So the contribution from the horizontal segments $\cup_{j=1}^{J-1} H_j$ is
\begin{align*}\label{estimateHj}
	&\ll \sqrt{x}\sum_{j=1}^{J-1}(2^jx_0)^{-1/2+\varepsilon}\exp\Big(\frac{\log x}{F_0 (2^{j-1}x_0)}\Big)\ll \sqrt{x}Jx_0^{-1/2+\varepsilon}\exp\big(\alpha_0\log x\big)\nonumber\\
    &\ll\sqrt{x}\exp \bigg(C_0\sqrt{\frac{\log x}{\log\log x}}\bigg).
\end{align*}

We are left with the integral along $H_J$, which is bounded by
\begin{equation*}
\max_{\frac12+\frac{C}{F_0(x)}\leq\sigma\leq1+\frac{1}{\log x}}\exp\Big((\sigma-1)\log x-\log|\zeta(\sigma\pm i\lfloor x\rfloor)|\Big).
\end{equation*}

For $\frac12+\frac{C}{F_0(x)}\leq\sigma\leq \frac12+o(\frac{1}{\log\log x})$, Lemma 2.2 of \cite{BF} implies that this is
\begin{align*}
	&\ll\exp\bigg((\sigma-1)\log x+\frac{\log x}{2\log\log x}\log\frac{1}{1-(\log x)^{1-2\sigma}}\nonumber\\
&\qquad\qquad\qquad\qquad\qquad+O\Big(\frac{\log x}{\log\log x}\Big)+O\Big(\frac{\log x}{(\log\log x)^2}\log\frac{1}{1-(\log x)^{1-2\sigma}}\Big)\bigg)\nonumber\\
	&\ll  \exp\bigg((\sigma-1)\log x+\frac{\log x}{2\log\log x}\log\frac{1}{(2\sigma-1)\log\log x}\nonumber\\
&\qquad\qquad\qquad\qquad\qquad+O\Big(\frac{\log x}{\log\log x}\Big)+O\Big(\frac{\log x}{(\log\log x)^2}\log\frac{1}{2\sigma-1}\Big)\bigg),
\end{align*}
which is decreasing with respect to $\sigma$, and, hence, \begin{align*}
&\ll  \exp\bigg(-\frac{\log x}{2}+\frac{C\log x}{F_0(x)}+\frac{\log x}{2\log\log x}\log\frac{F_0(x)}{2\log\log x}+O\Big(\frac{\log x}{\log\log x}\Big)\bigg)\nonumber\\
&\ll\exp\bigg(O\Big(\frac{\log x}{\log\log x}\Big)\bigg)\ll x^\varepsilon.
\end{align*}

For $1-\frac{1}{\log\log x}\leq \sigma\leq 1+\frac{1}{\log x}$, the second estimate in  Theorem 2 of \cite{CC} leads to a bound of size
\begin{align*}
	&\ll\exp\Big((\sigma-1)\log x+\log\log\log x+O(1)\Big)\ll \log\log x.
\end{align*}

Finally, for $\frac12+O(\frac{1}{\log\log x})\leq \sigma\leq 1-\frac{1}{\log\log x}$, we use the last estimate in  Theorem 2 of \cite{CC} to get the bound
\begin{align}\label{est4}
	&\ll\exp\bigg((\sigma-1)\log x+\varepsilon\log x+\frac{(\log x)^{2-2\sigma}}{(1-\sigma)\log\log x}+O\Big(\frac{(\log x)^{2-2\sigma}}{(1-\sigma)^2(\log\log x)^2}\Big)\bigg)\ll x^\varepsilon.
\end{align}
Combining the estimates \eqref{V0'}--\eqref{est4} we obtain Theorem \ref{partM} in the case $k=1$.

\subsection{The case $k\in\mathbb{N}$ and $k>1$}

The proof follows closely that of the case $k=1$. We, therefore, provide only a sketch and highlight the key differences. 

From \cite[(9.1)]{BF} we have
\begin{equation*}\label{mobiusstart2}
	\sum_{n \leq x} \mu_k(n)=\frac{1}{2\pi i}\int_{c-i\lfloor x\rfloor}^{c+i\lfloor x\rfloor}\frac{x^s}{\zeta(s)^k}\frac{ds}{s}+O\big((\log x)^k\big)
\end{equation*}
with $c=1+\frac{1}{\log x}$.  Let
\[
F_1(x)=(\log x)^{1/k}\log\log x
\]
and let
\[
x_1=\exp\bigg(\Big(\frac{\log x}{\log\log x}\Big)^{k/(k+1)}  \bigg),\qquad \alpha_1=\frac{C}{F_1(x_1)},\qquad J=\bigg\lceil \log_2\frac{\lfloor x\rfloor}{x_1} \bigg\rceil\ll\log x,
\]
with $C=C(k/2)$ as in Theorem \ref{k>1/2}.
We now replace the line segment $c+it$, $|t|\leq \lfloor x\rfloor$, with $$\bigcup_{j=1}^{J} \big(V_j'\cup H_j'\big) \bigcup V_0',$$ 
where
\begin{align*}
&V_0':\qquad s=\frac{1}{2}+\alpha_1+it,\qquad |t|\leq x_1,\\
	&V_j':\qquad s=\frac{1}{2}+\frac{C}{F_1(2^{j-1}x_1)}+it,\qquad 2^{j-1}x_1\leq |t|\leq \min\{2^jx_1,\lfloor x\rfloor\},\qquad 1\leq j\leq J,\\
	&H_j':\qquad s=\sigma\pm i2^jx_1,\qquad \frac12+\frac{C}{F_1(2^{j}x_1)}\leq \sigma\leq \frac{1}{2}+\frac{C}{F_1(2^{j-1}x_1)},\qquad 1\leq j\leq J-1,\\
	& H_J':\qquad s=\sigma\pm i\lfloor x\rfloor,\qquad \frac12+\frac{C}{F_1(2^{J-1}x_1)}\leq \sigma\leq 1+\frac{1}{\log x}.
\end{align*}
We cross no singularity in moving the contour.

For the integral along $V_0'$, we use \eqref{pwb} and get that
\begin{align}\label{1stestimate'2}
	\frac{1}{2\pi i}\int_{1/2+\alpha_1-ix_1}^{1/2+\alpha_1+ix_1}\frac{x^s}{\zeta(s)^k}\frac{ds}{s}&\ll \sqrt{x}\exp\bigg(\alpha_1\log x+\Big(\frac k2+\varepsilon\Big)\frac{\log x_1}{\log\log x_1}\log\frac{1}{\alpha_1}\bigg)\nonumber\\
    &\ll \sqrt{x}\exp\bigg( C_0\Big(\frac{\log x}{\log\log x}\Big)^{k/(k+1)}  \bigg),
\end{align}
for some $C_0>0$

For the contribution from the vertical segments $\cup_{j=1}^{J} V_j'$, we use Theorem \ref{k>1/2} and bound it by
\begin{align}\label{Vj'2}
	&\ll \sqrt{x}\sum_{j=1}^{J}\exp\Big(\frac{C\log x}{F_1(2^{j-1}x_1)}+O\big(\log\log (2^jx_1)\big)\Big)\ll \sqrt{x}J\exp\Big(\alpha_1\log x+O(\log\log x)\Big)\nonumber\\
    &\ll\sqrt{x}\exp\bigg( C_0\Big(\frac{\log x}{\log\log x}\Big)^{k/(k+1)}  \bigg).
\end{align}

The same bound holds for $\cup_{j=1}^{J-1} H_j'$, and the contribution from $H_J'$ is $O_\varepsilon(x^\varepsilon)$ as in Section \ref{sectionk1}. Together with \eqref{1stestimate'2} and \eqref{Vj'2} we obtain the bound for the case $k\in\mathbb{N}$ and $k>1$.

\section{The minimum of $\zeta(s)$ in short intervals}
\label{section_min}
Recall that
\[
M_{\alpha,\gamma}(t)
:=
\min_{|h|\leq(\log T)^\gamma}
\Big|\zeta\Big(\tfrac12+\alpha+i(t+h)\Big)\Big|.
\]
The log-correlated prime-sum model for $\log|\zeta|^{-1}$, and its
analogy with branching random walk, suggest the following
analogue of the FHK Conjecture; see the discussion in the
introduction and also \cite{AOR}.

For $\gamma \geq 0$, let
\begin{equation} 
\label{kgamma}
k_\gamma:=\sqrt{1+\frac{\gamma\log\log T}{\log(1/\alpha)}}.
\end{equation}

\begin{conj} 
\label{min_conj}
 Suppose that
$$
\alpha=o(1),\qquad \alpha\log T\gg1.
$$ Fix $\gamma\geq0$. When $\gamma>0$, assume additionally that $\alpha\leq(\log T)^{-c}$ for some fixed $0<c<1$. For every $\varepsilon>0$, there exists $C>0$ such that for all sufficiently large $T$, the following holds.
\begin{enumerate}
\item If $\gamma=0$, then
\[
\P\bigg(
\Big|
\log M_{\alpha,0}(t)+\log\frac1\alpha
-\frac34\log\log\frac1\alpha
\Big|\leq C
\bigg)\geq1-\varepsilon.
\]
\item If $\gamma>0$,
then
\[
\P\bigg(
\Big|
\log M_{\alpha,\gamma}(t)+k_\gamma\log\frac1\alpha
-\frac{1}{4k_\gamma}\log\log\frac1\alpha
\Big|\leq C
\bigg)\geq1-\varepsilon.
\]
\end{enumerate}
\end{conj}

\subsection{Lower bounds for the minimum}

We will prove the following analogue to Corollary $3.1$ in \cite{ABR}.

\begin{thm}\label{prop:main}
Assume RH. Fix $0<c<1$ and $\gamma\geq0$. Let $k_\gamma$ be as in
\eqref{kgamma}. Suppose that
\begin{equation}\label{alpha_range}
 0<\alpha\ll (\log T)^{-c},\qquad
 \frac{\alpha\log T}{\log\log T\,\log\log\log T}
 \to \infty.
\end{equation}
 There exists $C>0$
such that for all sufficiently large $T$ and every
\[
 0\leq y\ll \log\frac{1}{\alpha},
\]
we have
\begin{equation}\label{eq:V-form}
\begin{aligned}
 \mathbb P \bigg(
M_{\alpha,\gamma}(t)
 \leq e^{-y}\alpha^{k_\gamma} \Big(  \log \frac{1}{\alpha} \Big)^{-C} 
     \bigg)\leq
 C\exp\Big(-2k_\gamma y-\frac{y^2}{\log(1/\alpha)}\Big).
\end{aligned}
\end{equation}
\end{thm}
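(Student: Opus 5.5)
Since $\log M_{\alpha,\gamma}(t)^{-1}=\max_{|h|\le(\log T)^\gamma}W_\alpha(t+h)$, the event in \eqref{eq:V-form} is that this maximum exceeds
\[
V:=k_\gamma\log\frac1\alpha+y+C\log\log\frac1\alpha .
\]
We treat it as an upper bound for the maximum of a log-correlated field, following the proof of Corollary 3.1 in \cite{ABR}. By \eqref{alpha_range} we are in the regime \eqref{first_regime}. We take $x$ as in \eqref{x_choice} and apply Lemma \ref{bf_lemma} at every point $t+h$. This gives $W_\alpha(t+h)\le S_x(t+h)+C'$ uniformly in $|h|\le(\log T)^\gamma$, since the error terms there are uniform in $t\asymp T$. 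The lower bound $\alpha\log T/(\log\log T\log\log\log T)\to\infty$ is used exactly here: it keeps $\log x/\log T$ small enough that moments of order about $\log(1/\alpha)$ (and a little more) of $S_x$ remain admissible in the mean value theorem. It also ensures the term $\frac{\log t}{\log x}\log\frac1{1-x^{-\alpha}}$ stays $O(1)$.

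The problem is then reduced to bounding $\P(\max_{|h|\le(\log T)^\gamma}S_x(t+h)>V-C')$. First we discretize. The derivative of $S_x$ in $h$ is a Dirichlet polynomial whose mean square is controlled by Lemma \ref{lem_moments}-type estimates, at most $\sum_{p\le x}(\log p)^2/p^{1+2\alpha}\ll\alpha^{-2}$. Hence on a grid of mesh $\alpha(\log 1/\alpha)^{-1}$, with a Sobolev/Gallagher-type inequality on each unit cell, the maximum is within $O(1)$ of the grid maximum, off a set of small probability. The grid has $N\asymp(\log T)^\gamma\alpha^{-1}(\log 1/\alpha)$ points.

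A naive union bound already gives the right shape. Proposition \ref{comparison1}, shifted to $t+h$ (the law of $t+h$ is essentially uniform on $[T,2T]$), gives $\P(S_x(t+h)>u)\ll\sigma_x^{-1}e^{-u^2/\log(1/\alpha)}$. We have $N\approx\exp((k_\gamma^2-1)\log\frac1\alpha)\cdot\alpha^{-1}(\log\frac1\alpha)^{O(1)}$, by the definition \eqref{kgamma}. Thus
\[
N e^{-V^2/\log(1/\alpha)}\ll(\log\tfrac1\alpha)^{O(1)-2k_\gamma C}\exp\Big(-2k_\gamma y-\frac{y^2}{\log(1/\alpha)}\Big),
\]
because $V^2/\log(1/\alpha)=k_\gamma^2\log\frac1\alpha+2k_\gamma(y+C\log\log\frac1\alpha)+(y+C\log\log\frac1\alpha)^2/\log\frac1\alpha$. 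The cross term $2yC\log\log\frac1\alpha/\log\frac1\alpha$ is nonnegative, so it only helps. Choosing $C$ large absorbs the polylogarithmic loss; this loss is exactly what the factor $(\log 1/\alpha)^{-C}$ in the statement is for.

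To make the union bound rigorous, I would apply Proposition \ref{comparison1} at each grid point $h$, after checking that its proof is translation invariant. For $y$ up to $\ll\log\frac1\alpha$, the level $V$ satisfies $|V|\le A\sigma_x^2$ with $A$ fixed, so the proposition applies. I would sum over unit intervals $u\ge V-C'$ exactly as in the bound after \eqref{ph_ub}. The tail $u>A\sigma_x^2$ is handled by Markov with $q\asymp A\sigma_x^2$ moments via Lemma \ref{lem_moments}. The length condition $x^{2q}\le T^{1-\varepsilon}$ holds by our hypothesis on $\alpha\log T$.

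The main obstacle is the discretization step. One must control $\sup_{|h-h_0|\le\text{mesh}}|S_x(t+h)-S_x(t+h_0)|$ with a tail good enough not to spoil the Gaussian bound at level $V$. I would handle this by bounding high moments of the derivative $S_x'$ (order $q\asymp\log\frac1\alpha$, normalized by $\alpha^{-1}$) through the mean value theorem. If that proves too lossy, the fallback is to apply Proposition \ref{comparison1} to the two-dimensional polynomial $(S_x,\alpha S_x')$ jointly, using the same sandwich-polynomial device.
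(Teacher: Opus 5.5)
There is a genuine gap, in the step you treat as routine. Your claim that \eqref{alpha_range} puts you in the regime \eqref{first_regime} is false. Since $\alpha\ll(\log T)^{-c}$, we have $\log\frac1\alpha\geq c\log\log T+O(1)$, so \eqref{first_regime} requires $\alpha\log T\gg(\log\log T)^{12+\varepsilon}$. By contrast, \eqref{alpha_range} allows $\alpha\log T=\omega(T)\log\log T\log\log\log T$ with $\omega\to\infty$ arbitrarily slowly, so most of the range lies inside \eqref{small_regime}. Proposition \ref{comparison1} genuinely needs the stronger hypothesis. Its sandwich polynomial has degree $d\asymp(\log\frac1\alpha)^{11}$, so the Dirichlet polynomials produced by expanding $|\mathcal D(S_x-v)|^2$ have length $x^{2d}=T^{O((\log(1/\alpha))^{11}\log(\alpha\log T)/(\alpha\log T))}$, which exceeds $T$ in this range. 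Hence the pointwise Gaussian bound at your grid points is unavailable. Your fallback through the pair $(S_x,\alpha S_x')$ has the same length problem.

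You do not need the factor $1/\sigma_x$, since polylogarithmic losses are absorbed by $(\log\frac1\alpha)^{-C}$. So a plain Markov bound with $q\asymp\log\frac1\alpha$ moments would suffice, and such moments are admissible under \eqref{alpha_range}. But it must have the sharp constant, and Lemma \ref{lem_moments} does not supply it. Its bound $\E S_x^{2q}\ll4^q q!\,\sigma_x^{2q}$ yields at best $\P(S_x>V)\ll\sqrt q\,e^{-V^2/(2\log(1/\alpha))}$. Then $N e^{-V^2/(2\log(1/\alpha))}\approx\alpha^{-k_\gamma^2/2}$, and the union bound collapses. The extra $2^q$ comes from the Minkowski step that merges primes with prime squares.

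The missing idea, which is how the paper proceeds, is to split $S_x=\Re P_1+\Re P_2$. The prime squares contribute at most $\frac12\log\log\log T+O(1)\ll\log\log\frac1\alpha$ uniformly in $|h|\leq H=(\log T)^\gamma$ (see \eqref{squares}), and this is absorbed into $(\log\frac1\alpha)^{-C}$. The pure prime polynomial satisfies $\E|P_1|^{2q}\ll q!\,B_1^q$ with $B_1=\log\frac1\alpha+O(1)$. This follows from Soundararajan's moment bound, or from the computation for $\widetilde{\xi_1}$ in the proof of Lemma \ref{lem_moments_random}. Taking $q=\lfloor U^2/B_1\rfloor$ then gives $e^{-U^2/B_1}$ with the correct constant.

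Once this is in place, the discretization you flagged as the main obstacle is harmless. At mesh $\alpha/\log\frac1\alpha$, $2q$-th moments of $S_x'$ with $q\asymp\log\frac1\alpha$ bound the probability of an $O(1)$ oscillation in a given cell by $\exp(-c\log\frac1\alpha\log\log\frac1\alpha)$. This survives the union over the $N\approx\alpha^{-k_\gamma^2}\log\frac1\alpha$ cells.

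The paper skips discretization entirely. Lemma \ref{lem:maximal}, which combines a Sobolev inequality with H\"older and moment bounds for $P_1$ and $P_1'$, gives $\E\max_{|h|\leq H}|P_1(t+h)|^{2q}\ll(1+qH\log x)\,q!\,B_1^q$. Here $\log(1+qH\log x)=k_\gamma^2\log\frac1\alpha+O(\log\log\frac1\alpha)$ plays the role of your $\log N$. A single Markov step at level $U=k_\gamma\log\frac1\alpha+2\Delta+y$, with $\Delta=C_1\log\log\frac1\alpha$, then finishes the proof.
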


\begin{remark}
\emph{Note that the lower threshold in the theorem above is
\begin{equation*}\label{eq:y-form}
 e^{-y}\alpha^{k_\gamma} \Big(  \log \frac{1}{\alpha} \Big)^{-C}
 =e^{-y}\alpha^{\,k_\gamma+o(1)}.
\end{equation*}
Also note that when $\gamma=0$,
\begin{equation*}\label{eq:gamma-zero}
 M_{\alpha,0}(t)
 \geq e^{-y}\alpha^{1+o(1)}
\end{equation*}
outside a set of measure
\[
 \ll T\exp\Big(-2y-\frac{y^2}{\log(1/\alpha)}\Big).
\]}
\end{remark}

Before beginning the proof of Theorem \ref{prop:main}, we first need the following lemma.
\begin{lem}\label{lem:maximal}
Let $1\leq H\leq T/10$ and
\[
 P(t)=\sum_{p\leq x}a_p p^{-i t}.
\]
For any $q\in\mathbb{N}$ such that $2q\leq \log T/\log x$, we have
\begin{equation}\label{eq:maximal-moment}
 \int_T^{2T}\max_{|h|\leq H}|P(t+h)|^{2q}\,dt
 \ll T(1+qH \log x)q!\bigg(\sum_{p \leq x} |a_p|^2\bigg)^q.
\end{equation}
The same estimate holds for a polynomial of the form
$\sum_{p^2\leq x}a_p p^{-2i t}$.
\end{lem}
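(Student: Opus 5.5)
The plan is to use the standard Sobolev/Gallagher-type inequality to control the maximum over a short window by an integral of the function and its derivative, and then to apply the mean value theorem for Dirichlet polynomials to both. For a $C^1$ function $f$ on $[t-H,t+H]$ we have
\[
\max_{|h|\leq H}|f(t+h)|\leq \frac{1}{2H}\int_{|h|\leq H}|f(t+h)|\,dh+\int_{|h|\leq H}|f'(t+h)|\,dh .
\]
We apply this with $f=P^{q}$, so that $f'=qP^{q-1}P'$. Then we integrate in $t\in[T,2T]$ and use Fubini together with the fact that $t+h$ ranges over $[T-H,2T+H]\subseteq[T/2,3T]$. This gives
\[
\int_T^{2T}\max_{|h|\leq H}|P(t+h)|^{q}\,dt\ll \int_{T/2}^{3T}|P|^{q}\,dt+H\int_{T/2}^{3T}q|P|^{q-1}|P'|\,dt .
\]
This is stated with exponent $q$; to get exponent $2q$ we replace $q$ by $2q$ throughout, i.e.\ take $f=P^{2q}$. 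Alternatively, we can work with $f=P^q$ and $|f|^2$, which is cleaner: apply the inequality to $g=|P|^{2q}=P^q\overline{P^q}$, whose derivative is bounded by $2q|P|^{2q-1}|P'|$.

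The second term is handled by Cauchy--Schwarz:
\[
\int q|P|^{2q-1}|P'|\leq q\Big(\int|P|^{2q}\Big)^{1/2}\Big(\int|P^{q-1}P'|^2\Big)^{1/2}.
\]
Now $P^{q}$ and $P^{q-1}P'$ are Dirichlet polynomials of length at most $x^{q}\leq T^{1/2}$, by the hypothesis $2q\leq\log T/\log x$. The mean value theorem (Montgomery--Vaughan) therefore gives $\int_{T/2}^{3T}|P^q|^2\ll T\,\E_{\rm rand}|\widetilde P^{q}|^2$, and this is at most $Tq!(\sum|a_p|^2)^q$ by Lemma \ref{lem_moments_random} (the case $\delta_p=0$).

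For $P^{q-1}P'$, note that $P'(t)=-i\sum a_p(\log p)p^{-it}$. Its $n$-th coefficient is the coefficient of $P^q$ multiplied by $\frac1q\log n$ in symmetrised form; more precisely, $P^{q-1}P'=\frac1q\frac{d}{dt}P^q$, whose coefficients are those of $P^q$ times $-i\log n/q$. Since $n\leq x^q$, we have $|\log n|/q\leq \log x$. Hence the mean value theorem gives
\[
\int_{T/2}^{3T}|P^{q-1}P'|^2\ll T(\log x)^2q!\Big(\sum|a_p|^2\Big)^q .
\]
Combining, the derivative term contributes $\ll HqT\log x\cdot q!(\sum|a_p|^2)^q$. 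Together with the first term, this gives \eqref{eq:maximal-moment}.

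For $\sum_{p^2\leq x}a_pp^{-2it}$ the argument is identical: the polynomial is supported on squares $n=p^2\leq x$, the $q$-th power still has length at most $x^q$, Lemma \ref{lem_moments_random} (with $\gamma_p=0$) again gives the $q!(\sum|a_p|^2)^q$ bound, and the logarithmic factor is again at most $\log x$.

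The only delicate points are the following. First, the MVT error term $O(x^q)$ must be absorbed into $T$; this holds because $x^q\leq\sqrt T$. Second, the interval enlargement from $[T,2T]$ to $[T/2,3T]$ requires $H\leq T/10$. Neither point should cause difficulty.
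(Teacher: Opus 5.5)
Your proof is correct and is essentially the paper's argument: a Sobolev-type inequality followed by mean-value bounds for $P$ and its derivative. The paper uses the endpoint form $\max|g|^2\leq\frac{|g(a)|^2+|g(b)|^2}{2}+\int|gg'|$ with H\"older against $\int|P'|^{2q}$ (bounded by Soundararajan's Lemma 3), whereas you use the averaged form together with Cauchy--Schwarz against the mean square of $P^{q-1}P'=\frac1q(P^q)'$; both routes give the same bound. One small fix: Lemma \ref{lem_moments_random} as \emph{stated} only gives $2^q q!$, and an extra factor $2^q$ is not affordable here, so cite the single-sum computation in its proof, which gives exactly $q!\big(\sum_p|a_p|^2\big)^q$ and applies verbatim to the $X(p)^2$ case.
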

\begin{proof}
We shall prove \eqref{eq:maximal-moment}. The treatment of the other case is similar.

We use the Sobolev inequality \cite[(6)]{ABBRS} to get that for fixed $t$, we have 
\[
 \max_{|h|\leq H}|P(t+h)|^{2q}
 \leq\frac{|P(t-H)|^{2q}+|P(t+H)|^{2q}}2
 +q\int_{-H}^{H}|P(t+v)|^{2q-1}|P'(t+v)|\,dv.
\]
Integrating over $T\leq t\leq2T$, it follows that
\begin{align}
 \int_T^{2T} &\max_{|h|\leq H}|P(t+h)|^{2q}\,dt
 \leq\int_{T-H}^{2T+H}|P(v)|^{2q}\,dv
 +2qH\int_{T-H}^{2T+H}|P(v)|^{2q-1}|P'(v)|\,dv
 \notag\\
 &\leq\int_{T-H}^{2T+H}|P(v)|^{2q}\,dv
+2qH
 \Big(\int_{T-H}^{2T+H}|P(v)|^{2q}\,dv\Big)^{1-1/(2q)}
 \Big(\int_{T-H}^{2T+H}|P'(v)|^{2q}\,dv\Big)^{1/(2q)}.
 \label{sobolev}
\end{align}

Now we bound these two moments. By \cite[Lemma 3]{Sound-moments},
\[
 \int_{T-H}^{2T+H}|P(v)|^{2q}\,dv
 \ll Tq!\bigg(\sum_{p\leq x}|a_p|^2\bigg)^q.
\]
Since
\[
 P'(v)=-i\sum_{p\leq x}a_p(\log p)p^{-iv},
\]
the same estimate applied to $P'$ gives
\[
 \int_{T-H}^{2T+H}|P'(v)|^{2q}\,dv
 \ll Tq!\bigg(\sum_{p\leq x}|a_p|^2(\log p)^2\bigg)^q
 \leq Tq!(\log x)^{2q}\bigg(\sum_{p\leq x}|a_p|^2\bigg)^q.
\]
The two bounds above and \eqref{sobolev} yield \eqref{eq:maximal-moment}.
\end{proof}

\begin{proof}[Proof of Theorem \ref{prop:main}]

Set
\begin{equation}\label{eq:u-x}
 u=\log(\alpha\log T),
 \qquad
 x=\exp\Big(\frac{u}{\alpha}\Big).
\end{equation}
By \eqref{alpha_range} we have
\[
 u\to \infty,\qquad
 u\leq(1-c)\log\log T + O(1) ,\qquad
 c\log\log T +O(1) \leq\log\frac{1}{\alpha}\leq\log\log T.
\]
We also have that
\[
 \log x=\frac{u}{\alpha}=o(\log T),
\]
and hence $x=T^{o(1)}$.

Now Lemma \ref{bf_lemma} gives
\begin{equation}\label{eq:majorant}
 W_\alpha(t)
 \leq
 \Re\bigg(\sum_{n\leq x}
 \frac{b_\alpha(n;x)}{n^{1/2+\alpha+i t}}\bigg)
 +\frac{\log\frac{t}{2\pi}}{\log x}\log\frac1{1-x^{-\alpha}}+O(1).
\end{equation}
Since
$
 x^{-\alpha}=\frac1{\alpha\log T},
$
we have
\[
 \frac{\log\frac{t}{2\pi}}{\log x}\log\frac1{1-x^{-\alpha}}
 \ll \frac{\alpha\log T}{u} \cdot
\frac1{\alpha\log T}
 \ll \frac1u.
\]
We also separate the prime and prime-square contributions by setting
\[
 P_1(t)=\sum_{p\leq x}
 \frac{b_\alpha(p;x)}{p^{1/2+\alpha+i t}}
 \qquad\text{and}\qquad
 P_2(t)=\sum_{p^2\leq x}
 \frac{b_\alpha(p^2;x)}{p^{1+2\alpha+2i t}}.
\]
The terms supported on $p^r$, $r\geq3$, contribute $O(1)$ uniformly in $t$.
It follows that
\begin{equation*}\label{eq:basic-majorant}
 W_\alpha(t)\leq \Re P_1(t)+\Re P_2(t)+C_0
\end{equation*}
for some absolute constant $C_0>0$.

Since $\alpha\log x\to\infty$ and $x=T^{o(1)}$, the estimates
in the proof of Lemma~\ref{variances} give
\begin{align}
 B_1&:=\sum_{p\leq x}
 \frac{|b_\alpha(p;x)|^2}{p^{1+2\alpha}}
 =\log\frac{1}{\alpha}+O(1).\label{eq:B1}
\end{align}
We now take $H=(\log T)^\gamma$ and set
\begin{equation*}\label{eq:Delta}
 \Delta=C_1 \log \log \frac1{\alpha},
\end{equation*}
for $C_1$ sufficiently large. 
Let
\begin{equation*}\label{eq:U}
 U=k_\gamma \log\frac{1}{\alpha} +2\Delta+y.
\end{equation*}
By Lemma~\ref{lem:maximal} and Markov's inequality, for every $q\in\mathbb{N}$ such that
such that $2q\leq \log T/\log x$,
\begin{equation}\label{eq:P1-Markov}
\P \Big(
 \max_{|h|\leq H}|P_1(t+h)|>U \Big)
 \ll (1+qH \log x)\frac{q!B_1^q}{U^{2q}}.
\end{equation}
We choose 
\[
 q=\Big\lfloor\frac{U^2}{B_1}\Big\rfloor.
\]
By Stirling's formula,
\begin{equation}\label{eq:factorial}
 \frac{q!B_1^q}{U^{2q}}
 \ll \sqrt q\exp\Big(-\frac{U^2}{B_1}\Big).
\end{equation}
We also have that 
\begin{align}
 \log(1+qH \log x)
 &
 =k_\gamma^2 \log\frac{1}{\alpha}+O\Big(\log \log\frac{1}{\alpha}\Big),
 \label{eq:entropy}
\end{align}
where we used $u\ll \log\frac{1}{\alpha}$ and $q\asymp \log\frac{1}{\alpha}$. On the other hand,
using $B_1=\log\frac{1}{\alpha}+O(1)$,
\begin{align}
 \frac{U^2}{B_1}
 &\geq k_\gamma^2 \log\frac{1}{\alpha}+2k_\gamma y+\frac{y^2}{\log(1/\alpha)}
 +2\Delta.
 \label{eq:U-expand}
\end{align}
Combining \eqref{eq:P1-Markov}--\eqref{eq:U-expand}, and using
$\Delta\gg\log \log\frac{1}{\alpha}$, then gives
\begin{align}
\P \Big(
 \max_{|h|\leq H}|P_1(t+h)|>U \Big)
 \ll \exp\bigg(-2k_\gamma y-\frac{y^2}{\log(1/\alpha)}- 2\Delta +O \Big(  \log \log\frac{1}{\alpha}\Big)\bigg).
 \label{eq:P1-final}
\end{align}

Now using \eqref{bpx} we have that
\begin{equation} 
\label{squares} \max_{|h| \leq H} \Re P_2(t+h) \leq \frac{ \log \log \log T}{2}+ O(1) \leq \Delta.\end{equation}
Let $C\geq4$ be large enough, and define
\begin{equation}
\label{ey}
 \mathcal E_y=
 \Big\{t\in[T,2T]:
 \max_{|h|\leq H}W_\alpha(t+h) \geq k_\gamma \log\frac{1}{\alpha}+C\Delta+y\Big\}.
\end{equation}
If $t\in\mathcal E_y$, then for some $|h|\leq H$,
\[
 \Re P_1(t+h)+\Re P_2(t+h)
 >k_\gamma \log\frac{1}{\alpha}+(C-1)\Delta+y.
\]
Thus, by \eqref{squares}, we obtain that
\[
 |P_1(t+h)|>k_\gamma \log\frac{1}{\alpha} +2\Delta+y.
\]
We deduce from \eqref{eq:P1-final} that
\begin{align}
 \text{meas}(\mathcal E_y)
 &\ll T\exp\Big(-2k_\gamma y-\frac{y^2}{\log(1/\alpha)}-\Delta\Big)\ll T\exp\Big(-2k_\gamma y-\frac{y^2}{\log(1/\alpha)}\Big). \label{measey}
\end{align}
This completes the proof of the theorem.
\end{proof}

 We show below that the range \eqref{alpha_range} in Theorem \ref{prop:main} can be slightly enlarged at the expense of a weaker error term. We obtain the following leading order
bound with probability $1-o(1)$, without the explicit
$y$-dependent tail estimate.

\begin{thm}\label{expanded_range}
Assume RH. Fix $0<c<1$, $\gamma\geq0$ and $\delta>0$. Let $k_\gamma$ be as in \eqref{kgamma}.
Uniformly for
\begin{equation*}
 0<\alpha\leq(\log T)^{-c},\qquad
 \frac{\alpha\log T}{\log\log T}\longrightarrow\infty,
\end{equation*}
 we have
\[
 \P\big(M_{\alpha,\gamma}(t)
 <\alpha^{k_\gamma+\delta}\big)=o(1).
\]
\end{thm}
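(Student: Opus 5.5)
The plan is to rerun the proof of Theorem~\ref{prop:main} with a different truncation parameter $x$ and a fixed amount of slack $\delta$, rather than tracking $y$. Write $L=\log\frac1\alpha$, so that $c\log\log T+O(1)\leq L\leq\log\log T$. Since $\log M_{\alpha,\gamma}(t)=-\max_{|h|\leq H}W_\alpha(t+h)$ with $H=(\log T)^\gamma$, it suffices to show
\[
\P\Big(\max_{|h|\leq H}W_\alpha(t+h)>(k_\gamma+\delta)L\Big)=o(1).
\]
The restriction $\alpha\log T/(\log\log T\log\log\log T)\to\infty$ in Theorem~\ref{prop:main} arises only from the constraint $2q\leq\log T/\log x=\alpha\log T/u$ with $u=\log(\alpha\log T)$ and $q\asymp L$. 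Here I will instead allow the explicit-formula remainder to be as large as a small multiple of $L$.

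Concretely, put $w=\alpha\log T/\log\log T\to\infty$ and choose
\[
x=\exp(u/\alpha),\qquad u=\log\Big(\frac{\alpha\log T}{\eta L}\Big)
\]
for a small fixed $\eta=\eta(\delta)>0$. Then $u\to\infty$ and $x^{-\alpha}=\eta L/(\alpha\log T)$. Hence the second term in Lemma~\ref{bf_lemma} is
\[
\ll\frac{\alpha\log T}{u}\,x^{-\alpha}\ll\frac{\eta L}{u}=o(L),
\]
which is certainly at most $\frac{\delta}{4}L$. Moreover $\alpha\log x=u\to\infty$, so \eqref{bpx} holds and $B_1=L+O(1)$ as in \eqref{eq:B1}. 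Finally, $\log T/\log x=\alpha\log T/u$, and $u\leq\log w+O(\log\log\log T)$. Since $\alpha\log T=w\log\log T$ and $L\asymp\log\log T$, we get $\alpha\log T/u\geq A L$ for every fixed $A$ once $T$ is large, and this is exactly what makes $q\asymp L$ admissible.

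Next I split $W_\alpha(t+h)\leq\Re P_1(t+h)+\Re P_2(t+h)+\frac{\delta}{4}L+O(1)$, where the $p^r$ terms with $r\geq3$ contribute $O(1)$. On the event above, either $\max_{|h|\leq H}|P_1(t+h)|>U:=(k_\gamma+\delta/2)L$ or $\max_{|h|\leq H}|P_2(t+h)|>\frac{\delta}{8}L$. I will not bound $\Re P_2$ pointwise, since $\sum_{p\leq\sqrt x}p^{-1-2\alpha}$ is of size $L$. Instead, for the second event I apply the prime-square version of Lemma~\ref{lem:maximal}. The coefficient mass $\sum_p|b_\alpha(p^2;x)|^2p^{-2-4\alpha}$ is $O(1)$, so taking $q\asymp L$ gives a bound $(1+qH\log x)\,q!\,O(1)^q(\delta L/8)^{-2q}$. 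This is $\exp(-cL\log L)$ times $(\log T)^{O(1)}$, hence $o(1)$.

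For $P_1$, Lemma~\ref{lem:maximal} with $q=\lfloor U^2/B_1\rfloor\asymp L$ and Stirling's formula give
\[
\P\Big(\max_{|h|\leq H}|P_1(t+h)|>U\Big)\ll\exp\Big(\log(1+qH\log x)-\frac{U^2}{B_1}+O(\log L)\Big).
\]
Here $\log(qH\log x)=\gamma\log\log T+\log\frac1\alpha+\log u+O(\log L)=k_\gamma^2L+O(\log\log\log T)$, using $\log u\ll\log\log\log T$ and the definition \eqref{kgamma}. Meanwhile $U^2/B_1=(k_\gamma+\delta/2)^2L+O(1)$. So the exponent is $\leq-\delta k_\gamma L+o(L)\to-\infty$.

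The main obstacle is the admissibility bookkeeping in the first step: the same $x$ must make the remainder $o(L)$ and also allow moments of order $q\asymp L$, with $L\asymp\log\log T$. I will first check this carefully with the stated choice of $u$, using only $w\to\infty$, and if needed adjust $\eta$ or replace $\eta L$ by $L/\log w$.
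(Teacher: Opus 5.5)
Your proposal is correct and is essentially the paper's argument. Like the paper, you rerun the proof of Theorem~\ref{prop:main} with a larger cutoff $u$: the paper takes $u=2\log(\alpha\log T/\log\log T)$ and you take $u=\log(\alpha\log T/(\eta L))$, and either choice makes the explicit-formula remainder $o(L)$ while keeping moments of order $q\asymp L$ admissible. You then apply Lemma~\ref{lem:maximal} to $P_1$ with $U=(k_\gamma+\delta/2)L$ (the paper uses $\delta/3$) and $q=\lfloor U^2/B_1\rfloor$.

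The only real difference is in $P_2$. The paper reuses the pointwise bound \eqref{squares}, whereas you apply the prime-square case of Lemma~\ref{lem:maximal}. This works equally well and avoids any pointwise estimate for $\max_h\Re P_2$; as you note, the coefficient sizes \eqref{bpx} alone would only give $\frac12\log\frac1\alpha+O(1)$.

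One correction to your admissibility check. Since $L\geq c\log\log T+O(1)$, you actually have $u=\log w+O_\eta(1)$, and it is this sharper bound that gives $\alpha\log T/u\geq AL$. The weaker $u\leq\log w+O(\log\log\log T)$ that you wrote would not suffice when $w\to\infty$ very slowly.
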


\begin{proof}
Let $H=(\log T)^\gamma$. We follow the proof of
Theorem \ref{prop:main} but
replace \eqref{eq:u-x} by
\begin{equation*}\label{eq:minimum-leading-cutoff}
 u=2\log\Big(\frac{\alpha\log T}{\log\log T}\Big),
 \qquad x=\exp\Big(\frac{u}{\alpha}\Big).
\end{equation*}
Then $u\to\infty$, and
\begin{equation}\label{eq:minimum-leading-length}
 \frac{\log(1/\alpha)\log x}{\log T}
 \ll\frac{\log\log T}{\alpha\log T}
       \log\Big(\frac{\alpha\log T}{\log\log T}\Big)=o(1).
\end{equation}
The remainder in \eqref{eq:majorant} is now
\[
 \frac{\log T}{\log x}\log\frac1{1-x^{-\alpha}}+O(1)
 \ll\frac{\alpha\log T}{u}e^{-u}+1
 =o\Big(\log\frac{1}{\alpha}\Big).
\]

Note that estimate \eqref{eq:B1} still holds.
For $P_1$, take
$$U=\Big(k_\gamma+\frac{\delta}{3}\Big)\log\frac{1}{\alpha}\qquad \text{and}\qquad q=\Big\lfloor \frac{U^2}{B_1}\Big\rfloor$$ in \eqref{eq:P1-Markov}.
Then $q\asymp \log\frac{1}{\alpha}$.
Thus \eqref{eq:minimum-leading-length} implies that $P_1^q$ has length
$T^{o(1)}$. Also, since $\log x=u/\alpha$ and
$1\leq u\ll \log\frac{1}{\alpha}$, equation \eqref{eq:entropy} still holds. 
Hence, \eqref{eq:P1-Markov} and \eqref{eq:factorial} give
\begin{align*}
 \P\Big(\max_{|h|\leq H}|P_1(t+h)|>U\Big)
 &\ll (1+qH\log x)\sqrt q\exp\Big(-\frac{U^2}{B_1}\Big)\\
 &\ll\exp\bigg(-\Big(\Big(k_\gamma+\frac{\delta}{3}\Big)^2-k_\gamma^2+o(1)\Big)
                         \log\frac{1}{\alpha}\bigg)=o(1).
\end{align*}

For $P_2$, equation \eqref{squares} is still true. So we get that outside the exceptional set for $P_1$, we have 
\[
 W_\alpha(t+h)
 \leq\Big(k_\gamma+\frac{\delta}{3}\Big)\log\frac{1}{\alpha}+o\Big(\log\frac{1}{\alpha}\Big)
 <(k_\gamma+\delta)\log\frac{1}{\alpha}.
\]
Taking exponentials proves the assertion.
\end{proof}

For smaller shifts, the negative-moment estimates give the following
weaker lower bound on intervals of fixed length.

\begin{prop}\label{weaker_bd}
Assume RH. Fix $\delta>0$.
Uniformly for
\begin{equation*}
 1\ll \alpha \log T\ll   \log \log T,
\end{equation*}
we have
\begin{equation*}
 \P\big(
M_{\alpha,0}(t) \geq \alpha^{5/4+\delta}\big)
 \longrightarrow1.
\end{equation*}
\end{prop}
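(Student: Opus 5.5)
The plan is to turn a negative-moment bound into a bound for the minimum by sampling $t+h$ on a discrete grid and interpolating between grid points with a derivative estimate. The key exponent comes from a moment just below $k=1/2$: Theorem \ref{negativemom} gives $I_{-k}(\alpha',T)\ll (\alpha')^{-k^2}$ for $0<k<1/2$ and $1\ll\alpha'\log T\ll\log\log T$. With $k\to 1/2$ the exponent $k^2$ tends to $1/4$. Markov's inequality applied to a union over roughly $\alpha^{-1}$ grid points then produces the threshold $\alpha^{1+1/4}=\alpha^{5/4}$.

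\textbf{Grid and union bound.} Fix $k<1/2$ close to $1/2$, so that $k^2<1/4$ and we can spare $\delta$. Take a grid $h_n=n\alpha/A$ with $|n|\le A/\alpha$, where $A$ is a large constant. If $|\zeta(\tfrac12+\alpha+i(t+h_n))|\le 2\alpha^{5/4+\delta}$ for some $n$, Markov gives
\[
\P\big(\exists n:\ |\zeta(\tfrac12+\alpha+i(t+h_n))|\le 2\alpha^{5/4+\delta}\big)\ll \frac{A}{\alpha}\,(2\alpha^{5/4+\delta})^{2k}\,I_{-k}(\alpha,T)\ll \alpha^{-1+2k(5/4+\delta)-k^2}.
\]
At $k=1/2$ the exponent equals $-1+5/4+\delta-1/4=\delta>0$. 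By continuity, for $k$ slightly less than $1/2$ the exponent stays positive, so this probability is $o(1)$. Shifts of the grid by $T$-translation are harmless because the measure is translation invariant up to the endpoints.

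\textbf{Interpolation (main obstacle).} The Lipschitz control of $\zeta$ between grid points is only of size $\zeta'$, and the minimum is small, so a naive bound fails. I would use instead that $\zeta(s)$ has no zeros with real part $\tfrac12+\alpha$ under RH. By Jensen/Borel–Carathéodory on a disc of radius $\asymp\alpha$, $\log|\zeta|$ is harmonic in the half-plane $\Re s>1/2$, and the disc around $\tfrac12+\alpha+i(t+h)$ of radius $\alpha/2$ stays inside it. Harnack's inequality for the positive harmonic function $\log(M^*/|\zeta|)$ then shows that $|\zeta|$ at points within distance $\alpha/A$ of each other are comparable up to a power. Here $M^*$ is an upper bound for $|\zeta|$ on the disc, which is $(\log T)^{O(1)}$ off a small exceptional set.

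The exponent of that power tends to $1$ as $A\to\infty$. Concretely, $|\zeta(s')|\ge |\zeta(s)|^{1+O(1/A)}(M^*)^{-O(1/A)}$. With $A$ large in terms of $\delta$, a value below $\alpha^{5/4+\delta}$ anywhere forces a grid value below $\alpha^{5/4+\delta/2}$, after adjusting $\delta$; this uses $\log M^*\ll\log\log T\asymp\log(1/\alpha)$. So I would run the union bound with $\delta/2$ and account for the exceptional set for $M^*$ through a positive-moment bound. That exceptional set has probability $o(1)$ by, e.g., Soundararajan's moment bounds.

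Combining the two steps, $M_{\alpha,0}(t)\ge\alpha^{5/4+\delta}$ with probability $1-o(1)$.
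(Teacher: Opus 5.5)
Your argument is correct, but it takes a different route from the paper. The paper neither discretizes nor interpolates. Under RH, $1/\zeta$ is holomorphic in $\Re s>\tfrac12$, so $|\zeta|^{-2k}$ is subharmonic there; this is the same fact that makes your $\log(M^*/|\zeta|)$ harmonic. The paper applies the area mean-value inequality on discs of radius $\alpha/2$ about $\tfrac12+\alpha+i(t+h)$, $|h|\le1$, and averages in $t$ to get
\[
\frac1T\int_T^{2T}\max_{|h|\le1}\Big|\zeta\Big(\tfrac12+\alpha+i(t+h)\Big)\Big|^{-2k}dt
\ll\frac1{\alpha^{2}}\int_{\alpha/2}^{3\alpha/2}\frac1T\int_{T-2}^{2T+2}\Big|\zeta\big(\tfrac12+\beta+iv\big)\Big|^{-2k}dv\,d\beta
\ll\alpha^{-1-k^2}.
\]
A single application of Markov's inequality with $k$ close to $\tfrac12$ then finishes. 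The factor $\alpha^{-1}$ here, from the area normalization $\alpha^{-2}$ against a $\beta$-range of length $\alpha$, plays exactly the role of your $\asymp A/\alpha$ grid points. In both arguments it is the source of the exponent $\tfrac54=1+\tfrac14$.

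The paper's version is more economical. It needs no a priori upper bound for $|\zeta|$ and no Harnack comparison, only the negative-moment bound uniformly for shifts $\beta\in[\alpha/2,3\alpha/2]$, which comes from Theorem~\ref{negativemom} and \cite[Theorem~1.4]{BF}. Yours uses the moment only on the line $\Re s=\tfrac12+\alpha$ and gives a pointwise transfer of small values between nearby points. The cost is the extra input $M^*\le(\log T)^{O(1)}$ off a small set, together with $\log\log T\asymp\log(1/\alpha)$ to absorb $(M^*)^{O(1/A)}$; the latter is valid in the stated range.

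Two minor remarks. First, the tool you actually use is Harnack's inequality; Jensen and Borel--Carath\'eodory play no role. Second, $M^*$ is a supremum over discs, so Soundararajan's moment bounds on the critical line must be combined with a sub-mean-value step. For example, subharmonicity of $|\zeta|^2$ on discs of radius $\alpha/4$, together with $\frac1T\int_{T-3}^{2T+3}|\zeta(\sigma+iv)|^2dv\ll\log T$ uniformly for $\sigma\ge\tfrac12$, gives $\E\max|\zeta|^2\ll\alpha^{-1}\log T\ll(\log T)^2$, which is enough.
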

\begin{proof}
Fix $0<k<1/2$. We use 
Theorem~1.4 of \cite{BF} and Theorem \ref{negativemom} to get that
\[
 \frac1T\int_{T/2}^{3T}
 \Big|\zeta(\tfrac12+\beta+it)\Big|^{-2k}dt
 \ll \alpha^{-k^2}
\]
for $\alpha/2\leq\beta\leq3\alpha/2$. Apply the mean-value inequality to $|\zeta|^{-2k}$ on a disk
of radius $\alpha/2$ about $1/2+\alpha+i(t+h)$.
All these disks for $|h|\leq1$ lie in the rectangle
$\alpha/2\leq\Re s-1/2\leq3\alpha/2$,
$t-2\leq\Im s\leq t+2$. Hence
\begin{align*}
\frac1T\int_T^{2T}\max_{|h|\leq1}
 \Big|\zeta\Big(\tfrac12+\alpha+i(t+h)\Big)\Big|^{-2k}dt&\ll\frac1{\alpha^2}
 \int_{\alpha/2}^{3\alpha/2}\frac1T\int_{T-2}^{2T+2}
 \Big|\zeta(\tfrac12+\beta+iv)\Big|^{-2k}dv\,d\beta
 \notag\\
 &
 \ll\alpha^{-1-k^2}.\label{eq:minimum-maximal-negative-moment}
\end{align*}
Thus,
\[
 \P\big(M_{\alpha,0}(t)
       <\alpha^{5/4+\delta}\big)
 \ll\alpha^{2k(5/4+\delta)-1-k^2}=o(1),
\]
on choosing $k$ sufficiently close to $1/2$.
This finishes the proof.
\end{proof}

\subsection{The upper bound for the minimum}
We now focus on the case $\gamma=0$, i.e., $|h| \leq 1$.
We will prove the following complementary upper bound for $M_{\alpha}$. 

\begin{thm}\label{prop:minimum-upper}
Assume RH. Fix $0<c,\delta<1$ and $M_0>0$.
Uniformly for
\begin{equation*}\label{eq:minimum-upper-range}
 \frac{M_0}{\log T}\leq\alpha\ll (\log T)^{-c},
\end{equation*}
we have
\begin{equation*}\label{eq:minimum-upper-result}
 \P\big(M_{\alpha,0}(t)
       >\alpha^{1-\delta}\big)=o(1).
\end{equation*}
\end{thm}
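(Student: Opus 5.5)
The plan is to produce, for most $t$, a single point $|h|\le 1$ at which $\log|\zeta(\tfrac12+\alpha+i(t+h))|\le -(1-\delta)\log\frac1\alpha$. I would split into two regimes: a \emph{large-shift} regime $\alpha\log T\geq (\log\log T)^{A}$ for a suitable fixed $A$, where the prime-sum model applies directly, and a \emph{small-shift} regime $M_0\le \alpha\log T\le (\log\log T)^{A}$. In the small-shift regime I would first find a small value at a larger shift $\alpha'$ and then transfer it to the line $\Re s=\tfrac12+\alpha$ by a maximum modulus argument.

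\emph{Large-shift regime.} I would use the upper-bound inequality of Carneiro--Chandee~\cite{CC}, the companion of Lemma~\ref{bf_lemma} with the roles of $\log|\zeta|$ and $\log|\zeta|^{-1}$ reversed. With $x=\exp(u/\alpha)$ as in \eqref{eq:u-x}, it should give
\[
\log|\zeta(\tfrac12+\alpha+it)|\le -\Re P_1(t)-\Re P_2(t)+O(1)
\]
uniformly in $t$, where the weights are those of Lemma~\ref{asympta}. By \eqref{squares}, $\Re P_2$ contributes at most $\Delta=o(\log\frac1\alpha)$.

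It then suffices to show that, with probability $1-o(1)$, some $|h|\le1$ satisfies
\[
\Re\sum_{p\le x}\frac{p^{-i(t+h)}}{p^{1/2+\alpha}}\ \ge\ (1-\delta/2)\log\frac1\alpha .
\]
For this I would adapt the second-moment argument of~\cite{ABBRS}, with $X(p)\mapsto -X(p)$ symmetry. Discretise $|h|\le1$ into $\asymp 1/\alpha$ points spaced by $\alpha$. Split the primes $p\le \exp(1/\alpha)$ into $\asymp\log\frac1\alpha$ blocks by $\log\log p$. Count points whose block increments follow a linear path of slope $1-\delta/2$, with a barrier.

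The first moment of the count comes from Gaussian comparison of the joint increments, as in Propositions~\ref{comparison21}/\ref{comparison1} with $\mathcal Q=1$ and a tilted measure, and gives $\alpha^{-1}\cdot\alpha^{(1-\delta/2)^2}\to\infty$. The second moment uses the decorrelation $\operatorname{Cov}\approx\frac12\log\frac1{\alpha+|h-h'|}$: pairs at distance $e^{-k}$ share the first $k$ blocks. Paley--Zygmund then gives the high point with probability $1-o(1)$, and the barrier is what is needed to push the probability to $1-o(1)$. The primes in $(e^{1/\alpha},x]$ carry variance $O(1)$ after damping by $p^{-\alpha}$. I would control them uniformly over the $1/\alpha$ points via a Chebyshev bound and Lemma~\ref{lem:maximal}.

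\emph{Small-shift regime.} Set $\alpha'=\alpha^{1-\delta/4}$, which lies in the large-shift regime since $\alpha'\log T\ge(\log T)^{\delta/4}/O(1)$. The previous step gives some $|h_0|\le 1/2$ with $|\zeta(\tfrac12+\alpha'+i(t+h_0))|\le \alpha'^{1-\delta/4}$.

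Suppose for contradiction that $|\zeta|>\alpha^{1-\delta}$ on the whole segment $\{\tfrac12+\alpha+i(t+h):|h|\le1\}$. Under RH, $1/\zeta$ is holomorphic in the rectangle $R=[\tfrac12+\alpha,\tfrac12+2\alpha']\times[t-1,t+1]$. I would apply the two-constants/Hadamard three-lines type bound (Poisson--Jensen for $\log|1/\zeta|$ on $R$) at the interior point $\tfrac12+\alpha'+i(t+h_0)$. On the left side $\log|1/\zeta|<(1-\delta)\log\frac1\alpha$ by assumption. On the right side $\Re s=\tfrac12+2\alpha'$, and on the horizontal sides, $\log|1/\zeta|\le (1-\delta/8)\log\frac1\alpha$ for most $t$. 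That holds for the right side by Theorem~\ref{prop:main} or Theorem~\ref{expanded_range}, since $\alpha'^{1+o(1)}\ge\alpha^{1-\delta/8}$. On the horizontal sides it holds by Markov's inequality applied to $\int_{\alpha}^{2\alpha'}|\zeta(\tfrac12+\beta+i(t\pm1))|^{-2k}\,d\beta$, with the moment bounds of Theorem~\ref{negativemom} and~\cite{BF}. The harmonic measure of the left side seen from a point at distance $\alpha'-\alpha$ inside a box of width $\asymp\alpha'$ is bounded below, so the harmonic average is strictly below $\log\frac1{\alpha'^{\,1-\delta/4}}$ for $\delta$ fixed. This contradicts the small value at $\alpha'$.

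\emph{Main obstacle.} I expect the difficulty to be this last transfer step, specifically the bookkeeping of harmonic-measure weights. The right-side bound is only a \emph{lower} bound on $|\zeta|$, which is the wrong direction for bounding the harmonic average of $\log|1/\zeta|$. This forces one to take the rectangle thin (right side only slightly beyond $\alpha'$) and to rely on the moment bound along the whole right side, which costs only $\alpha^{-O(k)}$ factors. If the constants do not close, the fallback is to choose $\alpha'=\alpha^{1-\eta}$ with $\eta\ll\delta$ and iterate. The ABBRS second-moment step is technical but follows the known template.
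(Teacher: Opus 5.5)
Your argument is correct in outline and has the same architecture as the paper's proof: a Carneiro--Chandee minorant, an ABBRS-type high point of the prime sum, and maximal-moment control of the remainder (Lemma~\ref{lem:maximal}) in a mesoscopic range, followed by a complex-analytic transfer from a larger shift. Both halves, however, are implemented differently.

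In the mesoscopic range the paper does not rerun a second-moment count on the shifted sum. It truncates at $x_0=\exp(\alpha^{-1+\nu})$, where $p^{-\alpha}=1+O(\alpha^{\nu})$. The main term is then the critical-line sum $\sum_{p\le x_0}p^{-1/2-iv}$, and Lemma~\ref{ABBRS} (a modification of \cite[Proposition~3.2]{ABBRS}) applies as a black box, at the cost of a factor $(1-\nu)^2$.

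For the transfer the paper fixes one reference shift $\beta=(\log T)^{-1+\eta}$. Using the Sobolev inequality, the $k=1/4$ negative moments (Theorem~\ref{negativemom} and \cite{BF}) and a Cauchy estimate, it shows that $|1/\zeta|\le(\log T)^6$ on all of $[\frac12+\alpha,2]\times[t-1,t+1]$ with probability $1-o(1)$. It then applies the maximum modulus principle to $e^{32\log\log T\,z(z-(3/2-\alpha))}/\zeta(s)$. This carries over only the crude size $\beta^{-(1-\eta)}$, which suffices because $2(\log T)^{-(1-\eta)^2}\le\alpha^{1-\delta}$ for every $\alpha\ge M_0/\log T$.

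Your two-constants argument on the thin rectangle $[\frac12+\alpha,\frac12+2\alpha']$ works differently. It needs an upper bound for $\log|1/\zeta|$ on the whole right side, which you import from Theorem~\ref{expanded_range} at shift $2\alpha'$. In exchange, you need no uniform control of $1/\zeta$ over a large rectangle, since the horizontal sides carry harmonic measure $\ll e^{-c/\alpha'}$. The comparison is also genuinely relative: it compares exponents at the three abscissae $\alpha,\alpha',2\alpha'$.

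Several points in your write-up need correcting, none of them fatal.

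First, your stated ``main obstacle'' is misplaced. A lower bound for $|\zeta|$ on the right side is an upper bound for $\log|1/\zeta|$, which is exactly what the Poisson representation needs. What you do need is the quantitative value of the harmonic measure $\omega_L$ of the left side seen from $s_0$, not just a lower bound. Comparison with the strip gives $\omega_L\ge\frac{\alpha'}{2\alpha'-\alpha}-\omega_H\ge\frac12-o(1)$, where $\omega_H$ is the harmonic measure of the horizontal sides. Hence $\log|1/\zeta(s_0)|\le(1-\frac{9\delta}{16}+o(1))\log\frac1\alpha<(1-\frac\delta4)^2\log\frac1\alpha$. With $\alpha'=\alpha^{1-\delta/4}$, a bound $\omega_L\ge c_0$ closes only when $c_0>\frac37-\frac{\delta}{14}$; your fallback $\alpha'=\alpha^{1-\eta}$ with $\eta\ll\delta$ removes this constraint.

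Second, Markov applied to the $\beta$-integral does not give the pointwise bound you claim on the horizontal sides. None is needed: the Poisson density there is $\ll\alpha'^{-1}e^{-c/\alpha'}$. An integral bound suffices, as does the deterministic cap \eqref{eq:above-half-pointwise-cap}, which gives $\log|1/\zeta|\ll\log T$ for $\Re s\ge\frac12+M_0/\log T$.

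Third, the ABBRS step has three issues.
\begin{itemize}
\item The event you need is $\Re\sum p^{-1/2-\alpha-i(t+h)}\le-(1-\frac\delta2)\log\frac1\alpha$. The symmetry $X(p)\mapsto-X(p)$ holds only in the random model; the sign change is justified because the moment comparison is insensitive to sign.
\item Propositions~\ref{comparison21} and \ref{comparison1} give only upper bounds. The first moment needs the lower half of \eqref{eq:sandwich} together with a lower local estimate.
\item The probability $1-o(1)$ does not come from a barrier; at slope $1-\delta/2$ no barrier is needed. If your count constrains the lowest blocks, which are shared by all $h$, then $\mathbb E N^2/(\mathbb E N)^2$ stays bounded away from $1$, and Paley--Zygmund gives only a positive probability. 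Leave the first $r\to\infty$ blocks unconstrained and control them by Lemma~\ref{lem:maximal}, as in Lemma~\ref{ABBRS}.
\end{itemize}

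Finally, since $\alpha'\ge(\log T)^{-1+\delta/4-o(1)}$, you only need the direct argument for $\alpha\ge(\log T)^{-1+\eta}$ with $\eta<\delta/4$. There is no reason to push it down to $(\log\log T)^A/\log T$, where the Dirichlet-polynomial lengths become delicate.
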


We first need the following lemma. We will only sketch the proof, since it is similar to the proof of Proposition $3.2$ in \cite{ABBRS}. 

\begin{lem}
    \label{ABBRS}
  Fix $0<c_1<c_2<1$ and $0<\nu<1$. Uniformly for
$$
(\log T)^{c_1}\leq Y\leq(\log T)^{c_2},
$$ we have
\begin{equation*}
 \P\bigg(\min_{|h|\leq1/2}
  \Big(\Re\sum_{p\leq e^Y}p^{-1/2-i(t+h)}\Big)
       <-(1-\nu)\log Y\bigg)=1-o(1).
\end{equation*}
\end{lem}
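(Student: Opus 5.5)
We follow the second-moment (branching random walk) argument of \cite[Proposition~3.2]{ABBRS}, with the sign flipped. The sign is harmless because the field $\Re\sum_p X(p)p^{-ih}$ is symmetric under $X(p)\mapsto -X(p)$.

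\textbf{Setup.} Discretize $|h|\leq 1/2$ into the points $h_k=k e^{-Y}$, giving about $e^{Y}$ points. Split the primes $p\leq e^Y$ into scales: write $Y_\ell=e^{\ell}$, and let the increments be
\[
S_\ell(h)=\Re\sum_{e^{Y_{\ell-1}}<p\leq e^{Y_\ell}}p^{-1/2-i(t+h)},\qquad \ell\leq L:=\lfloor\log Y\rfloor,
\]
each of variance about $1/2$. Fix a small initial range $\ell\leq \ell_0$ and a final range near $L$, and discard both. On the bulk scales, count the points $h_k$ for which the partial sums $\sum_{m\leq \ell}S_m(h_k)$ follow the linear path $-(1-\nu)\ell$ to within a barrier. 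Concretely, require the increments $S_\ell(h_k)\in[-(1-\nu)-\epsilon,-(1-\nu)+\epsilon]$ on coarse blocks of scales, as in ABBRS, where the increments are grouped into blocks of bounded length to keep Dirichlet polynomial lengths small. Call the resulting count $Z$.

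\textbf{First moment.} For each fixed $h$, the increments over distinct prime ranges behave like independent variables. By expanding indicator-approximating polynomials (or via high moments and the mean-value theorem, as in Proposition~\ref{comparison21}), their joint law is compared with the Steinhaus model. This gives $\E Z\asymp e^{Y}\prod_\ell \P(\text{block event})$, which is of size $e^{(1-(1-\nu)^2)\log Y\cdot O(1)}\to\infty$ after the normalization appropriate to the discretization.

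\textbf{Second moment.} For two points with $|h-h'|\approx e^{-Y_m}$, the increments on scales $\ell<m$ are nearly identical, while those on scales $\ell>m$ decorrelate because of the oscillation $(p^{-i(h-h')})$. This yields $\E Z^2\leq(1+o(1))(\E Z)^2$, and Paley--Zygmund then gives $\P(Z>0)=1-o(1)$. On $\{Z>0\}$ we need more than the bulk increments, so the following remainders must be controlled at the chosen point:
\begin{itemize}
\item the sum over the small primes, which is $O_{\P}(\ell_0)$ and is bounded using a shifted initial segment, exactly as ABBRS do;
\item the final scales, whose contribution to the maximum over $h$ is $O(\sqrt{\log\log Y})$ in probability, by Lemma~\ref{lem:maximal}-type moment bounds.
\end{itemize}
With these in hand, the prime sum at the chosen point is at most $-(1-\nu/2)\log Y + o(\log Y)<-(1-\nu)\log Y$ after readjusting $\nu$.

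\textbf{Main obstacle.} The main difficulty is the second-moment decorrelation estimate for pairs at intermediate distance. We handle it by the ABBRS two-point comparison with a Gaussian random model; the two-point analogue here is Proposition~\ref{comparison22}. The restriction $Y\leq(\log T)^{c_2}$ ensures that all the Dirichlet polynomials involved have length $T^{o(1)}$.
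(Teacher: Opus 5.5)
\textbf{Verdict: genuine gap.} Your route differs from the paper's, and the step you call the main obstacle, the second-moment estimate, does not go through as written.

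\textbf{The paper's route.} The paper never reruns a second-moment argument at scale $Y$. It applies \cite[Proposition~3.2]{ABBRS}, modified to coefficients $-p^{-1/2}$, at the full scale $\log T$. The blocks are $J_j=(\exp((\log T)^{j/K}),\exp((\log T)^{(j+1)/K})]$ for $1\leq j\leq K-3$. With probability $1-o(1)$ this gives a point $|h|\leq 1/4$ at which \emph{every} block sum $P_j(t+h)$ exceeds $\frac{1-\nu/4}{K}\log\log T$.

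Because all blocks are large simultaneously at that point, one can simply truncate:
\begin{itemize}
\item The $m$ complete blocks below $e^Y$ already give more than $(1-\nu/2)\log Y$, provided $Kc_1\geq 8/\nu$.
\item The leftover primes (those below $\exp((\log T)^{1/K})$ and those in the incomplete top block) have $\sum 1/p\ll \log Y/K$.
\item Lemma~\ref{lem:maximal} with $H=1$ and $q=\lfloor\log Y\rfloor$ then bounds the leftover sum by $\frac{\nu}{2}\log Y$, uniformly in $|h|\leq 1$, outside a set of measure $o(T)$.
\end{itemize}
In other words, the scale-$Y$ ancestors of a high point of the long walk are automatically high. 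No new two-point estimate is needed. This is the idea your proposal is missing.

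\textbf{Errors in your own argument.} A from-scratch second-moment argument at scale $Y$ could in principle be made to work, but your sketch has concrete errors exactly where the work lies.

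First, the correlation structure is mis-scaled. Since $\operatorname{Cov}(\Re p^{-ih},\Re p^{-ih'})=\frac12\cos((h-h')\log p)$, the increment on scale $\ell$ (where $\log p\asymp Y_\ell=e^{\ell}$) decorrelates once $|h-h'|\gg e^{-\ell}$. It does not decorrelate at $e^{-Y_\ell}$. For $|h-h'|\approx e^{-Y_m}=e^{-e^m}$, the increments on all scales $\ell<e^{m}$ are still nearly identical. Once $m\geq\log\log Y$, that is every scale. So your claimed decorrelation for $\ell>m$ is false, and $\E Z^2\leq(1+o(1))(\E Z)^2$ does not follow. Correspondingly, the natural mesh is $\asymp 1/Y$, giving about $Y$ effectively independent points and $\E Z= Y^{1-(1-\nu)^2+o(1)}$. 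Your mesh $e^{-Y}$ and your first-moment line conflate this with $e^{Y}$ points.

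Second, Proposition~\ref{comparison22} is not the two-point input you need.
\begin{itemize}
\item It compares $S_\ell^{(j)}(t)$ and $S_\ell^{(m)}(t)$ at the \emph{same} $t$, for two truncations of $b_\alpha$.
\item Its proof relies on the difference field $\mathcal F$ having small variance, $O(z_\ell^2)$.
\item For two shifted points $t+h$ and $t+h'$, the difference has variance comparable to the whole field on the decorrelated scales, so that tilting argument does not transfer.
\end{itemize}
You would have to reprove the shifted two-point comparison of \cite{ABBRS} at scale $Y$.

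A minor point: the maximum over $|h|\leq 1$ of the discarded top scales is of order $\sqrt{\log Y}$, not $\sqrt{\log\log Y}$. This cannot be improved with Lemma~\ref{lem:maximal}, though it is still $o(\log Y)$, so it does not affect the conclusion.
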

\begin{proof}
Let $K$ be a sufficiently large integer.
We use the intervals from \cite[(10)]{ABBRS}
and the corresponding prime sums with
\[
 J_j=\Big(\exp\big((\log T)^{j/K}\big),
             \exp\big((\log T)^{(j+1)/K}\big)\Big],
 \qquad P_j(v)=-\Re\sum_{p\in J_j}p^{-1/2-iv},
\]
for $1\leq j\leq K-3$. Proposition~3.2 in \cite{ABBRS} states that
\[
 \P\bigg(\max_{|h|\leq1/4}\min_{1\leq j\leq K-3}
      \Big( \Re\sum_{p\in J_j}p^{-\sigma_0-i(t+h)}\Big)
       >\frac{1-\nu/4}{K}\log\log T\bigg)=1-o(1),
\]
where
$
 \sigma_0=1/2+(\log T)^{3/(2K)-1}.
$
A minor modification of the argument in \cite{ABBRS} leads to the same conclusion
with coefficients $-p^{-1/2}$ instead of $p^{-\sigma_0}$. We then get that 
\begin{equation}
    \label{new_conclusion}
 \P\Big(\max_{|h|\leq1/4}\min_{1\leq j\leq K-3}P_j(t+h)
       >\frac{1-\nu/4}{K}\log\log T\Big)=1-o(1).
\end{equation}

Now note that $e^Y$ is not necessarily an endpoint of the chosen intervals. We first take
all complete blocks below $e^Y$, and then bound the contributions from the missing primes.
Let $m$ be such that 
$$ (\log T)^{(m+1)/K}\leq Y<(\log T)^{(m+2)/K}. $$
Note that 
\[
 m=\Big\lfloor\frac{K\log Y}{\log\log T}\Big\rfloor-1.
\]
and
\[
 \frac mK\log\log T\geq\log Y-\frac2K\log\log T.
\]
Thus, at the point provided by \eqref{new_conclusion}, we have 
\[
 \sum_{j=1}^mP_j(t+h)
 >\Big(1-\frac{\nu}{4}\Big)\Big(\log Y-\frac2K\log\log T\Big)
 \geq\Big(1-\frac{\nu}{2}\Big)\log Y,
\]
provided $Kc_1\geq8/\nu$, since
$\log Y\geq c_1\log\log T$.

The omitted primes are those below the first block and those
between the last complete block and $e^Y$. Write their sum as
\[
 R(v)=\sum_{p\leq\exp((\log T)^{1/K})}p^{-1/2-iv}
      +\sum_{\exp((\log T)^{(m+1)/K})<p\leq e^Y}p^{-1/2-iv}.
\]
Mertens' estimate and the definition of $m$ give
\begin{align*}
 &\sum_{p\leq\exp((\log T)^{1/K})}\frac1p
  +\sum_{\exp((\log T)^{(m+1)/K})<p\leq e^Y}\frac1p\\
 &\qquad=\frac{\log\log T}K+\log Y
                -\frac{(m+1)\log\log T}{K}+O(1)\leq\frac{2\log\log T}K+O(1)
       \leq\frac{2\log Y}{K\varepsilon}+O(1).
\end{align*}
Now we apply Lemma~\ref{lem:maximal} with $H=1$ and
$q=\lfloor\log Y\rfloor$. Note that the length condition is satisfied because $qY = o(\log T)$.
Markov's inequality and Stirling's formula therefore give
\[
 \P\Big(\max_{|h|\leq1}|R(t+h)|>\frac\nu2\log Y\Big)
 \ll(1+qY)
       \Big(\frac{C}{K}
                    +\frac{C}{\log Y}\Big)^q=o(1).
\]
Outside the two exceptional sets (the one obtained above and the one in \eqref{new_conclusion}), at the same point $|h|\leq1/4$, we have
\[
 -\Re\sum_{p\leq e^Y}p^{-1/2-i(t+h)}
 =\sum_{j=1}^mP_j(t+h)-\Re R(t+h)
 >(1-\nu)\log Y.
\]
This finishes the proof.
    \end{proof}

We are now ready to begin the proof of Theorem \ref{prop:minimum-upper}.

\begin{proof}[Proof of Theorem \ref{prop:minimum-upper}]
We first prove the assertion for $|h|\leq1/2$, when
\begin{equation}\label{eq:minimum-upper-mesoscopic}
 (\log T)^{-1+\eta}\leq\alpha\leq(\log T)^{-c},
\end{equation}
where $\eta>0$ is fixed and chosen sufficiently small that
\[
  \eta<1-c\qquad\text{and}
  \qquad (1-\eta)^2>1-\delta.
\]

Let $u,x$ be as in \eqref{eq:u-x}. The extremal minorant of
Carneiro and Chandee \cite[Lemma~5 and Section~2.1]{CC}, inserted in
the explicit formula, gives the inequality 
\begin{equation}\label{eq:minimum-CC-minorant}
 W_\alpha(v)\geq
 \Re\sum_{n\leq x}\frac{b_\alpha^-(n;x)}{n^{1/2+\alpha+iv}}
 -\frac{\log T}{\log x}\log(1+x^{-\alpha})-O(1),
 \end{equation}
with  $v\in[T-1,2T+1]$.
Here we have
\begin{align*}
 b_\alpha^-(n;x)=-\Lambda(n)\sum_{j\geq0}(-1)^j
 \Big(\frac{j+1}{\log n+j\log x}x^{-j\alpha}
 -\frac{(j+1)n^{2\alpha}}{(j+2)\log x-\log n}
                                  x^{-(j+2)\alpha}\Big).
\end{align*}
As in Lemma \ref{asympta}, we have
\begin{equation}\label{eq:minimum-minus-coefficients}
 b_\alpha^-(p^r;x)=-\frac1r+O\Big(\frac{\log p}{\log x}\Big),
 \qquad |b_\alpha^-(p^r;x)|\ll\frac1r.
\end{equation}

Fix $0<\nu<1/2$, to be chosen sufficiently small in terms of
$\delta$, and put $x_0=\exp(\alpha^{-1+\nu})$. We have
$\log x_0/\log x=\alpha^\nu/u=o(1)$, so $x_0<x$.
We write
\begin{align}
 R_1(v)
 &:=\sum_{p\leq x_0}
       \frac{b_\alpha^-(p;x)p^{-\alpha}+1}{p^{1/2+iv}}
    +\sum_{x_0<p\leq x}
       \frac{b_\alpha^-(p;x)}{p^{1/2+\alpha+iv}},
       \label{eq:minimum-R1-definition}\\
 R_2(v)
 &:=\sum_{p^2\leq x}
       \frac{b_\alpha^-(p^2;x)}{p^{1+2\alpha+2iv}}.
       \label{eq:minimum-R2-definition}
\end{align}
Then 
\[
 \sum_{p\leq x}\frac{b_\alpha^-(p;x)}{p^{1/2+\alpha+iv}}
 =-\sum_{p\leq x_0}p^{-1/2-iv}+R_1(v).
\]
The higher prime powers are bounded uniformly in $v$, and hence \eqref{eq:minimum-CC-minorant} becomes
\begin{equation*}\label{eq:minimum-remainder-decomposition}
 W_\alpha(v)\geq
 -\Re\sum_{p\leq x_0}p^{-1/2-iv}
 +\Re R_1(v)+\Re R_2(v)-C,
\end{equation*}
for some $C>0$.

We next bound $R_1$ and $R_2$ uniformly for $v=t+h$, $|h|\leq1$.
For $p\leq x_0$, \eqref{eq:minimum-minus-coefficients} and
the fact that $1-e^{-s}\leq s$ for $s\geq0$ give
\[
 |b_\alpha^-(p;x)p^{-\alpha}+1|
 \leq |b_\alpha^-(p;x)+1|p^{-\alpha}+1-p^{-\alpha}
 \ll\Big(\alpha+\frac1{\log x}\Big)\log p.
\]
It follows that
\begin{align*}
 \sum_{p\leq x_0}
 \frac{|b_\alpha^-(p;x)p^{-\alpha}+1|^2}{p} \ll\Big(\alpha+\frac1{\log x}\Big)^2
       \sum_{p\leq x_0}\frac{(\log p)^2}{p} \ll\Big(\alpha+\frac1{\log x}\Big)^2(\log x_0)^2
 \ll\alpha^{2\nu}.
\end{align*}

For the remaining primes, we use $x_0<e^{1/\alpha}<x$ and get that
the sums of the squared coefficients of
\eqref{eq:minimum-R1-definition} and
\eqref{eq:minimum-R2-definition}  satisfy
\begin{align*}
 &\sum_{p\leq x_0}
       \frac{|b_\alpha^-(p;x)p^{-\alpha}+1|^2}{p}
  +\sum_{x_0<p\leq x}
       \frac{|b_\alpha^-(p;x)|^2}{p^{1+2\alpha}}
       \ll\nu\log\frac 1{\alpha},
       \\
 &\sum_{p^2\leq x}
       \frac{|b_\alpha^-(p^2;x)|^2}{p^{2+4\alpha}}
    \ll1.
\end{align*}

Let $q=\lfloor\log\frac{1}{\alpha}\rfloor$. Then
\[
 \frac{q\log x}{\log T}
 =\frac{q\log(\alpha\log T)}{\alpha\log T}
 \ll\frac{(\log\log T)^2}{(\log T)^\eta}=o(1).
\]
Applying
Lemma~\ref{lem:maximal} with $H=1$ to 
$R_1$ and $R_2$, respectively, gives
\begin{align*}
 \E \Big(\max_{|h|\leq1}|R_1(t+h)|^{2q}\Big)
 &\ll(1+q\log x)q!\bigg(C\Big(\nu\log\frac{1}{\alpha}\Big)\bigg)^q,\\
 \E \Big( \max_{|h|\leq1}|R_2(t+h)|^{2q}\Big)
 &\ll(1+q\log x)q!C^q.
\end{align*}
Markov's inequality and Stirling again give
\begin{align}
 \P\Big(\max_{|h|\leq1}|R_1(t+h)|
            >\frac\delta8\log\frac{1}{\alpha}\Big)
 &\ll(1+q\log x)
       \bigg(\frac{C}{\delta^2}
\Big(\nu+\frac1{\log(1/\alpha)}\Big)\bigg)^q,
       \label{eq:minimum-R1-probability}\\
 \P\Big(\max_{|h|\leq1}|R_2(t+h)|
                  >\frac\delta8\log\frac{1}{\alpha}\Big)
 &\ll(1+q\log x)
       \Big(\frac{C}{\delta^2\log(1/\alpha)}\Big)^q.
       \label{eq:minimum-R2-probability}
\end{align}

Now note that
\[
 \log(1+q\log x)
 =\log\frac{1}{\alpha}+O\Big(\log\log\frac{1}{\alpha}\Big).
\]
We choose $\nu$ so that $(1-\nu)^2\geq1-\delta/4$ and
$C\nu/\delta^2\leq e^{-4}$. The term in parenthesis in
\eqref{eq:minimum-R1-probability} is then at most $e^{-3}$
for large $T$, and so the right hand side is
\[
 \ll\exp\bigg(-2\log\frac{1}{\alpha}
                  +O\Big(\log\log\frac{1}{\alpha}\Big)\bigg)=o(1).
\]
We also have the right-hand side of \eqref{eq:minimum-R2-probability} is
\[
 \ll\exp\bigg(-\log\frac{1}{\alpha}\log\log\frac{1}{\alpha}
                       +O\Big(\log\frac{1}{\alpha}\Big)\bigg)
 =o(1).
\]

We next use Lemma \ref{ABBRS} with $Y= \log x_0$ (note that $Y \log Y = o(\log T)$). It follows that outside a set of measure $o(T)$, there is a point
$|h|\leq1/2$ such that
$$
 \Re\sum_{p\leq x_0}p^{-1/2-i(t+h)}
 <-(1-\nu)\log\log x_0
 =-(1-\nu)^2\log\frac{1}{\alpha}
 \leq -\Big(1-\frac{\delta}{4}\Big)\log\frac{1}{\alpha}.
$$
Outside the union of this exceptional set and the two sets in
\eqref{eq:minimum-R1-probability}--\eqref{eq:minimum-R2-probability},
we have
\begin{align*}
 W_\alpha(t+h)>(1-\delta/2)\log\frac{1}{\alpha}-C
  >(1-\delta)\log\frac{1}{\alpha},
\end{align*}
for sufficiently large $T$. Equivalently,
$$
 \min_{|h|\leq1/2}
 \Big|\zeta\Big(\tfrac12+\alpha+i(t+h)\Big)\Big|
 =\exp\Big(-\max_{|h|\leq1/2}W_\alpha(t+h)\Big)
 <\alpha^{1-\delta}.
$$
This finishes the proof in the range \eqref{eq:minimum-upper-mesoscopic}. 

Now we assume that 
$$ \frac{M_0}{\log T} \leq \alpha \leq (\log T)^{-1+\eta}.$$
We begin with the Sobolev inequality used in the proof of
\cite[Proposition~2.1, eq.~(6)]{ABBRS}: for every
continuously differentiable complex-valued function $g$ on $[a,b]$,
\begin{equation}\label{eq:minimum-sobolev}
 \max_{a\leq v\leq b}|g(v)|^2
 \leq\frac{|g(a)|^2+|g(b)|^2}{2}
       +\int_a^b|g(v)g'(v)|\,dv.
\end{equation}
We apply this to $f(s)=\zeta(s)^{-1/4}$ on the four sides of
\[
 \mathcal R_t=\Big\{s:\frac12+\alpha\leq\Re s\leq2,
                          \ |\Im s-t|\leq1\Big\}.
\]

On either vertical side, take $g(v)=f(\sigma+iv)$ on
$[t-1,t+1]$. Averaging \eqref{eq:minimum-sobolev} over $t$
gives
\begin{align}
 &\frac1T\int_T^{2T}\max_{|h|\leq1}
             \Big|f\Big(\sigma+i(t+h)\Big)\Big|^2\,dt \ll\frac1T\int_{T-2}^{2T+2}
       \bigl(|f(\sigma+iv)|^2+
             |f(\sigma+iv)f'(\sigma+iv)|\bigr)\,dv.
 \label{eq:minimum-vertical-sobolev}
\end{align}
 On the two horizontal sides, we
apply \eqref{eq:minimum-sobolev} to
$g(\sigma)=f(\sigma+i(t\pm1))$ on $[1/2+\alpha,2]$ and we get 
\begin{equation*}\label{eq:minimum-horizontal-sobolev}
\begin{aligned}
&\frac1T\int_T^{2T}
\max_{1/2+\alpha\leq\sigma\leq2}
|f(\sigma+i(t\pm1))|^2\,dt \leq
\frac12\sum_{\sigma\in\{1/2+\alpha,\,2\}}
\frac1T\int_{T-2}^{2T+2}|f(\sigma+iv)|^2\,dv\\
&\qquad+
\int_{1/2+\alpha}^{2}
\Big(
\frac1T\int_{T-2}^{2T+2}
|f(\sigma+iv)f'(\sigma+iv)|\,dv
\Big)d\sigma.
\end{aligned}
\end{equation*}
Thus, after averaging over $t$, it remains to bound the means
of $|f|^2$ and $|ff'|$ on the vertical lines
$1/2+\alpha\leq\sigma\leq2$.

We first bound the mean of $|f|^2$.
Theorem~\ref{negativemom} with $k=1/4$ applies
when the shift is at most $(1/2+\varepsilon)\log\log T/\log T$, and
\cite[Theorem~1.4]{BF} applies to the larger shifts. We obtain that
\begin{equation}
    \label{eq:minimum-quarter-moment}
 \frac{1}{T}  \int_{T-2}^{2T+2} |f(\sigma+iv)|^2 \, dv \ll  \Big(\sigma-\frac12\Big)^{-1/16} \ll \alpha ^{-1/16}.\end{equation}

Next, we bound the mean of $|f'|^2$. For
$1/2+\alpha\leq\sigma\leq2$, take $r=\alpha/4$. Cauchy's
formula gives
\begin{align*}
 f'(\sigma+iv)&=\frac1{2\pi r}\int_0^{2\pi}
 f(\sigma+iv+re^{i\theta})e^{-i\theta}\,d\theta,
 \end{align*}
 so after using Cauchy--Schwarz, we get
 \begin{align*}
 |f'(\sigma+iv)|^2&\leq\frac1{2\pi r^2}\int_0^{2\pi}
 |f(\sigma+iv+re^{i\theta})|^2\,d\theta.
\end{align*}
Integrating over $v$ we get that
\begin{equation*}\label{eq:minimum-reciprocal-derivative}
 \frac1T\int_{T-2}^{2T+2}|f'(\sigma+iv)|^2\,dv
 \ll \alpha^{-2-1/16}.
\end{equation*}
The Cauchy--Schwarz inequality then gives
\begin{equation}\label{eq:minimum-reciprocal-product}
 \frac1T\int_{T-2}^{2T+2}
 |f(\sigma+iv)f'(\sigma+iv)|\,dv
 \ll \alpha^{-17/16}.
\end{equation}

Substituting \eqref{eq:minimum-quarter-moment} and
\eqref{eq:minimum-reciprocal-product} into
\eqref{eq:minimum-vertical-sobolev} bounds the average of
each vertical-side maximum by $O(\alpha^{-17/16})$.
We similarly get that each horizontal-side maximum has average
$O(\alpha^{-17/16})$ as well. By the maximum modulus
principle, we obtain
\begin{equation*}\label{eq:minimum-rectangle-moment}
 \frac1T\int_T^{2T}\max_{s\in\mathcal R_t}
                 |\zeta(s)|^{-1/2}\,dt
 \ll \alpha^{-17/16}.
\end{equation*}
Since $\alpha\geq M_0/\log T$, Markov's inequality yields
\begin{equation}\label{eq:minimum-rectangle-exception}
 \P\Big(\max_{s\in\mathcal R_t}|\zeta(s)|^{-1/2}
                         >(\log T)^3\Big)
 \ll (\log T)^{-31/16}=o(1).
\end{equation}

Now let $\beta= (\log T)^{-1+\eta}$. We have already shown that 
\begin{equation}\label{eq:minimum-beta-point}
 \max_{|h|\leq1/2}
\Big|\zeta\Big(\tfrac12+\beta+i(t+h)\Big)\Big|^{-1/2}
 \geq\beta^{-(1-\eta)/2}
\end{equation}
with probability $1-o(1)$. 

Outside the exceptional sets in
\eqref{eq:minimum-beta-point} and
\eqref{eq:minimum-rectangle-exception}, choose $|h|\leq1/2$
such that
\[
\Big|\zeta\Big(\tfrac12+\beta+i(t+h)\Big)\Big|^{-1}
 \geq\beta^{-(1-\eta)},
\]
and put
\[
 z=s-\Big(\frac12+\alpha+i(t+h)\Big),
 \qquad
 g(s)=\frac{\exp\bigl(32\log\log T\,z(z-(3/2-\alpha))\bigr)}
               {\zeta(s)}.
\]
This function is holomorphic on a neighbourhood of
$\mathcal R_t$. Writing $z=x+iy$, we have
\[
 \Re\Big(z\big(z-(3/2-\alpha)\big)\Big)
 =x(x-(3/2-\alpha))-y^2.
\]
On the left and right sides, $x=0$ and $x=3/2-\alpha$,
respectively, so the exponential factor has modulus at most $1$.
On the right side, moreover, $|g(s)|\leq\zeta(2)$.
On the horizontal sides $|y|\geq1/2$ and
$0\leq x\leq3/2-\alpha$, so the exponential factor is at most
$(\log T)^{-8}$. By \eqref{eq:minimum-rectangle-exception},
$|1/\zeta(s)|\leq(\log T)^6$ throughout the rectangle.
Hence $|g(s)|\leq(\log T)^{-2}$ on its horizontal sides.

At the interior point $s=1/2+\beta+i(t+h)$, however,
\[
 |g(s)|=
 e^{-32\log\log T\,(\beta-\alpha)(3/2-\beta)}
 |\zeta(s)|^{-1}
 \geq e^{-48\beta\log\log T}\beta^{-(1-\eta)}
 \longrightarrow\infty.
\]
The maximum modulus principle therefore forces a value at least
this large on the left side. Since the exponential factor has
modulus at most $1$ there, we conclude that
\begin{equation*}\label{eq:minimum-rectangle-transfer}
 \max_{|h|\leq1}
 \Big|\zeta\Big(\tfrac12+\alpha+i(t+h)\Big)\Big|^{-1}
 \geq e^{-48\beta\log\log T}\beta^{-(1-\eta)}.
\end{equation*}
Here $\beta\log\log T=o(1)$. Taking reciprocals gives
\[
 M_{\alpha,0}(t)
 \leq e^{48\beta\log\log T}\beta^{1-\eta}
 \leq2(\log T)^{-(1-\eta)^2}
 \leq\alpha^{1-\delta}
\]
for sufficiently large $T$.
The last inequality follows since $(1-\eta)^2>1-\delta$ and
$\alpha\geq M_0/\log T$. This completes the proof.
\end{proof}

\section{Moments in short intervals}
\label{section_shortintervals}
       \begin{proof}[Proof of Theorem \ref{momshortint}]
       For simplicity of notation, let
       $$Z_{k}(t) =\frac{1}{2H} \int_{|h| \leq H} \Big|\zeta\Big(\tfrac12+\alpha+i(t+h)\Big)\Big|^{-2k} dh . $$
           Let 
           $$S_t(V) = \frac{1}{2H} \text{meas} \Big\{  |h| \leq H :  W_{\alpha}(t+h)>V\Big\},$$ and note that we have
           $$Z_k(t) =2k \int_{-\infty}^{\infty} e^{2kV} S_t(V) \, dV.$$
             Since
       $$2k \int_{-\infty}^0 e^{2kV} S_t(V) \, dV \leq 1,$$ we get that
       $$Z_k(t) \leq 1+ 2k \int_0^{\infty} e^{2kV} S_t(V) \, dV.$$
      
       Now pick any $0<a <\min\{k/2,1/(4k)\}$, and let $b >a$ such that $$
 b>\max\Big\{k,\sqrt{1+\frac{2\gamma}{c_0}}+\frac{M}{2}\Big\}+2.
$$ 
       We write
       \begin{align} \label{split}
       2k \int_0^{\infty} e^{2kV} S_t(V) \, dV &= 2k \int_0^{a \log\frac{1}{\alpha}} e^{2kV} S_t(V) \, dV+ 2k\int_{a \log\frac{1}{\alpha}}^{b \log\frac{1}{\alpha}} e^{2kV} S_t(V) \, dV\nonumber\\
       &\qquad+2k\int_{b \log\frac{1}{\alpha}}^{\infty} e^{2kV} S_t(V) \, dV.  
       \end{align}
       
       Note that
       \begin{equation}
       \label{firstint}
       2k \int_0^{a \log\frac{1}{\alpha}} e^{2kV} S_t(V) \, dV \leq e^{2k a \log(1/\alpha)} \leq \alpha^{-k^2},\end{equation} where we bounded $S_t(V) \leq 1$ and used the fact that $a <k/2$. 
       
       Now using Theorem \ref{prop:main} (see equations \eqref{ey} and \eqref{measey}), outside on a set of measure $\ll T/A$, we have that 
       $$ \max_{|h| \leq (\log T)^{\gamma}} W_{\alpha}(t+h) <k_{\gamma} \log\frac{1}{\alpha} +C  \log \log\frac{1}{\alpha}+\frac{\log A}{2k_{\gamma}} , $$
       for $C>0$ sufficiently large. By the choice of $b$, it follows that
       $$ \max_{|h| \leq (\log T)^{\gamma}} W_{\alpha}(t+h) < b \log\frac{1}{\alpha},  $$ for all $t$ except a set of measure $\ll T/A$. For the good $t$, it follows that for $V>b \log\frac{1}{\alpha}$, we have
       $$ S_t(V) = 0,$$ and hence
       $$ 2k\int_{b \log\frac{1}{\alpha}}^{\infty} e^{2kV} S_t(V) \, dV =0.$$
       
       We now focus on the middle term in \eqref{split}.
       From Theorem \ref{mainthm} we have that
       \begin{align}
           \frac{1}{T} \int_T^{2T} S_t(V) \, dt  &=\frac{1}{2TH} \int_{T}^{2T} \int_{-H}^H \mathbf{1} \big(W_{\alpha}(t+h) >V\big) \, dh \, dt \nonumber\\
           &\leq \frac{1}{T} \text{meas} \Big\{  u \in [T-H,2T+H] : W_{\alpha}(u) >V \Big\}\nonumber\\
           &\ll \frac{1}{\sqrt{\log(1/\alpha)}} \exp \Big(  - \frac{V^2}{\log(1/\alpha)}\Big). \label{stv}
       \end{align}
By Fubini's theorem
and \eqref{stv}, we obtain that
\begin{align*}
\frac1T\int_T^{2T}\Big(
 2k\int_{a\log\frac{1}{\alpha}}^{b\log\frac{1}{\alpha}}
                         e^{2kV}S_t(V)\,dV\Big)dt&\ll \frac1{\sqrt{\log(1/\alpha)}}
 \int_{a\log\frac{1}{\alpha}}^{b\log\frac{1}{\alpha}}
       \exp\Big(2kV-\frac{V^2}{\log(1/\alpha)}\Big)\,dV\\
 &\leq\frac{\alpha^{-k^2}}{\sqrt{\log(1/\alpha)}}
 \int_{\mathbb R}
 \exp\Big(-\frac{(V-k\log(1/\alpha))^2}{\log(1/\alpha)}\Big)\,dV \ll\alpha^{-k^2}.
\end{align*}
By Markov's inequality we get that, outside a set of measure $\ll T/A$,
\begin{equation*}\label{eq:short-moment-middle-bound}
 2k\int_{a\log\frac{1}{\alpha}}^{b\log\frac{1}{\alpha}}
                         e^{2kV}S_t(V)\,dV
 \ll A\alpha^{-k^2}.
\end{equation*}
Combining with \eqref{firstint} we have
$$Z_k(t) \ll  A \alpha^{-k^2},$$ except for on a set of measure $\ll T/A$. 

Now suppose that $k>k_\gamma$. We use the same argument, but
integrate only up to $$U:=k_{\gamma} \log\frac{1}{\alpha} +C   \log \log\frac{1}{\alpha}+\frac{\log A}{2k_{\gamma}} .$$ Since
\[
 \exp\Big(2kV-\frac{V^2}{\log(1/\alpha)}\Big)
 \leq\alpha^{-k_\gamma^2}e^{2(k-k_\gamma)V},
\]
equation \eqref{stv}, Fubini's theorem, and Markov's inequality,
together with the fact that $2ak<1/2$, give
\begin{align*}
 Z_k(t)
 &\ll \alpha^{-1/2}
   +\frac{A\alpha^{-k_\gamma^2}}{\sqrt{\log(1/\alpha)}}
       \int_{a\log\frac{1}{\alpha}}^U e^{2(k-k_\gamma)V}\,dV \ll \alpha^{-1/2}
   +\frac{A\alpha^{-k_\gamma^2}e^{2(k-k_\gamma)U}}
          {(k-k_\gamma)\sqrt{\log(1/\alpha)}},
\end{align*}
outside
a set of measure $\ll T/A$. Note that the first term is dominated by the second.
Substituting the value of $U$ proves
\eqref{supercritical}.
      \end{proof}

\bibliographystyle{amsplain}
\bibliography{references}

\providecommand{\bysame}{\leavevmode\hbox to3em{\hrulefill}\thinspace}
\providecommand{\MR}{\relax\ifhmode\unskip\space\fi MR }
\providecommand{\MRhref}[2]{%
  \href{http://www.ams.org/mathscinet-getitem?mr=#1}{#2}
}
\providecommand{\href}[2]{#2}
\begin{thebibliography}{10}

\bibitem{AB}
L.-P. Arguin and E.~Bailey, \emph{Large deviation estimates of {Selberg}'s
  central limit theorem and applications}, International Mathematics Research
  Notices \textbf{2023} (2023), no.~23, 20574--20612. \MR{4675078}

\bibitem{AB-lower}
\bysame, \emph{Lower bounds for the large deviations of {Selberg}'s central
  limit theorem}, Mathematika \textbf{71} (2025), no.~1, e70002.

\bibitem{ABR}
L.-P. Arguin, E.~Bailey, and A.~Roberts, \emph{Conditional upper bounds for
  large deviations and moments of the {R}iemann zeta function}, Preprint,
  arXiv:2604.25579, 2026.

\bibitem{ABB-CUE}
L.-P. Arguin, D.~Belius, and P.~Bourgade, \emph{Maximum of the characteristic
  polynomial of random unitary matrices}, Communications in Mathematical
  Physics \textbf{349} (2017), no.~2, 703--751.

\bibitem{ABBRS}
L.-P. Arguin, D.~Belius, P.~Bourgade, M.~Radziwi{\l}{\l}, and K.~Soundararajan,
  \emph{Maximum of the {R}iemann zeta function on a short interval of the
  critical line}, Communications on Pure and Applied Mathematics \textbf{72}
  (2019), no.~3, 500--535. \MR{3911893}

\bibitem{ABR1}
L.-P. Arguin, P.~Bourgade, and M.~Radziwi{\l}{\l}, \emph{The
  {F}yodorov-{H}iary-{K}eating {C}onjecture. {I}}, Preprint, arXiv:2007.00988,
  2020.

\bibitem{ABR2}
\bysame, \emph{The {F}yodorov-{H}iary-{K}eating {C}onjecture. {II}}, Preprint,
  arXiv:2307.00982, 2023.

\bibitem{AC-Dirichlet}
L.-P. Arguin and N.~Creighton, \emph{Upper bounds on large deviations of
  {Dirichlet} {$L$}-functions in the {$q$}-aspect}, Journal of Number Theory
  \textbf{273} (2025), 96--158.

\bibitem{AC}
\bysame, \emph{Lower bounds for the large deviations and moments of the
  {Riemann} zeta function on the critical line}, Preprint, arXiv:2603.01711,
  2026.

\bibitem{AH}
L.-P. Arguin and J.~Hamdan, \emph{The {Fyodorov--Hiary--Keating} conjecture on
  mesoscopic intervals}, Preprint, arXiv:2405.06474, 2024.

\bibitem{AOR}
L.-P. Arguin, F.~Ouimet, and M.~Radziwi{\l}{\l}, \emph{Moments of the {Riemann}
  zeta function on short intervals of the critical line}, Annals of Probability
  \textbf{49} (2021), no.~6, 3106--3141.

\bibitem{BDR}
M.~Balazard and A.~de~Roton, \emph{Notes de lecture de l'article ``partial sums
  of the {M}\"obius function'' de {Kannan Soundararajan}}, Preprint,
  arXiv:0810.3587, 2008.

\bibitem{BK-negative}
M.~V. Berry and J.~P. Keating, \emph{Clusters of near-degenerate levels
  dominate negative moments of spectral determinants}, Journal of Physics A:
  Mathematical and General \textbf{35} (2002), no.~1, L1--L6. \MR{1891805}

\bibitem{BF}
H.~M. Bui and A.~Florea, \emph{Negative moments of the {R}iemann
  zeta-function}, Journal f{\"u}r die reine und angewandte Mathematik
  \textbf{2024} (2024), no.~806, 247--288.

\bibitem{CC}
E.~Carneiro and V.~Chandee, \emph{Bounding {$\zeta(s)$} in the critical strip},
  Journal of Number Theory \textbf{131} (2011), no.~3, 363--384.

\bibitem{CFZ-ratios}
J.~B. Conrey, D.~W. Farmer, and M.~R. Zirnbauer, \emph{Autocorrelation of
  ratios of {$L$}-functions}, Communications in Number Theory and Physics
  \textbf{2} (2008), no.~3, 593--636. \MR{2482944}

\bibitem{CS-ratios}
J.~B. Conrey and N.~C. Snaith, \emph{Applications of the {$L$}-functions
  {R}atios {C}onjectures}, Proceedings of the London Mathematical Society
  \textbf{94} (2007), no.~3, 594--646. \MR{2325314}

\bibitem{Creighton-twists}
N.~Creighton, \emph{Upper tail distributions of central {$L$}-values of
  quadratic twists of elliptic curves at the variance scale}, Preprint,
  arXiv:2507.08640, 2025.

\bibitem{Farmer-ratios}
D.~W. Farmer, \emph{Long mollifiers of the {R}iemann zeta-function},
  Mathematika \textbf{40} (1993), no.~1, 71--87. \MR{1239132}

\bibitem{Forrester-Keating}
P.~J. Forrester and J.~P. Keating, \emph{Singularity dominated strong
  fluctuations for some random matrix averages}, Communications in Mathematical
  Physics \textbf{250} (2004), no.~1, 119--131. \MR{2092032}

\bibitem{Fyodorov-negative}
Y.~V. Fyodorov, \emph{Negative moments of characteristic polynomials of random
  matrices: {I}ngham-{S}iegel integral as an alternative to
  {H}ubbard-{S}tratonovich transformation}, Nuclear Physics B \textbf{621}
  (2002), no.~3, 643--674. \MR{1877952}

\bibitem{FHK}
Y.~V. Fyodorov, G.~A. Hiary, and J.~P. Keating, \emph{Freezing transition,
  characteristic polynomials of random matrices, and the {R}iemann zeta
  function}, Physical Review Letters \textbf{108} (2012), no.~17, 170601.

\bibitem{FK-negative}
Y.~V. Fyodorov and J.~P. Keating, \emph{Negative moments of characteristic
  polynomials of random {GOE} matrices and singularity-dominated strong
  fluctuations}, Journal of Physics A: Mathematical and General \textbf{36}
  (2003), no.~14, 4035--4046. \MR{1984532}

\bibitem{FK}
\bysame, \emph{Freezing transitions and extreme values: random matrix theory,
  and disordered landscapes}, Philosophical Transactions of the Royal Society
  of London. Series A. Mathematical, Physical and Engineering Sciences
  \textbf{372} (2014), no.~2007, 20120503, 32. \MR{3151088}

\bibitem{Gonek}
S.~M. Gonek, \emph{On negative moments of the {R}iemann zeta-function},
  Mathematika \textbf{36} (1989), no.~1, 71--88. \MR{1014202}

\bibitem{Harper-model}
A.~J. Harper, \emph{A note on the maximum of the riemann zeta function, and
  log-correlated random variables}, Preprint, arXiv:1304.0677, 2013.

\bibitem{Harper-moments}
\bysame, \emph{Sharp conditional bounds for moments of the {Riemann} zeta
  function}, Preprint, arXiv:1305.4618, 2013.

\bibitem{Harper-partition}
\bysame, \emph{On the partition function of the {Riemann} zeta function, and
  the {F}yodorov--{H}iary--{K}eating conjecture}, Preprint, arXiv:1906.05783,
  2019.

\bibitem{Harper-survey}
\bysame, \emph{The {Riemann} zeta function in short intervals [after
  {Najnudel}, and {Arguin}, {Belius}, {Bourgade}, {Radziwi{\l}{\l}} and
  {Soundararajan}]}, S{\'e}minaire Bourbaki. Vol. 2018/2019. Expos{\'e}s
  1151--1165, Ast{\'e}risque, no. 422, Soci{\'e}t{\'e} Math{\'e}matique de
  France, 2020, Exp. No. 1161, pp.~391--414. \MR{4224641}

\bibitem{HRS}
W.~Heap, M.~Radziwi{\l}{\l}, and K.~Soundararajan, \emph{Sharp upper bounds for
  fractional moments of the {R}iemann zeta function}, The Quarterly Journal of
  Mathematics \textbf{70} (2019), no.~4, 1387--1396. \MR{4045106}

\bibitem{KS-moments}
J.~P. Keating and N.~C. Snaith, \emph{Random matrix theory and
  {$\zeta(1/2+it)$}}, Communications in Mathematical Physics \textbf{214}
  (2000), no.~1, 57--89. \MR{1794265}

\bibitem{Landau}
E.~Landau, \emph{{\"U}ber die {M\"obiussche} funktion}, Rendiconti del Circolo
  Matematico di Palermo \textbf{48} (1924), 277--280.

\bibitem{Littlewood}
J.~E. Littlewood, \emph{Quelques cons{\'e}quences de l'hypoth{\`e}se que la
  fonction {$\zeta(s)$} de {Riemann} n'a pas de z{\'e}ros dans le demi-plan
  {$\Re(s)>1/2$}}, Comptes rendus de l'Acad\'emie des Sciences \textbf{154}
  (1912), 263--266.

\bibitem{MM}
H.~Maier and H.~L. Montgomery, \emph{The sum of the {M\"obius} function},
  Bulletin of the London Mathematical Society \textbf{41} (2009), no.~2,
  213--226.

\bibitem{MV}
Hugh~L. Montgomery and Robert~C. Vaughan, \emph{{Hilbert}'s inequality},
  Journal of the London Mathematical Society \textbf{8} (1974), no.~1, 73--82.

\bibitem{Najnudel}
J.~Najnudel, \emph{On the extreme values of the {R}iemann zeta function on
  random intervals of the critical line}, Probability Theory and Related Fields
  \textbf{172} (2018), no.~1-2, 387--452. \MR{3851835}

\bibitem{PZ-CUE}
E.~Paquette and O.~Zeitouni, \emph{The maximum of the {CUE} field},
  International Mathematics Research Notices \textbf{2018} (2018), no.~16,
  5028--5119.

\bibitem{selberg}
A.~Selberg, \emph{Contributions to the theory of the {R}iemann zeta-function},
  Archiv for Mathematik og Naturvidenskab \textbf{48} (1946), no.~5, 89--155.
  \MR{20594}

\bibitem{Sound-moments}
K.~Soundararajan, \emph{Moments of the {R}iemann zeta function}, Annals of
  Mathematics \textbf{170} (2009), no.~2, 981--993. \MR{2552116}

\bibitem{Sound-mobius}
\bysame, \emph{Partial sums of the {M}\"obius function}, Journal f{\"u}r die
  reine und angewandte Mathematik \textbf{2009} (2009), no.~631, 141--152.
  \MR{2542220}

\bibitem{SF-negative}
E.~Strahov and Y.~V. Fyodorov, \emph{Universal results for correlations of
  characteristic polynomials: {R}iemann-{H}ilbert approach}, Communications in
  Mathematical Physics \textbf{241} (2003), no.~2-3, 343--382. \MR{2013803}

\bibitem{Titchmarsh}
E.~C. Titchmarsh, \emph{A consequence of the {Riemann} hypothesis}, Journal of
  the London Mathematical Society \textbf{2} (1927), no.~4, 247--254.

\bibitem{vassaux}
Louis Vassaux, \emph{{Brownian} behaviour of the {Riemann} zeta function around
  the critical line}, arXiv:2505.07352, 2025.

\end{thebibliography}

\end{document}